\def\serieslogo@{}
\def\@setcopyright{}
\title{Realisability and Localisation}
\author{Birgit Huber}
\newtheorem{lem}{Lemma}[section]
\newtheorem{prop}[lem]{Proposition} \newtheorem{cor}[lem]{Corollary}
\newtheorem{thm}[lem]{Theorem}
\newtheorem{construction}[equation]{Construction}
\theoremstyle{remark}
\newtheorem*{Rem}{Remark}
\theoremstyle{definition}
\newtheorem{exm}[lem]{Example}
\newtheorem{defn}[lem]{Definition}
\newtheorem{rem}[lem]{Remark}
\numberwithin{equation}{section}
\newcommand{\smatrix}[1]{\left[\begin{smallmatrix}#1\end{smallmatrix}\right]}
\renewcommand{\mod}{\operatorname{mod}\nolimits}
\newcommand{\id}{\operatorname{id}\nolimits}
\newcommand{\Tr}{\operatorname{tr}\nolimits}
\newcommand{\Mod}{\operatorname{Mod}\nolimits}
\newcommand{\Modgr}{\operatorname{Mod_{gr}}\nolimits}
\newcommand{\Moddg}{\operatorname{Mod_{dg}}\nolimits}
\newcommand{\Hom}{\operatorname{Hom}\nolimits}
\newcommand{\Loc}{\operatorname{Loc}\nolimits}
\newcommand{\END}{\operatorname{\mathcal E\!\!\:\mathit n\mathit d}\nolimits}
\newcommand{\HOM}{\operatorname{\mathcal H\!\!\:\mathit o\mathit m}\nolimits}
\renewcommand{\Im}{\operatorname{Im}\nolimits}
\newcommand{\Ker}{\operatorname{Ker}\nolimits}
\newcommand{\Coker}{\operatorname{Coker}\nolimits}
\newcommand{\cone}{\operatorname{Cone}\nolimits}
\newcommand{\res}{\operatorname{res}\nolimits}
\newcommand{\per}{\operatorname{per}\nolimits}
\newcommand{\Ext}{\operatorname{Ext}\nolimits}
\newcommand{\umod}{\operatorname{\underline{mod}}\nolimits}
\newcommand{\uMod}{\operatorname{\underline{Mod}}\nolimits}
\newcommand{\uHom}{\operatorname{\underline{Hom}}\nolimits}
\newcommand{\Cone}{\operatorname{Cone}\nolimits}
\newcommand{\Char}{\operatorname{Char}\nolimits}
\newcommand{\Inj}{\operatorname{Inj}\nolimits}
\newcommand{\Ann}{\operatorname{Ann}\nolimits}
\newcommand{\Ab}{\mathrm{Ab}}
\newcommand{\op}{\mathrm{op}}
\newcommand{\inc}{\mathrm{inc}}
\newcommand{\can}{\mathrm{can}}
\newcommand{\comp}{\mathop{\raisebox{+.3ex}{\hbox{$\scriptstyle\circ$}}}}
\newcommand{\xto}{\xrightarrow}
\newcommand{\grspec}{\operatorname{Spec_{gr}}\nolimits}
\newcommand{\ra}{\longrightarrow}
\newcommand{\Ddg}{\D}
\newcommand{\Ainf}{A_{\infty}}
\newcommand{\qis}{quasi-isomorphism}
\newcommand{\Hgk}{H^{*}(G,k)}
\newcommand{\Kinjg}{\operatorname{\bf K}(\Inj kG)}
\newcommand{\HH}{\mathrm{HH}}
\newcommand{\Zgr}{Z_{\mathrm gr}}
\def\p{\mathfrak{p}}
\def\m{\mathfrak{m}}
\def\a{\mathfrak{a}}
\def\b{\mathfrak{b}}
\def\ep{\varepsilon}
\def\g{\gamma}
\def\r{\rho}
\def\Si{\Sigma}
\def\A{{\mathcal A}}
\def\B{{\mathcal B}}
\def\C{{\mathcal C}}
\def\D{{\mathcal D}}
\def\I{{\mathcal I}}
\def\K{{\mathcal K}}
\def\M{{\mathcal M}}
\def\N{{\mathcal N}}
\def\S{{\mathcal S}}
\def\T{{\mathcal T}}
\def\U{{\mathcal U}}
\def\bbZ{\mathbb Z}
\def\bbB{\mathbb B}
\def\bfa{\mathbf a}
\def\bfi{\mathbf i}
\def\bfp{\mathbf p}
\def\bfC{\mathbf C}
\def\bfD{\mathbf D}
\def\bfK{\mathbf K}
\def\bfL{\mathbf L}
\def\bfR{\mathbf R}
\def\bfS{\mathbf S}
\begin{document}
\originalTeX 
\begin{titlepage}
\thispagestyle{empty}
\begin{center}
\thispagestyle{empty}
\mbox{ } \\
\textbf{\Large 
Universit\"{a}t Paderborn}\\
{\tiny \ }\\
\textbf{\Large 
Fakult\"{a}t f\"{u}r Elektrotechnik, Informatik und Mathematik}\\
\mbox{ }\\
\mbox{ }\\
\mbox{ }\\
\mbox{ }\\
\mbox{ }\\
\mbox{ }\\
\mbox{ }\\
\mbox{ }\\
\mbox{ }\\
\mbox{ }\\
\mbox{ }\\
\mbox{ }\\
{\huge
\textbf{Dissertation}\\
\mbox{ }\\
\mbox{ }\\
 \mbox{ } \\
\textbf{
Realisability and Localisation}\\
{\tiny \ }\\
\textbf{
\Large
Realisierbarkeit und Lokalisierung}\\
\mbox{ }\\
\mbox{ }\\
\textbf{Birgit Huber}}\\
\mbox{ }\\
\large{2007}
\mbox{ }\\
\mbox{ }\\
\mbox{ }\\
\mbox{ }\\
\mbox{ }\\
\mbox{ }\\
\thispagestyle{empty}
\mbox{  } \\

\end{center}
\end{titlepage}
\pagebreak
\maketitle

\begin{abstract}
Let $A$ be a differential graded algebra with cohomology ring $H^*A$. A~gra\-ded module over $H^*A$ is called \emph{realisable} if it is (up to direct summands) of the form $H^*M$ for some differential graded $A$-module $M$. Benson, Krause and Schwede have stated  a local and a global obstruction for realisability. The global obstruction is given by the Hochschild class determined by the secondary multiplication of the $A_{\infty}$-algebra structure of $H^*A$.

In this thesis we mainly consider differential graded algebras $A$ with graded-com\-mu\-ta\-tive cohomology ring. We show that a finitely presented graded $H^*A$-module $X$  is realisable if and only if its $\p$-localisation $X_{\p}$ is realisable for all graded prime ideals $\p$ of $H^*A$.

In order to obtain such a local-global principle also for the global obstruction, we define the \emph{localisation of a differential graded algebra $A$ at a graded prime $\p$ of $H^*A$}, denoted by $A_{\p}$, and show the existence of a  morphism of differential graded algebras inducing the canonical map $H^*A \to (H^*A)_{\p}$ in cohomology. The latter result actually holds in a much more general setting: we prove that every smashing localisation on the derived category of a differential graded algebra is induced by a morphism of differential graded algebras. 

Finally we discuss the relation between realisability of modules over the group cohomology ring and the Tate cohomology ring.
\end{abstract}

\germanTeX
\begin{abstract}
Sei $A$ eine differenziell graduierte Algebra mit Kohomologiering $H^*A$. Ein graduierter $H^*A$-Modul hei\ss t \emph{realisierbar}, falls man ihn (bis auf direkte Summanden)  mit einem $H^*A$-Modul von der Form $H^*M$ identifizieren kann, wobei $M$ ein differenziell graduierter $A$-Modul ist.  Benson, Krause und Schwede haben ein lokales und ein globales Hindernis f"ur Realisierbarkeit angegeben. Das globale Hindernis ist durch eine Hochschild Klasse gegeben, welche durch die sekund"are Mulitplikation der $A_{\infty}$-Algebra-Struktur von $H^*A$ bestimmt ist. 

In dieser Doktorarbeit betrachten wir haupts"achlich differenziell graduierte Algebren $A$ mit graduiert-kommutativen Kohomologieringen. Wir zeigen, dass ein endlich pr"asentierter graduierter $H^*A$-Modul $X$ genau dann realisierbar ist, wenn dessen $\p$-Lokalisierung $X_{\p}$ f"ur alle graduierten Primideale $\p$ von $H^*A$ realisierbar ist. 

Um ein solches Lokal-Global Prinzip auch f"ur globale Realisierbarkeit zu formulieren, definieren wir die \emph{Lokalisierung einer differenziell graduierten Algebra $A$ an einem Primideal $\p$ von $H^*A$} und bezeichnen sie mit $A_{\p}$.  Wir zeigen die Existenz eines Morphismus von differenziell graduierten Algebren, der in der Kohomologie die kanonische Abbildung $H^*A \to (H^*A)_{\p}$
induziert. Letzteres Resultat beweisen wir in einem wesentlich allgemeineren Kontext: Wir zeigen, dass jede mit direkten Summen kommutierende Lokalisierung der derivierten Kategorie einer differenziell graduierten Algebra von einem Morphismus von differenziell graduierten Algebren induziert ist. 

Abschlie\ss end diskutieren wir den Zusammenhang von Realisierbarkeit von Moduln "uber dem Gruppen-Kohomologiering und dem Tate-Kohomologiering.
\end{abstract}

\originalTeX


\newpage
\tableofcontents
\pagebreak
\subsection*{Danksagung}
An erster Stelle m\"ochte ich mich ganz besonders herzlich bei meinem Dissertationsbetreuer  Henning Krause f\"ur die Vergabe dieses Themas, die  engagierte Betreuung dieser Doktorarbeit und viele wichtige Anregungen f\"ur meine Arbeit bedanken. Besonders danken m\"ochte ich ihm auch f\"ur die Unterst\"utzung au\ss erhalb der Mathematik, insbesondere f\"ur seinen gro\ss en Einsatz, die Finanzierung meiner Arbeit stets zu sichern. Erw\"ahnen m\"ochte ich auch meine  gro\ss artigen Dienstreisen zu Konferenzen oder Forschungsaufenthalten, die dank seines Einsatzes vom mathematischen Institut oder der Forschungskommision der Universit\"at Paderborn finanziert wurden.  

W\"ahrend eines dreimonatigen Forschungsaufenthalt an der NTNU Trondheim in Norwegen wurde meine Arbeit von \O{}yvind Solberg betreut, und auch ihm m\"ochte ich f\"ur sein gro\ss es Engagement und seine Gastfreundschaft sehr herzlich danken.

Besonders dankbar bin ich auch Dave Benson f\"ur eine Einladung zu einem einw\"ochigen Forschungsaufenthalt nach Aberdeen. Ich m\"ochte ihm sehr herzlich f\"ur die vielen und langen Diskussionen danken, in denen ich sehr viel gelernt habe und durch die ich die Kapitel \ref{sec group and Tate} and \ref{sec Comparing} \"uber Gruppenkohomologie wesentlich verbessern konnte.

Mein sehr herzlicher Dank gilt auch Bernhard Keller, der sich bei jeder Konferenz, auf der wir uns trafen,  Zeit genommen hat, meine Fragen mit gro\ss er Geduld und Sorgfalt zu beantworten. Dies hat entscheidend zur Entstehung des neunten Kapitels beigetragen. 

Auch bei Ragnar-Olaf Buchweitz m\"ochte ich mich herzlich f\"ur Diskussionen w\"ahrend seines Gastaufenthalts in Paderborn bedanken. Hierbei habe ich viel \"uber Kommutative Algebra gelernt und Anregungen erhalten, die mich bei dem Beweis von Ergebnissen in Abschnitt \ref{sec loc-glob global} einen gro\ss en Schritt weiter gebracht haben.

F\"ur anregende mathematische Diskussionen m\"ochte ich mich auch herzlich bei Helmut Lenzing und bei Dirk Kussin bedanken.

Herzlich danken m\"ochte ich Wolfgang Zimmermann f\"ur die Einf\"uhrung in das Gebiet der Algebra und Darstellungstheorie, die ich von ihm an der Universit\"at M\"unchen erhalten habe.

\medskip

Ich m\"ochte allen Mitgliedern und ehemaligen Mitgliedern der Paderborner Dar\-stel\-lungs\-theorie Arbeitsgruppe und allen anderen wissenschaftlichen und nicht-wissen\-schaft\-lichen Mitarbeitern  f\"ur die angenehme und freundliche Atmosph\"are am Mathematischen Institut der Universit\"at Paderborn danken. Besonders danke ich in diesem Zusammenhang auch Kristian Br\"uning und Karsten Schmidt f\"ur die nette Atmosph\"are in unserem B\"uro, viele mathematische Diskussionen und die \glqq \LaTeX-Unter\-st\"ut\-zung\grqq.

Herzlich danken m\"ochte ich auch Jes\'us Soto und Stefan Wolf f\"ur das Korrekturlesen meines Manuskripts.

\medskip

\enlargethispage*{1cm}

Ich danke der Konrad-Adenauer-Stiftung f\"ur ein Graduiertenstipendium \" uber 2,5 Jahre und besonders f\"ur  die Finanzierung des Flugs  zu einer Konferenz nach Mexiko.

W\"ahrend meines dreimonatigem Aufenthalts in Trondheim wurde ich durch ein Marie-Curie-Stipendium finanziert, und ich m\"ochte daf\"ur dem Liegrits-Netzwerk und Idun Reiten   sehr herzlich danken.

Besonders danken m\"ochte ich auch der Universit\"at Paderborn f\"ur ein sechsmonatiges  Abschlussstipendium, welches mir erm\"oglicht hat, meine Doktorarbeit ohne finanzielle Sorgen in Ruhe abzuschlie\ss en.

Sehr herzlich danke ich Claus Ringel f\"ur die Einladung zu einem zweimonatigen Gast\-aufenthalt am Sonderforschungsbereich \glqq Spektrale Strukturen und To\-pologische Me\-tho\-den in der Mathematik\grqq\  (SFB 701)  an der Universit\"at Bielefeld.

\medskip

Last but not least m\"ochte ich mich bei meinen Eltern und bei Mart\'in Jim\'enez f\"ur die gro\ss artige und liebevolle Unterst\"utzung au\ss erhalb der Mathematik bedanken.

\newpage

Ich danke den Gutachtern

\vspace*{1cm}

\noindent Prof. Dave Benson\\
University of Aberdeen\\

\vspace{0.7cm}
\noindent Prof. Dr. Henning Krause\\
Universit\"at Paderborn\\

\vspace{0.7cm}
\noindent Prof. Dr. Alexander Zimmermann\\
Universit\'e de Picardie, Amiens

\newpage

\section{Introduction}
In this thesis we connect two algebraic concepts which seem unrelated
at first sight: realisability and localisation. Using some advanced
methods from Homological Algebra, we establish a local-global
principle for realisability. Before we discuss our main results in
detail, let us first explain the concepts we deal with.

The starting point is a cohomology theory which assigns to a
mathematical object $X$ its cohomology group $H^*X$. Such cohomology
theories arise for example in Algebraic Topology, Algebraic Geometry,
or in Representation Theory. Usually there exists some commutative
cohomology ring $E$ such that $H^*X$ is naturally an $E$-module. Then an
$E$-module is realisable if it is up to isomorphism of the form $H^*X$ for
some object $X$.

The assignment described above can be expressed in the language of categories and functors: If $H^* \colon \C \to \D$ is a functor between categories $\C$ and $\D$, then realisability is concerned with deciding whether an object $D \in \D$ is isomorphic to an object in the image of the functor $H^*$. 

In some specific representation theoretic context, Benson, Krause, and
Schwede \cite{BKS} established a criterion for realisability. They investigated
for a finite group $G$ and a field~$k$ the Tate cohomology functor 
$$\hat H^*(G,-) \colon \uMod kG \to \Modgr \hat H^*(G,k)$$ 
from the stable module category $\uMod kG$ into the category of $\bbZ$-graded modules over the Tate cohomology ring $\hat H^*(G,k)$.
In this setting, realisability deals with deciding whether a graded $\hat H^*(G,k)$-module $X$ 
can be written as $\hat H^*(G,M)$ for some module $M$ over the group algebra $kG$.

The stable module category $\uMod kG$ has some additional structure: it is a triangulated category. The functor $\hat H^*(G,-)$ commutes both with arbitrary direct sums and with arbitrary products. 

More generally, Benson, Krause and Schwede \cite{BKS} consider a compactly generated trian\-gulated category $\T$ admitting arbitrary direct sums, and a cohomological functor
$$H^*\colon \T \to \Modgr E$$
into the category of $\bbZ$-graded modules over a cohomology ring $E$
which preserves arbitrary direct sums and products.

For an arbitrary $\bbZ$-graded $E$-module $X$, they have given a \emph{local obstruction}
$$\kappa(X) \in \Ext^{3,-1}_E(X,X)$$ which is trivial if and only if $X$ is a direct summand of $H^*M$ for some object $M \in \T$. 

Moreover, Benson, Krause and Schwede \cite{BKS} show that if there exists an infinite sequence of obstructions 
$$\kappa_n(X) \in \Ext^{n,2-n}_{E}(X,X),\; n \ge 3,$$
where the class $\kappa_n(X)$ is defined provided that the previous one $\kappa_{n-1}(X)$ vanishes,  then it even holds  $X \cong H^*M$. In this sequence of obstructions all but the first one depend on choices. 
Only $\kappa_3(X)$ is uniquely determined and actually, it equals the local obstruction $\kappa(X)$. It is remarkable that, despite of the necessity of an infinite sequence of obstructions to decide if $X \cong H^*M$, the first obstruction already tells whether $X$ is a direct summand of $H^*M$. 

Since we mainly deal with the latter question, we call a $\bbZ$-graded $E$-module $X$ \emph{reali\-sable} if it is a direct summand of $H^*M$ for some $M \in \T$. If $X \cong H^*M$, then we refer to a \emph{strictly realisable} module.
 

The triangulated categories for which realisability is particularly interesting arise as derived categories of differential graded algebras, or shortly, dg algebras. Such algebras are complexes  with an additional multiplicative structure. They have their origin in Algebraic Topology \cite{Cartan} and encode topological invariants. 
Derived categories of dg algebras were first studied systematically by Bernhard Keller \cite{KDdg}.

If $A$ is a dg algebra, then realisability is concerned with deciding whether a graded module over the cohomology ring $H^*A$ is (up to direct summands) isomorphic to a cohomology module $H^*M$, where $M$ is a dg $A$-module. The functor in question is
$$H^*\colon \D(A) \to \Modgr H^*A,$$
where $\D(A)$ denotes the derived category of the dg algebra $A$.
Benson, Krause and Schwede \cite{BKS} 
show that this setting even admits   a \emph{global  obstruction} for realisability. 
For this purpose, they use a result of Kadeishvili~\cite{Kade} saying that $H^*A$ admits an $A_{\infty}$-algebra structure.
The global obstruction arises as the Hochschild class $\mu_A \in \HH^{3,-1}(H^*A)$ determined by the secondary multiplication $m_3^{H^*A}$ of $H^*A$: if the canonical class $\mu_A$ is trivial, then all $\bbZ$-graded $H^*A$-modules are realisable~\cite{BKS}.
\medskip

The other main concept we consider is localisation. This is an algebraic concept which has its origin in Geometry. In Commutative Algebra, the localisation of a commutative ring $R$ by a multiplicatively closed subset $S$ of $R$ is a uniquely determined ring of fractions $S^{-1}R$ with the property that each $s \in S$ is made invertible in $S^{-1}R$. Similarly, one defines a module $S^{-1}M$ which is a module over $S^{-1}R$. One considers in particular multiplicatively closed subsets $S \subseteq R$ which are the complement of a prime ideal $\p$ of $R$. The ring of fractions is then denoted by $R_{\p}$ and the module $S^{-1}M$ by $M_{\p}$. 
The ring of fractions $R_{\p}$ is a local ring, and many results on commutative rings or modules over commutative rings can be proven more easily under the assumption that the ring in question is local. 
The \emph{local-global principle} says that an assertion holds if and only it holds in  localisation at every prime ideal. It is a classical principle of Commutative Algebra.

Many rings which arise in Representation Theory of Groups or Algebraic Topology are not commutative, but still, their elements commute up to a sign which depends on the degree of the elements. Therefore these rings are called  {graded-commu\-tative}. Examples are the group and Tate cohomology ring of a group or the singular cohomology ring $H^*X$ of a topological space $X$. These examples arise as cohomology of a dg algebra. Also many other dg algebras do not have a commutative, but still a graded-commu\-ta\-tive cohomology ring.

Although these rings are not strictly commutative, still many results from Commutative Algebra can also be proven in this more general setting. 
In particular, localisation at prime ideals can be done similarly as in classical Commutative Algebra. This is folklore knowledge for some experts, but there seems to be no published account. For this reason, we provide some material on rings of fractions in the graded-commutative setting in Section \ref{grcom}. 


We will also consider localisation of triangulated categories. 
In the 1960s, Gabriel and Zisman \cite{GZ} introduced the Calculus of Fractions for arbitrary categories, generalising the classical localisation of modules.  This was used by Verdier in his th\`ese~\cite{V} to study localisation of triangulated categories and in particular, to construct the Verdier quotient.

In this thesis we show that there is a strong relation between realisability and locali\-sa\-tion. We prove relations in several settings. 

In Chapter \ref{sec Realisability and localisation} we consider differential graded algebras $A$ having a graded-commutative cohomology ring $H^*A$. 
We show  that if a graded $H^*A$-module $X$ is realisable, then so is $X_{\p}$ for every graded prime ideal of $H^*A$. Our 
main result of Chapter~\ref{sec Realisability and localisation} is
\begin{thm}[Local-global principle]
Let $A$ be a dg algebra over a commutative ring such that $H^*A$ is graded-commutative and coherent. The following conditions are {equi\-valent} for a finitely presented, graded $H^*A$-module $X$:
\begin{itemize}
\item[(1)] $X$ is realisable.
\item[(2)] $X_{\p}$ is realisable for all graded prime ideals $\p$ of $H^*A$.
\item[(3)] $X_{\m}$ is realisable for all graded maximal ideals $\m$ of $H^*A$.
\end{itemize}
\end{thm}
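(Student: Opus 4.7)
My plan is to reduce the theorem to a vanishing statement for the Benson--Krause--Schwede local obstruction, and then combine a naturality property of that obstruction with a support-theoretic local-global principle for a single Ext class. Write $E := H^*A$. Recall that realisability of a graded $E$-module $Y$ is equivalent to the vanishing of the canonical class $\kappa(Y) \in \Ext^{3,-1}_E(Y,Y)$, and the same criterion applies to graded $E_\p$-modules once the localised dg algebra $A_\p$ with $H^*A_\p \cong E_\p$ (whose existence is the content of an earlier chapter of the thesis) is available. Both non-trivial implications rely on the compatibility of $\kappa$ with $\p$-localisation.

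For (1)$\Rightarrow$(2) I invoke the dg morphism $A \to A_\p$ constructed in the localisation chapter, which induces on cohomology the canonical map $E \to E_\p$. If $X$ is a direct summand of $H^*M$ for some dg $A$-module $M$, I form the derived extension of scalars $N := M \otimes^{L}_{A} A_\p$, a dg $A_\p$-module. Since localisation of graded modules is exact and $A \to A_\p$ realises the cohomological localisation, there is an identification $H^*N \cong (H^*M)_\p$, so $X_\p$ is a direct summand of $H^*N$ and hence realisable over $A_\p$. The implication (2)$\Rightarrow$(3) is immediate because maximal ideals are prime.

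For the essential direction (3)$\Rightarrow$(1) I argue at the level of $\kappa$. Transporting Kadeishvili's $A_\infty$-structure on $E$ along the dg morphism $A \to A_\m$, one obtains an $A_\infty$-structure on $E_\m$ whose secondary multiplication agrees with the $\m$-localisation of $m_3^{H^*A}$ up to a Hochschild coboundary; consequently $\kappa(X_\m)$ coincides with the image of $\kappa(X)$ under the natural morphism $\Ext^{3,-1}_E(X,X) \to \Ext^{3,-1}_{E_\m}(X_\m, X_\m)$. Because $X$ is finitely presented and $E$ is graded-coherent, $X$ admits a resolution by finitely generated graded projectives, so $\Ext^{n,*}_E(X,X)$ is itself a finitely presented graded $E$-module and localisation commutes with Ext in the form $\Ext^{n,*}_E(X,X)_\m \cong \Ext^{n,*}_{E_\m}(X_\m, X_\m)$. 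Hypothesis (3) then says that $\kappa(X)$ maps to zero in every localisation $\Ext^{3,-1}_E(X,X)_\m$. The classical graded support argument---if a nonzero homogeneous element $\xi$ of a graded $E$-module $M$ had proper graded annihilator, then $\Ann(\xi)$ would be contained in some graded maximal ideal $\m$, and hence $\xi/1 \neq 0$ in $M_\m$---forces $\kappa(X) = 0$, whence $X$ is realisable.

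I expect the main obstacle to be this naturality step: one must verify that the $A_\infty$-structure on $E_\m$ induced from the dg morphism $A \to A_\m$ is $A_\infty$-equivalent to the one obtained by localising the chosen $A_\infty$-structure on $E$, so that the secondary multiplications---and hence the Hochschild classes $\mu$ and obstructions $\kappa$---correspond under $\p$-localisation. This rests crucially on the localised dg algebra $A_\p$ developed in the preceding chapter, together with the homotopy functoriality of Kadeishvili's construction under quasi-isomorphisms of dg algebras. The commutation of Ext with localisation and the final support argument are then routine consequences of graded coherence of $H^*A$.
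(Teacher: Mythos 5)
Your proof of (1)$\Rightarrow$(2) is sound (the paper argues this a bit more directly via the commutative diagram of Corollary~\ref{HPSdia}, without passing through the dg algebra $A_\p$ or the derived tensor product, but the content is the same), and (2)$\Rightarrow$(3) is trivial. The essential direction (3)$\Rightarrow$(1) is where your approach departs from the paper's, and where there is a real gap.

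Your argument for (3)$\Rightarrow$(1) proceeds by transporting Kadeishvili's $A_\infty$-structure along $A\to A_\m$ to relate $m_3^{H^*A}$ and $m_3^{H^*A_\m}$, hence the canonical classes $\mu_A$ and $\mu_{A_\m}$, and from there deduce the compatibility $\kappa(X_\m)=\kappa(X)_\m$ via the cup-product description $\kappa(X)=\id_X\cup\mu_A$. The problem is that this entire chain of reasoning is only available when $A$ is a dg algebra over a \emph{field}: the $A_\infty$-structure on $H^*A$, the existence and well-definedness of $\mu_A\in\HH^{3,-1}(H^*A)$ (Proposition~\ref{m3Hochschild}), and Theorem~\ref{BKSglobal} ($\kappa(X)=\id_X\cup\mu_A$) are all stated and proved under the field hypothesis, because Kadeishvili's construction requires choosing a $k$-linear splitting of cocycles, which needs $k$-semisimplicity. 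But the theorem you are proving assumes only that $A$ is a dg algebra over a commutative ring. So the $A_\infty$-route cannot cover the stated generality.

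The paper avoids this by establishing $\kappa(X)\otimes_{H^*A}(H^*A)_\m=\kappa(X_\m)$ through a purely triangulated-categorical argument (Lemma~\ref{diaclass}): one takes an $N$-special $\T$-presentation of $X$, applies the exact coproduct-preserving functor $Q\colon\D(A)\to\D(A)/\C_\m$, and observes that the image is a $QA$-special presentation of $X_\m$ whose associated extension is precisely $\kappa(X)\otimes_{H^*A}(H^*A)_\m$. This works with no hypothesis on the ground ring and is much lighter than the $A_\infty$-machinery. Even over a field, your route requires you to independently verify (i) that $\mu_{A_\m}$ corresponds to $\mu_A/1$ under the Hochschild localisation isomorphism $\HH^{3,-1}(H^*A)_\m\cong\HH^{3,-1}(H^*A_\m)$, which is itself the main result of a later chapter and needs a Noetherianity hypothesis on $(H^*A)^e$ rather than coherence of $H^*A$; and (ii) that the cup-product pairing $\Hom_{H^*A}(X,X)\otimes\HH^{3,-1}(H^*A)\to\Ext^{3,-1}_{H^*A}(X,X)$ commutes with localisation. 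In short: your plan proves a special case of the theorem by invoking a substantially stronger (and later) result. Replace the $A_\infty$-based naturality step by the direct argument on $N$-special presentations and the proof goes through in the stated generality.
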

We will prove such a local-global principle also for \emph{global realisability} in Chapter \ref{sec Localising mu}.
Before we give a precise formulation of this result, let us first explain the contents of Chapter~\ref{seclift}.

The results stated in Chapter~\ref{seclift} are joint work with K. Br\"uning \cite{BH} and also deal with a realisability problem, however in a different setting. We consider \emph{smashing localisation functors} on derived categories of dg algebras, that is,  localisation functors of triangulated categories which commute with arbitrary direct sums. We show that every smashing localisation on the derived category of a dg algebra can be realised by a morphism of dg algebras.
More precisely, we will prove
\begin{thm}[joint work with K. Br\"uning]\label{jointwK}
Let $A$ be a dg algebra over a commutative ring and $L\colon \D(A) \to \D(A)$ a smashing localisation. Then there 
exists a dg algebra $A_L$ with the property that $\D(A_L) \simeq \D(A)/\Ker L$, and the map
$$\D(A)(A,A)^* \to \D(A)(LA,LA)^*,\quad f \mapsto L(f),$$
is induced by a zigzag of dg algebra maps $$A \xleftarrow{\sim} A' \xto{\varphi} A_L.$$
That is, there exists a dg algebra $A'$ quasi-isomorphic to $A$ and 
a morphism of dg algebras  $\varphi\colon A' \to A_L$ such that in cohomology, we have the commutative diagram 
$$\xymatrix{\ar @{--} H^*A' \ar[d]_{\cong} \ar[rd]^-{H^*\varphi}& \\
\D(A)(A,A)^* \ar[r]^-{L}&  \D(A)(LA,LA)^*.}$$
Moreover, if $A$ is a cofibrant dg algebra, then there exists a morphism $A \to A_L$ which induces the algebra map $\D(A)(A,A)^* \to \D(A)(LA,LA)^*$ in cohomology.
\end{thm}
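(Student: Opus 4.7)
The plan is to construct $A_L$ as the endomorphism dg algebra of a rectified model of $LA$ in the category of dg $A$-modules, with the dg algebra map $\varphi$ read off from the $A$-algebra structure on this model.

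First I would replace $A$ by a quasi-isomorphic cofibrant dg algebra $A'$ using the model structure on dg algebras over the base commutative ring; this furnishes the left leg $A \xleftarrow{\sim} A'$ of the zigzag and identifies $\D(A)$ with $\D(A')$, so that $L$ transfers to a smashing localisation on $\D(A')$ without change of notation. Because $L$ is smashing, $LA'$ represents $L$ as a smashing idempotent and carries a homotopy-coherent algebra structure over $A'$, with unit the canonical map $A' \to LA'$ and multiplication realising $L \simeq - \otimes^{\mathbf L}_{A'} LA'$. The crucial observation is that the smashing property makes the associated Bousfield localisation of the category of dg $A'$-modules compatible with $\otimes_{A'}$, so that by the Schwede--Shipley lifting machinery it transfers to a Bousfield localisation on the category of dg $A'$-algebras. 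Taking a fibrant replacement of $A'$ in this localised model structure on dg $A'$-algebras yields a strict dg algebra map $\iota\colon A' \to T$ with $T \simeq LA'$ in $\D(A')$.

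Now set $A_L := \END_{A'}(T)$. Since $T$ is a dg $A'$-algebra with unit $\iota$, multiplication in $T$ defines a dg algebra map $T \to \END_{A'}(T)$, and precomposition with $\iota$ produces the dg algebra map $\varphi\colon A' \to A_L$. For the triangulated equivalence, observe that $T \simeq LA'$ generates $L\D(A')$ (because $A'$ generates $\D(A')$) and is compact in $L\D(A')$ (because $L$ preserves sums, the inclusion of $L$-local objects also preserves sums by smashing, and $\D(A')(LA',X) = \D(A')(A',X)$ for $L$-local $X$). Keller's Morita theorem for compactly generated algebraic triangulated categories then yields $\D(A_L) \simeq L\D(A')$, and together with $L\D(A') \simeq \D(A')/\Ker L \simeq \D(A)/\Ker L$ this proves the first claim. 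To verify the cohomology diagram, note that under the identifications $H^*A' \cong \D(A)(A,A)^*$ and $H^*A_L = H^*\END_{A'}(T) \cong \D(A)(LA,LA)^*$, the map $H^*(\varphi)$ sends the class of $a \in A'$ to multiplication by $\iota(a)$ on $T$, which coincides with the image of $a$ (viewed as an endomorphism of $A'$) under $L$. Finally, if $A$ is itself cofibrant, the cofibrant replacement step is unnecessary and the construction produces the direct morphism $A \to A_L$.

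The main obstacle is the rectification step: upgrading the smashing idempotent $LA$ from a homotopy-coherent algebra to a strict dg $A'$-algebra $T$ together with a strict unit map $A' \to T$. This hinges precisely on the smashing property of $L$, which ensures that the Bousfield localisation is compatible with the tensor product over $A'$ and therefore lifts through the forgetful functor from dg $A'$-algebras to dg $A'$-modules. Without smashing one would obtain at best an $A_\infty$-algebra model of $LA$, and the zigzag could not be realised at the level of honest dg algebras.
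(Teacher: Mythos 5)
The rectification step in your argument is the point of failure, and it is not a small one. You assert that the smashing idempotent $LA'$ can be upgraded from a homotopy-coherent $A'$-algebra to a strict dg $A'$-algebra $T$ with a strict unit map $\iota\colon A'\to T$, by invoking ``Schwede--Shipley lifting machinery'' to transfer the Bousfield localisation of $\Moddg A'$ to a localised model structure on dg $A'$-algebras. But the theorem is stated for an arbitrary dg algebra $A$ over a commutative ring, with no commutativity assumption on $A$ (or even on $H^*A$). In this generality $\Moddg A'$ is not a monoidal category: the tensor $\otimes_{A'}$ pairs right modules with left modules, not right modules with themselves. Consequently there is no notion of ``algebra object in dg $A'$-modules'', and the Schwede--Shipley transfer from modules to algebras simply does not apply. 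A ``dg $A'$-algebra'' in the non-commutative setting must be interpreted as a dg $k$-algebra under $A'$ (equivalently, an algebra object in $(A',A')$-bimodules), and there is no free/forgetful adjunction between that category and right dg $A'$-modules through which a Bousfield localisation could be lifted. Without the rectification you cannot define $A_L=\END_{A'}(T)$, the unit $\iota$, or the map $\varphi\colon A'\to\END_{A'}(T)$ by precomposition, and the argument collapses.

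The paper avoids this issue entirely. It never attempts to put an algebra structure on a model of $LA$. Instead it fixes a homotopically projective model $RQA$ of $LA$, represents the adjunction unit $\eta_A\colon A\to RQA$ by a dg $A$-module map $\bar\eta_A\colon A\to\widehat{LA}$ that is split as a map of \emph{graded} $A$-modules (Lemma \ref{choosesplit}), sets $A_L=\END_A(\widehat{LA})$, and then constructs $A'$ as the \emph{pullback} of the dg algebra maps
$$\END_A(A)\xrightarrow{\bar\eta_{A*}}\HOM_A(A,\widehat{LA})\xleftarrow{\bar\eta_A^{\,*}}\END_A(\widehat{LA}).$$
Lemma \ref{pullback} (surjective quasi-isomorphisms of dg bimodules induce pullbacks in cohomology) and Lemma \ref{eta*qis} ($\bar\eta_A^{\,*}$ is a surjective quasi-isomorphism) then show that $p_1\colon A'\to\END_A(A)\cong A$ is a quasi-isomorphism, and $\varphi=p_2\colon A'\to A_L$ is the desired map. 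The identification $\D(A_L)\simeq\D(A)/\Ker L$ (Proposition \ref{RHOM(RQA,R-)}) is the one place where the two proofs genuinely agree, both resting on the fact that $LA$ is a compact generator of the quotient because $L$ is smashing. But the construction of the zigzag itself is entirely different: the paper's pullback device replaces the unavailable strictification and also delivers, for cofibrant $A$, the direct lift $A\to A_L$ by splitting the surjective quasi-isomorphism $p_1$ — something your approach would have to reprove separately since $T$ is not $A'$ itself.
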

We focus in particular on the following special case: Let  $A$ be a dg algebra with graded-commutative cohomology ring, $\p$ a graded prime ideal of $H^*A$ and $L_{\p} \colon \D(A) \to \D(A)$ the smashing localisation with the $L_{\p}$-acyclic objects being those $X \in \D(A)$ such that $(H^*X)_{\p}=0$. Then we denote
$$A_{\p}=A_{L_{\p}}$$
and call the dg algebra $A_{\p}$ \emph{localisation of $A$ at a prime $\p$ in cohomology}. 
The cohomology of $A_{\p}$ satisfies $H^*(A_{\p}) \cong (H^*A)_{\p}$ as graded rings, and with this identification, the canonical map $H^*A \to (H^*A)_{\p}$ is the cohomology of a zigzag of dg algebras 
$$A \xleftarrow{\sim} A' \xto{\varphi} A_{\p}.$$

In Chapter \ref{sec Localising mu} we 
consider dg algebras $A$ with graded-commutative cohomology ring. Our main result in this chapter is a local-global principle for global realisability. 
For this purpose, we state a global obstruction 
for the $\p$-local modules, i.e.\ those graded $H^*A$-modules with the property that $X \cong X_{\p}$. Here we use the dg algebra $A_{\p}$ constructed in Chapter \ref{seclift}: 
this obstruction arises from the  $A_{\infty}$-struc\-ture of $(H^*A)_{\p} \cong H^*(A_{\p})$. Actually, we show that the canonical Hochschild class $\mu_{A_{\p}} \in \HH^{3,-1}(H^*(A_{\p}))$, which is a global obstruction for the graded $H^*(A_{\p})$-modules due to Benson, Krause and Schwede~\cite{BKS}, is also a global obstruction for the $\p$-local $H^*A$-modules.

In order to relate the global obstruction  $\mu_A \in \HH^{3,-1}(H^*A)$ for the $H^*A$-modules and the global obstruction  $\mu_{A_{\p}} \in \HH^{3,-1}(H^*A_{\p})$ for the $\p$-local $H^*A$-modules, we construct a map of Hochschild cohomology rings 
$$\Gamma\colon \HH^{*,*}(H^*A) \longrightarrow \HH^{*,*}(H^*A_{\p})$$
which has the property $\Gamma(\mu_A)=\mu_{A_{\p}}$. 
This is the key to prove that also the global obstruction behaves well under $\p$-localisation:
\begin{thm}[Local-global principle]
Let $A$ be a differential graded algebra over a field $k$ such that $H^*A$ is 
graded-commutative. Assume that the algebra $H^*A^{\op} \otimes_k (H^*A)$ is Noetherian. Then the following conditions are equivalent:
\begin{itemize}
\item[(1)] $\mu_A \in \HH^{3,-1}(H^*A)$ is trivial.
\item[(2)] $\mu_{A_{\p}} \in \HH^{3,-1}(H^*A_{\p})$ is trivial for all graded prime ideals $\p$ of $H^*A$.
\item[(3)] $\mu_{A_{\m}} \in \HH^{3,-1}(H^*A_{\m})$ is trivial for all graded maximal ideals $\m$ of $H^*A$.
\end{itemize}
\end{thm}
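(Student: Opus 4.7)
The plan is to prove the three implications of the equivalence separately, with (1) $\Rightarrow$ (2) and (2) $\Rightarrow$ (3) being essentially formal, and the content sitting in (3) $\Rightarrow$ (1).

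For (1) $\Rightarrow$ (2), I would just invoke the ring homomorphism $\Gamma = \Gamma_{\p}\colon \HH^{*,*}(H^*A) \to \HH^{*,*}(H^*A_{\p})$ constructed earlier in this chapter. Since by construction $\Gamma_{\p}(\mu_A) = \mu_{A_{\p}}$, the vanishing of $\mu_A$ forces $\mu_{A_{\p}} = 0$ for every graded prime $\p$. The implication (2) $\Rightarrow$ (3) is immediate, because every graded maximal ideal is a graded prime ideal.

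The main work lies in (3) $\Rightarrow$ (1). Write $E = H^*A$ and $E^e = E^{\op}\otimes_k E$. Since $E$ is graded-commutative, the canonical algebra map $E \to \Zgr(\HH^{*,*}(E))$ (see Section \ref{grcom}) turns every $\HH^{n,m}(E)$ into a graded $E$-module. Because the multiplication $E^e \to E$ presents $E$ as a cyclic $E^e$-module, the Noetherian hypothesis on $E^e$ gives that each $\HH^{n,m}(E) = \Ext^{n,m}_{E^e}(E,E)$ can be computed from a resolution of $E$ by finitely generated projective $E^e$-modules, and is therefore finitely generated as an $E^e$-module, hence as a graded $E$-module. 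The key technical step is to identify $\Gamma_{\p}$ in degree $(3,-1)$ with the $E$-module localisation: one has a natural comparison $\HH^{*,*}(E)_{\p} \to \HH^{*,*}(E_{\p})$ coming from flatness of $E \to E_{\p}$ applied to a resolution by finitely generated $E^e$-modules, and Noetherianity together with finite generation lets $\Ext$ commute with localisation, so this comparison is an isomorphism which is compatible with $\Gamma_{\p}$.

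Once this identification is in place, the assumption (3) translates into the statement that the image of $\mu_A$ in $\HH^{3,-1}(E)_{\m}$ vanishes for every graded maximal ideal $\m$. Since $\HH^{3,-1}(E)$ is a finitely generated graded $E$-module and $E$ is a graded-commutative Noetherian ring, the graded-commutative version of the local-global principle developed in Section \ref{grcom} yields $\mu_A = 0$. The hard part I expect is precisely the compatibility of $\Gamma_{\p}$ with $E$-module localisation: the Hochschild cohomology of $E_{\p}$ is intrinsically computed relative to its own enveloping algebra $(E_{\p})^e$, not relative to a localisation of $E^e$ at a single prime, so one must bridge these two viewpoints using the Noetherian assumption and a comparison of resolutions.
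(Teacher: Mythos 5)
Your proposal is essentially correct and follows the same strategy as the paper: localise via $\Gamma$ for (1) $\Rightarrow$ (2), observe (2) $\Rightarrow$ (3) is trivial, and for (3) $\Rightarrow$ (1) use finite generation of $\HH^{3,-1}(H^*A)$ as an $H^*A$-module (via the Noetherian hypothesis), the isomorphism $\HH^{3,*}(H^*A_{\p}) \cong \HH^{3,*}(H^*A)_{\p}$, its compatibility with $\Gamma$, and then the graded-commutative local-global principle. The paper packages the content slightly differently — it proves all three equivalences at once by showing that $\mu_{A_{\p}}$ corresponds to the fraction $\tfrac{\mu_A}{1}$ under the above isomorphism, rather than arguing implication by implication — but the underlying ingredients are identical. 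The one technical point you gesture at but do not fully articulate is that the natural tensor $(H^*A)_{\p} \otimes_{H^*A} \HH^{n,*}(H^*A) \otimes_{H^*A} (H^*A)_{\p}$ (which is what the comparison of $\Ext$ over $(H^*A)^e$ and $(H^*A_{\p})^e$ produces) must be identified with the one-sided $H^*A$-module localisation $\HH^{n,*}(H^*A)_{\p}$; in the paper this requires the bigraded-commutativity of $\HH^{*,*}(H^*A)$ established in Chapter~\ref{sec hochschild} together with the identification $\HH^{0,*}(H^*A) \cong \Zgr(H^*A) = H^*A$. Your mention of the map $E \to \Zgr(\HH^{*,*}(E))$ shows you are aware of this structure; making the bigraded-commutativity explicit would complete the argument.
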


\bigskip

In the last chapter of this thesis, we focus on realisability in the context of group representation theory. We study the relation between realisability over the group cohomology ring $H^*(G,k)$ and the Tate cohomology ring $\hat H^*(G,k)$, where $k$ is a field and $G$ a finite group. Note that $H^*(G,k)$ can be viewed as a subalgebra of $\hat H^*(G,k)$. 

For the group cohomology ring, the appropriate realisability setting is given by the functor
$$\xymatrix@C+4pt{\Kinjg \ar[rr]^-{\Kinjg(ik,-)^*}& & \Modgr H^*(G,k),}$$
where $\Kinjg$ is the homotopy category of injective $kG$-modules. 

The group cohomology ring $H^*(G,k)$ has better properties than the Tate cohomology ring $\hat H^*(G,k)$ which, for instance, is not Noetherian in general. However, when it comes to the source categories of realisability, the stable module category $\uMod kG$ is more ``handsome'' than the homotopy category $\Kinjg$. This is the reason why we are interested in studying the relation of realisability over  group and Tate cohomology. 

The triangulated categories $\Kinjg$ and $\uMod kG$ are related by a smashing localisation
$$\xymatrix{\ar @{--}\bfK(\Inj kG) \ar@<-1ex>[r]_-Q & \uMod kG \ar@<-1ex>[l]_-R}$$
and we are now concerned with finding a relation between realisability and this localisation of triangulated categories. 

We study realisability of fixed modules as well as global realisability. Note for the latter that both $H^*(G,k)$ and $\hat H^*(G,k)$ are the cohomology of a dg algebra and thus, they admit an $A_{\infty}$-structure yielding global obstructions denoted by 
$\mu_G \in \HH^{3,-1}(H^*(G,k))$ and $\hat \mu_G \in \HH^{3,-1}(\hat H^*(G,k))$.

The canonical class $\hat \mu_G$ has been computed for some groups $G$ by Benson, Krause and Schwede \cite{BKS}, and by Langer \cite{L}. 
We consider the same groups and compute the global obstructions for the group cohomology rings. In many cases, the Hochschild classes $\mu_G\in \HH^{3,-1}(H^*(G,k))$ and $\hat \mu_G \in \HH^{3,-1}(\hat H^*(G,k))$ turn out to behave surprisingly similar. 
As a first explanation, we  show that the algebra morphism $H^*(G,k) \to \hat H^*(G,k)$ is the cohomology of a zigzag 
of dg algebra morphisms. 
Then we are ready to prove the main result of this chapter, which is, in some parts, also an application of our results on Hochschild cohomology from Chapter \ref{sec Localising mu}:
\begin{thm}
Let $G$ be finite group, $k$ a field of characteristic $p>0$ and assume that $p$ divides the order of $G$.
If the Hochschild class $\hat{\mu}_G \in \HH^{3,-1}(\hat H^*(G,k))$ is trivial, then so is the Hochschild class $\mu_G \in \HH^{3,-1}(H^*(G,k))$. 
If the $p$-rank of the group $G$ equals one, then $\hat{\mu}_G$ is trivial if and only if $\mu_G$ is trivial. 
\end{thm}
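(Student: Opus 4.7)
The plan is to deduce both assertions from the fact, established earlier in this chapter, that the canonical inclusion $H^*(G,k) \hookrightarrow \hat H^*(G,k)$ is the cohomology of a zigzag of dg algebra morphisms
$$A \xleftarrow{\sim} A' \xto{\varphi} B$$
with $H^*A \cong H^*(G,k)$ and $H^*B \cong \hat H^*(G,k)$. The first assertion will rest on an $A_{\infty}$-compatibility argument directly comparing $m_3^{H^*(G,k)}$ with $m_3^{\hat H^*(G,k)}$, while the equivalence in the $p$-rank one case will additionally exploit the fact that $\hat H^*(G,k)$ is a $\p$-localisation of $H^*(G,k)$, together with the local-global principle of Chapter \ref{sec Localising mu}.

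For the first direction, I would apply Kadeishvili's theorem to transfer the dg structures on $A$ and $B$ to unital $A_{\infty}$-structures on $H^*(G,k)$ and $\hat H^*(G,k)$, and then use $\varphi$ (together with a compatible transfer on $A'$) to produce a strictly unital $A_{\infty}$-morphism $f = (f_1, f_2, \ldots)\colon H^*(G,k) \to \hat H^*(G,k)$ whose linear component $f_1$ is the canonical inclusion $\iota$. The $A_{\infty}$-relation in arity three then yields
$$\iota \circ m_3^{H^*(G,k)} \ - \ m_3^{\hat H^*(G,k)} \circ \iota^{\otimes 3} \ = \ \partial(f_2),$$
which says that the cocycles representing $\mu_G$ and $\hat\mu_G$ have the same class in $\HH^{3,-1}(H^*(G,k), \hat H^*(G,k))$ (with $\hat H^*(G,k)$ regarded as an $H^*(G,k)$-bimodule via $\iota$). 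Triviality of $\hat\mu_G$ therefore forces the image of $\mu_G$ in $\HH^{3,-1}(H^*(G,k), \hat H^*(G,k))$ to vanish, and the remaining step is to show injectivity of the canonical map $\HH^{3,-1}(H^*(G,k)) \to \HH^{3,-1}(H^*(G,k), \hat H^*(G,k))$. This will come from the long exact sequence associated to the short exact sequence $0 \to H^*(G,k) \to \hat H^*(G,k) \to C \to 0$ of graded $H^*(G,k)$-bimodules, using that $C$ is concentrated in non-positive degrees together with the hypothesis $p \mid |G|$.

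For the equivalence in the $p$-rank one case, the key observation is that $\hat H^*(G,k)$ is then obtained from $H^*(G,k)$ by inverting a periodicity generator $\zeta$, so that, modulo nilpotents, $\hat H^*(G,k) \cong (H^*(G,k))_\p$ at the unique non-maximal graded prime $\p$ of $H^*(G,k)$. By Chapter \ref{seclift}, the smashing localisation $A_\p$ realises this localisation with $H^*(A_\p) \cong \hat H^*(G,k)$, and by the construction of the map $\Gamma$ in Chapter \ref{sec Localising mu} we have $\Gamma(\mu_A) = \mu_{A_\p}$. Matching the two dg algebra models of $\hat H^*(G,k)$---namely $B$ and $A_\p$---via a zigzag of dg algebra quasi-isomorphisms then identifies $\mu_{A_\p}$ with $\hat\mu_G$. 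The local-global principle (Theorem 2 of the introduction) therefore gives $\mu_G = 0 \Rightarrow \mu_{A_\p} = 0 \Rightarrow \hat\mu_G = 0$, which combined with the first direction yields the claimed equivalence. The principal obstacle I anticipate is precisely this identification of $B$ with $A_\p$: both induce the same smashing localisation on $\D(A)$, but lifting that derived equivalence to a zigzag of dg algebra quasi-isomorphisms compatible with the transferred $A_{\infty}$-structures will require delicate bookkeeping.
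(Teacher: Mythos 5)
Your approach to the first assertion is correct and is essentially a repackaging of the paper's argument. Where the paper starts from $\iota\circ m = \delta g$ for a $(2,-1)$-cochain $g\colon H^*(G,k)^{\otimes 2}\to \hat H^*(G,k)$ and explicitly corrects $g$ by $\kappa\,\delta u$ to kill $g(1,1)$, you propose to hide the same degree computation inside the long exact sequence attached to $0\to H^*(G,k)\to\hat H^*(G,k)\to C\to 0$ and to prove injectivity of $\HH^{3,-1}(H^*(G,k))\to \HH^{3,-1}(H^*(G,k),\hat H^*(G,k))$ by showing $\HH^{2,-1}(H^*(G,k),C)=0$. That vanishing does hold: since $C$ sits in strictly negative degrees, a cochain of internal degree $-1$ in $C^{1,-1}(H^*(G,k),C)$ resp.\ $C^{2,-1}(H^*(G,k),C)$ is determined by its value at $1$ resp.\ $(1,1)$, both landing in $C_{-1}$, and $(\delta u)(1,1)=u(1)$ shows that $\delta$ is onto. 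You should, however, actually carry out this computation; as stated your appeal to ``$C$ in non-positive degrees together with $p\mid|G|$'' is only a pointer. (Tate duality, which the paper invokes, is in fact not needed for this step; one can simply choose $u$ with $u(1)=g(1,1)$.) Your route to the relation $\iota\circ\mu_G = \hat\mu_G\circ\iota^{\otimes 3}$ via an $A_\infty$-morphism is also fine, though the paper gets it more directly from its Proposition~\ref{classexttate}, which in turn rests on the zigzag of Theorem~\ref{exttatemorphism} and Proposition~\ref{aboutm3}.

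For the second assertion your route has a genuine gap, which you yourself flag. You want to factor the argument through the dg algebra $A_\p$ of Chapter~\ref{seclift} and then match $A_\p$ with the dg model $B=\END(tk)$ of $\hat H^*(G,k)$. But two dg algebras with isomorphic cohomology need not have the same canonical class (cf.\ the final Remark of Chapter~\ref{sec Localising mu}), so the identification of $\mu_{A_\p}$ with $\hat\mu_G$ requires constructing a zigzag of dg algebra quasi-isomorphisms between $A_\p$ and $B$; this is nontrivial and the paper does not prove it. The paper instead avoids $A_\p$ entirely: when $r_p(G)=1$, Lemma~\ref{periodic-loc} makes $\iota$ a flat ring epimorphism, Theorem~\ref{exttatemorphism} already provides a zigzag of dg algebra morphisms realising $\iota$ in cohomology, and Theorem~\ref{GammatutsAB} (applied as in Proposition~\ref{GammatutsH*A_p}) gives a map $\Gamma\colon\HH^{*,*}(H^*(G,k))\to\HH^{*,*}(\hat H^*(G,k))$ with $\Gamma(\mu_G)=\hat\mu_G$. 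You should replace your detour through $A_\p$ and the local-global principle by this direct application of $\Gamma$ to the zigzag for $\iota$; this closes the gap and is both shorter and requires none of the Noetherian hypotheses underlying Theorem~\ref{loc-glo-global}.
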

In general, the last statement is not true for groups with $p$-rank at least two, as we show by giving a counter-example.

\subsection*{Organisation}
Our main results, as stated above, can be found in the Chapters \ref{seclift}, \ref{sec Realisability and localisation}, \ref{sec Localising mu} and~\ref{sec Comparing}. At the end of each of these chapters, we point out related open questions. 

In the second chapter we recall facts about triangulated categories and introduce briefly those triangulated categories that we deal with in this thesis.

A short review on graded rings and modules can be found in the third chapter. We focus on localisation of graded-commutative rings in Section \ref{grcom}.

In Chapter \ref{sec group and Tate} we introduce group and Tate cohomology rings and state their basic properties. 

Hochschild cohomology of graded rings is discussed in Chapter \ref{sec hochschild}. In particular, we study the multiplicative structure of the Hochschild cohomology ring $\HH^{*,*}(\Lambda)$ of a graded algebra $\Lambda$. 
We show that for elements $\zeta \in \HH^{m,i}(\Lambda)$ and $\eta \in \HH^{n,j}(\Lambda)$, we have the commutativity relation $\zeta \cdot \eta = (-1)^{mn} (-1)^{ij} \eta \cdot  \zeta$, where the multiplication is given by the Yoneda or the cup product. Hochschild cohomology rings of non-graded rings are well-known to be graded-commutative, but we do not know of a published source of this more general result. We will apply it to prove some of our results in Chapter \ref{sec Localising mu}.

In Chapter \ref{secdgmod} we introduce differential graded algebras and discuss properties of their derived categories.

A short introduction to $A_{\infty}$-algebas is given in Chapter \ref{sectionAinf}. In particular, for a dg al\-gebra $A$, we present  Kadeishvili's construction \cite{Kade} of the secondary multiplication $m_3^{H^*A}\colon H^*A^{\otimes 3} \to H^*A$ of the $A_{\infty}$-algebra $H^*A$. 

Chapter \ref{sec Local tria} is about localisation in triangulated categories and contains important re\-qui\-sites for our main results. After a short discussion of the Calculus of Fractions for arbitrary categories due to Gabriel and Zisman \cite{GZ} we  focus on triangulated categories. In particular, we introduce the Verdier quotient \cite{V} and consider localisation sequences of triangulated categories. In Section \ref{HomLoc} we state a theorem of Krause \cite{Krcohom} on cohomological localisation  and prove some results together with K. Br\"uning \cite{BH} which apply in particular to cohomological $\p$-localisation.

In Chapter \ref{sec Realisability} we give a short review of the results of Benson, Krause and Schwede~\cite{BKS} and introduce their local and global obstruction for realisability, as discussed above. We focus especially on realisability in the setting of dg algebras.

\subsection*{Notations and conventions}
Unless otherwise stated, modules are always considered to be \emph{right} modules. In particular, we denote by $\Mod R$ the category of right $R$-modules and by $\mod R$ the category of finitely generated right $R$-modules over a ring $R$. If $R$ is self-injective, then $\uMod R$ resp.\ $\umod R$ denote the stable module categories of $\Mod R$ resp.\ $\mod R$.

When we talk about graded rings and modules, we always mean $\bbZ$-graded rings and modules.
 If $R$ is a graded ring and it is clear from the context that we mean graded $R$-modules, then we sometimes speak of $R$-modules.
 We denote by $\Modgr R$ the category of graded right $R$-modules, where the morphisms are the homogeneous graded $R$-linear maps of degree zero.
 By $\Hom^i_{R}(M,N)$ we denote the homogeneous graded $R$-linear maps $M\to N$ rising the degree by $i \in \bbZ$, and we write $\Hom_{R}(M,N)$ for $\Hom^0_{R}(M,N)$. Moreover, we set $\Hom^*_{R}(M,N) = \coprod_{n \in \bbZ} \Hom^n_{R}(M,N).$

 Cohomology of graded  modules over a graded ring $R$ is bigraded; the first index gives the cohomological degree and the second, internal degree arises from the grading of $R$. 
For example, for $i \ge 0$ and $j \in \bbZ$ we have
$$\Ext_{R}^{i,j}(M,N)=\Ext_{R}^{i}(M,N[j]),$$
where $[j]$ denotes the $j$-fold shift on $\Modgr R$. 
 
In particular, Hochschild cohomology of a graded algebra $\Lambda$, denoted by $\HH^{*,*}(\Lambda)$, is bigraded. Note that only the first grading is changed by the differential. 

For the homotopy category of complexes in an additive category $\A$ we write $\bfK(\A)$,
and the derived category of an abelian category $\A$
is denoted by $\bfD(\A)$. For the derived category $\bfD(\Mod R)$ of a ring $R$ we write shortly $\bfD(R)$.

All dg algebras considered in this thesis are supposed to have a differential of degree~$+1$. So the homology of these dg algebras is, in fact, cohomology, and throughout this thesis, we speak of cohomology. If $A$ is a dg algebra, then we denote its cohomology ring by $H^*A$, and the homotopy resp.\ derived category of $A$ will be denoted by $\K(A)$ resp.\ $\D(A)$.

The symbol ${\sim}$ indicates a quasi-isomorphism and the symbol $\cong$ is used for  isomorphisms of objects in categories. Equivalences of categories are indicated by $\simeq$. 

The set of morphisms $X \to Y$ in a category $\C$ is denoted by $\C(X,Y)$ or $\Hom_{\C}(X,Y)$. If $\T$ is a triangulated category, then we denote its suspension functor by $\Sigma$ or $[1]$. For $i \in \bbZ$, we write $\T(X,Y)^i$ for $\T(X,\Sigma^i Y)$ and we denote
$$\T(X,Y)^* = \coprod_{i \in \bbZ}\T(X,Y)^i.$$

The composition of maps $f\colon A \to B$ and $g\colon B \to C$ is denoted by $g \circ f$ or $gf$; similarly for functors.

\newpage

\section{Triangulated categories}\label{sec Triangulated categories}
Triangulated categories were introduced independently by Verdier in his th\`ese \cite{V}, and in Algebraic Topology by Puppe \cite{P}. 

The purpose of this chapter is to state results on triangulated categories which we will use later on. In particular, we recall examples of triangulated categories and fix notation. For the definition of a triangulated category we refer to the book of Neeman \cite{N2} or Krause's notes \cite{Kchicago}.

Let $\T$ be a triangulated category. We denote the suspension functor by $[1]\colon \T \to \T$ or $\Sigma\colon \T\to\T.$ 
A non-empty full subcategory $\S$ is a \emph{triangulated subcategory} if 
\begin{itemize}
\item[(i)] $\S$ is closed under shifts, i.e.\ $X \in \S$ if and only if $X[1] \in \S$.
\item[(ii)] $\S$ is closed under triangles, i.e.\ if in the exact triangle $X \to Y \to Z \to X[1]$ two objects from $\{X,Y,Z\}$ belong to $\S$, then also the third.
\end{itemize}
 A triangulated subcategory $\S$ is called \emph{thick} if it is closed under direct factors, that is, a decomposition $X = X' \amalg X''$ for $X \in \S$ implies $X' \in \S$.

 A triangulated subcategory $\S$ admitting arbitrary direct sums is called \emph{localising}. 
If $\S$ is localising, then it is already a thick subcategory (\cite[Rem.\ 3.2.7]{N2}).

Let $\N$ be a class of objects in $\T$. The triangulated subcategory \emph{generated by} $\N$ is the smallest full triangulated subcategory which contains $\N$. We refer to \cite[Ch.\ 2.8]{Kchicago} for an explicit construction.
 
If $\T$ admits arbitrary direct sums, then the triangulated subcategory \emph{generated by} $\N$ is the smallest full triangulated subcategory which contains $\N$ \emph{and} is closed under taking arbitrary direct sums. We denote this category  $\Loc(\N)$ since it is the smallest localising subcategory that contains $\N$.

An object $X \in \T$ is called \emph{compact} if the covariant Hom functor
$$\T(X,-)\colon \T \to \Ab$$
into the category $\Ab$ of abelian groups commutes with arbitrary direct sums.

We provide a useful criterion to prove that a category is generated by compact objects. 
\begin{lem}\label{compactgen}\cite[Lemma 2.2.1]{SS1}
Let $\T$ be a triangulated category with arbitrary direct sums and $\M$ a set of compact objects. 
The following conditions are equivalent:
\begin{itemize}
\item[(1)] $\T$ is generated by $\M$, i.e.\ $\T = \Loc(\M)$.
\item[(2)] An object $X \in \T$ is trivial if and only if there are no graded maps from $\M$ to $X$, i.e.\ $\T(M,X[n])=0$ for all $M \in \M$ and $n \in \bbZ$.
\end{itemize}
\end{lem}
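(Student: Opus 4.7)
The plan is to prove the two implications separately, with the non-trivial direction relying on a Bousfield-type localisation triangle whose existence is guaranteed by the compactness hypothesis.

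For $(1) \Rightarrow (2)$, I would fix an object $X \in \T$ satisfying $\T(M, X[n]) = 0$ for every $M \in \M$ and every $n \in \bbZ$, and consider the full subcategory
\[
\S = \{\, Y \in \T : \T(Y, X[n]) = 0 \text{ for all } n \in \bbZ \,\}.
\]
A direct verification shows $\S$ is closed under shifts, extensions (from the long exact sequence obtained by applying $\T(-, X[n])$ to a triangle), direct summands, and arbitrary coproducts (since $\T(\coprod Y_i, X[n]) \cong \prod \T(Y_i, X[n])$). Hence $\S$ is a localising subcategory containing $\M$, so $\S \supseteq \Loc(\M) = \T$. In particular $X \in \S$, giving $\id_X = 0$ and therefore $X = 0$.

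For $(2) \Rightarrow (1)$, the strategy is to produce, for each $X \in \T$, an exact triangle
\[
S \longrightarrow X \longrightarrow Y \longrightarrow S[1]
\]
with $S \in \Loc(\M)$ and $\T(M, Y[n]) = 0$ for all $M \in \M$, $n \in \bbZ$; then $(2)$ forces $Y = 0$, so $X \cong S \in \Loc(\M)$, proving $\T = \Loc(\M)$. The construction of $S$ is the main obstacle: one builds $S$ as a homotopy colimit of a sequence $S_0 \to S_1 \to S_2 \to \cdots$ in $\Loc(\M)$, where $S_0 = 0$ and each $S_{i+1}$ is obtained from $S_i$ by attaching, via a triangle, a coproduct $\coprod_{\alpha} M_{\alpha}[n_{\alpha}]$ indexed by all graded maps from objects of $\M$ into the cone of $S_i \to X$, chosen so as to kill those maps. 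Passing to the homotopy colimit $S = \hocolim S_i$ (which lies in $\Loc(\M)$ because each $S_i$ does and localising subcategories are closed under such colimits) yields a morphism $S \to X$; compactness of every $M \in \M$ guarantees
\[
\T(M, \hocolim S_i[n]) \cong \colim \T(M, S_i[n]),
\]
which is exactly what is needed to check that the cone $Y$ of $S \to X$ satisfies $\T(M, Y[n]) = 0$ for all $M \in \M$ and $n$.

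The key technical point, and therefore the main difficulty, is this use of compactness to commute $\T(M, -)$ with the homotopy colimit; without it one cannot guarantee that the iterated cell-attachment procedure annihilates all maps from $\M$ into $Y$. Once this Bousfield triangle is in hand, the rest of the argument is purely formal, and the equivalence follows.
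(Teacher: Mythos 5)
The paper does not prove this lemma; it is cited to Schwede--Shipley without argument, so there is no in-paper proof to compare against. Your argument is correct and is precisely the standard Bousfield-type cell-attachment proof of this statement: the easy direction $(1)\Rightarrow(2)$ via showing that the orthogonal class $\S$ is localising and contains $\M$, and the hard direction $(2)\Rightarrow(1)$ by constructing a triangle $S\to X\to Y\to S[1]$ with $S\in\Loc(\M)$ and $Y$ $\M$-orthogonal, built as a homotopy colimit of an exhaustive tower of cell attachments, with compactness used exactly where you say it is used --- to identify $\T(M,\hocolim S_i[n])$ with $\colim\T(M,S_i[n])$. One small bookkeeping point worth spelling out if you write this up in full: after attaching cells indexed by maps $M[n]\to\Cone(S_i\to X)$ via the triangle $\coprod M_\alpha[n_\alpha-1]\to S_i\to S_{i+1}$, one must check that the map $u_i\colon S_i\to X$ extends over $S_{i+1}$; this holds because the composite $\coprod M_\alpha[n_\alpha-1]\to C_i[-1]\to S_i\to X$ vanishes (two consecutive maps in a triangle), and one must then make a \emph{coherent choice} of such extensions in order to assemble a map $S\to X$ on the homotopy colimit. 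With this understood, the surjectivity of $\T(M,S[n])\to\T(M,X[n])$ comes from the first stage of the tower and the injectivity comes from compactness plus the fact that each offending class dies at the next stage, exactly as you indicate.
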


An \emph{exact functor} $\T \to \S$ between triangulated categories is a functor preserving exact triangles and shifts. More precisely, it is a pair $(F,\eta)$, consisting of a functor $F\colon \T \to \S$ and a natural isomorphism $\eta \colon F \circ [1]_{\T} \xto{\cong} [1]_{\S} \circ F$ such that for every exact triangle $X \xto{f} Y \xto{g} Z \xto{h} X[1]$, the triangle
$$FX \xto{Ff} FY \xto{Fg} FZ \xto{\eta_X \circ Fh} (FX)[1]$$
is exact in $\S$.
The following proposition is useful to check whether an exact functor is an equivalence. It is a version of `Beilinson's Lemma' \cite{Bei}. 
\begin{prop}\cite[Prop.\ 3.10]{S}\label{schwedeprop}
Let $\T, \S$ be triangulated categories admitting arbitrary direct sums and  $F\colon \T \to \S$  an exact functor preserving arbitrary direct sums. Suppose that $\T$ has a compact generator $C$ such that
\begin{itemize}
\item[(1)] $FC$ is a compact generator of $\S$, and
\item[(2)] the map
$$F\colon \T(C,C[n]) \to \S(FC,FC[n])$$
is bijective for all $n \in \bbZ$.
\end{itemize}
Then $F$ is an equivalence of triangulated categories.
\end{prop}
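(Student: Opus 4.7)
The plan is to prove first that $F$ is fully faithful by a two-step devissage argument using Lemma \ref{compactgen}, and then to show essential surjectivity via the image being a localising subcategory of $\S$ containing the compact generator $FC$. Compactness of $C$ and $FC$ together with the direct-sum preservation of $F$ are the ingredients that let us pass from the single graded isomorphism on $(C,C)$ to all of $\T$.

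First I would fix $n \in \bbZ$ and consider the full subcategory
$$\X = \{X \in \T \mid F\colon \T(C,X[m]) \to \S(FC,FX[m]) \text{ is bijective for all } m \in \bbZ\}.$$
By hypothesis (2), $C \in \X$. Since $F$ is exact and $C$, $FC$ are cohomological (i.e.\ $\T(C,-)$ and $\S(FC,-)$ send triangles to long exact sequences), the five lemma shows that $\X$ is a triangulated subcategory. Compactness of $C$ (in $\T$) and of $FC$ (in $\S$), combined with the fact that $F$ preserves arbitrary direct sums, shows that $\X$ is closed under arbitrary coproducts, hence $\X$ is localising. By Lemma \ref{compactgen}, $\T = \Loc(C) \subseteq \X$, so $\X = \T$.

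Next I would fix $X \in \T$ and form
$$\Y = \{Y \in \T \mid F\colon \T(Y,X[m]) \to \S(FY,FX[m]) \text{ is bijective for all } m \in \bbZ\}.$$
The previous step gives $C \in \Y$. Again, exactness of $F$ and the five lemma make $\Y$ a triangulated subcategory, and now the cohomological functor $\T(-,X[m])$ turns direct sums into products while $F$'s preservation of direct sums does the analogous thing on the target; the resulting product of bijections is a bijection, so $\Y$ is closed under direct sums. Hence $\Y$ is localising, contains $C$, and therefore equals $\T$. This establishes that $F$ is fully faithful.

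Finally, for essential surjectivity, let $\mathcal I \subseteq \S$ be the essential image of $F$, or more precisely the full subcategory of objects isomorphic to $FY$ for some $Y \in \T$. Since $F$ is exact and commutes with direct sums, $\mathcal I$ is a triangulated subcategory of $\S$ closed under arbitrary coproducts, hence localising, and it contains $FC$. By hypothesis (1), $FC$ is a compact generator of $\S$, so Lemma \ref{compactgen} forces $\mathcal I = \S$. Combined with full faithfulness this gives that $F$ is an equivalence. The only point requiring care, and the main technical step, is the second devissage where one must verify that $\Y$ is closed under coproducts: this uses both that each $FY_i$ sits inside a Hom-group which converts coproducts in the first variable into products, and that the natural map $\coprod F Y_i \to F(\coprod Y_i)$ is an isomorphism, i.e.\ direct-sum preservation of $F$ is essential here.
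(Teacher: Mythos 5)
Your proof is correct and follows the standard d\'evissage (``Beilinson's lemma'') argument, which is the same route taken in the cited reference \cite[Prop.\ 3.10]{S}; the paper itself offers no proof and simply cites Schwede. One small point worth tightening: in the final step, closure of the essential image $\mathcal I$ under triangles uses the full faithfulness you have just established (to lift a morphism $FX\to FY$ in $\S$ to a morphism in $\T$ whose cone realises the third term), so the justification ``since $F$ is exact and commutes with direct sums'' slightly understates what is being invoked there, and the opening ``fix $n\in\bbZ$'' is unused and can be deleted.
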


Let $\T$ be a triangulated and $\A$ an abelian category. A functor $F\colon \T \to \A$ is called \emph{co\-ho\-mo\-logical} if it sends each exact triangle in $\T$ to an exact sequence in $\A$. 
In particular, if \linebreak $X \to Y \to Z \to X[1]$ is an exact triangle in $\T$, then $F$ gives rise to an infinite exact sequence 
$$\cdots \to F(Y[-1]) \to F(Z[-1]) \to FX \to FY \to FZ \to F(X[1]) \to F(Y[1]) 
\to \cdots$$

For a proof of the following well-known lemma, we refer to \cite[Ch.\  2.3]{Kchicago}.
\begin{lem}\label{T(X,-)cohomol}
Let $\T$ be triangulated and $X$ in $\T$. The representable functors
\begin{align*}
\T(X,-)\colon \T \to \Ab  \quad \text{and} \quad \T(-,X)\colon \T^{op} \to \Ab
\end{align*}
are cohomological.
\end{lem}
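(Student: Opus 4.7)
The plan is to verify the definition directly: given an exact triangle $X \xrightarrow{f} Y \xrightarrow{g} Z \xrightarrow{h} X[1]$ in $\T$ and any object $W$, I need to show that the sequence
$$\T(W,X) \xrightarrow{f_*} \T(W,Y) \xrightarrow{g_*} \T(W,Z)$$
is exact in $\Ab$ (and similarly with $\T(-,W)$). Applying the same assertion to all rotations of the triangle then yields the doubly infinite long exact sequence mentioned just before the lemma.

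First I would establish the inclusion $\im f_* \subseteq \ker g_*$. This is immediate from the fact that in any exact triangle the composite of two consecutive arrows vanishes (a standard consequence of the triangulated category axioms, since $X \xrightarrow{\id} X \to 0$ extends to an exact triangle and TR3 provides a morphism of triangles showing $g\circ f = 0$). Thus $g_*(f_*(\beta)) = (gf)\beta = 0$ for every $\beta\colon W \to X$.

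The core step is the reverse inclusion $\ker g_* \subseteq \im f_*$. Given $\alpha\colon W \to Y$ with $g\alpha = 0$, I would consider the trivial exact triangle $W \xrightarrow{\id} W \to 0 \to W[1]$ and the commutative square
$$\xymatrix{W \ar[r]^{\id} \ar[d] & W \ar[d]^{\alpha} \\ Y \ar[r]^{g} & Z}$$
whose commutativity expresses exactly $g\alpha = 0$. The axiom TR3 (extension of a morphism of triangles from any two of its three components) then produces a morphism $\beta\colon W \to X$ such that $f\beta = \alpha \circ \id = \alpha$, which is what is needed.

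For the contravariant functor $\T(-,X)$, the argument is formally dual: one uses that $\T^{\op}$ is again triangulated (with the opposite triangulation) and applies the same reasoning, or equivalently repeats the same TR3 argument using the trivial triangle $0 \to W \xrightarrow{\id} W$. The only subtlety, and the one step worth checking carefully rather than routinely, is choosing the correct orientation so that the commutativity of the relevant square is exactly the hypothesis $\alpha g = 0$; beyond that, the argument is entirely parallel. No obstacle of substance is expected here — the proof is essentially a direct application of TR3 together with the vanishing of the composite of consecutive arrows in an exact triangle.
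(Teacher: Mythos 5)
Your outline identifies exactly the right ingredients (vanishing of the composite of two consecutive arrows in a triangle, and the completion axiom TR3 together with rotation), and these are indeed the only tools needed; the paper itself cites \cite[Ch.~2.3]{Kchicago} and gives no proof, so there is no alternative to compare to. However, the commutative square you draw is not the one that feeds into TR3, and as written it is inconsistent. You have $W \xrightarrow{\id} W$ on top and $Y \xrightarrow{g} Z$ on the bottom. But $\id\colon W\to W$ is the \emph{first} arrow of the trivial triangle $W \to W \to 0 \to W[1]$, while $g\colon Y \to Z$ is the \emph{second} arrow of $X \to Y \to Z \to X[1]$; TR3 requires the two horizontal arrows of the square to occupy the same position in their respective triangles. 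Moreover, the right-hand vertical map is labelled $\alpha$, but $\alpha$ is a morphism $W\to Y$, and the right-hand vertical of your square would have to go $W\to Z$; the equality you claim it expresses, $g\alpha = 0$, does not match the square as labelled.

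The square you actually want is
$$\xymatrix{W \ar[r] \ar[d]_{\alpha} & 0 \ar[d]^{0} \\ Y \ar[r]^{g} & Z}$$
where the top row is the \emph{second} arrow $W\to 0$ of the trivial triangle, the bottom is the second arrow $g$ of the given triangle, the left vertical is $\alpha$, and the right vertical is the zero map. Its commutativity is precisely the hypothesis $g\alpha=0$. Rotating both triangles once to the right (so that these second arrows become first arrows) and then applying TR3 produces a map $W[1]\to X[1]$, i.e.\ a map $\beta\colon W\to X$, and commutativity of the remaining square (after undoing the rotation, where a sign from TR2 cancels on both sides) gives $f\beta = \alpha$ as desired. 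Your phrase ``extension of a morphism of triangles from any two of its three components'' is indeed the correct upshot of TR3 combined with TR2, but it is worth making the rotation explicit so that the square and the axiom actually match up. With this repair the proof is complete and standard; the contravariant case is the formal dual, as you say.
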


The following result is due to Neeman. It is a consequence of the Brown Representability Theorem (see for example \cite[Ch.\ 4.5]{Kchicago}).
\begin{prop}\cite[Prop.\ 3.3]{Kscheme}\label{Brownprop}
Let $F\colon \S \to \T$ be a an exact functor between triangulated categories, and suppose that $\S$ is compactly generated. 
\begin{itemize}
\item[(1)] There is a right adjoint $\T \to \S$ if and only if $F$ preserves arbitrary direct sums.
\item[(2)] There is a left adjoint $\T \to \S$ if and only if $F$ preserves arbitrary products.
\end{itemize}
\end{prop}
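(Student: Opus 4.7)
The plan is to deduce both parts from the Brown Representability Theorem, which is the principal tool available for compactly generated triangulated categories and is exactly what Krause \cite{Krcohom} cites in the reference for this proposition.

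For the easy direction of (1), I would just note that every right adjoint makes its left adjoint colimit-preserving, and direct sums are a particular kind of colimit (in a triangulated category, coproducts of triangles glue to triangles, so the statement is really about coproducts in the underlying additive category). For the substantive converse, fix $Y \in \T$ and consider the composite functor
\[
H_Y \;=\; \T(F(-),Y)\colon \S^{\op} \lto \Ab.
\]
By Lemma \ref{T(X,-)cohomol} applied in $\T$, the functor $\T(-,Y)$ is cohomological, and since $F$ is exact the composite $H_Y$ is cohomological as well. The hypothesis that $F$ preserves arbitrary direct sums, combined with the usual fact that $\T(-,Y)$ turns coproducts into products, tells us that $H_Y$ sends coproducts in $\S$ to products in $\Ab$. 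Brown representability for the compactly generated category $\S$ then yields an object $GY \in \S$ together with a natural isomorphism $H_Y \cong \S(-,GY)$. Standard Yoneda machinery promotes $Y \mapsto GY$ to a functor $G\colon \T \to \S$, right adjoint to $F$ by construction.

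For (2), the argument is formally dual but relies on the version of Brown representability applicable to covariant functors out of $\S$: for a compactly generated triangulated category, every cohomological functor $\S \to \Ab$ that takes products to products is corepresentable. Granting this, I would fix $Y \in \T$ and consider
\[
K_Y \;=\; \T(Y, F(-))\colon \S \lto \Ab,
\]
which is cohomological by Lemma \ref{T(X,-)cohomol} and exactness of $F$, and which sends products to products because $F$ does. Corepresenting $K_Y$ produces an object $HY$ with $K_Y \cong \S(HY,-)$, and naturality in $Y$ gives the left adjoint $H\colon \T \to \S$. As before, the reverse implication is immediate since any left adjoint preserves limits, in particular products.

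The main subtlety is the covariant form of Brown representability needed for (2); unlike the contravariant form, it is not purely formal and depends on the deeper result of Neeman that compactly generated triangulated categories satisfy Brown representability on both sides. I would simply quote this as the black box underlying the cited Proposition 3.3 of \cite{Kscheme}. Once both forms of Brown representability are in hand, the rest is a routine verification that the representing objects assemble into adjoint functors, which is a standard Yoneda-style argument.
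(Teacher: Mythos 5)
The paper itself offers no proof here; it simply attributes the result to Neeman, says it "is a consequence of the Brown Representability Theorem," and refers the reader to Krause's paper for the details. Your proposal reconstructs exactly that argument: fix a target object, build the cohomological functor $\T(F(-),Y)$ (respectively $\T(Y,F(-))$), check that preservation of direct sums (respectively products) by $F$ translates into the coproduct-to-product (respectively product-to-product) condition required by Brown representability, represent, and assemble the representing objects into an adjoint by Yoneda. You also correctly flag the genuine asymmetry between the two halves: the contravariant form of Brown representability on a compactly generated category is comparatively formal, whereas the covariant (dual) form used in part (2) rests on the deeper theorem of Neeman that compactly generated triangulated categories satisfy Brown representability on both sides. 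This is exactly the content hidden behind the citation, so your proposal matches the intended proof both in structure and in where the real depth lies.
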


\begin{exm}\label{triaex}
We introduce briefly the triangulated categories which are considered in this thesis, in particular to fix notation. For  detailed definitions, we refer to  \cite{Kchicago}.

(1) Let $\A$ be an additive category and denote by $\bfC(\A)$ the category of complexes in $\A$.  The null-homotopic
maps form an ideal in $\bfC(\A)$ and the {\em homotopy category} 
$\bfK(\A)$ is the quotient of $\bfC(\A)$ with respect to this ideal. 
Denote by
$\Sigma\colon\bfK(\A)\to\bfK(\A)$ the equivalence which takes a complex $X$ to its shifted
complex  $\Sigma X$, defined by
$$(\Sigma X)^n=X^{n+1}\quad\textrm{and}\quad
d^n_{\Sigma X}=-d^{n+1}_X.$$ Given a map $\alpha \colon X\to Y$ of complexes, the {\em mapping
cone} $\Cone(\alpha)$ is the complex defined in degree $n$ by $X^{n+1}\amalg Y^n$, and endowed with the differential
$d^n_{\Cone(\alpha)}=\smatrix{-d^{n+1}_X&0\\ \alpha^{n+1}&d^n_Y}.$ It fits into
a {\em mapping cone sequence} $$X\xto{\alpha}Y\xto{\beta} \Cone(\alpha)\xto{\g}\Si X,$$
given in degree $n$ by 
$$X^n\xto{\alpha^n} Y^n\xto{\smatrix{0\\ \id}} X^{n+1}\amalg Y^{n}
\xto{\smatrix{-\id&0}} X^{n+1}.$$ 
$\bfK(\A)$ is triangulated, with the exact triangles being those isomorphic to a mapping cone sequence as defined above.

(2) The {\em derived category} $\bfD(\A)$ of an abelian category $\A$ is obtained
from $\bfK(\A)$ by formally inverting all quasi-isomorphisms. 
$\bfD(\A)$ is a triangulated category, where the triangulated structure is induced by the one of $\bfK(\A)$. More precisely, $\bfD(\A)$ carries a unique triangulated structure such that the canonical functor $\bfK(\A) \to \bfD(A)$ is exact. 


(3) In Chapter \ref{secdgmod} we introduce differential graded algebras. If $A$ is a differential graded algebra, then the  homotopy category $\K(A)$ and the derived category $\D(A)$ are triangulated categories. This generalises the homotopy resp.\ derived category of a non-graded algebra, viewed as differential graded algebra concentrated in degree zero.

(4) Let $\A$ be an exact category in the sense of Quillen \cite{Q}.  Thus
$\A$ is an additive category with a distinguished class of
sequences
\begin{equation*}\label{eq:ses}
0\to X\xto{\alpha} Y\xto{\beta} Z\to 0
\end{equation*}
which are called {\em exact}.  The exact sequences satisfy a number of
axioms. In particular, the maps $\alpha$ and $\beta$ in each exact sequence
as above form a {\em kernel-cokernel pair}. That is, $\alpha$ is a kernel
of $\beta$ and $\beta$ is a cokernel of $\alpha$. A map in $\A$ arising as
the kernel in some exact sequence is called {\em admissible mono}, and a
map arising as a cokernel is called {\em admissible epi}. A full
subcategory $\B$ of $\A$ is {\em extension-closed} if every exact
sequence in $\A$ belongs to $\B$, provided that its end terms belong to
$\B$.

Let $\A$ be an exact category. An object $P \in \A$ is called {\em projective} if
the induced map $\Hom_\A(P,Y)\to\Hom_\A(P,Z)$ is surjective for every
admissible epi $Y\to Z$. Dually, an object $I$ is {\em injective} if
the induced map $\Hom_\A(Y,I)\to\Hom_\A(X,I)$ is surjective for every
admissible mono $X\to Y$.  The category $\A$ has {\em enough
projectives} if every object $Z$ admits an admissible epi $Y\to Z$
with $Y$ projective, and it has {\em enough injectives} if every
object $X$ admits an admissible mono $X\to Y$ with $Y$
injective. Finally, $\A$ is called a {\em Frobenius category} if $\A$
has enough projectives and enough injectives and if both coincide.

The \emph{stable category} of a Frobenius category $\A$ is denoted by $\bfS(\A)$ and defined to be the quotient of $\A$ with respect to the ideal $\I$ of morphisms factoring through an injective object.  Thus
$$\Hom_{\bfS(\A)}(X,Y)=\Hom_\A(X,Y)/\I(X,Y)$$ for all $X,Y$ in
$\A$.

We choose for each $X \in \A$ an exact sequence
$$0 \to X \to I(X) \to \Sigma X \to 0$$
such that $I(X)$ is injective. The morphism $X \to I(X)$ is called \emph{injective hull}. One easily checks that the assignment $X \mapsto \Sigma X$ defines  an equivalence on $\bfS(\A)$.
Every exact sequence $0 \to X \to Y \to Z \to 0$ fits into a commutative diagram 
$$\xymatrix{ 0\ar[r]&X
\ar[r]^\alpha \ar@{=}[d]&Y\ar[r]^\beta \ar[d]&Z\ar[r]\ar[d]^\g&0\\
0\ar[r]&X\ar[r]&I(X)\ar[r]&\Si X\ar[r]&0}$$ 
such that $I(X)$ is injective.
The category $\bfS(\A)$ carries a triangulated structure, with the exact triangles being those isomorphic to a sequence of maps 
$$ X\xto{\alpha} Y\xto{\beta} Z\xto{\g}\Si X$$ 
as in the diagram above.

(4)(a) If $A$ is a finite dimensional self-injective algebra, then $\Mod A$ is a Frobenius category and obviously, $\Sigma$ equals the first cosyzygy $\Omega^{-1}$. We denote the stable category $\bfS(\Mod A)$ by $\uMod A$ and conclude that it has a triangulated structure.

(4)(b) The homotopy category $\bfK(\A)$ of an additive category $\A$ identifies with the stable category of $\bfC(\A)$, where the exact structure is induced by the degree-wise split short exact sequences of complexes, see \cite[Ch.\ 7.2]{Kchicago}.

(4)(c) Similarly, if $A$ is a differential graded algebra, then the homotopy category $\K(A)$ is the stable category of a Frobenius category. We provide more details in Section~\ref{K(A)stablecat}.
\end{exm}

\section{Graded rings and modules}\label{gr}
In this section we introduce graded rings and modules, and state some properties of the category of graded modules. In particular, we fix the sign convention we will use throughout this paper. Unless otherwise stated, we mean graded \emph{right} modules when we speak of graded modules.

A \emph{$\bbZ$-graded ring}  is a ring $R$ together with a decomposition of abelian groups 
$$R = \coprod_{i \in \bbZ}R_i$$ 
such that $R_iR_j \subseteq R_{i+j}$. 

A \emph{$\bbZ$-graded module}   over a $\bbZ$-graded ring $R$ is an $R$-module $M$ together with a decomposition of abelian groups 
$$M = \coprod_{i \in \bbZ}M_i$$
 satisfying $M_iR_j \subseteq M_{i+j}$. 
 
 A \emph{$\bbZ$-graded algebra} over some commutative ring $k$ is a graded ring $\Lambda$  which also has a graded $k$-module structure that makes $\Lambda$ into a $k$-algebra. Note that the operation of $k$ on $\Lambda$ has degree zero.
 
 
Throughout this thesis, we will talk about \emph{graded rings}, \emph{graded algebras} and \emph{graded modules} and always refer to $\bbZ$-graded rings, algebras and modules.

Let $M$ be a graded module over a graded ring $R$.
The elements $m \in M_i$ are called homogeneous elements of degree $i$, and we 
denote the degree of $m$ by $|m|$. For $n \in \bbZ$, the $n$-fold shifted graded module $M[n]$ is given by $M[n]^i = M^{n+i}$. We use the notation $\Sigma^n m$ when we view $m \in M^{n+i}$ as an element in $M[n]^i$.

If $M, N$ are graded $R$-modules and $n \in \bbZ$, then an $R$-linear map $f \colon M \to N$ is called  \emph{homogeneous graded map} or shortly, \emph{graded map}  of degree $n$ if $f(M_j) \subseteq N_{j+n}$ for all $j \in \bbZ$. Note that $f$ can also be considered as a graded map $M \to N[n]$ of degree zero. We denote by $\Hom^n_R(M,N)$  the set of all graded maps $M\to N$ of degree $n$, and we define 
$$\Hom^*_R(M,N) = \coprod_{n \in \bbZ} \Hom^n_R(M,N).$$

The graded $R$-modules form a category denoted by $\Modgr R$. The morphisms are the graded maps of degree zero and we denote
$$\Hom_R(M,N) = \Hom_R^0(M,N).$$

A graded $R$-module $B \subseteq M$ is a \emph{graded submodule} of $M$ if the inclusion map is a morphism in $\Modgr R$. In this case, the quotient $M/B$ also carries a natural grading.
If $f\colon M \to N$ is a morphism in $\Modgr R$, then $\Ker f$, $\Im f$ and $\Coker f$ are graded modules. Moreover, one can show that $\Modgr R$ is a Grothendieck category~\cite[Ch.\ 2.2]{NvO}. 

The graded submodules of $R$ are called \emph{graded right ideals}. An arbitrary right ideal $I$ of $R$ is graded if and only if it is generated by homogeneous elements. 

A graded $R$-module is \emph{graded free} if it is a direct sum of shifted copies of $R$, or equivalently, if it has an $R$-basis consisting of homogeneous elements. Note that it is not enough to assume that the module is graded and free as non-graded module: Considering $R = \bbZ \times \bbZ$ as graded ring concentrated in degree $0$, the module $F = \bbZ \times \bbZ$ endowed with the grading $F_0 = \bbZ \times 0$, $F_1 = 0 \times \bbZ$, and $F_i =0$ otherwise,  is not graded free since it cannot have an $R$-basis consisting of homogeneous elements.

A graded $R$-module is called \emph{graded projective} if it is a projective object in the category $\Modgr R$, or equivalently, if it is a direct summand of a graded free $R$-module. Since $\Modgr R$ is a Grothendieck category, it has enough injective objects. Those are the \emph{graded injective modules}.

Every graded $R$-module $M$ admits a graded free presentation $F_1 \to F_0 \to M \to 0$, i.e.\ $F_0$ and $F_1$ are graded free modules. If both $F_0$ and $F_1$ can be chosen to be finitely generated, then $M$ is called \emph{finitely presented}.

A graded ring is \emph{right Noetherian} if it is right Noetherian as a ring, i.e.\ every (not necessarily graded) ideal is finitely generated. In this case, every finitely generated graded $R$-module is already finitely presented.

If $N$ is a graded $R$-module and $i \ge 0, j \in \bbZ$, then one defines 
$$\Ext_R^{i,j}(M,N)=\Ext_R^{i}(M,N[j]).$$
An element of $\Ext_R^{i,j}(M,N)$ can be represented by an exact sequence of graded $R$-modules
$$0 \to N[j] \to X_i \to \cdots  \to X_1\to M \to 0.$$

Assume now that $R$ is a graded algebra over some commutative ring $k$. For graded modules $M$ and $N$ the tensor product $M \otimes_k N$ is a graded module, where the degree $i$ component is given by

$$(M \otimes_k N)_i = \coprod_{p+q=i}M_p \otimes_k N_q.$$
Let $f\colon M \to M'$ and $g\colon N \to N'$ be graded maps. Note that due to the \emph{Koszul sign rule}, in the tensor product  $f \otimes g$ there appears a sign:
\begin{equation}\label{Koszul}
(f \otimes g)(m \otimes n) = (-1)^{|g| \cdot |m|}f(m) \otimes g(n),
\end{equation}
where $m \in M$ is homogeneous and $n \in N$.

One also needs to involve signs to define the \emph{opposite algebra}:  $R^{\op}$ is again a graded algebra, with multiplication
\begin{equation}\label{op algebra}
r \cdot r' = (-1)^{\vert r \vert \cdot \vert r' \vert}r'r.
\end{equation}

A graded right $R$-module can be viewed as graded left $R^{\op}$-module by setting
$$r \cdot m = (-1)^{|r||m|}mr.$$

If $S$ is a graded $k$-algebra, then $R \otimes_k S$ is a graded $k$-algebra
with multiplication
\begin{equation}\label{tensor prod algebra}
(r \otimes s)(r' \otimes s') = (-1)^{|r'||s|}(rr' \otimes ss').
\end{equation}

A graded $(R,S)$-bimodule $M$ is simultaneously a graded left $R$-module and a graded right $S$-module such that $(rm)s=r(ms)$. The graded $(R,S)$-bimodules correspond to the graded right modules over $R^{\op} \otimes_k S$.

If  $M$ is a graded
$(R,S)$-bimodule, then $M[t]$ is a graded $(R,S)$-bimodule by setting
\begin{equation}\label{bimodule sign}
r\cdot (\Sigma^t m) \cdot s \ = \
(-1)^{t|r|}\, \Sigma^t(rms). \  \end{equation}

Note that the shift functor $M\mapsto M[1]$ for graded {\em right} modules
 does not involve any extra sign: we have $(\Sigma m) \cdot r = \Sigma(mr)$.
However, the sign in \eqref{bimodule sign} appears when translating
graded right $R$-modules into graded left modules over $R^{\op}$.


The graded rings we are particularly interested in arise as graded endomorphism rings of objects of triangulated categories.
\begin{exm}\label{examgradendring}
Let $\T$ be a triangulated category with arbitrary direct sums and suspension functor $\Sigma$. For objects $M,N \in \T$ we write $\T(N,M)^i=\T(N,\Sigma^i M)$.
Then $$\T(N,N)^* = \coprod_{i \in \bbZ}\T(N,N)^i$$ is a graded ring,  called the \emph{graded endomorphism ring of $N$}, and $\T(N,M)^*$ is a graded $\T(N,N)^*$-module by composition of graded maps.
\end{exm}

\subsection{Graded-commutative rings}\label{grcom}
A graded ring $R$ is called \emph{graded-commutative} if $rs = (-1)^{|r||s|}sr$ for all homogeneous elements $r, s \in R$. Although such a ring is not strictly commutative,  many results about graded \emph{and} commutative rings (which are studied for example in \cite{BrHe}) can still be carried over. However, ``Commutative Algebra over graded-commutative rings'' is rarely treated in literature. We provide definitions and results about localisation of graded-commutative rings that we will need later on.

We like to thank Dave Benson for pointing out Remark \ref{ident spec} and Lemma \ref{wlogeven}.

\subsubsection{Prime and maximal ideals}

A graded right ideal $\m$ of a graded ring $R$ is called \emph{graded maximal right ideal} if $\m \neq R$ and moreover, for any graded right ideal $\a$ such that $\m \subseteq \a \subseteq R$, it holds $\a = \m$ or $\a = R$. If $\b \neq R$ is a graded right ideal of $R$, then there exists a maximal right ideal $\m$ containing $\b$. 
If $R$ is graded-commutative, a  graded (maximal) right ideal is a  graded  (maximal) ideal.

For an arbitrary ring $R$, a \emph{prime ideal} $\p \varsubsetneq R$ is an ideal such that  for $a, b \in R$, it holds $a \in \p$ or $b \in \p$ whenever $aRb \subseteq \p$. If $R$ is graded-commutative, then this definition simplifies as in the case of strictly commutative rings:

\begin{defn}\label{grspec}
Let $R$ be a graded-commutative ring. An ideal $\p \varsubsetneq R$ is  a \emph{prime ideal} if $ab \in \p$ implies that $a \in \p$ or $b \in \p$. The set of graded prime ideals $\p \subseteq R$ is called the \emph{graded spectrum} of $R$ and denoted by $\grspec(R)$.
\end{defn}

Actually, the prime spectrum of a graded-commutative ring can be identified with the prime spectrum of a graded, strictly commutative ring:

\begin{rem}\label{ident spec}
Let $R$ be a graded-commutative ring, and denote by $\mathfrak{n}$ the ideal generated by the homogeneous nilpotent elements. If $x$ is a homogeneous element of odd degree, it holds $2x^2=0$, and thus, $2x$ is nilpotent. Hence $x \equiv -x \mod \mathfrak{n}$, and the factor ring $R/\mathfrak{n}$ is a graded, strictly commutative ring.
Since $\mathfrak{n}$ is contained in all graded prime ideals, it follows that $$\grspec(R)=\grspec(R/\mathfrak{n}).$$
\end{rem}

\begin{lem}
Let $R$ be a graded-commutative ring and $\a \subseteq R$ a graded ideal.
\begin{itemize}
\item[(1)] $\a$ is prime if and only if $R/\a$ is a domain.
\item[(2)] $\a$ is a graded maximal ideal if and only if every non-zero homogeneous element of $R/\a$ is invertible.
\end{itemize}
\end{lem}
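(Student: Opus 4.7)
The plan is to adapt the standard arguments for commutative rings to the graded-commutative setting, being careful about homogeneity in part (2).

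For part (1), I would proceed directly from Definition \ref{grspec}. For the forward direction, given $\bar a, \bar b \in R/\a$ with $\bar a\bar b = 0$, lift to $a,b \in R$; then $ab \in \a$, so by primeness $a \in \a$ or $b \in \a$, i.e.\ $\bar a = 0$ or $\bar b = 0$, showing $R/\a$ is a domain. For the converse, if $ab \in \a$ with $R/\a$ a domain, then $\bar a\bar b = 0$, forcing $\bar a = 0$ or $\bar b = 0$, i.e.\ $a \in \a$ or $b \in \a$. Note that since $\a$ is graded and every element is a sum of homogeneous components, the test on arbitrary elements reduces to the test on homogeneous elements, but in fact the same implication works unchanged.

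For part (2), the forward direction starts with a non-zero homogeneous element $\bar x \in R/\a$. Then $x \notin \a$, and the graded right ideal $\a + xR$ (which is graded because $\a$ is graded and $x$ is homogeneous) strictly contains $\a$. By maximality, $\a + xR = R$, so we may write $1 = a + xr$ with $a \in \a$ and $r \in R$. Since $1$ is homogeneous of degree $0$, passing to the degree-$(-|x|)$ homogeneous component of $r$ and the degree-$0$ component of $a$ yields such an equation with $a \in \a$ and $r \in R$ homogeneous of degree $-|x|$; passing to $R/\a$ gives $\bar 1 = \bar x \bar r$, so $\bar x$ is invertible (graded-commutativity makes this a two-sided inverse).

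For the converse in part~(2), let $\b$ be a graded ideal with $\a \subsetneq \b \subseteq R$. Since $\b$ is graded, it contains a homogeneous element $x \in \b \setminus \a$, so $\bar x \in R/\a$ is a non-zero homogeneous element, hence invertible by hypothesis. Therefore $\bar x$ has a preimage relation $\bar x \bar y = \bar 1$ in $R/\a$, which lifts to $1 \equiv xy \pmod \a$ with $xy \in \b$, so $1 \in \b$ and $\b = R$. Thus $\a$ is maximal. The only nontrivial point throughout is the homogeneity bookkeeping in (2), which is essentially a degree-counting exercise and should not pose a real obstacle.
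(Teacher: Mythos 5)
Your proof is correct and takes essentially the same approach as the paper, which records (1) as trivial and reduces (2) to the observation that a graded-commutative ring $T$ has all non-zero homogeneous elements invertible iff its only graded ideals are $(0)$ and $T$; you simply unwind this via the correspondence between graded ideals of $R/\a$ and graded ideals of $R$ containing $\a$. The homogeneity bookkeeping you carry out in (2) is the right way to make the degree-zero argument precise.
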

\begin{proof}
(1) is trivial. For (2) note that all non-zero homogeneous elements of a graded-commutative ring $T$ are invertible if and only if the only graded ideals contained in $T$ are $(0)$ and $T$. 
\end{proof}

If $R$ is graded commutative, then all non-zero homogeneous elements being invertible implies that $R$ is a domain. Thus every graded maximal ideal is prime.

\subsubsection{Rings and modules of fractions}\label{grcom localisation}
Let $R$ be a (not necessarily graded) ring and $S$ a multiplicative subset with $1 \in S$. We define the \emph{right ring of fractions} of $R$ with respect to $S$ as a ring $R[S^{-1}]$ together with a ring homomorphism $\rho\colon R \to R[S^{-1}]$ satisfying
\begin{itemize}
\item[(F1)] $\rho(s)$ is invertible for each $s \in S$.
\item[(F2)] Every element in $R[S^{-1}]$ has the form $\rho(r)\rho(s)^{-1}$ with $s \in S$.
\item[(F3)] $\rho(r) = 0$ if and only if there exists an element $s \in S$ such that $rs=0$.
\end{itemize}
It is not immediately clear from these axioms that $R[S^{-1}]$ is uniquely determined, but it is, in fact, the case. We refer to \cite[Ch.\ II]{St}.

Let $S$ be a multiplicatively closed subset of $R$. The ring $R[S^{-1}]$ exists if and only if the following conditions, called \emph{right Ore conditions}, are satisfied:
\begin{itemize}
\item[(O1)] If $s \in S$ and $r \in R$, then there exist $s' \in S$ and $r' \in R$ such that $sr'=rs'$
\item[(O2)] If $r \in R$ and $s \in S$ with $sr=0$, then there exists $s' \in S$ such that $rs'=0$.
\end{itemize}
If (O1) and (O2) are satisfied, then
$$R[S^{-1}] = R \times S/\sim,$$
where the equivalence relation $\sim$ is given by
$$\frac{r}{s} \sim \frac{r'}{s'} $$
if and only if there exist $u,v \in R$ such that $ru=r'v$ and $su=s'v$.

If the analogous left Ore conditions are satisfied, then there exists the left ring of fractions $[S^{-1}]R$. Furthermore, if both $R[S^{-1}]$ and $[S^{-1}]R$ exist, then they are isomorphic~\cite[II, Cor.\ 1.3]{St}.

If we now assume that $R$ is graded and $S$ a multiplicative subset of homogeneous elements with $1 \in S$, then it suffices to check the Ore conditions on homogeneous elements. Moreover, if $R[S^{-1}]$ exists, then it is a graded ring, where
$$\deg\big{(}\frac{r}{s}\big{)} = \deg(r) - \deg(s)$$
for any homogeneous element $r \in R$ and $s \in S$. \cite[Ch.\ 8.1]{NvO}

If $R$ is graded-commutative, then the Ore conditions are trivially satisfied and the right ring of fractions $R[S^{-1}]$ exists. One might want to define the equivalence relation as for strictly commutative rings, but the transitivity fails for elements $s \in S$ of odd degree. However, one easily checks

\begin{lem}\label{wlogeven}
Let $R$ be a graded-commutative ring and $S$ is a multiplicative closed subset of homogeneous elements of $R$. Let $S_{ev} \subseteq S$ the subset of even-degree elements of $S$. Then $R[S^{-1}] \cong R[S_{ev}^{-1}]$ as graded rings and the equivalence relation simplifies into $\frac{r}{s} \sim \frac{r'}{s'}$ with $r,r' \in R, s,s' \in S_{ev} $
if and only if there exists $t \in S_{ev}$ such that $rs't=r'st$.\qed
\end{lem}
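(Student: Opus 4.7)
The plan is to reduce everything to the fact that even-degree elements are central, which allows us to treat $R[S_{ev}^{-1}]$ essentially as a classical commutative localisation. The first observation is that every $s \in S_{ev}$ is central in $R$: for any homogeneous $r \in R$ we have $sr = (-1)^{|s||r|}rs = rs$ since $|s|$ is even. Hence $S_{ev}$ is a central multiplicative subset consisting of homogeneous elements, and the (graded) Ore conditions are trivially satisfied for $S_{ev}$, so the localisation $R[S_{ev}^{-1}]$ exists. Of course the Ore conditions for $S$ itself hold by graded-commutativity as already noted in the text, so $R[S^{-1}]$ exists too.

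Next I would construct the isomorphism $R[S^{-1}] \cong R[S_{ev}^{-1}]$ via universal properties. The canonical map $R \to R[S^{-1}]$ sends $S_{ev} \subseteq S$ to units, hence factors through a graded ring homomorphism $\alpha\colon R[S_{ev}^{-1}] \to R[S^{-1}]$. For the converse direction, I need to verify that every $s \in S$ becomes invertible in $R[S_{ev}^{-1}]$. For $s \in S_{ev}$ this is by construction. For $s \in S$ of odd degree, the element $s^2$ lies in $S_{ev}$ (the set is multiplicatively closed) and is thus a unit in $R[S_{ev}^{-1}]$; then $s\cdot(s\cdot(s^2)^{-1}) = s^2 \cdot (s^2)^{-1} = 1$, so $s$ is invertible there with inverse $s\cdot(s^2)^{-1}$ (of degree $-|s|$, as required). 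By the universal property of $R[S^{-1}]$ this yields a graded ring homomorphism $\beta\colon R[S^{-1}] \to R[S_{ev}^{-1}]$. Both compositions $\alpha \circ \beta$ and $\beta \circ \alpha$ agree with the identity on the image of $R$ and therefore, by the respective universal properties, are the identity.

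Finally, I would derive the simplified equivalence relation. Since $S_{ev}$ is central, the standard Gabriel--Zisman/Ore construction applied to $S_{ev}$ collapses to the classical description: two fractions $\tfrac{r}{s}, \tfrac{r'}{s'}$ with $s,s' \in S_{ev}$ represent the same element of $R[S_{ev}^{-1}]$ if and only if there exists $t \in S_{ev}$ with $(rs' - r's)t = 0$, i.e.\ $rs't = r'st$. Transitivity of this relation, which is the only delicate point, goes through as in the commutative case precisely because all denominators and all witnesses $t$ lie in $S_{ev}$ and hence commute with everything in $R$; this is exactly where the argument for arbitrary $s \in S$ breaks down, and explains why the restriction to $S_{ev}$ is necessary. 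Combining with the isomorphism $R[S^{-1}] \cong R[S_{ev}^{-1}]$ from the previous step, we obtain the claimed simplification for fractions with denominators in $S_{ev}$.

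The only step requiring care is showing that an odd-degree $s \in S$ really becomes invertible in $R[S_{ev}^{-1}]$; once this is in place, the rest is a straightforward application of universal properties and the classical construction for central multiplicative sets.
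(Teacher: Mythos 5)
The paper does not actually include a proof of this lemma; it is stated with a \verb|\qed| immediately following, after the preceding text remarks that ``one easily checks.'' So there is no proof in the source to compare against, and your proposal has to be judged on its own merits.

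Your argument is correct and identifies exactly the two points that need to be addressed. First, the observation that $S_{ev}$ is \emph{central}, because $sr = (-1)^{|s||r|}rs = rs$ when $|s|$ is even, is what reduces the construction of $R[S_{ev}^{-1}]$ to the classical commutative localisation with the usual one-witness relation $rs't = r's t$; it is also what makes transitivity go through, precisely the issue the paper flags as failing for odd-degree denominators. Second, the step showing that an odd-degree $s \in S$ becomes invertible in $R[S_{ev}^{-1}]$, via $s^2 \in S_{ev}$ and $s^{-1} = s\cdot(s^2)^{-1}$ (a two-sided inverse since $(s^2)^{-1}$ is central), is the essential content of the isomorphism $R[S^{-1}] \cong R[S_{ev}^{-1}]$; once every element of $S$ is inverted in $R[S_{ev}^{-1}]$, the universal property gives mutually inverse graded ring maps because any map out of a ring of fractions is determined by its restriction to $R$ by axiom (F2). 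The degree bookkeeping is also right: $s(s^2)^{-1}$ has degree $-|s|$ as required. This is a clean and complete filling of the gap the paper leaves implicit.
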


Consequently, we may define the localisation of a graded-commutative ring $R$ with respect to a multiplicative subset $S$ of homogeneous elements to be the ring
$$R[S_{ev}^{-1}]$$
with the equivalence relation used in the strictly commutative case. Then addition and multiplication are also defined as in that well-known case. We write $S^{-1}R$ for $R[S_{ev}^{-1}]$.

If $M$ is a graded $R$-module, we define $M_S$, the \emph{localisation of $M$ with respect to $S$} as 
$$S^{-1}M = R \times S_{ev}/\sim,$$
with $\frac{m}{s} \sim \frac{m'}{s'}$ if and only if $ms't=m'st$ for some $t \in S_{ev}$. Obviously, $S^{-1}M$ is a graded $S^{-1}R$-module with the canonical structure and grading
$$\deg\big{(}\frac{m}{s}\big{)} = \deg(m) - \deg(s),$$
where $m \in M$ is homogeneous and $s \in S_{ev}$.

Note that $S^{-1}R$ is flat as both left and right $R$-module, and that $M \otimes_R S^{-1}R \cong S^{-1}M$ as graded $S^{-1}R$-modules.

Let $\a$ be a graded ideal of $R$ and let $S$ be the subset of homogeneous elements of $R\setminus \a$. Then we set $M_{\a} = S^{-1}M$. Similarly as in classical Commutative Algebra, we have
\begin{prop}[Local-global principle]\label{local global principle}
Let $M$ be a graded $R$-module. The following conditions are equivalent:
\begin{itemize}
\item[(1)] $M = 0$.
\item[(2)] $M_{\p} = 0$ for all graded prime ideals $\p$.
\item[(3)] $M_{\m} = 0$ for all graded maximal ideals $\m$.
\end{itemize}
\end{prop}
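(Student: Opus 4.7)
The plan is to establish the implications cyclically as (1) $\Rightarrow$ (2) $\Rightarrow$ (3) $\Rightarrow$ (1), with only the last carrying any content. The first two steps are formal and I would dispose of them quickly. For (1) $\Rightarrow$ (2): since $M_\p \cong M \otimes_R R_\p$ (as noted in the discussion preceding the statement), $M = 0$ trivially forces $M_\p = 0$. For (2) $\Rightarrow$ (3): every graded maximal ideal of a graded-commutative ring is a graded prime ideal (as observed right after the lemma characterising graded maximal ideals), so the hypothesis specialises.

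The substantive implication is (3) $\Rightarrow$ (1), which I would prove by contrapositive. Assume $M \neq 0$ and pick a nonzero homogeneous element $m \in M$. Form the annihilator
\[
\Ann(m) = \{ r \in R \text{ homogeneous} : mr = 0 \},
\]
which generates a graded ideal of $R$ (it is automatically graded since $m$ is homogeneous and $mr=0$ on a sum of homogeneous components forces each component to annihilate $m$; it is bilateral by graded-commutativity). Since $m \cdot 1 = m \neq 0$, this ideal is proper, and a Zorn's lemma argument on proper graded ideals containing $\Ann(m)$ yields a graded maximal ideal $\m \supseteq \Ann(m)$. I then claim that $m/1 \neq 0$ in $M_\m$. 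By Lemma \ref{wlogeven}, elements of $M_\m$ are represented as fractions with denominators in $S_{ev}$, the even-degree homogeneous elements of $R \setminus \m$, and $m/1$ vanishes if and only if there is some $t \in S_{ev}$ with $mt = 0$. But such a $t$ would lie in $\Ann(m) \subseteq \m$, contradicting $t \in S_{ev} \subseteq R \setminus \m$. Hence $M_\m \neq 0$, against the hypothesis.

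The only mildly non-standard point --- and hence the main obstacle, such as it is --- is that the equivalence relation defining $S^{-1}M$ in the graded-commutative setting involves only even-degree scalars, so one cannot naively reproduce the classical commutative-algebra proof. Once Lemma \ref{wlogeven} is invoked to reduce the vanishing criterion to the even-degree multiplicative set, the classical argument carries over verbatim.
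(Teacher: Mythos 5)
Your proof is correct and takes essentially the same approach as the paper: both reduce to the implication (3)~$\Rightarrow$~(1), locate a graded maximal ideal containing the graded annihilator of a witness element, and derive a contradiction from the hypothesis $M_\m = 0$ via the fraction equivalence. The only cosmetic difference is that the paper works with an arbitrary $x \in M$ and passes to the largest graded ideal $\Ann(x)^*$ contained in $\Ann(x)$, whereas you pick a homogeneous $m$ so that $\Ann(m)$ is already graded; this is a minor variation that changes nothing of substance.
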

\begin{proof}
We only need to show that (3) implies (1). Let $x \in M$ be any element. We consider $\Ann(x)^*$, the largest graded ideal contained in $\Ann(x) = \{r \in R \;|\; rx=0\}$.
Assuming that $\Ann(x)^*$ is a proper graded ideal of $R$, we obtain a graded maximal ideal $\m$ which contains $\Ann(x)^*$. Since $M_{\m} = 0$, there exists a homogeneous element $s$ in $R\setminus \m$ such that $sx =0$. So $s$ is contained in $\Ann(x)^*$ and thus in $\m$, which is a contradiction.
\end{proof}

\section{Group and Tate cohomology rings}\label{sec group and Tate}
Homology and Cohomology of groups has been considered since the 1940s. Inspired by a work of 
 Hopf \cite{Hopf} from 1941 in which he considers what today is called the second homology group $H_2(G,\bbZ)$ of a group $G$, Eilenberg and Mac Lane \cite{EilMac} started to study systematically homology and cohomology of groups. 

The Tate cohomology ring was invented by Tate, but these results were never published by himself; the first published account is contained in the book of Cartan and Eilenberg~\cite{CE}.

In the first section of this chapter we study group cohomology rings, and in the second Tate cohomology rings. 

We thank Dave Benson for many useful comments and pointing out references to the author.

\subsection{Group cohomology rings}\label{sec Group cohomology rings}
Let $k$ be a commutative ring and $G$ a finite group. The ring $k$ becomes a $kG$-module by trivial action of $G$. This module is called the \emph{trivial module} and also denoted by $k$.

If $M$ is a $kG$-module and $n \geq 0$, then the \emph{$n$-th cohomology of $G$ with coefficients in $M$} is defined to be
$$H^n(G,M)= \Ext_{kG}^n(k,M).$$
The Yoneda splice multiplication yields a $k$-bilinear, associative map (see \cite[Sect.\ 6]{C})
$$H^n(G,M) \times H^l(G,k) \to H^{n+l}(G,M),$$
defined as follows:
If $\zeta \in H^n(G,M)$ is represented by
$$E_{\zeta}\colon 0 \to M \to X_0 \to \cdots \to X_n \xto{\pi} k \to 0$$
and $\eta \in H^l(G,k)$ by
$$E_{\eta}\colon 0 \to  k \xto{\iota} Y_0 \to \cdots \to Y_l \to k \to 0,$$
then the Yoneda splice product of $\zeta$ and $\eta$ is the class which is represented by the exact sequence obtained by splicing together $E_{\zeta}$ and $E_{\eta}$:
$$\xymatrix@C-9pt{ 0 \ar[r] & M \ar[r]  & X_0 \ar[r]
& \cdots  \ar[r] & X_n  \ar[rd]^-{\pi} \ar[rr]^{\iota \circ \pi} & & Y_0 \ar[r] & \cdots \ar[r] & Y_l \ar[r]   & k \ar[r] & 0\\
&&&&& k \ar[ru]^{\iota} &&&&&}$$

We denote by $\bfK(\Inj kG)$ the homotopy category of $\Inj kG$, which is the full subcategory $\Mod kG$ formed of the injective $kG$-modules. The category $\Inj kG$ is additive, and it is closed under arbitrary direct sums, provided that $kG$ is noetherian.


Write $iM \in \bfK(\Inj kG)$ for an injective resolution of a $kG$-module $M$. 
With the well-known identification 
$$\Ext^m_{kG}(k,X) \cong \bfK(\Inj kG)(ik,\Sigma^m(iX))$$
for any $X \in \Mod kG$ and $m \geq 0,$
one can also form a $k$-bilinear, associative product
by composition of chain maps of injective resolutions:
$$H^n(G,M) \times H^l(G,k) \to H^{n+l}(G,M),\quad (f,g) \mapsto \Sigma^l(g) \circ f.$$
This product coincides with the Yoneda splice product (see \cite[Sect.\ 6]{C}).

With any of the two multiplications,
$$H^*(G,k)=\coprod_{n \ge 0} H^n(G,k)$$
is a graded ring (concentrated in non-negative degrees) and 
$$H^*(G,M)=\coprod_{n \ge 0} H^n(G,M)$$
is a graded module over $H^*(G,k)$.

Note also that $H^*(G,k)$ is a graded-commutative ring \cite[Cor.\ 6.9]{C}. Due to Evens and Venkov, it is Noetherian whenever $k$ is. 
\begin{thm}[Evens, Venkov]
If $k$ is Noetherian, then $H^*(G,k)$ is a finitely generated $k$-algebra. 
\end{thm}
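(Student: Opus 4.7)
The plan is to follow the classical Venkov-Evens strategy: reduce to the case of a $p$-group over the prime field $\mathbb{F}_p$, and then exploit the Lyndon-Hochschild-Serre spectral sequence attached to a central subgroup of order $p$, using Evens' multiplicative norm map to produce enough permanent cycles to force finite generation.

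First I would carry out the reductions. For a general Noetherian $k$, a universal-coefficient / change-of-rings argument reduces finite generation over $k$ to the case of residue fields, and thus to $k$ a field. When the characteristic of $k$ does not divide $|G|$, Maschke's theorem gives $H^{>0}(G,k)=0$ and there is nothing to prove, so one may assume $k = \mathbb{F}_p$ with $p \mid |G|$. Let $S \leq G$ be a Sylow $p$-subgroup. The composition $\mathrm{tr}^G_S \circ \mathrm{res}^G_S$ is multiplication by the unit $[G:S] \in \mathbb{F}_p^{\times}$, so $\mathrm{res}^G_S$ is split-injective and $H^*(G,\mathbb{F}_p)$ is a direct summand of $H^*(S,\mathbb{F}_p)$ as a module over itself. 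In particular $H^*(S,\mathbb{F}_p)$ is a finitely generated $H^*(G,\mathbb{F}_p)$-module, and the Artin-Tate lemma (applicable because the rings involved are graded-commutative and $\mathbb{F}_p$ is Noetherian) shows that finite generation of the $\mathbb{F}_p$-algebra $H^*(S,\mathbb{F}_p)$ implies the same for $H^*(G,\mathbb{F}_p)$. This reduces the problem to $G$ a $p$-group.

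For a $p$-group $G$ I would induct on $|G|$, the case $G = 1$ being trivial. By the class equation, $Z(G)$ contains an element $z$ of order $p$; set $C = \langle z \rangle \cong \mathbb{Z}/p$. The central extension $1 \to C \to G \to G/C \to 1$ produces the Lyndon-Hochschild-Serre spectral sequence
$$E_2^{s,t} = H^s\bigl(G/C,\, H^t(C, \mathbb{F}_p)\bigr) \Longrightarrow H^{s+t}(G, \mathbb{F}_p).$$
By induction $H^*(G/C, \mathbb{F}_p)$ is a finitely generated $\mathbb{F}_p$-algebra, and $H^*(C, \mathbb{F}_p)$ is a polynomial algebra in one variable (tensored, if $p$ is odd, with an exterior algebra on a degree-one class). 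Hence $E_2$ is a finitely generated bigraded $\mathbb{F}_p$-algebra. To propagate finite generation from $E_2$ to $E_\infty$, I would invoke Evens' norm $N = N^G_C \colon H^*(C, \mathbb{F}_p) \to H^*(G, \mathbb{F}_p)$, which satisfies
$$\mathrm{res}^G_C\bigl(N(y)\bigr) \;=\; \prod_{g \in G/C} g \cdot y \;=\; y^{[G:C]},$$
where $y$ is the polynomial generator of $H^*(C,\mathbb{F}_p)$; the second equality uses that $C$ is central, so the conjugation action of $G/C$ on $H^*(C,\mathbb{F}_p)$ is trivial. Consequently $y^{[G:C]}$ lies in the image of the edge homomorphism and is a permanent cycle. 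Since $E_2$ is a finitely generated module over the Noetherian subalgebra $H^*(G/C,\mathbb{F}_p) \otimes \mathbb{F}_p[y^{[G:C]}]$ of permanent cycles, only finitely many differentials act non-trivially on a fixed set of module generators, and a standard filtration argument then produces a finitely generated $E_\infty$, hence a finitely generated $H^*(G,\mathbb{F}_p)$.

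The hard part is the last step: constructing Evens' norm map, verifying the restriction formula $\mathrm{res} \circ N(y) = \prod g \cdot y$, and converting the existence of the permanent cycle $y^{[G:C]}$ into actual finite generation of $E_\infty$ through careful Noetherian spectral-sequence bookkeeping. An alternative route is Venkov's topological proof, embedding $G$ into a unitary group and using the fibration $U(n)/G \to BG \to BU(n)$ together with the known polynomial cohomology of $BU(n)$, but this trades one non-trivial input for another.
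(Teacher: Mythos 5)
The paper states this as a classical theorem of Evens and Venkov and gives no proof, so there is no in-paper argument to compare against. Your core mechanism is the right one and is the standard Evens argument: pass to a $p$-group, take a central $C \cong \mathbb{Z}/p$, run the Lyndon--Hochschild--Serre spectral sequence for the extension $1 \to C \to G \to G/C \to 1$, and use the norm map to make $y^{[G:C]}$ a permanent cycle, so that $E_2$ is finite over a Noetherian subalgebra of permanent cycles and finite generation propagates through $E_\infty$ to $H^*(G,\mathbb{F}_p)$.

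The gap is at the very start: the reduction from a general Noetherian $k$ to its residue fields is not justified. Finite generation of each $H^*(G,k/\mathfrak{m})$ does not formally give finite generation of $H^*(G,k)$: the universal coefficient theorem relates $H^*(G,k)$ to $H^*(G,\mathbb{Z})$ up to Tor terms, not to the rings $H^*(G,k/\mathfrak{m})$, and $H^*(G,k)\otimes_k k/\mathfrak{m}$ is in general not $H^*(G,k/\mathfrak{m})$. Without a uniform degree bound on generators as $\mathfrak{m}$ varies, even a graded Nakayama-type argument is unavailable. The classical proof does not make this reduction: Evens runs the norm and spectral-sequence induction directly over an arbitrary Noetherian base (using index-$p$ subgroups rather than a Sylow reduction, since for general $k$ the identity $\mathrm{tr}\circ\mathrm{res}=[G:S]$ does not split $\mathrm{res}$). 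If you want to keep your field-coefficient argument, the clean fix is to first prove the theorem over $\mathbb{Z}$ and then observe, via the universal coefficient spectral sequence, that $H^*(G,k)$ is a finitely generated module, hence a finitely generated algebra, over $H^*(G,\mathbb{Z})\otimes_{\mathbb{Z}} k$ for any Noetherian $k$.
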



\subsection{Tate cohomology rings}
Let $k$ be a field of characteristic $p>0$ and $G$ be a finite group such that $p$ divides the order of $G$. We denote by $\uMod kG$ the stable module category of $kG$. The objects are the same as in the module category $\Mod kG$, but the morphisms are given by
$$\uHom_{kG}(M,N) = \Hom_{kG}(M,N)/\I(M,N),$$
where $\I(M,N)$ denotes the morphisms factoring through an injective object. The category $\uMod kG$ is triangulated with shift functor $\Omega^{-1}$, the first cosyzygy, see Example~\ref{triaex}(4)(a).  An elementary proof for the fact that $\uMod kG$  is triangulated can be found in Carlson's book \cite[Thms.\ 5.6, 11.4]{C}.

A \emph{Tate resolution} or \emph{complete resolution} of a $kG$-module $X$ is an 
exact sequence of projectives
$$tX\colon \cdots \to P_2  \to P_1 \to P_0 \xto{\delta} P_{-1} \to P_{-2} \to \cdots$$
with $\Im \delta = X$.
It can be constructed by splicing together a projective and an injective resolution of $X$.

If $M$ denotes a $kG$-module and  $n \in \bbZ$, then the \emph{$n$-th Tate cohomology group of $G$ with coefficients in $M$} is defined to be the $n$-th cohomology  of the complex $\Hom_{kG}(tk,M)$ and denoted by
$$\hat{H}^n(G,M) = \widehat{\Ext}_{kG}^n(k,M).$$
The Tate cohomology groups identify with morphism groups in the stable module \linebreak category of $kG$: it holds
$$\hat{H}^n(G,M) \cong \uHom_{kG}(k,\Omega^{-n}M),$$
see \cite[Ch.\ 6]{C}. 
Thus $\hat{H}^*(G,k) = \coprod_{n \in \bbZ}\hat{H}^n(G,k)$ becomes
a graded ring with multiplication 
$$\hat{H}^n(G,k) \times \hat{H}^m(G,k) \to \hat{H}^{n+m}(G,k),\quad (f,g) \mapsto \Omega^{-n}(f) \circ g,$$
and $\hat{H}^*(G,M)$ is a graded module over $\hat{H}^*(G,k)$, also by composition of graded maps.

Since the non-negative part of the Tate resolution $tk$ is a projective resolution of $k$, we have $\hat{H}^n(G,M) = H^n(G,M)$ for $n > 0$, and we obtain an exact sequence
$$0 \to \I(k,M) \to H^*(G,M) \to \hat{H}^*(G,M) \to H^{-}(G,M) \to 0,$$
where $H^{-}(G,M)$ denotes the negatively graded part of Tate cohomology. In particular, since $\I(k,M)=0$ whenever $M$ has no projective direct summands, we can view the group cohomology ring as subring of the Tate cohomology ring. In fact, for the positively graded part of $\hat{H}^*(G,k)$, Yoneda splice multiplication and composition of graded maps in the stable module category coincide \cite[Sect.\ 6]{C}.

\begin{prop}[Tate duality]\cite[XII, Cor.\ 6.5]{CE}, \cite[Sect.\ 2]{BKpure}\label{Tateduality}
Let $D=\Hom_k(-,k)$. For any $kG$-module $M$, it holds
$$\hat{H}^{n-1}(G,DM) \cong D\hat{H}^{-n}(G,M).$$
\end{prop}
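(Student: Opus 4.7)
The plan is to exploit the structure of $kG$ as a symmetric Frobenius $k$-algebra (so $kG \cong D(kG)$ as $(kG,kG)$-bimodules) via a complete projective resolution. I would fix a complete projective resolution $tk = P_*$ of $k$ consisting of finitely generated projective $kG$-modules, so that $\hat H^n(G, -) = H^n(\Hom_{kG}(P_*, -))$ and Tate homology is computed as $\hat H_n(G, -) = H_n(P_* \otimes_{kG} -)$, where right $kG$-modules are viewed as left $kG$-modules through the anti-involution $g \mapsto g^{-1}$.

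The first step would be to set up a natural isomorphism of cochain complexes $\Hom_{kG}(P_*, DM) \cong D(P_* \otimes_{kG} M)$ via tensor-Hom adjunction: a $kG$-linear map $P_n \to \Hom_k(M, k)$ corresponds to a $k$-linear map $P_n \otimes_{kG} M \to k$. Since $k$ is a field, $D$ is exact on complexes, and taking cohomology would give
$$\hat H^n(G, DM) \cong D\hat H_n(G, M).$$

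The second step is to invoke the classical identification $\hat H_n(G, M) \cong \hat H^{-n-1}(G, M)$. To prove this, I would use that $kG$ is symmetric, so for finitely generated projective $P$ one has $\Hom_{kG}(P, kG) \cong DP$, whence a natural isomorphism $\Hom_{kG}(P, M) \cong M \otimes_{kG} \Hom_{kG}(P, kG)$. Applying this termwise would identify the Tate cochain complex $\Hom_{kG}(P_*, M)$ with $M \otimes_{kG} P^\vee_*$, where $P^\vee_* := \Hom_{kG}(P_*, kG)$ is an exact complex of finitely generated projective left $kG$-modules. Its ``middle'' image equals $\Hom_{kG}(k, kG) \cong k$ but sits one degree higher than in $P_*$: it is the image of $P^\vee_1 \to P^\vee_0$ rather than of $P^\vee_0 \to P^\vee_{-1}$. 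Hence $P^\vee_*$ would be quasi-isomorphic to $P_*[1]$ (after converting left to right via the anti-involution), producing the shift by one in the homology-cohomology comparison.

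Combining the two steps and substituting $n-1$ for $n$ would yield
$$\hat H^{n-1}(G, DM) \cong D\hat H_{n-1}(G, M) \cong D\hat H^{-n}(G, M).$$
The hard part, I expect, is tracking this degree shift precisely: one must verify that the anti-involution, the tensor-Hom adjunction, and the symmetry isomorphism $kG \cong D(kG)$ combine to yield exactly a shift by one, and not zero or two.
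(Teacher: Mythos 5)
The paper states this proposition as a cited result (Cartan--Eilenberg and Benson--Krause) and gives no proof of its own, so there is nothing internal to compare against. Your sketch is, however, a correct outline of the standard proof found in those references, and the degree bookkeeping you flagged as the delicate point does come out right: with $P_*$ normalised so that $\Im(P_0\to P_{-1})=k$, the dual exact complex $Q_m:=\Hom_{kG}(P_{-m},kG)$ has its copy of $\Hom_{kG}(k,kG)\cong k$ sitting at $\Im(Q_1\to Q_0)$, so $Q_*\simeq P_*[1]$ as complete resolutions. Tracing through $\Hom_{kG}(P_*,DM)\cong D(P_*\otimes_{kG}M)$ and then $P_*\otimes_{kG}M\cong\Hom_{kG}(Q_*,M)$ yields $\hat H^m(G,DM)\cong D\hat H_m(G,M)\cong D\hat H^{-m-1}(G,M)$, and putting $m=n-1$ gives exactly the stated isomorphism.

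One small imprecision: you say $P^\vee_*$ would be \emph{quasi-isomorphic} to $P_*[1]$, but both complexes are acyclic, so any chain map between them is a quasi-isomorphism and that phrasing carries no content. What you actually need, and what is true, is that $P^\vee_*$ (suitably re-indexed and converted to right modules via $g\mapsto g^{-1}$) is a complete resolution of $k$ whose augmentation sits one degree off from that of $P_*$; the uniqueness of complete resolutions up to chain homotopy equivalence then gives the comparison. With that substitution of ``chain homotopy equivalent as complete resolutions'' for ``quasi-isomorphic,'' the argument is complete.
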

The Tate cohomology ring is graded-commutative \cite[XII, Prop.\ 5.2]{CE}. In general, it is not Noetherian. However, this is true in the so-called \emph{periodic case} that we discuss below. We omit the proof of the following well-known characterisation.
\begin{lem}\label{periodic resol}
The following are equivalent: 
\begin{itemize}
\item[(1)] The trivial module $k$ admits a periodic projective resolution. 
\item[(2)] There exists $n >0$ and an element $x \in \hat{H}^n(G,k)$ such that the map $$\hat{H}^m(G,k) \to \hat{H}^{m+n}(G,k), \quad \gamma \mapsto \gamma \cdot x,$$
is an isomorphism for all $m \in \bbZ$.
\end{itemize}
\end{lem}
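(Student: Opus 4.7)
The plan is to prove the two implications separately via the identification $\hat H^n(G,k) \cong \uHom_{kG}(k,\Omega^{-n}k)$ recalled in the text, which converts statements about the element $x \in \hat H^n(G,k)$ into statements about a morphism $x\colon k \to \Omega^{-n}k$ in the stable module category $\uMod kG$.

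For $(1) \Rightarrow (2)$, truncating a periodic projective resolution of $k$ of period $n$ gives an exact sequence
$$0 \to k \to P_{n-1} \to \cdots \to P_0 \to k \to 0$$
with projective $P_i$, which exhibits an isomorphism $\Omega^n k \cong k$, or equivalently a morphism $x\colon k \xto{\sim} \Omega^{-n}k$ that is invertible in $\uMod kG$. The corresponding element $x \in \hat H^n(G,k)$ is then a unit of the graded ring $\hat H^*(G,k)$, so multiplication by $x$ is an isomorphism in every degree.

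For $(2) \Rightarrow (1)$, I would first extract an inverse of $x$ algebraically. Surjectivity of $\cdot x\colon \hat H^{-n}(G,k) \to \hat H^0(G,k)$ provides $y \in \hat H^{-n}(G,k)$ with $y \cdot x = 1$. By associativity, $(x\cdot y)\cdot x = x\cdot (y\cdot x) = x = 1\cdot x$, and injectivity of $\cdot x\colon \hat H^0(G,k) \to \hat H^n(G,k)$ forces $x \cdot y = 1$ as well. Translating the two relations along the identifications $\hat H^n(G,k) = \uHom_{kG}(k,\Omega^{-n}k)$ and $\hat H^{-n}(G,k) = \uHom_{kG}(k,\Omega^{n}k)$, and using that $\Omega^{-n}$ is a self-equivalence of $\uMod kG$ to realise $\Omega^{-n}(y)\colon \Omega^{-n}k \to k$, yields that $x\colon k \to \Omega^{-n}k$ is a two-sided isomorphism in $\uMod kG$. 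Equivalently, there are projective $kG$-modules $P,Q$ with $k \oplus P \cong \Omega^{-n}k \oplus Q$ in $\Mod kG$.

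It remains to upgrade this stable isomorphism to a genuine exact sequence $0 \to k \to R_{n-1} \to \cdots \to R_0 \to k \to 0$ with projective $R_i$, whose infinite splicing then yields the desired periodic projective resolution. Concretely, starting from a truncated projective resolution $0 \to \Omega^n k \to P_{n-1} \to \cdots \to P_0 \to k \to 0$ and using the isomorphism above, one absorbs $P$ and $Q$ into the projective terms so that the end terms become $k$ on both sides. I expect this bookkeeping to be the only real obstacle of the proof, as it is a standard but slightly fiddly manipulation of exact sequences over the self-injective algebra $kG$.
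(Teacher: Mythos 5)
The paper omits the proof of this lemma entirely, stating it as a ``well-known characterisation,'' so there is no proof in the text to compare yours against. Evaluated on its own, your argument is sound: translating $x\in\hat H^n(G,k)$ to a morphism $x\colon k\to\Omega^{-n}k$ via $\hat H^n(G,k)\cong\uHom_{kG}(k,\Omega^{-n}k)$ is the right move, and the extraction of a two-sided inverse $y$ from surjectivity in degree $-n$ together with associativity and injectivity in degree $0$ is correct and uses the hypothesis economically.

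The one place you do not actually complete the argument is the final ``bookkeeping'' step, passing from invertibility of $x$ in $\uMod kG$ to a genuine exact sequence $0\to k\to R_{n-1}\to\cdots\to R_0\to k\to 0$ with the $R_i$ projective. Your route (absorbing $P$, $Q$ into the projective terms and splitting off injectives over the self-injective ring $kG$) does work, but the cleanest finish is Krull--Schmidt. First, apply $\Omega^n$ so that you have a stable isomorphism $k\simeq\Omega^n k$; note that for a periodic \emph{projective} resolution you should be working with the syzygy $\Omega^n k$ coming from a projective resolution, not the cosyzygy $\Omega^{-n}k$ (they are of course stably isomorphic, but the notational slip is worth fixing). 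Now take the \emph{minimal} projective resolution of $k$, so $\Omega^n k$ has no projective direct summands; $k$ is simple and non-projective since $p\mid|G|$, so it has none either. Then any isomorphism $k\oplus P\cong\Omega^n k\oplus Q$ with $P,Q$ projective forces $k\cong\Omega^n k$ in $\Mod kG$ by Krull--Schmidt over the finite-dimensional algebra $kG$, and the truncation $0\to\Omega^n k\to P_{n-1}\to\cdots\to P_0\to k\to 0$ of the minimal resolution already has $k$ at both ends; splicing gives the periodic resolution.
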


We call an element $x \in \hat{H}^n(G,k)$ satisfying (2) a \emph{periodicity generator}, and from Lemma \ref{periodic resol}, we infer 
\begin{lem}\label{periodic-loc}
In the periodic case, the Tate cohomology ring is a localisation of the group cohomology ring:
$$\hat H^*(G,k) = S^{-1} H^*(G,k),$$
where $S$ is the multiplicative subset generated by the periodicity generator of lowest degree. In particular, $\hat H^*(G,k)$ is Noetherian whenever it is periodic.\qed
\end{lem}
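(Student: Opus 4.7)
Let $x \in \hat H^n(G,k)$ be the periodicity generator of lowest degree (so $n > 0$) and set $S = \{1, x, x^2, \ldots\}$. Since $n > 0$, the identification $\hat H^n(G,k) = H^n(G,k)$ makes $S$ a multiplicative subset of the graded-commutative ring $H^*(G,k)$. My goal is to show that the canonical inclusion $\iota\colon H^*(G,k) \hookrightarrow \hat H^*(G,k)$ sends each element of $S$ to an invertible element of $\hat H^*(G,k)$, and that the resulting graded ring homomorphism $\tilde\iota\colon S^{-1}H^*(G,k) \to \hat H^*(G,k)$ is an isomorphism.

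The invertibility of $x$ in $\hat H^*(G,k)$ is immediate from Lemma \ref{periodic resol}(2): multiplication by $x$ is an isomorphism $\hat H^{-n}(G,k) \xrightarrow{\cong} \hat H^0(G,k)$, so the preimage of $1$ provides an element $y \in \hat H^{-n}(G,k)$ with $yx = 1$. A degree-parity check shows that either $n$ is even or $\operatorname{char}k = 2$ (otherwise graded-commutativity would force $2x^2 = 0$, contradicting the periodicity of $x^2$), and in both cases $xy = yx = 1$. Hence every element of $S$ becomes invertible under $\iota$, so by the universal property recorded in Section \ref{grcom localisation} the map $\iota$ extends uniquely to the graded ring map $\tilde\iota$.

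To verify surjectivity, given $\gamma \in \hat H^m(G,k)$ I choose $N \geq 1$ with $m + Nn > 0$; then $\gamma \cdot x^N \in \hat H^{m+Nn}(G,k) = H^{m+Nn}(G,k)$, and $\gamma = \tilde\iota((\gamma x^N)/x^N)$. For injectivity, suppose $a \in H^m(G,k)$ satisfies $\tilde\iota(a/x^N) = 0$. Then $\iota(a) = 0$ in $\hat H^m(G,k)$, and multiplying by $x^M$ with $m + Mn > 0$ gives $a x^M = 0$ in $\hat H^{m+Mn}(G,k) = H^{m+Mn}(G,k)$. Since $\iota$ is an isomorphism in strictly positive degrees, $a x^M = 0$ already in $H^*(G,k)$, so $a/x^N = 0$ in $S^{-1}H^*(G,k)$ by the equivalence relation from Lemma \ref{wlogeven}.

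The Noetherian claim then follows at once: by the Evens--Venkov Theorem $H^*(G,k)$ is a finitely generated $k$-algebra, hence Noetherian, and the localization of a Noetherian graded-commutative ring is again Noetherian by the standard argument transported from classical commutative algebra to the setting of Section \ref{grcom localisation}. The only subtle point I foresee is the degree-parity check needed to ensure that $x$ really has a two-sided inverse; once that is settled, the argument is a routine verification of the universal property of localization, carried out in the graded-commutative framework established earlier.
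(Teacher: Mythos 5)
The paper proves nothing here: the lemma itself carries an end-of-proof marker directly after the statement, and the only stated justification is the preceding phrase ``from Lemma~\ref{periodic resol}, we infer''. Your verification is correct and supplies exactly the routine details the author leaves implicit: $x$ becomes a unit (the parity observation --- $n$ even or $\Char(k)=2$, since otherwise $2x^2=0$ would kill the periodicity generator $x^2$ --- is a worthwhile check, as graded-commutativity only gives $xy=\pm yx$ a priori), and surjectivity and injectivity of $\tilde\iota$ follow by shifting into positive degrees, where $\iota\colon H^*(G,k)\to\hat H^*(G,k)$ is an isomorphism.
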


Whether Tate cohomology is periodic or not depends on the \emph{$p$-rank} of $G$, which is the maximal rank of an elementary abelian $p$-subgroup of $G$ and denoted by $r_p(G)$.

\begin{thm}\cite[XII, Prop.\ 11.1]{CE}\label{periodic-rank}
Let $k$ a be field of characteristic $p$ and $G$ a group with order divisible by $p$. Then the trivial module $k$ admits a periodic resolution if and only if $r_p(G)=1$.
\end{thm}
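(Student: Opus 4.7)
The plan is to prove both implications, using subgroup restriction as the main structural tool and comparing Krull dimensions (or Hilbert series) for one direction, and Burnside's classification of $p$-groups of $p$-rank one together with a transfer argument for the other.

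For the implication ``periodic $\Rightarrow r_p(G)=1$'', I would argue by contrapositive. The key observation is that for any subgroup $H \leq G$, the algebra $kG$ is free as a left $kH$-module, since $kG \cong kH \otimes_k k[G/H]$ via a choice of coset representatives. Hence projective $kG$-modules restrict to projective $kH$-modules, and a periodic projective resolution of $k$ over $kG$ restricts to a periodic projective resolution of $k$ over $kH$. It therefore suffices to show that if $E = (\bbZ/p)^r$ is elementary abelian of rank $r \geq 2$, then $k$ has no periodic projective resolution over $kE$. One computes $H^*(E,k)$ via Künneth: for $p$ odd, $H^*(E,k) \cong k[x_1,\ldots,x_r] \otimes_k \Lambda(y_1,\ldots,y_r)$ with $|x_i|=2$, $|y_i|=1$; for $p=2$, $H^*(E,k) \cong k[x_1,\ldots,x_r]$ with $|x_i|=1$. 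In either case the Hilbert series has a pole of order $r$ at $t=1$, so the Krull dimension equals $r \geq 2$. On the other hand, periodicity would force $\dim_k \hat H^i(E,k)$ to be bounded in $i$ (by Lemma~\ref{periodic resol}), so the Hilbert series of $\hat H^*(E,k)$ would have a pole of order at most one. Since $H^*$ and $\hat H^*$ agree in all strictly positive degrees (and the map $H^* \to \hat H^*$ is injective when $p \mid |G|$), this yields Krull dimension $\leq 1$ for $H^*(E,k)$, a contradiction.

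For the implication ``$r_p(G)=1 \Rightarrow$ periodic'', let $P$ be a Sylow $p$-subgroup. Then $r_p(P) \leq r_p(G) = 1$, so by Burnside's classical theorem, $P$ is either cyclic (for any $p$) or generalized quaternion (only for $p=2$). In either case one writes down an explicit periodic projective resolution of $k$ over $kP$: for cyclic $P = \langle g \rangle$ of order $p^n$, the standard two-periodic resolution $\cdots \to kP \xrightarrow{N} kP \xrightarrow{g-1} kP \to k \to 0$ (with norm $N = 1+g+\cdots+g^{p^n-1}$) works; for generalized quaternion groups one obtains period four. To transfer periodicity from $P$ up to $G$, I would use that $[G:P]$ is invertible in $k$ and the classical identity $\mathrm{tr}_P^G \circ \mathrm{res}_P^G = [G:P] \cdot \mathrm{id}$ on $\hat H^*(G,k)$, which shows that the restriction $\mathrm{res}_P^G : \hat H^*(G,k) \hookrightarrow \hat H^*(P,k)$ is a split monomorphism of $\hat H^*(G,k)$-modules. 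Given a periodicity generator $y \in \hat H^n(P,k)$, an appropriate power $y^m$, or equivalently the Evens norm $N_P^G(y) \in \hat H^{n[G:P]}(G,k)$, restricts to an invertible element of $\hat H^*(P,k)$ and thereby provides a periodicity generator of $\hat H^*(G,k)$.

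The main obstacle I anticipate is the final step: producing a genuine periodicity generator of $\hat H^*(G,k)$ from one of $\hat H^*(P,k)$. Having a split injection on cohomology is not formally sufficient, because invertibility of an element downstairs does not automatically lift to invertibility upstairs. One needs either the multiplicative norm map of Evens or a direct stable-element argument à la Cartan--Eilenberg to locate an explicit nonnilpotent element of positive degree in $\hat H^*(G,k)$ whose restriction generates the periodicity upstairs, and to verify via the split injection that $\hat H^*(G,k)$ is then finitely generated over the polynomial subring generated by this element, so that multiplication by it is an isomorphism in all degrees.
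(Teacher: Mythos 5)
The paper itself gives no proof of this statement; it simply cites Cartan--Eilenberg \cite[XII, Prop.\ 11.1]{CE}, so there is nothing in the text to compare your argument against. Taken on its own merits, your sketch correctly follows the classical route. The easy direction is sound: projective $kG$-modules restrict to projective $kH$-modules because $kG$ is $kH$-free, so periodicity passes to subgroups; the Künneth computation of $H^*(E,k)$ for $E=(\bbZ/p)^r$ gives unbounded graded dimensions once $r\ge 2$, contradicting the boundedness that periodicity of $\hat H^*(E,k)$ imposes on the positive part. The hard direction also starts correctly: reduction to a Sylow $p$-subgroup $P$ via $r_p(P)\le r_p(G)=1$, Burnside's classification ($P$ cyclic, or generalised quaternion when $p=2$), explicit $2$- and $4$-periodic resolutions, and then the genuinely nontrivial step of climbing back from $P$ to $G$.

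One remark on the final paragraph, though: the worry about verifying that ``$\hat H^*(G,k)$ is finitely generated over the polynomial subring generated by this element'' conflates the situation for ordinary cohomology with the Tate setting. In $\hat H^*(G,k)$, an element $u$ of degree $d\neq 0$ is a periodicity generator precisely when it is a unit, and once you have a unit, multiplication by $u$ is an isomorphism in every degree with inverse multiplication by $u^{-1}$ --- no finite-generation argument is needed. The genuine content of the step you flag is exactly producing such a unit in $\hat H^*(G,k)$: take the Evens norm $N^G_P(y)$ of a periodicity generator $y\in\hat H^*(P,k)$, observe that its restriction to $P$ is (up to lower filtration, which vanishes here for degree reasons) a product of conjugates of $y$ and hence invertible in $\hat H^*(P,k)$, and then conclude invertibility in $\hat H^*(G,k)$ by Frobenius reciprocity: $\Tr_P^G(v)\cdot N^G_P(y)=\Tr_P^G(v\cdot\res_P N^G_P(y))=[G:P]\cdot 1$ for $v$ the inverse of $\res_P N^G_P(y)$, and $[G:P]$ is a unit in $k$. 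This closes the gap you correctly identified and makes the sketch into a complete argument.

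Also note a small redundancy: since $\hat H^*(G,k)$ depends only on the isomorphism class in $\uMod kG$, periodicity of $k$ over $kG$ for groups of $p$-rank one is already recorded in the paper as Theorem~\ref{periodic-rank} combined with Lemma~\ref{periodic resol}; your proposal is effectively re-deriving these equivalences, which is fine, but keep in mind that the paper's Theorem~\ref{when rank is one} already supplies the Burnside step you invoke.
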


The $p$-groups of $p$-rank one are characterised as follows:

\begin{thm}\cite[Chapter 5, Theorem 4.10]{G}\label{when rank is one}
Let $G$ be a $p$-group. Then $r_p(G)=1$ if and only if $G$ is cyclic or $p=2$ and $G$ is a generalised quaternion group.
\end{thm}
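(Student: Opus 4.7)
The plan is to handle the two directions separately, with the backward implication carrying essentially all the content. For the forward direction, a cyclic $p$-group has exactly one subgroup of each order dividing $|G|$, so the only elementary abelian $p$-subgroup is the unique one of order $p$, giving $r_p(G)=1$. For the generalised quaternion group $Q_{2^n}$ at $p=2$, a direct computation in the standard presentation shows that the element $a^{2^{n-2}}$ of the cyclic subgroup of index $2$ is the unique involution (it coincides with the centre), so $r_2(Q_{2^n})=1$ as well.

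For the backward direction, I would first reduce to a statement about subgroups of order $p$: the hypothesis $r_p(G)=1$ is equivalent to $G$ having a \emph{unique} subgroup of order $p$. The nontrivial implication is as follows. Suppose $A, B \leq G$ are distinct subgroups of order $p$ and set $H = \langle A, B \rangle$. Since $H$ is a nontrivial $p$-group, its centre is nontrivial and contains some subgroup $C$ of order $p$. If $C = A$, then $A$ and $B$ commute, and $AB$ is an elementary abelian $2$-subgroup (since $A \cap B = 1$); symmetric reasoning handles $C = B$; otherwise $C$ is a third central subgroup of order $p$ and $AC$ is already elementary abelian of rank $2$. In every case we contradict $r_p(G)=1$.

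The second and main step is to classify $p$-groups with a unique subgroup of order $p$, which I would prove by induction on $|G|$. A key observation is that every abelian subgroup of such a $G$ is cyclic, since an abelian $p$-group is cyclic iff its socle has order $p$. Choose a maximal subgroup $M \lhd G$ of index $p$; by induction $M$ is cyclic or (for $p=2$) generalised quaternion, and the extension $1 \to M \to G \to C_p \to 1$ is classified by the conjugation action, i.e.\ an element of order $1$ or $p$ in $\mathrm{Aut}(M)$. If this action is trivial then $G$ is abelian, hence cyclic. Otherwise one must inspect the $p$-torsion of $\mathrm{Aut}(M)$.

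The main obstacle is the $p=2$ analysis when $M$ is cyclic of order $2^n$, $n \geq 3$: here $(\mathbb{Z}/2^n)^\times$ has three elements of order $2$, corresponding to automorphisms $x \mapsto -x$, $x \mapsto (1+2^{n-1})x$, and $x \mapsto (-1+2^{n-1})x$, which produce respectively the dihedral, modular maximal-cyclic, and semidihedral groups in addition to the generalised quaternion group (the last arising from a non-split extension with the first automorphism). A case-by-case inspection of involutions in each of these groups shows that only $Q_{2^n}$ retains a unique subgroup of order $2$. One must further rule out the case that $M$ itself is generalised quaternion of order $\geq 16$: an order-$2$ outer automorphism would yield a group containing additional involutions, violating the hypothesis. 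Completing this classification, together with the straightforward case $p$ odd (where $(\mathbb{Z}/p^n)^\times$ is cyclic and the non-trivial extension produces a non-cyclic group with multiple subgroups of order $p$), finishes the proof.
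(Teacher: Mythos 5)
The paper does not prove this theorem --- it cites Gorenstein's book --- so there is no argument in the paper to compare against; the assessment below is of your proof on its own terms. Your reduction of $r_p(G)=1$ to the statement that $G$ has a unique subgroup of order $p$ is correct (modulo the typo ``elementary abelian $2$-subgroup''), and inducting on a maximal subgroup $M$ is a sound plan. However, the step where you ``rule out the case that $M$ itself is generalised quaternion'' is not merely incomplete but false. You assert that an order-$2$ outer automorphism of $Q_{2^m}$ would produce additional involutions, but $Q_{2^{m+1}}$ itself refutes this: it has a unique involution, it contains $Q_{2^m}$ as a maximal (noncyclic) subgroup, and since $C_{Q_{2^{m+1}}}(Q_{2^m})=Z(Q_{2^m})$ has order~$2$, the conjugation action determines a nontrivial element of $\Out(Q_{2^m})$ of order~$2$. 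So this case cannot be excluded; the correct conclusion is that a $2$-group $G$ with a quaternion maximal subgroup and a unique involution is again quaternion, and proving that (equivalently, showing $G$ must also possess a cyclic maximal subgroup) is the substantive core of the induction. As written, the induction does not close.

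Two smaller issues. For nonabelian $M$ the conjugation action of $G/M$ on $M$ is a class in $\Out(M)$, not an element of $\Aut(M)$; you should also treat the case $C_G(M)\not\leq M$, which forces $G$ to be the central product $M\circ C_4$ over $Z(M)$ (which does carry extra involutions) --- this is what replaces your ``trivial action $\Rightarrow$ abelian $\Rightarrow$ cyclic'' step, whose reasoning via $G/Z(G)$ only applies when $M$ is abelian. And for $p$ odd, the claim that the nontrivial extension of a cyclic $M=\langle a\rangle$ by $C_p$ has more than one subgroup of order $p$ needs a computation: e.g.\ if $gag^{-1}=a^{1+p^{n-1}}$ and $g^p=a^{p\ell'}$, verify directly that $(ga^{-\ell'})^p=1$. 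The exponent sum there contains a factor $\tfrac{p-1}{2}$, which is an integer precisely when $p$ is odd; the failure of this at $p=2$ is exactly what lets the generalised quaternion groups exist, so this is not a point one can wave at.
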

The \emph{generalised quaternion group} $Q_{2^n}$ is given by the defining relations 
$$Q_{2^n} = \langle h,g \,|\, h^{2^{n-1}}=g^2=b, b^2=1, g^{-1}hg=h^{-1} \rangle$$
and has order $2^{n+1}$, see \cite[Ch.\ 2.6]{G}. In the special case $n=2$, the group is called \emph{quaternion group}.

\begin{rem}\label{shapeTate}
The Tate cohomology ring comes in three different types: It is periodic if and only if the $p$-rank of the group $G$ equals one. Whenever the depth of the group cohomology ring $H^*(G,k)$ is at least two, then $\hat{H}^*(G,k)$ is a trivial extension of $H^*(G,k)$ by the negatively graded
part $\hat{H}^-(G,k)$ of $\hat{H}^*(G,k)$ \cite[Thm.\ 3.3]{BC1}. That is in particular the case if the $p$-rank of the centre of a $p$-Sylow subgroup of $G$ is at least two (\cite{D}, see also \cite[Thm.\ 3.2]{BC1}). But there are also cases where the Tate cohomology ring is neither periodic nor a trivial extension: The smallest example is the semidihedral group of order~16
$$SD_{16}= \langle g,h\, |\, g^8=1, h^2=1, h^{-1}gh=g^3 \rangle.$$
The $2$-rank of $SD_{16}$ is two and the depth of $H^*(SD_{16},k)$ equals one (see \cite{BC1}).
\end{rem}

\section{The Hochschild Cohomology of a graded ring}\label{sec hochschild}
Hochschild cohomology $\HH^*(R)$ of an algebra $R$ over some commutative ring $k$ was first defined by Hochschild in 1945 using the Bar resolution \cite{Hochpers}. Cartan and Eilenberg showed that $\HH^*(R)$ is isomorphic to $\Ext_{R^e}^*(R,R)$ whenever $R$ is projective over $k$ \cite[Ch.\ IX.6]{CE}.

Hochschild cohomology of a graded algebra $\Lambda$
is bigraded; it is denoted by
$\HH^{*,*}(\Lambda)$, where the first index is the cohomological degree which is changed by the differential and the second, internal one, arises from the grading of $\Lambda$. 
The difference between ``usual'' Hochschild cohomology and Hochschild cohomology of a graded algebras consists basically in the occurrence of additional signs. However, sometimes it can be tedious to figure them out. 

 In this chapter we introduce Hochschild cohomology of a graded algebra $\Lambda$ with the sign conventions as in \cite{BKS}.  In addition, we study in Section \ref{hochring} the ring structure of $\HH^{*,*}(\Lambda)$. Hochschild cohomology $\HH^*(R)$ of a non-graded algebra $R$ is well-known to be a graded-commutative ring (in the sense of Section \ref{grcom}) with multiplication given by cup or Yoneda product. We show that for bigraded Hochschild cohomology $\HH^{*,*}(\Lambda)$, one additionally needs to take into account the internal degree: for $\zeta \in \HH^{m,i}(\Lambda)$ and $\eta \in \HH^{n,j}(\Lambda)$, we prove that $\zeta \eta = (-1)^{mn} (-1)^{ij} \eta \zeta.$
 
\smallskip

Throughout this chapter let $\Lambda$ be a graded algebra over a field $k$. We write $\Lambda^{\otimes n}$ for the $n$-fold tensor product over $k$ and denote a tuple $\lambda_1 \otimes \cdots \otimes \lambda_n \in \Lambda^{\otimes n}$ by $(\lambda_1,\cdots ,\lambda_n)$.

For any graded $(\Lambda,\Lambda)$-bimodule $M$, the Hochschild cohomology
of $\Lambda$ with coefficients in $M$ is the cohomology of the
bigraded complex $C^{*,*}(\Lambda,M)$ given by
\begin{equation}\label{def-Hochschild complex}
 C^{n,m}(\Lambda,M) \ = \ \Hom^{m}_k(\Lambda^{\otimes n},M) , \end{equation}
 where $n \ge 0$ and $m \in \bbZ$. Note that the differential $\delta\colon
C^{n,m}(\Lambda,M)\to C^{n+1,m}(\Lambda,M)$ changes only the first grading.
It is given by  
\begin{multline*}
(\delta \varphi)(\lambda_1,\dots,\lambda_{n+1}) =
(-1)^{m|\lambda_1|}\lambda_1\varphi(\lambda_2,\dots,\lambda_{n+1}) \ + \\
\sum_{i=1}^n (-1)^i \varphi(\lambda_1,\dots,\lambda_i\lambda_{i+1},
\dots,\lambda_{n+1})
+ (-1)^{n+1} \varphi(\lambda_1,\dots,\lambda_n)\lambda_{n+1}.
\end{multline*}
This construction obviously generalises the Hochschild complex
in the non-graded case.

The {\em Hochschild cohomology groups} $\HH^{*,*}(\Lambda,M)$ are
the cohomology groups of the complex
$C^{*,*}(\Lambda,M)$,
\[ \HH^{s,t}(\Lambda,M) \ = \ H^{s}(C^{*,t}(\Lambda,M)) \ . \]
$\HH^{s,t}(\Lambda,\Lambda)$ is abbreviated by $\HH^{s,t}(\Lambda)$.


Using the bimodule structure of $M[t]$ given in \eqref{bimodule sign}, there is a natural isomorphism of chain complexes
$$C^{*,m}(\Lambda,M) \ \cong \ C^{*,0}(\Lambda,M[t]),$$
which induces a natural isomorphism of Hochschild cohomology groups
$$\HH^{s,t}(\Lambda,M) \ \cong \ \HH^{s,0}(\Lambda,M[t]).$$

\subsection{Functoriality}
The complex $C^{*,*}(\Lambda,M)$ and its cohomology groups
$\HH^{*,*}(\Lambda,M)$ are covariant functors in the
$(\Lambda,\Lambda)$-bimodule $M$: If $f\colon M \to N$ is a morphism of $(\Lambda,\Lambda)$-bimodules, then we have a cochain homomorphism
$$C^{s,t}(\Lambda,M) \to C^{s,t}(\Lambda,N), \quad \varphi \mapsto  f \circ \varphi, $$ 
 which induces the map
$$\HH^{s,t}(\Lambda,M) \to \HH^{s,t}(\Lambda,N), \quad [\varphi] \mapsto  [f \circ \varphi].$$

Furthermore, the Hochschild groups are contravariant functors in the graded algebra. Let
$\alpha:\Gamma\to\Lambda$ be a map of graded $k$-algebras. Then
 a $(\Lambda,\Lambda)$-bimodule $M$ carries a
$(\Gamma,\Gamma)$-bimodule structure through the morphism $\alpha$. 
We obtain 
a cochain homomorphism 
$$C^{s,t}(\Lambda,M) \to C^{s,t}(\Gamma,M), \quad \varphi \mapsto  \varphi \circ \alpha^{\otimes s},$$ inducing the map
$$\HH^{s,t}(\Lambda,M) \to \HH^{s,t}(\Gamma,M), \quad [\varphi] \mapsto  [\varphi \circ \alpha^{\otimes s}].$$

In general, we cannot expect $\alpha:\Gamma\to\Lambda$ to give rise to a map $\HH^{s,t}(\Lambda)
\to \HH^{s,t}(\Gamma)$. However, in the case that $\alpha$ is a \emph{flat epimorphism of rings}, we will construct  an algebra homomorphism $\HH^{s,t}(\Lambda) \to \HH^{s,t}(\Gamma)$ induced by $\alpha$ in Chapter~\ref{flatepi}. For this construction, we need the graded Bar resolution.


\subsection{The graded Bar resolution}\label{secBar}
Let $\Lambda^e=\Lambda^{\op} \otimes \Lambda$. With the necessary precaution on the signs (see \eqref{op algebra} and \eqref{tensor prod algebra}), this algebra is graded and we may identify graded $(\Lambda,\Lambda)$-bimodules with graded right $\Lambda^e$-modules.

The \emph{graded bar resolution} $\bbB = \bbB(\Lambda)$ 
is a $\Lambda^e$-projective resolution of $\Lambda$ defined as
$\bbB_n= B_n = \Lambda^{\otimes (n+2)}$,
with $\Lambda^e$-module structure given by
\[ (\lambda_0,\dots,\lambda_{n+1})(\mu,\mu') \ = \
(-1)^{|\mu||\lambda_0\cdots\lambda_{n+1}|} \,
(\mu\lambda_0,\lambda_1,\dots,\lambda_n,\lambda_{n+1}\mu'), \]
and with differential 
\[ d_n(\lambda_0,\dots,\lambda_{n+1}) =
\sum_{i=0}^n (-1)^i (\lambda_0,\dots,\lambda_i\lambda_{i+1},
\dots,\lambda_{n+1}). \]
For the fact that $\bbB(\Lambda)$ is indeed a graded projective
$\Lambda^e$-resolution of $\Lambda$, we refer to \cite[Sect.\ 4]{BKS} and \cite[Ch.\ IX.6]{CE}.

\begin{lem}\label{hochiso}
The map
\begin{equation}\label{tilde}
\Hom_k^{t}(\Lambda^{\otimes s},M)  \to  \Hom_{\Lambda^e}^{t}(\Lambda^{\otimes (s+2)},M), \quad
f  \mapsto \tilde{f},
\end{equation}
where 
$$\tilde{f}(\lambda_0,\cdots,\lambda_{s+1}) = (-1)^{t|\lambda_0|}\lambda_0 f(\lambda_1,\cdots,\lambda_s)\lambda_{s+1},$$
is an isomorphism and extends to an isomorphism of chain complexes
\begin{equation}\label{hochchain}
C^{*,t}(\Lambda,M) \to \Hom^{t}_{\Lambda^e}(B_*,M).
\end{equation}
\end{lem}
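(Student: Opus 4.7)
The plan is to verify the three separate things hidden in the statement: (a) the map $f\mapsto\tilde f$ is well defined, i.e.\ $\tilde f$ is $\Lambda^e$-linear of internal degree $t$; (b) the assignment is bijective; (c) under this bijection the Hochschild differential on $C^{*,t}(\Lambda,M)$ matches the differential on $\Hom^t_{\Lambda^e}(B_*,M)$ induced by $d_n$.

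For (a), I would unravel the right $\Lambda^e$-action on $B_n=\Lambda^{\otimes(n+2)}$ displayed in Section~\ref{secBar} together with the Koszul convention \eqref{Koszul} for morphisms of graded modules. Plugging $(\lambda_0,\dots,\lambda_{s+1})(\mu,\mu')$ into $\tilde f$ and using only $k$-bilinearity of $f$, the sign $(-1)^{|\mu||\lambda_0\cdots\lambda_{s+1}|}$ coming from the bimodule structure has to match the sign $(-1)^{|\tilde f|(|\mu|+|\mu'|)}$ demanded by the definition of a homogeneous $\Lambda^e$-linear map of degree $t$ together with the sign $(-1)^{t|\mu\lambda_0|}$ prescribed by the formula. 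A short bookkeeping, using that $|\tilde f|=t$, shows these cancel.

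For (b), I would define the inverse $g\mapsto\bar g$ by $\bar g(\lambda_1,\dots,\lambda_s)=g(1,\lambda_1,\dots,\lambda_s,1)$. The identity $\overline{\tilde f}=f$ is immediate from the formula once one observes that the sign $(-1)^{t|\lambda_0|}$ disappears for $\lambda_0=1$. Conversely, to see $\widetilde{\bar g}=g$, I would factor
\[ (\lambda_0,\lambda_1,\dots,\lambda_s,\lambda_{s+1})
=(-1)^{|\lambda_0|(|\lambda_1|+\cdots+|\lambda_s|)}\,(1,\lambda_1,\dots,\lambda_s,1)\cdot(\lambda_0,\lambda_{s+1})\]
using the bimodule action, then apply the graded $\Lambda^e$-linearity of $g$, and check that the sign picked up from the factorization, the sign $(-1)^{|g||(\lambda_0,\lambda_{s+1})|}$ from Koszul, and the sign $(-1)^{t|\lambda_0|}$ in the formula for $\widetilde{\bar g}$ combine to $+1$.

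For (c), which I expect to be the main obstacle, I would evaluate both $\widetilde{\delta f}$ and the induced differential $\varphi\mapsto(-1)^{?}\varphi\circ d_{n+1}$ applied to $\tilde f$ on a general element $(\lambda_0,\dots,\lambda_{s+2})$ and compare term by term. The boundary terms come from multiplying $\lambda_0\lambda_1$ and $\lambda_{s+1}\lambda_{s+2}$ on the two ends, which, after using $\Lambda^e$-linearity of $\tilde f$, have to reproduce the first and last summands of the Hochschild differential, including the sign $(-1)^{m|\lambda_1|}$ in front of $\lambda_1\varphi(\lambda_2,\dots)$ (here the shift from $|\lambda_0|$ to $|\lambda_1|$ is absorbed by $\lambda_0$ becoming part of the bimodule action). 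The remaining internal terms match up directly since both differentials use the same alternating signs $(-1)^i$. This termwise matching, which is essentially the reason one introduces the sign $(-1)^{t|\lambda_0|}$ in the formula for $\tilde f$ in the first place, completes the verification.
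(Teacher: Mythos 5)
The paper states Lemma~\ref{hochiso} without proof, so there is no argument to compare against directly; it is a routine graded variant of the classical Cartan--Eilenberg identification of $C^{*,*}(\Lambda,M)$ with $\Hom_{\Lambda^e}(\bbB,M)$. Your three-part plan (well-definedness, explicit two-sided inverse $g\mapsto\bar g$ with $\bar g(\lambda_1,\dots,\lambda_s)=g(1,\lambda_1,\dots,\lambda_s,1)$, and a term-by-term match of $\widetilde{\delta f}$ against $\tilde f\circ d_{s+1}$) is the right structure, and the formulas you write down are the ones that make the verification close.

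However, your sign bookkeeping in parts (a) and (b) is not correct as stated, and if followed literally it will not close. In (a) you invoke a sign $(-1)^{|\tilde f|(|\mu|+|\mu'|)}$ ``demanded by the definition of a homogeneous $\Lambda^e$-linear map of degree $t$,'' but by the paper's own convention (Section~\ref{gr}: ``the shift functor $M\mapsto M[1]$ for graded \emph{right} modules does not involve any extra sign'') a degree-$t$ map of right $\Lambda^e$-modules satisfies $g(x\cdot r)=g(x)\cdot r$ with \emph{no} Koszul sign. The sign you must instead track, and which your accounting omits, comes from unwinding the right $\Lambda^e$-action on the bimodule $M$: if $m\in M$ is homogeneous, then $m\cdot(\mu,\mu')=(-1)^{|\mu||m|}\mu m\mu'$, and applied to $m=\tilde f(\lambda_0,\dots,\lambda_{s+1})$ (which has degree $|\lambda_0\cdots\lambda_{s+1}|+t$) this contributes $(-1)^{|\mu|(|\lambda_0\cdots\lambda_{s+1}|+t)}$. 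Together with the $B_s$-bimodule sign $(-1)^{|\mu||\lambda_0\cdots\lambda_{s+1}|}$ and the two formula signs $(-1)^{t|\mu\lambda_0|}$ and $(-1)^{t|\lambda_0|}$, everything cancels; the three signs you list do not. The same slip recurs in (b), where ``the sign $(-1)^{|g||(\lambda_0,\lambda_{s+1})|}$ from Koszul'' should be replaced by the sign $(-1)^{|\lambda_0|\,|g(1,\lambda_1,\dots,\lambda_s,1)|}$ coming from the $M$-bimodule action. Part (c) is fine: the first summand of $\widetilde{\delta f}$ carries $(-1)^{t|\lambda_0|+t|\lambda_1|}$, matching $(-1)^{t|\lambda_0\lambda_1|}$ from $\tilde f\circ d$, and the remaining terms agree term by term.
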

Consequently, we have 
$$\HH^{s,t}(\Lambda,M) = H^s (C^{*,t}(\Lambda,M))= H^s(\Hom_{\Lambda^e}^{t}(\bbB,M)),$$  
and thus, it holds
$$\HH^{s,t}(\Lambda,M) = \Ext^{s,t}_{\Lambda^e}(\Lambda,M).$$
Of course, the Hochschild cohomology groups can also be computed with an arbitrary
graded $\Lambda^e$-projective resolution of $\Lambda$.

\subsection{Ring structure}\label{hochring}
G. Hochschild \cite{Hochpers} proved that Hochschild cohomology  $\HH^{*}(R)$ of an algebra $R$ admits a ring structure, with multiplication given by the cup product. It is well-known that the cup product coincides with the Yoneda product (see \cite[Prop.\ 1.1]{BGSS})  and
makes $\HH^*(R)$ into a graded-commutative ring (see for example~\cite{SolICRA}).
In this section we study the ring structure of bigraded Hochschild cohomology  $\HH^{*,*}(\Lambda)$ of a graded algebra $\Lambda$.

There is a degree zero chain map $\Delta\colon \bbB \to \bbB \otimes_{\Lambda} \bbB$ 
lifting the identity map of $\Lambda$, given by
$$\Delta(\lambda_0,\cdots,\lambda_{n+1})= \sum_{i=0}^{n}(\lambda_0,\cdots \lambda_i,1)\otimes_{\Lambda} (1,\lambda_{i+1},\cdots,\lambda_{n+1}).$$
For the non-graded case, this can be found for example in \cite{SolICRA}. It carries over to our case without any additional sign adjustment.

Let $\zeta \in \HH^{m,i}(\Lambda)$ be represented by a cocycle $B^m \to \Lambda$ of degree $i$, and represent
 $\eta \in \HH^{n,j}(\Lambda)$ by a $(n,j)$-cocycle $B^n \to \Lambda$.  Then the \emph{cup product} of $\zeta$ and $\eta$ is given by the composition of graded maps
$$\bbB \xto{\Delta} \bbB \otimes_{\Lambda} \bbB \xto{\zeta \otimes \eta} \Lambda \otimes_{\Lambda} \Lambda \xto{\nu} \Lambda,$$
where $\nu\colon \Lambda \otimes_{\Lambda} \Lambda \to \Lambda$ is the multiplication map. In order to write the cup product $\zeta \cup \eta$ explicitly as a $(m+n,i+j)$-cocycle $B^{m+n} \to \Lambda$, we need to take into account the Koszul sign rule \eqref{Koszul} and obtain
\begin{equation}\label{multcupproduct}
(\zeta \cup \eta)(\lambda_0,\cdots,\lambda_{m+n+1})=(-1)^{(|\lambda_0,\cdots,\lambda_m|)\cdot j}
\zeta(\lambda_0,\cdots\lambda_m,1) \eta(1,\lambda_{m+1},\cdots \lambda_{m+n+1}).\end{equation}
The cup product makes $\HH^{*,*}(\Lambda)$ into a \emph{bigraded ring}, that is, a $\bbZ \times \bbZ$-graded ring.

The \emph{Yoneda product} $\zeta * \eta$ is given through a \emph{graded lifting} of $\eta$, i.e.\ a graded chain map $\bbB \to \bar \Sigma^n \bbB$ which lifts $\eta$. Here $ \bar \Sigma^n \bbB$  denotes the $n$-fold shift to the left of the complex $\bbB$ without changing the signs in the differential. Note that the `internal' degree of this chain map is the degree of $\eta$. 

Now we adapt the proof for the non-graded case (see \cite[Prop.\ 1.1]{BGSS}) to our setting and show that  cup product and Yoneda product coincide: 
\begin{prop}\label{cup=Yoneda}
Let $\zeta \in \HH^{m,i}(\Lambda)$ and $\eta \in \HH^{n,j}(\Lambda)$ be represented by the $(m,i)$-cocycle $\zeta\colon B^m \to \Lambda$  and the $(n,j)$-cocycle $\eta\colon B^n \to \Lambda$, respectively. Then it holds
$$\zeta \cup \eta = \zeta * \eta.$$

\end{prop}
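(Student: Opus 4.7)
The plan is to mirror the non-graded argument in \cite[Prop.\ 1.1]{BGSS}, inserting the Koszul signs forced by the $\bbZ$-grading. First, I would write down an explicit graded lifting $\tilde\eta\colon \bbB \to \bar\Sigma^n\bbB$ of $\eta$ by the formula
$$\tilde\eta_s(\lambda_0,\ldots,\lambda_{s+n+1})\ =\ (-1)^{j(|\lambda_0|+\cdots+|\lambda_s|)}\bigl(\lambda_0,\ldots,\lambda_s,\,\eta(1,\lambda_{s+1},\ldots,\lambda_{s+n+1})\bigr),$$
where the last entry $\eta(1,\lambda_{s+1},\ldots,\lambda_{s+n+1})\in\Lambda$ (by Lemma~\ref{hochiso}) is placed at position $s+1$ of $\bbB_s=\Lambda^{\otimes(s+2)}$. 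Conceptually this is the composite $(\mathrm{id}\otimes\eta)\circ\Delta$, where $\Delta$ is the coproduct recalled above and $\mathrm{id}\otimes\eta\colon\bbB\otimes_\Lambda\bbB\to\bbB\otimes_\Lambda\Lambda\cong\bbB$ is applied with the Koszul sign $(-1)^{j|x|}$ when $\eta$ of internal degree $j$ is commuted past $x=(\lambda_0,\ldots,\lambda_s,1)\in\bbB_s$ of degree $|\lambda_0|+\cdots+|\lambda_s|$.

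Second, I would verify that $\tilde\eta$ is a right $\Lambda^e$-linear chain map of internal degree $j$ which lifts $\eta$. The $\Lambda^e$-linearity is a direct check against the bimodule conventions \eqref{bimodule sign} of $\bbB_s$. The chain map identity $d_s\circ\tilde\eta_s=\tilde\eta_{s-1}\circ d_{s+n}$ is established by expanding both sides: the contractions $\lambda_i\lambda_{i+1}$ with $i<s$ match on the nose thanks to the identical Koszul prefactor, and the crossing term at $i=s$ together with all terms for $i>s$ reassemble into $\eta$ applied to the boundary $d_n(1,\lambda_{s+1},\ldots,\lambda_{s+n+1})$, which vanishes because $\eta$ is a cocycle. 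That $\tilde\eta$ lifts $\eta$ follows by setting $s=0$ and post-composing with the augmentation $\bbB_0\to\Lambda$. Consequently, the Yoneda product $\zeta*\eta$ is represented by the cocycle $\zeta\circ\tilde\eta_m$.

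Third, I would compute $\zeta\circ\tilde\eta_m$. Since $\zeta$ is right $\Lambda^e$-linear, the scalar $\eta(1,\lambda_{m+1},\ldots,\lambda_{m+n+1})\in\Lambda$ appearing in the last slot of $\tilde\eta_m(\lambda_0,\ldots,\lambda_{m+n+1})$ factors out of $\zeta$ as a right multiplication, yielding
$$(\zeta\circ\tilde\eta_m)(\lambda_0,\ldots,\lambda_{m+n+1})\ =\ (-1)^{j(|\lambda_0|+\cdots+|\lambda_m|)}\,\zeta(\lambda_0,\ldots,\lambda_m,1)\,\eta(1,\lambda_{m+1},\ldots,\lambda_{m+n+1}),$$
which is precisely the $(m+n,i+j)$-cocycle \eqref{multcupproduct} representing $\zeta\cup\eta$. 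Hence $\zeta\cup\eta=\zeta*\eta$.

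The main obstacle is sign bookkeeping: the Koszul factor in the definition of $\tilde\eta$ is rigidly determined by the requirement that it simultaneously (i)~be compatible with the bimodule conventions \eqref{bimodule sign}, (ii)~make $\tilde\eta$ a chain map via the cocycle condition on $\eta$, and (iii)~produce the prefactor of \eqref{multcupproduct} in the final composition. Once the correct sign is in place no conceptual difficulty remains; the proof is a direct termwise verification, parallel to the non-graded argument of \cite{BGSS}.
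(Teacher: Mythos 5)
Your proposal is correct and takes essentially the same approach as the paper: the key point in both is that the graded lifting $\tilde\eta$ arising from $(\mathrm{id}\otimes\eta)\circ\Delta$ carries the Koszul sign $(-1)^{j(|\lambda_0|+\cdots+|\lambda_s|)}$, which after composing with $\zeta$ reproduces exactly the cup-product cocycle \eqref{multcupproduct}. The only difference is presentational — the paper works at the level of map compositions, citing \cite[Prop.~1.1]{BGSS} and manipulating $\zeta\nu(\bbB\otimes\eta)\Delta$ via the Koszul rule, whereas you spell out the lifting elementwise and verify the chain-map property directly; both are valid instances of the same argument.
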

\begin{proof}
In \cite[Prop.\ 1.1]{BGSS}, it is shown that
$\tilde{\eta}\colon \bbB \to \bar \Sigma^n\bbB$, defined by
$$\bbB \xto{\Delta} \bbB \otimes_{\Lambda} \bbB \xto{\bbB \otimes \eta} \bbB \otimes_{\Lambda} \bar \Sigma^n \Lambda \xto{\nu} \bar \Sigma^n \bbB,$$
is a lifting of $\eta$.
Actually, this is a graded lifting of internal degree $j$.
Hence we may set
$$\zeta * \eta = \zeta  \tilde{\eta} = \zeta \nu (\bbB \otimes \eta) \Delta.$$
The Koszul sign rule \eqref{Koszul} permits the equations 
$$\zeta \nu = \nu (\zeta \otimes {\bar \Sigma^n \Lambda}) \quad \text{ and } \quad (\zeta \otimes {\bar \Sigma^n \Lambda})({\bbB} \otimes \eta)=\zeta \otimes \eta.$$
But then 
\begin{align*}
 \zeta * \eta & =  \zeta \nu (\bbB \otimes \eta) \Delta \\
 & =  \nu (\zeta \otimes {\bar \Sigma^n \Lambda}) (\bbB \otimes \eta) \Delta \\
 & =  \nu (\zeta \otimes \eta)  \Delta\\
 & =  \zeta \cup \eta.   \qedhere
\end{align*}
\end{proof}

\enlargethispage*{3cm}
Now  we are ready to prove 
\begin{thm}\label{bigraded-commutative}
$\HH^{*,*}(\Lambda)$ is a \emph{bigraded-commutative} ring: For $\zeta \in \HH^{m,i}(\Lambda)$ and $\eta \in \HH^{n,j}(\Lambda)$, we have
$$\zeta \eta = (-1)^{mn} (-1)^{ij} \eta \zeta,$$
where the multiplication is given by the cup or Yoneda product. 
\footnote{With another sign convention 
one obtains  a different result: $\zeta \, \tilde \cup \, \eta = (-1)^{(m+i)(n+j)} \eta \, \tilde \cup \, \zeta$ for $\zeta \in \HH^{m,i}(\Lambda)$ and $\eta \in \HH^{n,j}(\Lambda)$. Here the multiplication $\tilde \cup$ arises by taking into account also the external degree of the bigraded elements in the Koszul sign rule \eqref{Koszul} and thus in the cup product \eqref{multcupproduct}.
However, starting with our sign convention, one can define a new multiplication  by the rule
$\eta * \zeta = (-1)^{in} \eta \zeta$. This new multiplication
is then still associative, as is easily checked,
and satisfies $\zeta * \eta = (-1)^{(m+i)(n+j)} \eta * \zeta.$}
\end{thm}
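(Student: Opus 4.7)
The strategy is to generalise Gerstenhaber's classical proof of graded-commutativity of Hochschild cohomology to the bigraded setting. By Proposition~\ref{cup=Yoneda} it suffices to work with the cup product, and inspection of formula~\eqref{multcupproduct} reveals that only the internal degree $j$ of $\eta$ enters the Koszul sign, independently of the cohomological degrees $m$ and $n$. This independence of the two gradings on the cochain level is what will ultimately produce the factored sign $(-1)^{mn}(-1)^{ij}$ in cohomology.

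The plan is to define a cup-one product $\cup_1$ on Hochschild cochains in analogy with Gerstenhaber's. Explicitly, for $\zeta\colon B^m\to\Lambda$ and $\eta\colon B^n\to\Lambda$ I would set
$$(\zeta\cup_1\eta)(\lambda_1,\dots,\lambda_{m+n-1})=\sum_{k=0}^{m-1}(-1)^{k(n-1)+j|\lambda_1\cdots\lambda_k|}\,\zeta\bigl(\dots,\lambda_k,\eta(\lambda_{k+1},\dots,\lambda_{k+n}),\lambda_{k+n+1},\dots\bigr),$$
i.e.\ the classical Gerstenhaber formula augmented by the Koszul factor $(-1)^{j|\lambda_1\cdots\lambda_k|}$ arising from transporting $\eta$ (of internal degree $j$) past the first $k$ arguments. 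A direct computation of $\delta(\zeta\cup_1\eta)$ should then yield, up to expected terms involving $\delta\zeta$ and $\delta\eta$, a relation of the shape
$$\zeta\cup\eta\ -\ (-1)^{mn}(-1)^{ij}\,\eta\cup\zeta\ =\ \pm\,\delta(\zeta\cup_1\eta)\ \pm\ (\delta\zeta)\cup_1\eta\ \pm\ \zeta\cup_1(\delta\eta).$$
For cocycles $\zeta$ and $\eta$ the last three terms vanish, so the left-hand side is a coboundary and the desired identity holds in $\HH^{*,*}(\Lambda)$.

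The main obstacle is the sign bookkeeping. In the ungraded Gerstenhaber argument the cup-one differential produces a telescoping cancellation whose ``boundary residues'' at $k=0$ and $k=m$ contribute $\zeta\cup\eta$ and $\pm\,\eta\cup\zeta$, and the factor $(-1)^{mn}$ emerges as the sum of the resulting cancellation signs. In our bigraded setting every step additionally carries Koszul contributions whenever the internal degree $j$ of $\eta$ is commuted past a homogeneous argument or past the internal degree of $\zeta$. The verification that these extra Koszul contributions assemble precisely into the factor $(-1)^{ij}$ on the $\eta\cup\zeta$ residue (and contribute trivially to the $\zeta\cup\eta$ residue, since then $\eta$ does not need to move) is direct but delicate. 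A more conceptual variant, which I would record as an alternative, constructs the swap $\tau\colon\bbB\otimes_\Lambda\bbB\to\bbB\otimes_\Lambda\bbB$ that realises $(-1)^{mn}$ on cohomological degrees and $(-1)^{ij}$ via the Koszul rule on the internal grading, observes that $\tau\circ\Delta$ is a second $\Lambda^e$-linear graded chain lift of $\mathrm{id}_\Lambda$, and invokes the uniqueness up to chain homotopy of such lifts to conclude $\zeta\cup\eta=(-1)^{mn}(-1)^{ij}\,\eta\cup\zeta$ in $\HH^{*,*}(\Lambda)$.
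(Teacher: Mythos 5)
Your main route, generalising Gerstenhaber's $\cup_1$-product to the bigraded setting, is a genuine alternative to the paper's argument and would, if completed, yield the theorem. The paper proceeds quite differently: following \cite{BGSS} and \cite{SolICRA}, it writes down the explicit graded lift
$$\zeta'_p(\lambda_0,\dots,\lambda_{p+m+1})=(-1)^{mp}\bigl(\zeta(\lambda_0,\dots,\lambda_m,1),\lambda_{m+1},\dots,\lambda_{p+m+1}\bigr)$$
of the cocycle $\zeta$, substitutes, and finds that $\eta*\zeta=(-1)^{mn}(-1)^{ij}\,\zeta\cup\eta$ holds already as an equality of \emph{cochains}; Proposition~\ref{cup=Yoneda} then gives the theorem with no homotopy or coboundary analysis at all. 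Your route produces the weaker cohomology-level identity via a bigraded $\cup_1$-product, and you correctly identify that the burden of the argument is the sign bookkeeping — but you defer precisely that verification, so as written the proposal is a sketch rather than a complete proof.

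The ``more conceptual variant'' via a swap map contains a genuine gap. A map $\tau$ that interchanges the two factors of $\bbB\otimes_\Lambda\bbB$ is not well-defined over $\otimes_\Lambda$: it would require $\lambda b_2\otimes b_1$ and $b_2\otimes b_1\lambda$ to be identified, whereas the tensor relation only identifies $b_2\lambda\otimes b_1$ with $b_2\otimes\lambda b_1$. Even granting a twisted formulation, $\tau\circ\Delta$ fails to be $\Lambda^e$-linear, because swapping exchanges the left and right $\Lambda$-actions on $\bbB\otimes_\Lambda\bbB$. The comparison theorem for $\Lambda^e$-linear lifts of $\id_\Lambda$ therefore does not apply, and the argument collapses. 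This is precisely the obstruction that forced Gerstenhaber to invent the $\cup_1$-product: the theorem is asserted for an arbitrary graded algebra $\Lambda$, not only graded-commutative ones, and for non-commutative $\Lambda$ there is no diagonal-approximation-uniqueness shortcut. If you want to retain the swap argument you must either restrict to graded-commutative $\Lambda$ (where $\Lambda^e$ carries an anti-involution making $\tau$ equivariant) or abandon it in favour of your primary $\cup_1$-route.
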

\begin{proof}
Let $\zeta \in \HH^{m,i}(\Lambda)$ and $\eta \in \HH^{n,j}(\Lambda)$ be represented by the $(m,i)$-cocycle \linebreak $\zeta\colon B^m \to \Lambda$ and the $(n,j)$-cocycle $\eta\colon B^n\to \Lambda$, respectively. 
We carry over the proof of \cite[Thm.\ 2.1]{SolICRA} to our bigraded case. There it is shown that the chain map $\zeta'\colon \bbB \to \bar \Sigma^n \bbB$ given by
\begin{eqnarray*}
\zeta'_p\colon  B^{m+p} & \longrightarrow & B^p\\
(\lambda_0,\cdots,\lambda_{p+m+1}) & \longmapsto & (-1)^{mp}\big{(} \zeta(\lambda_0,\cdots,\lambda_m,1),\lambda_{m+1},\cdots,\lambda_{p+m+1}\big{)}
\end{eqnarray*}
is a lifting of $\zeta$. Since all $\zeta'_p$ are homogeneous maps of degree $i$, we conclude that $\zeta'$ is actually a graded lifting.
Moreover, we have that
{\small
\begin{eqnarray*}
\eta * \zeta(\lambda_0,\cdots,\lambda_{m+n+1}) & = & \eta \zeta_{n}(\lambda_0,\cdots,\lambda_{m+n+1})\\
 & = & (-1)^{mn}\eta\big{(}\zeta(\lambda_0,\cdots,\lambda_m,1),\lambda_{m+1},\cdots,
 \lambda_{n+m+1}\big{)}\\
 & = & (-1)^{mn}(-1)^{|\zeta(\lambda_0,\cdots,\lambda_{m},1)|\cdot j}
 \zeta(\lambda_0,\cdots,\lambda_m,1)\eta(1,\lambda_{n+1},\cdots,\lambda_{n+m+1})\\
 & = & (-1)^{mn}(-1)^{\big{(}i+|(\lambda_0,\cdots,\lambda_{m})|\big{)}\cdot j}
 \zeta(\lambda_0,\cdots,\lambda_m,1)\eta(1,\lambda_{m+1},\cdots,\lambda_{n+m+1})
\end{eqnarray*}}
On the other hand, it holds
$$(\zeta \cup \eta)(\lambda_0,\cdots,\lambda_{m+n+1})=(-1)^{|(\lambda_0,\cdots,\lambda_m)|\cdot j}
\zeta(\lambda_0,\cdots\lambda_m,1) \eta(1,\lambda_{m+1},\cdots \lambda_{m+n+1}).$$
We infer
$$\eta * \zeta = (-1)^{mn}(-1)^{ij} \zeta \cup \eta,$$
and the claim follows since cup and Yoneda product coincide by Proposition \ref{cup=Yoneda}.
\end{proof}

\begin{rem}\label{graded centre}
We may identify the graded ring $\HH^{0,*}(\Lambda)$ with the \emph{graded centre} of $\Lambda$, $\Zgr(\Lambda)= \{x \in \Lambda\,|\,x\lambda=(-1)^{|x||\lambda|}\lambda x\text{\; for all \;} \lambda \in \Lambda \}$, by the evaluation map
$$\HH^{0,*}(\Lambda)\to \Zgr(\Lambda), \quad f \mapsto f(1),$$
which is easily checked to be well-defined and bijective.
\end{rem}


\subsection{The cup product pairing}\label{cupproduct}
Let $\varphi\in \Hom_{\Lambda^e}(B_s,\Lambda)$ be a $(s,t)$-Hochschild cocycle 

\[ \xymatrix@C-5pt{\bbB \colon & \cdots \ar[r] &
B_s \ar[r] \ar[d]^(.6){\varphi} & B_{s-1}
\ar[r] &
\cdots \ar[r] & B_0 \ar[r] & \Lambda \ar[r] & 0 \\
& & \Lambda  }\]
Tensoring $\varphi$ over $\Lambda$ with a homomorphism of graded right
$\Lambda$-modules $f\colon X\to Y$ yields the map
$$f \otimes_{\Lambda}\varphi \colon X \otimes_{\Lambda} B_s \to Y$$
which is homogeneous of degree $t$.
The complex $\bbB  \otimes_{\Lambda} X$ is a projective resolution of $X$ in $\Modgr \Lambda$, and since this construction commutes with the differential, 
$f \otimes_{\Lambda} \varphi$ is a $(s,t)$-cocycle:
\[ \xymatrix@C-9pt{\bbB \otimes_{\Lambda} X \colon & \cdots \ar[r] &
B_s \otimes_{\Lambda} X \ar[r] \ar[d]^(.6){f \otimes_{\Lambda} \varphi} & B_{s-1} \otimes_{\Lambda} X
\ar[r] &
\cdots \ar[r] & B_0 \otimes_{\Lambda} X \ar[r] & X \ar[r] & 0 \\
& & Y  }\]
The cohomology class $f \cup \varphi $ of $f \otimes_{\Lambda} \varphi$
 only depends on the cohomology class of the cocycle $\varphi$ and hence, we obtain a well-defined map
\begin{equation}\label{cup pairing}
\cup \ \colon \ \Hom_{\Lambda}(X,Y)\otimes_{\Lambda} \HH^{s,t}(\Lambda)  \ \ra \
\Ext^{s,t}_{\Lambda}(X,Y), \ 
\end{equation}
\enlargethispage*{2cm}
called the {\em cup product pairing}.

\section{Differential graded algebras and their derived categories}\label{secdgmod}
Differential graded algebras, or shortly, dg algebras were introduced by Cartan~\cite{Cartan} in 1956. They arise as complexes with an additional multiplicative structure. For example, the endomorphism complex $\END(C)$ of a complex $C$ carries a natural dg algebra structure. Derived categories of dg algebras or  more generally, of dg categories, were first studied systematically by Bernhard Keller in `Deriving DG Categories' \cite{KDdg}.

\subsection{Differential graded algebras and modules}
A graded algebra over a commutative ring
$$A=\coprod_{n\in\bbZ}A^n$$
is called a {\em differential graded algebra} or \emph{dg algebra} if it carries a \emph{differential} $d\colon A\to A$, i.e.\ a graded $k$-linear map of degree $+1$ with the property $d^2=0$, which is required to satisfy the \emph{Leibniz rule}
$$d(xy)=d(x)y+(-1)^nxd(y)\quad\textrm{for all}\quad x\in
A^n\quad\textrm{and}\quad y\in A.$$
The cohomology of $A$ is a graded associative algebra over $k$ and denoted by $H^*A$.

A {\em dg $A$-module} is a
graded (right) $A$-module $X$ endowed with a differential $d\colon
X\to X$ satisfying the Leibniz rule
$$d(xy)=d(x)y+(-1)^nxd(y)\quad\textrm{for all}\quad x\in
X^n\quad\textrm{and}\quad y\in A.$$ 

A \emph{morphism of dg $A$-modules} is an
$A$-linear map which is homogeneous of degree zero and commutes with
the differential. We denote the category of dg $A$-modules by $\Moddg A$.

A map $f\colon X\to Y$ of dg $A$-modules is {\em null-homotopic} if
there is a graded $A$-linear map $\r\colon X\to Y$ of degree $-1$ such that $f=d_Y\comp \r+\r\comp d_X$. The
null-homotopic maps form an ideal and the {\em homotopy category}
$\K(A)$ is the quotient of $\Moddg A$ with respect to this
ideal. The homotopy category carries a triangulated structure which is
defined in the same way as for the homotopy category $\bfK(\C)$ of an additive
category $\C$.

A map $X\to Y$ of dg $A$-modules is a {\em quasi-isomorphism} if it
induces an isomorphism $H^nX\to H^nY$ in each degree $n \in \bbZ$. The {\em derived
category} of  the dg algebra $A$ is the localisation of $\K(A)$ with respect to
the class $S$ of all quasi-isomorphisms,
$$\D(A)=\K(A)[S^{-1}].$$  Note that $S$ is a
multiplicative system and compatible with the
triangulation. Therefore $\D(A)$ is triangulated and the
localisation functor $\K(A)\to \D(A)$ is exact.
\begin{rem}\label{dgremark}
(1) Any graded algebra is a dg algebra $A$ with differential zero. A non-graded 
algebra $R$ can be viewed as a dg algebra $A$ 
with $A^0=R$ and $A^n=0$, otherwise. In this case,
$\Moddg A$  can be identified with the category of complexes of $R$-modules $\bfC(\Mod R)$. Furthermore, $\K(A)$ identifies with $\bfK(\Mod R)$, the homotopy category of $\bfC(\Mod R)$, and $\D(A)$ with $\bfD(\Mod R)$, the derived category of complexes of $R$-modules. In particular, the results we state in the following sections 
carry over to the derived category of a module category over a non-graded algebra.
    
(2) Let $X,Y$ be complexes in some additive category $\C$. Then the complex $\HOM_\C(X,Y)$ is given by
$$\coprod_{n \in \bbZ} \prod_{p\in\mathbb Z}\Hom_\C(X^p,Y^{p+n}),$$
with differential 
$$d^n(\phi)=d_Y\comp \phi -(-1)^n\phi \comp d_X$$ 
for $\phi = (\phi^p)_{p \in  \bbZ} \in \Hom_\C(X^p,Y^{p+n})$.
Note that
$$H^n\HOM_\C(X,Y)\cong\Hom_{\bfK(\C)}(X,\Si^nY)$$ 
because $\Ker d^n$
identifies with $\Hom_{\bfC(\C)}(X,\Si^nY)$ and $\Im d^{n-1}$ with the
ideal of null-homotopic maps $X\to \Si^n Y$.  The composition of
graded maps yields a dg algebra structure for
$$\END_\C(X)=\HOM_\C(X,X),$$ and $\HOM_\C(X,Y)$ is a dg module over
$\END_\C(X)$.

(3) If $A$ is a dg algebra and $X,Y$ dg $A$-modules, then the homomorphism complex
$\HOM_A(X,Y) $ is defined in an analogous way:
$$\HOM_A(X,Y) = \coprod_{n \in \bbZ} \Hom_A^n(X,Y),$$
where $\Hom_A^n(X,Y)$ denotes the homogeneous graded $A$-linear maps $X\to Y$ rising the degree by $n \in \bbZ$.
The differential $d^n\colon \Hom_A^n(X,Y) \to \Hom_A^{n+1}(X,Y)$ is defined to be
$$d^n(f) = d_Y \circ f - (-1)^n f \circ d_X.$$
Also in this case, we have an isomorphism
$$H^n\HOM_A(X,Y)\cong\Hom_{\K(A)}(X,\Si^n Y).$$
The endomorphism ring $\END_A(X) = \HOM_A(X,X)$ is a dg algebra and $\HOM_A(X,Y)$ a dg module over $\END_A(X)$ by composition of graded maps. 
\end{rem}

The following well-known lemma shows that the functor 
$$\D(A)(A,-)^*= \coprod_{i \in \bbZ} \D(A)(A,-)^i$$ is naturally isomorphic to the cohomology functor $H^*$. 
\begin{lem}\label{D(A,-)^*=H^*-}
Let $A$ be a dg algebra and $H^*A$ its cohomology algebra. For any $X \in \D(A)$,
the evaluation map
$$\Ddg(A)(A,X)^* \to H^*X,\quad f \mapsto f(1),$$
is a natural isomorphism of graded $H^*A$-modules, where $\Ddg(A)(A,X)^*$ becomes a graded $H^*A$-module
via the isomorphism $\Ddg(A)(A,A)^* \cong H^*A$.
\end{lem}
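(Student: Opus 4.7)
The plan is to factor the evaluation map through two natural identifications, reducing the lemma to formal checks. First I would use that $A$, being free of rank one over itself, is K-projective as a dg $A$-module; consequently the localisation functor $\K(A)\to\D(A)$ induces bijections
$$\K(A)(A,\Sigma^i X)\xto{\cong}\D(A)(A,\Sigma^i X)$$
for every $i\in\bbZ$. By Remark~\ref{dgremark}(3) the left-hand side equals $H^i\HOM_A(A,X)$, so the next step is to identify $\HOM_A(A,X)$ with $X$. I would take this identification to be evaluation at $1$: it is a graded bijection with inverse $x\mapsto(a\mapsto x\cdot a)$, and it is a chain map since $d_A(1)=0$. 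Taking $H^i$ and composing with the previous bijection yields a natural isomorphism of graded abelian groups $\D(A)(A,X)^*\xto{\cong}H^*X$ which on cocycle representatives is precisely $f\mapsto f(1)$.

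Next I would verify $H^*A$-linearity. The right action of $\D(A)(A,A)^*$ on $\D(A)(A,X)^*$ sends $f\in\D(A)(A,\Sigma^i X)$ and $g\in\D(A)(A,\Sigma^j A)$ to $f\cdot g=(\Sigma^j f)\circ g\in\D(A)(A,\Sigma^{i+j}X)$. If $a=g(1)\in A^j$ and $x=f(1)\in X^i$ are cocycle representatives, then right $A$-linearity of $f$, combined with the fact recorded in Chapter~\ref{gr} that shifts of right modules involve no extra signs, gives
$$(f\cdot g)(1)=(\Sigma^j f)(a)=f(1\cdot a)=f(1)\cdot a=x\cdot a,$$
which matches the action of $g(1)\in H^jA$ on $f(1)\in H^iX$ induced by the right $A$-module structure of $X$. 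Specialising to $X=A$ yields a ring isomorphism $\D(A)(A,A)^*\cong H^*A$, and with respect to this isomorphism, the bijection for general $X$ is an isomorphism of graded $H^*A$-modules.

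Naturality in $X$ is inherited step by step from each of the three ingredients of the factorisation. No serious obstacle is expected; the only mild subtlety is bookkeeping the signs in the composition formula for the module structure, and these turn out to be harmless precisely because the shifts in question are shifts of right modules.
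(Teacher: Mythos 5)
The paper states this lemma as ``well-known'' and gives no proof, so there is nothing internal to compare against; your argument is, however, exactly the standard one and is correct. The factorisation through $\K(A)(A,\Sigma^i X)\cong\D(A)(A,\Sigma^i X)$ (using that $A$ is homotopically projective), then $\cong H^i\HOM_A(A,X)$, then $\cong H^iX$ via evaluation at $1$ is sound, the check that evaluation is a chain map reduces correctly to $d_A(1)=0$, and your treatment of the $H^*A$-module structure and signs is right: since shifts of graded \emph{right} modules carry no Koszul sign in this paper's conventions, $(\Sigma^j f)\circ g$ evaluated at $1$ is literally $f(1)\cdot g(1)$. One can package the module-linearity slightly more compactly by noting that evaluation at $1$ gives an isomorphism of dg right $\END_A(A)$-modules $\HOM_A(A,X)\xrightarrow{\cong} X$ covering the dg algebra isomorphism $\END_A(A)\cong A$, and then simply applying $H^*$; but this is the same content as your degree-by-degree verification, not a genuinely different route.
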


It follows in particular that $\D(A)$ is compactly generated by $A$, the free dg $A$-module of rank one.

 J. Rickard \cite{Rickardcompact} proved that  the compact objects of $\bfD(\Mod R)$, where $R$ is a ring, are the {perfect complexes}. A complex of $R$-modules is called \emph{perfect} if it is 
quasi-isomorphic to a bounded complex of finitely generated projective $R$-modules. The full subcategory of perfect complexes of $\bfD(\Mod R)$ is denoted by $\bfD^{\per}(\Mod R)$. 

This characterisation of the compact objects was extended to the derived category $\D(A)$ of a dg algebra $A$ by Neeman \cite{Ncompact}. Here $\D^{\per}(A)$ denotes the smallest thick subcategory of $\D(A)$ containing~$A$. 
\begin{prop}\cite{Ncompact}, \cite[Ch.\ 5.5]{Kchicago}.
A dg module is compact in $\D(A)$ if and only if it is contained in $\D^{\per}(A)$.
\end{prop}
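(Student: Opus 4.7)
The plan is to prove both inclusions $\D^{\per}(A) \subseteq \D(A)^c$ and $\D(A)^c \subseteq \D^{\per}(A)$, where $\D(A)^c$ denotes the full subcategory of compact objects.

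For $\D^{\per}(A) \subseteq \D(A)^c$, I would first check that the free module $A$ is itself compact. By Lemma~\ref{D(A,-)^*=H^*-}, the functor $\D(A)(A,-)$ is naturally isomorphic to $H^0(-)$, and $H^0$ commutes with arbitrary direct sums in $\D(A)$ since such direct sums are computed degreewise at the level of dg modules. Next, I would observe that $\D(A)^c$ is a thick subcategory: closure under shifts is immediate, closure under triangles follows from the five-lemma applied to the long exact sequences furnished by Lemma~\ref{T(X,-)cohomol}, and closure under direct summands is formal. Since $\D^{\per}(A)$ is by definition the smallest thick subcategory of $\D(A)$ containing $A$, it follows that $\D^{\per}(A) \subseteq \D(A)^c$.

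For the converse, the strategy is to construct, for an arbitrary $M \in \D(A)$, a sequence of perfect dg modules $X_0 \to X_1 \to \cdots$ in $\D^{\per}(A)$ together with compatible maps $X_i \to M$ whose induced map $\hocolim X_i \to M$ is an isomorphism in $\D(A)$. Such a cellular filtration is built inductively: pick a set of homogeneous generators of the graded abelian group $H^*M$, let $X_0$ be the corresponding coproduct of shifts of $A$, and use Lemma~\ref{D(A,-)^*=H^*-} to realise an evaluation map $X_0 \to M$ that is surjective on cohomology. At the inductive step, complete $X_i \to M$ to a triangle, pick a coproduct of shifts of $A$ mapping onto the cohomology of the fibre, and take $X_{i+1}$ to be the cone of the composite. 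Since $H^*$ commutes with direct sums and with the directed system defining the homotopy colimit, the induced map $\hocolim X_i \to M$ is a quasi-isomorphism, and each $X_i$ lies in $\D^{\per}(A)$ by construction.

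Now suppose $M \in \D(A)^c$. Applying $\D(A)(M,-)$ to the defining triangle
$$\coprod_{i \geq 0} X_i \xto{1-\mathrm{shift}} \coprod_{i \geq 0} X_i \longrightarrow \hocolim X_i$$
and using compactness of $M$ to commute $\Hom$ with the coproducts, one obtains the natural isomorphism $\D(A)(M, \hocolim X_i) \cong \colim_i \D(A)(M, X_i)$. Applying this to the identity $\id_M \colon M \to M \cong \hocolim X_i$ shows that $\id_M$ factors through some $X_n$, so $M$ is a direct summand of $X_n$. Thickness of $\D^{\per}(A)$ then yields $M \in \D^{\per}(A)$. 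The main obstacle is the inductive cellular construction in the preceding paragraph, together with the verification that the resulting homotopy colimit really recovers $M$; once this cellular model is in place, the compactness argument closes the proof almost immediately.
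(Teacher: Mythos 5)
Your forward inclusion is fine: $A$ is compact because $\D(A)(A,\Sigma^n-)\cong H^n(-)$ commutes with coproducts, the compact objects form a thick subcategory, and $\D^{\per}(A)$ is by definition the smallest such containing $A$. The converse, however, contains a genuine gap. In your cellular tower, $X_0$ is a coproduct of shifted copies of $A$ indexed by a generating set of the graded group $H^*M$, and each $X_{i+1}$ is the cone of another such coproduct mapping into $X_i$. These coproducts are infinite in general — compactness of $M$ gives no control on the size of $H^*M$ as a graded abelian group — and $\D^{\per}(A)$, being the smallest \emph{thick} subcategory containing $A$, is closed only under finite coproducts, not infinite ones. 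So the assertion ``each $X_i$ lies in $\D^{\per}(A)$ by construction'' is false, and the concluding deduction ``$M$ is a direct summand of $X_n$, hence $M\in\D^{\per}(A)$'' does not go through. What you have established is only the standard fact that $\id_M$ factors through a finite stage $X_n$ of a cellular tower built from (possibly infinite) coproducts.

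The real content of the proposition, and the step you have elided, is the refinement argument that turns this into a factorisation through something genuinely perfect. One uses compactness of $M$ a second time, inducting on the length $n$ of the tower: since $M$ is compact, each map from $M$ into a coproduct of shifts of $A$ factors through a finite sub-coproduct, so the attaching maps along the finite piece of the tower seen by $M$ can all be replaced by finite ones. This produces a perfect complex $X_n'\in\D^{\per}(A)$ through which $M\to X_n$ factors, exhibiting $M$ as a retract of $X_n'$, and then thickness of $\D^{\per}(A)$ finishes the argument. This is precisely the nontrivial part of Neeman's theorem; without it the proof does not close.
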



\subsection{$\K(A)$ as stable category of a Frobenius category}\label{K(A)stablecat}
Let $A$ be a dg algebra over a commutative ring $k$. Then $\Moddg A$ is an exact category with respect to the exact sequences of dg $A$-modules
$$0 \to X \to Y \to Z \to 0$$
which are split considered as sequences of graded $A$-module maps.
Furthermore, $\Moddg A$ is a Frobenius category (see Example \ref{triaex}(4) for a definition). The projective-injective objects are the dg $A$-modules $M \oplus M[1]$ with differential $\smatrix{0&\id\\ 0&0}$, where $M \in \Moddg A$. Since the maps factoring through an injective object are precisely the nullhomotopic maps, the associated stable category coincides with the homotopy category $\K(A)$. We refer to \cite[Sect.\ 8.2.3]{KinK-Z} and \cite[Sect.\ 2.2]{KDdg} for more details. 

\begin{lem}\label{choosesplit}
Let $\varphi\colon X \to Y$ be any morphism in $\K(A)$. Then $\varphi$ can be represented by a morphism $X \to \hat{ Y}$ in $\Moddg A$ which is a split monomorphism in the category of graded $A$-modules.
\end{lem}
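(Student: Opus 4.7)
The plan is to inflate $Y$ by a contractible summand so that the map $\varphi$ becomes split injective on the level of underlying graded modules, while remaining the same map in $\K(A)$. This uses the Frobenius structure on $\Moddg A$ recalled in Section~\ref{K(A)stablecat}.

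First I would fix a representative $\varphi\colon X\to Y$ in $\Moddg A$ of the given morphism in $\K(A)$, and recall the injective hull of $X$: by the description of the Frobenius structure, the dg $A$-module $I(X) = X\oplus X[1]$ with the differential $\smatrix{d_X&\id\\0&-d_{X[1]}}$ (or whatever sign-convention makes the Leibniz rule hold) is a projective-injective object, and the canonical inclusion $\iota_X\colon X\to I(X)$, $x\mapsto (x,0)$, is a morphism of dg $A$-modules which is a split monomorphism in $\Modgr A$ (the retraction $\pi_X\colon I(X)\to X$ being the obvious projection, which is graded $A$-linear but not a dg map). Moreover $I(X)$ is contractible, i.e.\ zero in $\K(A)$.

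Next I would set $\hat{Y} = Y\oplus I(X)$ and define
\[ \hat\varphi\colon X\longrightarrow \hat Y,\qquad x\longmapsto (\varphi(x),\iota_X(x)). \]
A direct check shows that $\hat\varphi$ is a morphism in $\Moddg A$, since both components are. It is split mono in $\Modgr A$: the graded $A$-linear map $(0,\pi_X)\colon \hat Y \to X$ is a retraction of $\hat\varphi$, because $\pi_X\comp \iota_X = \id_X$.

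Finally I would observe that the projection $p\colon \hat Y = Y\oplus I(X)\to Y$ is a morphism of dg $A$-modules and a homotopy equivalence, because $I(X)\cong 0$ in $\K(A)$; thus in $\K(A)$ we have the identification $\hat Y\cong Y$, and $p\comp \hat\varphi = \varphi$. Hence $\hat\varphi$ represents $\varphi$ in $\K(A)$, as required. The only non-routine point is the bookkeeping of signs in the differential of $I(X)$ needed to verify that $\hat\varphi$ genuinely commutes with the differentials, but this is fixed once the Frobenius structure has been set up, and otherwise the argument is essentially formal.
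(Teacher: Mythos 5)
Your proof is correct and follows essentially the same route as the paper: both represent $\varphi$ by a dg map $f\colon X\to Y$, pass to $\hat Y = Y\oplus I(X)$ using the injective hull $\iota_X\colon X\to I(X)$ from the Frobenius structure, take $\hat\varphi=\smatrix{f\\ \iota_X}$, and exhibit the graded retraction $\smatrix{0&\pi_X}$. Your added remarks spelling out $I(X)$ explicitly as $X\oplus X[1]$ (with the sign caveat you flag) and justifying via the projection $p\colon\hat Y\to Y$ that $\hat\varphi$ still represents $\varphi$ merely make explicit what the paper leaves implicit.
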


\begin{proof}
Since $\K(A) = \S(\Moddg A)$, we can choose a map of dg $A$-modules $f\colon X \to Y$ that induces $\varphi$ in the stable category. Let $i\colon X \to I(X)$ be the injective hull and $s\colon I(X) \to X$ the graded $A$-linear map satisfying $s\circ i = \id_X$. We set $\hat Y = Y \oplus I(X)$.
Then the map
$$\smatrix{f\\ i}\colon X \to Y \oplus I(X)$$
is clearly a monomorphism of dg $A$-modules that induces $\varphi$ in the homotopy category, 
and the graded $A$-module map
$$\smatrix{0 & s}\colon Y \oplus I(X) \to X$$
satisfies $\smatrix{0 & s} \circ \smatrix{f\\ i} = \id_{X}.$
\end{proof}

\subsection{Homotopically projective and homotopically injective dg modules}\label{sechomproj}
We use the terminology of Bernhard Keller, as presented in \cite{KinK-Z}.
Throughout this section let  $A$ be a dg algebra over some commutative ring $k$. 
We say that a dg $A$-module $X$ is \emph{homotopically projective} if
$$\K(X,Y) = 0 $$
for all acyclic dg $A$-modules $Y$. 

Dually, $X$ is called \emph{homotopically injective} if
$$\K(Y,X) = 0$$
for all acyclic dg $A$-modules $Y$. 

We denote by $\K_{\bfp}(A)$ (resp.\ $\K_{\bfi}(A)$) the full subcategory of homotopically projective (resp.\ homotopically injective) dg $A$-modules of $\K(A)$.

\begin{thm}\cite[Sect.\ 8.1.6]{KinK-Z}\begin{itemize}
\item[(1)]        
For any dg $A$ module $X$, there is a triangle 
\begin{equation*}
\bfp X \to X \to \bfa X \to \Sigma \bfp X
\end{equation*}
in $\K(A)$, where $\bfp X$ is homotopically projective and $\bfa X$ is acyclic. Any triangle $Z \to X \to Y \to \Sigma Z$ with homotopically projective $Z$ and acyclic $Y$ is isomorphic to  $(\bfp X,X,\bfa X)$, and there is a unique such isomorphism extending the identity of $X$.
\item[(2)]        
For any dg $A$ module $X$, there is a triangle 
\begin{equation*}
\bfa' X \to X \to \bfi X \to \Sigma \bfa' X
\end{equation*}
in $\K(A)$, where $\bfi X$ is homotopically injective and $\bfa' X$ is acyclic. Any triangle $Z \to X \to Y \to \Sigma Z$ with acyclic $Z$ and homotopically injective $Y$ is isomorphic to  $(\bfa' X,X,\bfi X)$, and there is a unique such isomorphism extending the identity of $X$.
\end{itemize}
\end{thm}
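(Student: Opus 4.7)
My plan: The engine is the orthogonality $\K(A)(P,\Sigma^n Y)=0$ whenever $P$ is homotopically projective and $Y$ is acyclic, which is immediate from the definition since $\Sigma^n Y$ is still acyclic. Accepting this, uniqueness in (1) is formal. Given two candidate triangles $Z\to X\to Y\to \Sigma Z$ and $Z'\to X\to Y'\to \Sigma Z'$, I apply $\K(A)(Z,-)$ to the second; the flanking terms $\K(A)(Z,Y')$ and $\K(A)(Z,\Sigma^{-1}Y')$ vanish by orthogonality, so $\K(A)(Z,Z')\to\K(A)(Z,X)$ is bijective and the identity on $X$ lifts uniquely to $\varphi\colon Z\to Z'$. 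A symmetric argument produces $\psi\colon Z'\to Z$, and uniqueness of such lifts forces $\psi\varphi=\id_Z$ and $\varphi\psi=\id_{Z'}$. Extending $\varphi$ to a morphism of triangles via (TR3) and using the analogous orthogonality $\K(A)(\bfa X,-)$-argument for the third component gives both existence and uniqueness of the isomorphism of triangles extending $\id_X$.

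For existence in (1), I would construct $\bfp X$ as a \emph{semi-free} dg $A$-module equipped with a quasi-isomorphism to $X$. The construction is transfinite: start with $P_0=\coprod_i A[n_i]$ and a map $P_0\to X$ whose image meets every cohomology class of $X$; inductively enlarge $P_{n-1}$ to $P_n$ by attaching further shifts of $A$ to kill the kernel of $H^*P_{n-1}\to H^*X$ and to cover any cocycles of $X$ not yet represented. The union $\bfp X=\bigcup_n P_n$ carries an exhaustive filtration whose subquotients are coproducts of shifts of $A$, and the induced map $\bfp X\to X$ is a quasi-isomorphism by construction. Semi-freeness implies homotopical projectivity: $A$ itself is homotopically projective because $\K(A)(A,Y)^n\cong H^nY$ by Lemma~\ref{D(A,-)^*=H^*-}, the property is stable under arbitrary direct sums, propagates through the filtration via the long exact sequence obtained by applying $\K(A)(-,Y)$ to the triangles $P_{n-1}\to P_n\to P_n/P_{n-1}$, and survives the colimit through a Milnor-type $\hocolim$ argument.

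To realise the quasi-isomorphism $\bfp X\to X$ as a distinguished triangle in $\K(A)$, I invoke the Frobenius structure on $\Moddg A$ recalled in Example~\ref{triaex}(4) and Section~\ref{K(A)stablecat}: by Lemma~\ref{choosesplit}, the class of $\bfp X\to X$ in $\K(A)$ can be represented by a map of dg modules that is a split graded monomorphism, and this map therefore fits into a short exact sequence of dg $A$-modules whose associated triangle in the stable category has the required form $\bfp X\to X\to\bfa X\to \Sigma\bfp X$; acyclicity of $\bfa X$ is then immediate from the long cohomology sequence. Part~(2) is formally dual: one builds a homotopically injective resolution $X\to\bfi X$ by the analogous transfinite procedure, attaching cofree dg modules at each stage to annihilate the cohomology of successive cofibres, and completes to a triangle using the same Frobenius-structure argument. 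The main obstacle throughout is arranging the transfinite construction so that every cocycle of $X$ is eventually represented and every spurious cohomology class of $P_n$ is eventually killed; once the semi-free resolution is in hand, the uniqueness and the production of the triangle are formal.
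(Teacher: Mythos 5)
The paper offers no proof here — it quotes the statement from Keller~\cite{KinK-Z} — so there is nothing internal to compare against; I assess your sketch directly. Part~(1) is essentially correct: the orthogonality $\K(A)(P,\Sigma^nY)=0$ for $P$ homotopically projective and $Y$ acyclic drives uniqueness, and the semi-free construction with the Milnor-triangle propagation of homotopical projectivity, completed to a triangle via Lemma~\ref{choosesplit}, gives existence. One slip: uniqueness of the third map of the triangle isomorphism follows from applying the contravariant functor $\K(A)(-,Y')$ to $Z\to X\to Y\to\Sigma Z$, using $\K(A)(\Sigma Z,Y')=0$; the ``$\K(A)(\bfa X,-)$'' you invoke is the covariant Hom out of an acyclic object and yields nothing.

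The genuine gap is the claim that part~(2) is ``formally dual''. The semi-free construction works because filtered colimits in $\Moddg A$ are exact, so the increasing union of the $P_n$ remains semi-free, the limiting map is a quasi-isomorphism, and the Milnor colimit triangle closes the argument. The naive dual replaces that union by an inverse limit of a tower, and inverse limits are not exact: a $\lp^1$ term intervenes, and neither acyclicity of the relevant cones nor homotopical injectivity passes to the limit automatically. Every standard proof of existence of homotopically injective resolutions for unbounded objects routes around this obstruction — Spaltenstein uses the truncation tower $\tau_{\ge -n}X$ (not a tower of cofibres) and arranges surjective transition maps to kill $\lp^1$; another route is Brown representability for the quotient $\K(A)\to\D(A)$; yet another is the small object argument for the injective model structure on $\Moddg A$. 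Your ``attach cofree dg modules to annihilate the cohomology of successive cofibres'' is none of these, and as written you cannot assert that the output is homotopically injective nor that $X\to\bfi X$ is a quasi-isomorphism. Once a homotopically injective $\bfi X$ is granted, the rest of your part~(2) does mirror~(1); the existence step is exactly what is missing.
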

In particular, each dg $A$ module $X$ is quasi-isomorphic to a homotopically projective (resp.\ homotopically injective) dg $A$-module and we call
\begin{align}
\bfp X \to X \quad \text{resp.} \quad X \to \bfi X
\end{align}
an \emph{homotopically projective} resp.\ \emph{homotopically injective} \emph{resolution.}
In particular, the assignments $\bfp$ and $\bfi$ can be shown to be functorial. Since they vanish on acyclic complexes, they extend to functors $\D(A) \to \K(A)$. In fact, we have
\begin{thm}\cite[Sect.\ 8.2.6]{KinK-Z}
\begin{itemize}
\item[(1)] The composition 
$$\K_{\bfp}(A) \hookrightarrow \K(A) \xto{\can} \D(A)$$
is an equivalence of triangulated categories with quasi-inverse given by
$$\bfp \colon \D(A) \to  \K_{\bfp}(A).$$ More precisely, $\bfp$ induces a fully faithful left adjoint to the quotient functor $\can\colon \K(A) \to \D(A)$.
\item[(2)] The composition 
$$\K_{\bfi}(A) \hookrightarrow \K(A) \xto{\can} \D(A)$$
is an equivalence of triangulated categories with quasi-inverse given by
$$\bfi \colon \D(A) \to  \K_{\bfi}(A).$$ More precisely, $\bfi$ induces a fully faithful right adjoint to the quotient functor $\can\colon \K(A) \to \D(A)$.
\end{itemize}
\end{thm}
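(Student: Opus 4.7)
The plan is to prove (1) in three steps, with (2) following by a dual argument using the triangle $\bfa' X \to X \to \bfi X \to \Sigma \bfa' X$. I will exploit the previous theorem repeatedly: it provides functorial triangles $\bfp X \to X \to \bfa X \to \Sigma \bfp X$ in $\K(A)$, and the functoriality of $\bfp$ (together with its vanishing on acyclics) follows from the uniqueness clause, so we automatically obtain a functor $\bfp \colon \D(A) \to \K_{\bfp}(A)$.

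First I would establish the following fully-faithfulness statement, which is really the heart of the matter: for every $X \in \K_{\bfp}(A)$ and every $Y \in \K(A)$, the canonical map
$$\K(A)(X,Y) \longrightarrow \D(A)(X,Y)$$
is bijective. The key observation is that any quasi-isomorphism $s \colon W \to X$ in $\K(A)$ admits a section in $\K(A)$: the cone $C$ of $s$ is acyclic, so applying $\K(A)(X,-)$ to the triangle $W \xto{s} X \to C \to \Sigma W$ and using $\K(A)(X,C)=0=\K(A)(X,\Sigma^{-1}C)$ (which is the very definition of homotopically projective) yields $\K(A)(X,W) \xto{s_*} \K(A)(X,X)$ bijective, so $\id_X$ lifts to some $\sigma \colon X \to W$ with $s\sigma = \id_X$ in $\K(A)$. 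Surjectivity then follows because any roof $X \xleftarrow{s} W \xto{f} Y$ is equivalent to the direct map $f\sigma$, and injectivity follows because $fs = gs$ in $\K(A)$ implies $f = g$ after precomposition with $\sigma$.

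Next I would handle essential surjectivity: for every $X \in \D(A)$, the triangle $\bfp X \to X \to \bfa X \to \Sigma \bfp X$ makes $\bfp X \to X$ into a quasi-isomorphism (since $\bfa X$ is acyclic), hence an isomorphism in $\D(A)$. Combining this with the previous step yields, for $X \in \D(A)$ and $Y \in \K(A)$, natural bijections
$$\K(A)(\bfp X, Y) \;\cong\; \D(A)(\bfp X, Y) \;\cong\; \D(A)(X, \can Y),$$
where the first isomorphism comes from fully-faithfulness (Step~1) since $\bfp X$ is homotopically projective, and the second comes from the isomorphism $\bfp X \cong X$ in $\D(A)$. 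This is precisely the adjunction $\bfp \dashv \can$, and the fact that the counit $\can \bfp X \to X$ is invertible in $\D(A)$ shows that $\bfp$ is fully faithful; restricting its image we obtain the claimed quasi-inverse $\D(A) \to \K_{\bfp}(A)$ to the composition $\K_{\bfp}(A) \hookrightarrow \K(A) \xto{\can} \D(A)$.

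The only genuinely non-formal ingredient is the existence of the section $\sigma$ in Step~1, and this is a direct consequence of the definition of $\K_{\bfp}(A)$ together with the long exact sequence obtained from Lemma~\ref{T(X,-)cohomol}; everything afterwards is a diagram chase. For part (2), the dual argument applies \emph{mutatis mutandis}: one works with $Y \in \K_{\bfi}(A)$ and the triangle ending in $\bfi X$, uses that $\K(A)(-,Y)$ kills acyclics, and concludes that the cohomological arrows reverse to produce a right adjoint $\bfi \colon \D(A) \to \K_{\bfi}(A)$ to $\can$.
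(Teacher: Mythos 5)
The paper does not prove this theorem; it is cited verbatim from Keller~\cite[Sect.\ 8.2.6]{KinK-Z}, and the surrounding text in Section~\ref{sechomproj} only records the consequences (functoriality of $\bfp,\bfi$; Corollary~\ref{K=D}). Your argument is essentially the standard proof found in the cited source, and it is correct: the decisive lemma is that a quasi-isomorphism into a homotopically projective object splits in $\K(A)$, which you deduce from the vanishing $\K(A)(X,C)=0=\K(A)(X,\Sigma^{-1}C)$ for $X$ homotopically projective and $C$ acyclic, together with the long exact sequence of Lemma~\ref{T(X,-)cohomol}; from this one reads off full faithfulness of $\K_{\bfp}(A)\to\D(A)$, essential surjectivity comes from the triangle $\bfp X\to X\to\bfa X\to\Sigma\bfp X$, and the adjunction $\K(A)(\bfp X,Y)\cong\D(A)(X,\can Y)$ follows by composing these two isomorphisms.

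One terminological slip: with $\bfp$ as the \emph{left} adjoint of $\can$, the relevant natural transformation whose invertibility is equivalent to $\bfp$ being fully faithful is the \emph{unit} $\eta_X\colon X\to\can\,\bfp X$, not the counit; the map you write, $\can\,\bfp X\to X$, is the inverse of the unit (the image in $\D(A)$ of the quasi-isomorphism $\bfp X\to X$). Since that map is an isomorphism, so is the unit, and your conclusion stands. You might also make explicit that a quasi-isomorphism between two homotopically projective objects is already an isomorphism in $\K(A)$ (apply your splitting lemma twice), which justifies both that $\bfp$ sends quasi-isomorphisms to isomorphisms and that the essential image of $\bfp$ exhausts $\K_{\bfp}(A)$; as written this is implicit but not spelled out. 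The dual argument for (2) using $\K(A)(-,Y)$ and the triangle $\bfa' X\to X\to\bfi X\to\Sigma\bfa' X$ is fine.
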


\begin{cor}\label{K=D}
For all dg $A$-modules $X$ and $Y$, we have
$$\K(A)(X,\bfi Y) \cong \D(A)(X,\bfi Y) \cong \D(A)(X,Y) \cong \D(A)(\bfp X, Y) \cong \K(A)(\bfp X,Y).$$
\end{cor}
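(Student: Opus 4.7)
My approach is to decompose the chain of five isomorphisms into two sources: the adjunctions supplied by the preceding theorem, and the quasi-isomorphisms $\bfp X \to X$ and $Y \to \bfi Y$ extracted from the triangles in the same theorem.

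First I would handle the two inner isomorphisms. The triangles $\bfp X \to X \to \bfa X \to \Sigma \bfp X$ and $\bfa' Y \to Y \to \bfi Y \to \Sigma \bfa' Y$ in $\K(A)$ have, respectively, $\bfa X$ and $\bfa' Y$ acyclic. Hence $\bfp X \to X$ and $Y \to \bfi Y$ become isomorphisms under the localisation functor $\can\colon \K(A) \to \D(A)$, which immediately yields
\begin{equation*}
\D(A)(\bfp X, Y) \;\cong\; \D(A)(X, Y) \;\cong\; \D(A)(X, \bfi Y).
\end{equation*}

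For the two outer isomorphisms I would invoke the adjunctions directly. Since $\bfp$ is a left adjoint of $\can$ (and $\can$ is the identity on objects, so I may freely identify a dg module with its image in either category), the adjunction unit gives
\begin{equation*}
\K(A)(\bfp X, Y) \;\cong\; \D(A)(X, \can Y) \;=\; \D(A)(X, Y).
\end{equation*}
Dually, $\bfi$ is a right adjoint of $\can$, so the adjunction counit gives
\begin{equation*}
\K(A)(X, \bfi Y) \;\cong\; \D(A)(\can X, Y) \;=\; \D(A)(X, Y).
\end{equation*}
Concatenating these five natural isomorphisms proves the corollary.

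I do not anticipate any real obstacle here: the statement is essentially a bookkeeping consequence of the preceding theorem. The only points that deserve a moment's attention are (i) verifying that $\bfp X \to X$ and $Y \to \bfi Y$ are genuine quasi-isomorphisms (immediate from the exact triangles, since acyclic dg modules are those killed by $H^*$), and (ii) the convention that $\can$ acts as the identity on objects, which lets the adjunction isomorphisms be read off in the clean form above without further reindexing.
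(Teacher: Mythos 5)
Your proposal is correct, and it is essentially the only reasonable way to deduce this corollary from the preceding theorem; the paper states the result without spelling out a proof, precisely because the argument is the routine combination of the adjunctions $(\bfp \dashv \can)$ and $(\can \dashv \bfi)$ for the outer isomorphisms with the fact that $\bfp X \to X$ and $Y \to \bfi Y$ are quasi-isomorphisms (acyclic cones) for the inner ones. Your bookkeeping is accurate in both directions of the adjunction isomorphisms, so there is nothing to fix.
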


\begin{rem}
A dg $A$-module is homotopically projective if and only if it is chain homotopy equivalent to a cofibrant dg module (see \cite[Rem.\ 3.17]{S}). A dg $A$-module $X$ is \emph{cofibrant} if there exists an exhaustive increasing filtration by dg $A$-submodules 
$$0 = X_0 \subseteq X_1 \subseteq \cdots \subseteq X_n \subseteq \cdots$$
such that each subquotient $X_{n+1}/X_n$ is a direct summand of shifted copies of $A$.
The derived category $\D(A)$ can also be defined in the following way: The objects are the cofibrant modules, and the morphisms are chain homotopy classes of dg $A$-module morphisms.
\end{rem}

\subsection{Derived functors}\label{derivedfun}
Let $A$ and $B$ be two dg algebras over a commutative ring $k$. A \emph{dg $(A,B)$-bimodule} is a graded $(A,B)$-bimodule which carries in addition a $k$-linear differential $d$ of degree $+1$ satisfying
$$d(axb) = (da)xb + (-1)^p a(dx)b + (-1)^{p+q}ax(db)$$
for all $a \in A^p, x \in X^q, b \in B $.

Let $M$ be any dg $A$-module. To define the tensor product $M \otimes_A X$ of dg modules, we first observe that the tensor product $M \otimes_k X$ is a dg $B$-module.
As for graded rings, the degree $n$ component is defined to be
$$(M \otimes_k X)^n = \coprod_{p+q=n}M^p \otimes X^q.$$
Additionally, we now have the differential
$$d(m \otimes x) = (dm)\otimes x + (-1)^{\vert m \vert}m \otimes dx.$$
Since the $k$-submodule generated by all differences $ma \otimes x - m \otimes ax$ is stable under both $d$ and multiplication with elements of $B$, the quotient modulo this submodule is a well-defined dg $B$-module which we denote by
$M \otimes_{A} X.$ Moreover, this construction is functorial in $M$ and $X$.

Let $N$ be a dg $B$-module. Then $\HOM_B(X,N)$, as defined in Remark \ref{dgremark}, is a right dg $A$-module by setting
$$(fa)(x) = f(ax).$$

Observe that $- \otimes_A X$ and $\HOM_B(X,-)$ induce functors between $\K(A)$ and $\K(B)$ which form an adjoint pair
$$\xymatrix{\ar @{--} \K(A)\ar@<-1ex>[rr]_{- \otimes_A X} & & \K(B) \ar@<-1ex>[ll]-_{\HOM_B(X,-)}}$$
We define the \emph{total left derived functor } $- \otimes_{A}^{\bfL} X$ as the composition
$$\D(A) \xto{\bfp} \K(A) \xto{- \otimes_{A} X} \K(B) \xto{\can} \D(B),$$
and
the \emph{total right derived functor } $\bfR \HOM_B(X,-)$ as
$$\D(B) \xto{\bfi} \K(B) \xto{\HOM_B(X,-)} \K(A) \xto{\can} \D(A).$$
Then the total derived functors also form an adjoint pair
$$\xymatrix{\ar @{--} \D(A)\ar@<-1ex>[rr]_{- \otimes_A^{\bfL} X} & & \D(B) \ar@<-1ex>[ll]-_{\bfR \HOM_B(X,-)}}.$$
In particular, we deduce from Proposition \ref{Brownprop} that 
$- \otimes_{A}^{\bfL} X$ preserves arbitrary direct sums and $\bfR \HOM_B(X,-)$ preserves arbitrary direct products.


\subsection{Cofibrant differential graded algebras}\label{cofibrantdga} 
The category of dg algebras $\mathrm{dga}/k$ over a commutative ring $k$ admits a model category structure \cite{SS2}. 
A model category is a category with three distinguished classes of morphisms, the \emph{fibrations}, \emph{cofibrations} and \emph{weak equivalences}. These are required to satisfy certain axioms. An object $C$ in a model category is called \emph{cofibrant} if the morphism $0 \to C$ is a cofibration. We refer to~\cite[Ch.\ 1.1]{Hov} for details. 

In the category $\mathrm{dga}/k$, the fibrations are the degree-wise surjective dg algebra morphisms and the weak equivalences equal the \qis s. A dg algebra is called cofibrant if it is a cofibrant object in the model category $\mathrm{dga}/k$, that is:
\begin{defn}
A dg algebra $A$ is \emph{cofibrant} if for any
morphism of dg algebras $f \colon A\to C$ and every surjective \qis\ of dg algebras $g \colon B \to C$, there exists a lift $h \colon A \to B$. That is, we have a commutative diagram
$$
\xymatrix{
  & B\ar[d]\ar[d]|>{\object@{>>}}^{g}_{\sim}  & \\
A\ar[r]^{f} \ar@{..>}[ur]^{h}& C & }
$$
\end{defn}

A direct consequence of the model category axioms for $\mathrm{dga}/k$ is
\begin{lem}\label{cofibrantreplacement}\cite[Ch.\ 1.1]{Hov}
If $A$ is any dg algebra over a commutative ring $k$, then there exists a cofibrant dg algebra $A^{\mathrm{cof}}$ and a  \qis\ 
$$\xymatrix@C-2pt{
A^{\mathrm{cof}} \ar[r]^-{\sim} & A.}$$
\end{lem}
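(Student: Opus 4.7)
The plan is to invoke the factorisation axiom of the model category $\mathrm{dga}/k$, applied to the unique morphism $0 \to A$ from the initial object to $A$.

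First I would recall the relevant model structure on $\mathrm{dga}/k$: as noted in the paragraph preceding the lemma, the fibrations are the degree-wise surjective dg algebra maps and the weak equivalences are the quasi-isomorphisms. The factorisation axiom of a model category (see \cite[Ch.~1.1]{Hov}) asserts that every morphism can be factored as a cofibration followed by a trivial fibration, i.e.\ a fibration that is simultaneously a weak equivalence.

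The main step is to apply this axiom to the morphism $0 \to A$, where $0$ denotes the initial object of $\mathrm{dga}/k$. This produces a factorisation
$$0 \;\lto\; A^{\mathrm{cof}} \;\xto{\;\sim\;}\; A,$$
where $0 \to A^{\mathrm{cof}}$ is a cofibration and $A^{\mathrm{cof}} \to A$ is a surjective quasi-isomorphism. By the very definition of a cofibrant object in a model category, the fact that $0 \to A^{\mathrm{cof}}$ is a cofibration means precisely that $A^{\mathrm{cof}}$ is cofibrant. The second map of the factorisation, being a weak equivalence in $\mathrm{dga}/k$, is a quasi-isomorphism of dg algebras, which is exactly what the lemma claims.

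There is no real obstacle here: the content of the lemma is entirely captured by the existence of the model structure on $\mathrm{dga}/k$ established in \cite{SS2}, together with the standard factorisation axiom. The only thing one has to verify, and it is immediate from the characterisations of the distinguished morphism classes recalled above, is that ``trivial fibration'' in this particular model structure translates to ``surjective quasi-isomorphism of dg algebras'', so the second factor is in particular a quasi-isomorphism as required.
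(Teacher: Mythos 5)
Your proof is correct and is exactly the argument the paper has in mind: factor $0\to A$ as a cofibration followed by a trivial (acyclic) fibration using the factorisation axiom of the model structure on $\mathrm{dga}/k$; the paper itself only cites Hovey and then remarks afterwards that the quasi-isomorphism can be taken surjective, which is precisely the trivial-fibration part of your factorisation.
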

We remark that from the model theory axioms, it follows moreover that the \qis\ $A^{\mathrm{cof}} \xto{\sim} A$ above is a surjective map \cite[Ch.\ 1.1]{Hov}.

Examples for cofibrant dg algebras are the Sullivan algebras \cite[Ch.\ 12]{FHT} which we define in the following. From now on we assume that $k$ is a field of characteristic zero. We recall the definition of the free graded-commutative algebra:

Let $V$ be a graded vector space over $k$. The elements $v \otimes w - (-1)^{|v||w|} w \otimes v$ generate an ideal $I$ in the tensor algebra $TV$. The  \emph{free graded-commutative algebra} $\Lambda V$ is quotient of the Tensor algebra $TV$ by the ideal~$I$,
$$\Lambda V = TV/I.$$
If $v_1,\cdots,v_n$ is a $k$-basis of $V$, one also writes $\Lambda(v_1,\cdots,v_n)$ for $\Lambda V$.

\begin{defn}
A \emph{Sullivan algebra} is a dg algebra of the form $(\Lambda V,d)$, where
\begin{itemize}
\item[(1)] $V=\coprod_{p \ge 1}V_p$ is a positively graded vector space
\item[(2)] $V= \bigcup_{l\ge 0} V(l)$, where $V(0) \subseteq V(1) \subseteq \cdots $ is an increasing sequence of graded subspaces such that
$$d=0 \text{\;\;on\;\;} V(0) \text{\quad  and \quad} d(V(l)) \subseteq \Lambda V(l-1) \text{\;\;for all\,\,} l \ge 1.$$
\end{itemize} 
\end{defn}

\begin{lem}\cite[Lemma 12.4]{FHT}
Every Sullivan algebra $(\Lambda V,d)$ is a cofibrant dg algebra.
\end{lem}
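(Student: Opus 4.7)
The plan is to prove the lifting property directly by induction on the filtration $V = \bigcup_{l \geq 0} V(l)$, exploiting the freeness of $\Lambda V$ as a graded-commutative algebra together with the fact that the kernel of a surjective quasi-isomorphism is acyclic. Given a surjective quasi-isomorphism $g \colon B \to C$ and a morphism $f \colon (\Lambda V, d) \to C$, the short exact sequence of complexes
\[
0 \to K \to B \xrightarrow{g} C \to 0,
\]
where $K = \ker g$, yields a long exact sequence in cohomology. Since $g$ is a quasi-isomorphism, $K$ is acyclic; this is the key tool I will use.

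By the universal property of the free graded-commutative algebra, a dg algebra morphism $h \colon \Lambda V \to B$ is determined by its values on a homogeneous basis of $V$, subject only to the constraint $d_B h(v) = h(d v)$ for each basis element $v$. I would therefore construct $h$ inductively: suppose by induction that a dg algebra map $h_{l-1} \colon \Lambda V(l-1) \to B$ is already defined and satisfies $g \circ h_{l-1} = f|_{\Lambda V(l-1)}$. Choose a basis of a graded complement of $V(l-1)$ in $V(l)$, and extend $h_{l-1}$ one basis element at a time, invoking Zorn's lemma if $V(l)/V(l-1)$ is not finitely generated.

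For each new generator $v$, the element $dv$ lies in $\Lambda V(l-1)$, so $h_{l-1}(dv) \in B$ is defined. It is a cycle, since $h_{l-1}$ commutes with the differential and $d^2 v = 0$. By surjectivity of $g$, pick $b_0 \in B$ with $g(b_0) = f(v)$. A direct computation shows that $d(b_0) - h_{l-1}(dv)$ lies in $K$ and is a cycle there; by acyclicity of $K$, it equals $d(c)$ for some $c \in K$ (of the appropriate degree, which poses no problem since $V$ is positively graded). Setting $h(v) := b_0 - c$ gives $g(h(v)) = f(v)$ and $d h(v) = h_{l-1}(dv)$, as required. The base case $l = 0$ is identical in spirit: since $d$ vanishes on $V(0)$, the same construction (with $h_{l-1}(dv) = 0$) produces cycles $h(v) \in B$ lifting $f(v)$.

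The main conceptual point, and the only step where something nontrivial is used, is the passage from ``$g$ is a surjective quasi-isomorphism'' to ``$\ker g$ is acyclic''; everything else is bookkeeping for the inductive extension. Possible pitfalls are keeping track of degrees (harmless here because $V$ is concentrated in positive degrees) and making sure the inductive choices assemble into a well-defined dg algebra morphism — which they do, precisely because $\Lambda V$ is free graded-commutative on $V$ and the induction respects the filtration $V(l)$ along which $d$ is built up.
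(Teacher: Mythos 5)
Your argument is the standard inductive cell-by-cell lifting used to prove \cite[Lemma 12.4]{FHT}, and it is correct in the setting in which FHT work, namely \emph{commutative} dg algebras: the passage from ``$g$ is a surjective quasi-isomorphism'' to ``$\ker g$ is acyclic'' via the long exact sequence, the observation that $db_0 - h_{l-1}(dv)$ is a cycle living in $\ker g$, and the correction $h(v)=b_0-c$ achieving both $g(h(v))=f(v)$ and $dh(v)=h(dv)$ are all right, and the positive grading of $V$ plus the Sullivan filtration make the induction go through.

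One subtlety deserves attention. The surrounding discussion in this paper phrases cofibrancy in the category $\mathrm{dga}/k$ of \emph{all} associative dg algebras, and there your appeal to the universal property of the free graded-commutative algebra is not quite sufficient. A linear map $V\to B$ extends to a graded algebra map $\Lambda V\to B$ only when the images of a homogeneous basis of $V$ pairwise graded-commute in $B$ and the images of odd-degree generators square to zero; for graded-commutative $B$ this is automatic (the square-zero condition because $\Char k\ne 2$), but for a non-commutative $B$ the inductively chosen lifts $h(v)=b_0-c$ need not satisfy it, and nothing in the recursion forces it. So what you have genuinely proved is cofibrancy in the commutative model category, exactly as in FHT. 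Matching the paper's literal $\mathrm{dga}/k$ framing would require either an extra argument or a restriction to commutative $B$ and $C$. This is a looseness in the paper's exposition rather than a flaw in your reasoning, but it is the one place where the bookkeeping conceals something substantive.
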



For an arbitrary dg algebra $A$, the \qis\ $A^{\mathrm{cof}} \xto{\sim} A$ which exists by Lemma~\ref{cofibrantreplacement} is not easy to compute. However, for a certain class of dg algebras, one can construct explicitly quasi-isomorphic Sullivan algebras:

\begin{prop}\cite[Prop.\ 12.1]{FHT}
Assume that $A$ is a graded-commutative dg algebra concentrated in non-negative degrees which satisfies $H^0(A)=k$. Then there exists a Sullivan algebra $(\Lambda V,d)$ and a \qis
$$\xymatrix@C-2pt{
(\Lambda V,d) \ar[r]^-{\sim} & A.}$$
\end{prop}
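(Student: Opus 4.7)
The plan is to construct $V$ and the quasi-isomorphism as a filtered colimit of Sullivan algebras $(\Lambda V^{\le n}, d)$ with compatible dg algebra morphisms $m_n \colon \Lambda V^{\le n} \to A$, built inductively on $n$ so that $H^k(m_n)$ is an isomorphism for all $k \le n$ and $H^{n+1}(m_n)$ is injective. Passing to the colimit in $n$ then yields a Sullivan algebra $(\Lambda V, d)$ together with a map to $A$ which is an isomorphism on cohomology in every degree.

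For the base case $n = 0$, take $V^{\le 0} = 0$ with $m_0 \colon k \to A$ the unit map; the hypothesis $H^0(A) = k$ makes $H^0(m_0)$ an isomorphism, while injectivity on $H^1$ holds trivially because $H^1(k) = 0$. This is precisely where the assumption $H^0(A) = k$ enters the argument: it allows the induction to start without any degree-zero generators and thereby ensures that every generator added in the course of the construction will have strictly positive degree, as required for a Sullivan algebra.

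For the inductive step from $n$ to $n+1$, I would proceed in two substages, each adding only degree-$(n+1)$ generators. In the first substage, choose cocycles $\{z_\alpha\} \subseteq A^{n+1}$ whose cohomology classes form a basis of a complement of $\operatorname{im} H^{n+1}(m_n)$ inside $H^{n+1}(A)$, and adjoin generators $\{v_\alpha\}$ of degree $n+1$ with $d v_\alpha = 0$ and $m(v_\alpha) = z_\alpha$. This makes $H^{n+1}(m)$ surjective while preserving injectivity, since the new generators are cocycles producing no new coboundaries in degree $n+1$ and mapping to classes independent from the existing image. In the second substage, pick cocycles $\{z_\beta\}$ of degree $n+2$ in the enlarged $\Lambda V$ whose classes form a basis of $\ker H^{n+2}(m)$ and which are linearly independent as elements; for each such $z_\beta$, choose $w_\beta \in A^{n+1}$ with $m(z_\beta) = d w_\beta$ and adjoin a generator $v_\beta$ of degree $n+1$ with $d v_\beta = z_\beta$ and $m(v_\beta) = w_\beta$. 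This forces $H^{n+2}(m_{n+1})$ to become injective, while the linear independence of the $z_\beta$ ensures that no new cocycles of degree $n+1$ appear, so $H^{n+1}(m_{n+1})$ remains an isomorphism. Declare the first-substage generators to lie in $V(2n+1) \setminus V(2n)$ and the second-substage generators in $V(2n+2) \setminus V(2n+1)$, so that the Sullivan condition $d V(l) \subseteq \Lambda V(l-1)$ is satisfied by construction.

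Since cohomology commutes with the filtered colimit $\Lambda V = \bigcup_n \Lambda V^{\le n}$ and $H^k(m) = H^k(m_k)$ by construction for every $k$, the colimit map $m \colon (\Lambda V, d) \to A$ is a quasi-isomorphism, and $V$ is positively graded because every generator is added at some stage $n \ge 1$ in degree $n$. The main technical obstacle is the simultaneous bookkeeping between the degree-wise extension of $m$ (surjectivity on $H^{n+1}$ through substage A, injectivity on $H^{n+2}$ through substage B) and the Sullivan filtration, together with the delicate point that substage B must be set up so as not to disturb the isomorphism already achieved on $H^{n+1}$; these are organisational matters but require care, and no substantive new ideas beyond this inductive construction are needed.
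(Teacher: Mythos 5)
The paper gives no proof of this proposition---it simply cites \cite[Prop.~12.1]{FHT}---so there is no argument in the text to compare against. What can be said is that your degree-by-degree construction is \emph{not} the one F\'elix--Halperin--Thomas use, and it has a genuine gap. FHT first adjoin, in one batch, cocycles of \emph{all} positive degrees so that $H(m_0)$ is surjective, and then iteratively kill $\ker H(m_k)$ by adjoining generators in all degrees simultaneously; each kernel class is destroyed at the very next stage, and since cohomology commutes with the filtered union the colimit map is a quasi-isomorphism. Their filtration $V(0)\subseteq V(1)\subseteq\cdots$ is by number of kernel-killing passes, not by degree, and the Sullivan condition comes for free.

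The gap in your inductive step is the assertion that Substage~B makes $H^{n+2}(m_{n+1})$ injective. After adjoining the $v_\beta$ with $dv_\beta=z_\beta$, there are \emph{new} degree-$(n+2)$ elements $v_\beta\cdot x$ with $x\in V^1$, and since $d(v_\beta x)=z_\beta x\mp v_\beta\,dx$, suitable linear combinations of these are new cocycles lying neither in the image of $H^{n+2}$ of the Substage-A algebra nor, automatically, outside $\ker H^{n+2}(m_{n+1})$. Concretely, take $A=\Lambda(a,b,c)/(ab)$ with $|a|=|b|=|c|=1$ and $d=0$. Substage~A gives $\Lambda(v_a,v_b,v_c)\to A$, $v_a\mapsto a$ etc., with $\ker H^2(m)$ spanned by $[v_av_b]$; Substage~B adjoins a single $v_1$ of degree~$1$ with $dv_1=v_av_b$. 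But then $v_av_1$ and $v_bv_1$ are new cocycles (as $v_a^2=v_b^2=0$), they are not coboundaries (the only degree-$2$ coboundary is $v_av_b$), and $H^2$ of $\Lambda(v_a,v_b,v_c,v_1)$ is $4$-dimensional while $H^2(A)$ is only $2$-dimensional; so $\ker H^2(m_1)\neq 0$ whatever cocycle of $A^1$ you choose for $m(v_1)$. Killing these new classes by further degree-$1$ generators recreates the same problem, so the iteration within degree~$1$ never terminates. The invariant ``$H^{n+2}(m_{n+1})$ injective'' is thus false after a single two-substage pass, and the delicate point you flagged is not merely organisational: one has to either switch to the FHT filtration, or allow each ``stage~$n$'' to be itself an infinite colimit rather than a finite two-substage step.
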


\begin{exm}\cite[Ch.\ 12, Exm.\ 4]{FHT}
Not every dg algebra of the form $(\Lambda V,d)$ is a Sullivan algebra: Consider  $(\Lambda(v_1,v_2,v_3),d)$, where $|v_i|=1$, and the differential is given by 
$dv_1=v_2v_3$, $dv_2=v_3v_1$, and $dv_3=v_1v_2$. This dg algebra is not a Sullivan algebra. 
However,  we can state a Sullivan algebra which is quasi-isomorphic to 
$(\Lambda(v_1,v_2,v_3),d)$:
there is a \qis\
$$\xymatrix@C-2pt{
\sigma\colon (\Lambda(w),0) \ar[r]^-{\sim} & (\Lambda(v_1,v_2,v_3),d),}$$
where $w$ is of degree $3;$ the map $\sigma$ is given by $\sigma(w)=v_1v_2v_3$.
\end{exm}

\section{$A_{\infty}$-algebras}\label{sectionAinf}
$A_{\infty}$-algebras are generalisations of dg algebras. They were invented by J. Stasheff at the beginning of the 1960s as a tool in the study of `group-like' topological spaces. In the 1990s, the relevance of $A_{\infty}$-algebras in algebra became more and more apparent.
We focus on a result of Kadeishvili  stating that the cohomology of a dg algebra is an $A_{\infty}$-algebra. Instead of Kadeishvili's Russian original paper \cite{Kade} we refer the reader to the articles \cite{K-Ainfrep} and \cite{K-IntroAinf} by Bernhard Keller.

Throughout this chapter let $k$ be a field and write shortly $\otimes$ for $\otimes_k$.
\begin{defn}
An $\Ainf$-algebra is a $\bbZ$-graded vector space 
$$A = \coprod_{p \in  \bbZ}A_p$$ together with a family of homogeneous $k$-linear maps
$$m_n\colon A^{\otimes n}\to A,\;n\geq1,$$
of degree $2-n$ satisfying the relations
\begin{itemize}
\item[(i)] $m_1 m_1 = 0$.
\item[(ii)] $m_1 m_2=m_2(m_1\otimes1 + 1\otimes m_1)$.
\item[(iii)] More generally, for all $n\geq 1$,
$$\sum (-1)^{r+st} m_u(\id^{\otimes r}\otimes m_s \otimes \id^{\otimes t} )=0,$$
where the sum runs over all decompositions $n = r+s+t$, and we set $u=r+1+t$.
\end{itemize}
\end{defn}
Note that $(A,m_1)$ is a differential complex due to (i). Condition (ii) means that $m_1$ is a graded derivation with respect to the multiplication $m_2$, and equation (iii) with $n=3$ shows that the multiplication $m_2$ is associative only up to homotopy. The map $m_3$ is called the \emph{secondary multiplication}.

\begin{rem}(1) In general, an $\Ainf$-algebra is not associative. However, its cohomology $H^*A$ with respect to the differential $m_1$ is an associative $\bbZ$-graded algebra with the multiplication induced by $m_2$.

(2) If $A$ is concentrated in degree zero, then $A=A_0$ is just an associative algebra. That is because $m_n$ is of degree $2-n$ and consequently, all $m_n$ other than $m_2$ have to vanish.

(3) If $m_n$ is trivial for all $n \geq 3$, then $A$ is a dg algebra. Conversely, each dg algebra carries an $\Ainf$-structure with $m_1$ the differential, $m_2$ the multiplication, and all other $m_n$ trivial.
\end{rem}
A morphism between two $\Ainf$-algebras $A$ and $B$ is in general not just a map $A\to B$, but something quite more complicated:
\begin{defn}\label{Ainfmorph}
A morphism of $\Ainf$-algebras $f\colon A \to B$ is a family of graded maps 
$$f_n\colon A^{\otimes n} \to B$$
of degree $1-n$ such that 
\begin{itemize}
\item[(i)] $f_1m_1 = m_1 f_1$, i.e. $f_1\colon A \to B$ is a chain map.
\item[(ii)] $f_1 m_2 = m_2 (f_1 \otimes f_1) + m_1 f_2 + f_2(m_1 \otimes \id + \id \otimes m_1)$.
\item[(iii)] More generally, for $n \geq 1$, we have
$$\sum(-1)^{r+st}f_u(\id^{\otimes r} \otimes m_s \otimes \id^{\otimes t} = 
\sum (-1)^s m_r(f_{i_1} \otimes f_{i_2} \otimes \cdots f_{i_r}),$$
where the first sum runs over all decompositions $n = r+s+t$, and we set $u=r+1+t$. The second sum runs over
$1 \leq r \leq n$ and all decompositions $n=i_1 +\cdots +i_r$. Furthermore, the sign on the right hand side is given by
$$s = (r-1)(i_1 -1) + (r-2)(i_2 -1)+ \cdots + 2(i_{r-2} -1) + (i_{r-1} -1).$$
\end{itemize}
\end{defn}
Note that equation (ii) means that $f_1$ commutes with the multiplication $m_2$ up to a homotopy given by $f_2$.  

An $\Ainf$-morphism $f\colon A \to B$ is 
\begin{itemize}
\item a \emph{quasi-isomorphism} if the chain map $f_1$ is a \qis,
\item \emph{strict} if $f_i=0$ for all $i\neq 1$, 
\item the \emph{identity morphism} if $f\colon A \to A$ is strict with $f_1 = \id_A$.
\end{itemize}
The composition of two $\Ainf$-morphisms $g \colon A \to B$ and $h \colon B \to C$ is defined as
$$(h \circ g)_n = \sum (-1)^s h_r \circ (g_{i_1} \otimes \cdots \otimes g_{i_r}),$$
where the sum and the sign are as in Definition \ref{Ainfmorph} (iii). 

\begin{thm}[Kadeishvili \cite{Kade}, see also \cite{K-IntroAinf}]
Let $A$ be an $\Ainf$-algebra. Then the cohomology $H^*A$ has an $\Ainf$-algebra structure such that 
\begin{itemize}
\item[1)] $m_1^{H^*A}=0$ and $m_2^{H^*A}$ is induced by $m_2^A$, and
\item[2)] there is a \qis\ of $\Ainf$-algebras $f\colon H^*A \to A$ lifting the identity in cohomology, i.e. $H^*f_1 = \id_{H^*A}$.
\end{itemize}
Moreover, this structure is unique up to (non unique) isomorphism of $\Ainf$-algebras.
\end{thm}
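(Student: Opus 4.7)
The plan is to construct the maps $m_n^{H^*A}$ and $f_n$ simultaneously by induction on $n$, using at each stage that a certain ``defect cochain'' on $A$ turns out to be a cocycle whose cohomology class dictates $m_n^{H^*A}$, while its boundary part determines $f_n$ via a choice of primitive.

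\textbf{Setup.} Choose for every cohomology class of $A$ a cocycle representative; this gives a $k$-linear map $f_1\colon H^*A \to A$ of degree $0$ which lands in the cocycles and induces the identity on cohomology. Then $m_1^A\circ f_1 = 0$, so if we set $m_1^{H^*A} = 0$, the first $A_\infty$-relation and the condition $f_1 m_1^{H^*A} = m_1^A f_1$ both hold. Define $m_2^{H^*A}$ to be the multiplication on $H^*A$ induced by $m_2^A$. The cochain $U_2 = m_2^A(f_1\otimes f_1) - f_1 m_2^{H^*A}$ is a cocycle (its class in $H^*A$ is zero by construction), hence a coboundary, so there exists $f_2\colon (H^*A)^{\otimes 2} \to A$ of degree $-1$ with $m_1^A\circ f_2 = U_2$. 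By the vanishing of $m_1^{H^*A}$, this is precisely the $n=2$ morphism equation of Definition \ref{Ainfmorph}.

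\textbf{Inductive step.} Suppose $m_i^{H^*A}$ and $f_i$ have been constructed for all $i<n$, satisfying all $A_\infty$-algebra relations and all $A_\infty$-morphism relations up to index $n-1$. Using only these already-constructed data, form the cochain
\[
U_n \;=\; \sum (-1)^s\, m_r^A(f_{i_1}\otimes\cdots\otimes f_{i_r}) \;-\; \sum_{\substack{r+s+t=n\\ s\geq 2,\, s<n}} (-1)^{r+st}\, f_u\bigl(\mathrm{id}^{\otimes r}\otimes m_s^{H^*A}\otimes \mathrm{id}^{\otimes t}\bigr),
\]
where the first sum runs over all decompositions $n=i_1+\cdots+i_r$ with $r\geq 2$ (so each $i_j<n$). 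The key calculation is to check that $m_1^A\circ U_n = 0$: this is a direct consequence of the inductive $A_\infty$-relations on $H^*A$ and the inductive morphism equations for $f_1,\ldots,f_{n-1}$, together with $m_1^{H^*A}=0$ (which annihilates several boundary terms). Since $f_1$ induces an isomorphism on cohomology, there is a unique graded linear map $m_n^{H^*A}\colon (H^*A)^{\otimes n}\to H^*A$ of degree $2-n$ such that $f_1\circ m_n^{H^*A}$ is cohomologous to $U_n$. Choose then $f_n\colon (H^*A)^{\otimes n} \to A$ of degree $1-n$ with $m_1^A \circ f_n = U_n - f_1\circ m_n^{H^*A}$; this is precisely the $n$-th morphism equation, and one verifies that the $n$-th $A_\infty$-relation on $H^*A$ for $m_n^{H^*A}$ follows by applying $m_1^A$ to both sides and using that $f_1$ is injective on cohomology.

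\textbf{Quasi-isomorphism and uniqueness.} By construction $f_1$ induces the identity in cohomology, so $f\colon H^*A \to A$ is an $A_\infty$-quasi-isomorphism. For uniqueness, suppose $(H^*A, m_n')$ with morphism $f'\colon H^*A \to A$ is a second such structure. One constructs inductively an $A_\infty$-isomorphism $g\colon (H^*A,m_n^{H^*A}) \to (H^*A, m_n')$ with $g_1 = \mathrm{id}$: at each stage one uses the same defect-cocycle argument applied to $f'^{-1}\circ f$ (which makes sense only at the level of $A_\infty$-morphisms, via the homotopy inverse of a quasi-isomorphism of $A_\infty$-algebras), adjusting $g_n$ and the differences $m_n' - m_n^{H^*A}$ simultaneously. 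The non-uniqueness of the choices of $f_n$ and of the primitives at each inductive step is absorbed by the non-uniqueness of the isomorphism $g$.

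\textbf{Main obstacle.} The hard part is bookkeeping the signs in the verification that $m_1^A\circ U_n = 0$. One must expand $U_n$ using the full $A_\infty$-relations for $A$ (applied via the $f_i$'s) and the inductive hypothesis on $H^*A$, and watch a large number of terms cancel in pairs thanks to the sign $(-1)^{r+st}$ and the sign occurring in the composition of $A_\infty$-morphisms. This is the combinatorial heart of Kadeishvili's argument; once it is in place, the rest of the induction, as well as the uniqueness statement, follow by the same bootstrap mechanism.
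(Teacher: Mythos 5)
The paper does not actually prove this theorem; it cites Kadeishvili \cite{Kade} and Keller \cite{K-IntroAinf}, and Construction \ref{m3construction} only carries out the first three steps of the induction, moreover in the special case where $A$ is a dg algebra (so that $m_3^A=0$, and your $U_3$ reduces to the $\Phi_3$ that appears there). Your proposal is the standard Kadeishvili induction, and your outline of the central step --- isolate a defect cochain $U_n$ from the $n$-th morphism equation, show $m_1^A\circ U_n=0$ so that $U_n$ lands in $Z^*A$, set $m_n^{H^*A}=\pi\circ U_n$, and then lift the resulting coboundary to $f_n$ --- is the right one and agrees with the paper's $n=3$ computation (up to an inessential sign: your convention $m_1^A f_n = U_n - f_1\circ m_n^{H^*A}$ differs from the paper's $m_1^A\circ f_3 = f_1\circ m_3^{H^*A}-\Phi_3$ by replacing $f_n$ with $-f_n$, which is absorbed by the freedom of choice).

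There is, however, one genuine gap: the verification that the $m_n^{H^*A}$ satisfy the $A_\infty$-relations is a separate combinatorial check from showing that $U_n$ is cocycle-valued, and your proposal does not give a correct argument for it. With $m_1^{H^*A}=0$, the $A_\infty$-relation on $H^*A$ at level $n$ reads $\sum(-1)^{r+st}m_u^{H^*A}(\id^{\otimes r}\otimes m_s^{H^*A}\otimes\id^{\otimes t})=0$ over decompositions $r+s+t=n$ with $s,u\geq 2$; since $u+s=n+1$, this forces $u,s\leq n-1$, so the relation involves only $m_2^{H^*A},\dots,m_{n-1}^{H^*A}$ and does not mention $m_n^{H^*A}$ at all, making ``the $n$-th relation for $m_n^{H^*A}$'' a misnomer. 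Moreover, ``applying $m_1^A$ to both sides'' does not parse: the relation is an identity between maps $(H^*A)^{\otimes n}\to H^*A$, and $m_1^A$ does not act on $H^*A$. The correct argument applies $f_1$ to the left-hand side of the relation, shows (using the morphism equations already established and the $A_\infty$-relations on $A$) that the result is coboundary-valued in $A$, and then invokes $\pi\circ f_1=\id_{H^*A}$ to conclude the relation holds; this is a nontrivial sign-bookkeeping step genuinely distinct from the check that $m_1^A\circ U_n=0$, and your proposal conflates the two. Finally, a smaller issue: your uniqueness sketch invokes a homotopy inverse $f'^{-1}$ of the $A_\infty$-quasi-isomorphism $f'$, but invertibility of $A_\infty$-quasi-isomorphisms up to homotopy is itself a theorem requiring proof (or a citation); it is cleaner to build the isomorphism $g$ with $g_1=\id_{H^*A}$ directly by the same defect/project/lift induction, without passing through an inverse.
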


In particular, the cohomology $H^*A$ of a dg algebra $A$ is an $\Ainf$-algebra.  We now show how to construct the secondary multiplication $m_3^{H^*A}$ of $H^*A$ and at the same time, the first three terms of the \qis\ $f\colon H^*A \to A$ lifting the identity of $H^*A$: 
\begin{construction}\label{m3construction}
Let $A$ be a dg algebra with differential $m_1^A$ and multiplication $m_2^A$. We view $H^*A$ as a complex with zero differential. Since we are working over a field, we can choose a \qis\ $f_1\colon H^*A \to A$ inducing the identity in cohomology. This amounts to choosing a representative cocycle for each cohomology class, in a linear way. Note that $f_1$ cannot be chosen to be multiplicative, but it does commute with multiplication \emph{up to coboundaries}. So we can choose a $k$-linear map of degree $-1$,
$$f_2\colon H^*A \otimes H^*A \to A,$$
satisfying
\begin{equation}\label{f_2}
m_1^A f_2(x,y) = f_1(xy) - f_1(x)f_1(y).
\end{equation}
So it holds indeed
\begin{equation}
f_1 m_2^{H^*A} = m_2^A (f_1\otimes f_1) + m_1^A f_2.
\end{equation}
Now we look for $f_3$ and $m_3$ such that
\begin{multline*}
f_1 \circ m_3^{H^*A} + f_2 \circ (m_2^{H^*A} \otimes \id - \id \otimes m_2^{H^*A})\\
+ f_3 \circ(m_1^{H^*A}\otimes \id^{\otimes 2} + \id \otimes m_1^{H^*A} \otimes \id + \id^{\otimes 2} \otimes m_1^{H^*A})\\
= m_3^A \circ ( f_1 \otimes f_1 \otimes f_1) + m_2^A \circ (f_1 \otimes f_2 - f_2 \otimes f_1) + m_1^A f_3.\\
\end{multline*}
Since $m_1^{H^*A}=0$, this simplifies into
\begin{equation}
f_1 \circ m_3^{H^*A} = m_2^A \circ (f_1 \otimes f_2 - f_2 \otimes f_1) - f_2 \circ (m_2^{H^*A} \otimes \id - \id \otimes m_2^{H^*A}) + m_1^A \circ f_3.
\end{equation}
Now one checks that the map
\begin{equation}
\Phi_3 = m_2^A \circ (f_1 \otimes f_2 - f_2 \otimes f_1) - f_2 \circ (m_2^{H^*A} \otimes \id - \id \otimes m_2^{H^*A})
\end{equation}
has its image in the \emph{cycles} $Z^*A$ of $A$. So we define 
\begin{equation}\label{m_3}
m_3^{H^*A} = \pi \circ \Phi_3,
\end{equation}
where $\pi$ denotes the quotient map $Z^*A\to H^*A$. 
Then
$$f_1 \circ m_3^{H^*A} - \Phi_3 = (f_1 \circ \pi - \id)\Phi_3$$
has its image in the coboundaries and thus we can indeed choose a $k$-linear map 
$$f_3\colon H^*A^{\otimes 3} \to A$$ of degree $-2$ such that
$$f_1 \circ m_3^{H^*A} - \Phi_3 = m_1^A \circ f_3$$
as desired. 
\end{construction}


This construction depends on some choices and the secondary multiplication is not uniquely determined. However, it determines a Hochschild class which is independent of all choices: 

\begin{prop}\label{m3Hochschild}\cite[Prop.\ 5.4]{BKS}
Let $A$ be a dg algebra over a field $k$. 
Then the secondary multiplication $m_3^{H^*A}$ of the $\Ainf$-algebra $H^*A$ 
is a $(3,-1)$-Hochschild cocycle. 
Moreover, its Hochschild class is independent of all choices in defining the maps $f_1$ and $f_2$. 
\end{prop}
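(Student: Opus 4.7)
The plan is to prove the two assertions in turn: the cocycle property of $m_3^{H^*A}$, and the independence of its Hochschild class from the choices of the maps $f_1$ and $f_2$.

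For the cocycle property, I would invoke the $A_\infty$-axiom~(iii) for $H^*A$ at $n=4$. Since $m_1^{H^*A}=0$, every summand containing an $m_1$-factor vanishes; in particular the term $m_1^{H^*A}\circ m_4^{H^*A}$ vanishes, so no $m_n$ with $n\geq 4$ enters the identity. What survives is
$$
-m_2(\id\otimes m_3) - m_2(m_3\otimes \id) + m_3(m_2\otimes \id^{\otimes 2}) - m_3(\id\otimes m_2\otimes \id) + m_3(\id^{\otimes 2}\otimes m_2) = 0,
$$
where the superscript $H^*A$ has been suppressed. Evaluating on $(x,y,z,w)\in (H^*A)^{\otimes 4}$ and expanding via the Koszul sign rule with internal degree $|m_3|=-1$, this matches the Hochschild differential with $m=-1$ term-by-term, so it reads $(\delta m_3^{H^*A})(x,y,z,w)=0$.

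For independence, I compare two instances $(f_1,f_2)$ and $(f_1',f_2')$ of the construction above, producing $m_3$ and $m_3'$, and show $m_3-m_3'$ is a Hochschild coboundary. I first treat the special case $f_1=f_1'$: here $h:=f_2-f_2'$ satisfies $m_1^A\circ h=0$ by the defining relation of $f_2$, so $h$ factors through the cocycles $Z^*A$ and yields a $(2,-1)$-Hochschild cochain $g:=\pi\circ h$ on $H^*A$. Subtracting the formulas for $\Phi_3$ and $\Phi_3'$ and pushing the difference $m_3-m_3'$ down via $\pi$, one finds
$$
(m_3-m_3')(x,y,z) = (-1)^{|x|} x\cdot g(y,z) - g(xy,z) + g(x,yz) - g(x,y)\cdot z = (\delta g)(x,y,z),
$$
a Hochschild coboundary. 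The general case $f_1\neq f_1'$ reduces to this one: since both are sections of $\pi$, one writes $f_1-f_1'=m_1^A\circ \k$ for some graded $k$-linear map $\k\colon H^*A\to A$ of degree $-1$, and uses $\k$ to modify $f_2$ into a $\tilde f_2$ compatible with $f_1'$ such that the associated $\tilde m_3$ coincides with $m_3$; then Case~1 applied to $(f_1',\tilde f_2)$ and $(f_1',f_2')$ concludes.

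The main obstacle is sign bookkeeping, both in matching the $n=4$ $A_\infty$-relation to the Hochschild cocycle identity and in verifying $m_3-m_3'=\delta g$. The reduction in the general case is also technical; an arguably cleaner alternative is to invoke Kadeishvili's uniqueness --- any two $A_\infty$-structures on $H^*A$ lifting $\id_{H^*A}$ are related by an $A_\infty$-isomorphism --- and to identify $g$ with (the image under $\pi$ of) the second component of such an $A_\infty$-isomorphism, bypassing the explicit modification of $f_2$.
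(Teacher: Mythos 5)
Your cocycle computation is correct: the $n=4$ instance of the $A_\infty$-relation with $m_1^{H^*A}=0$ collapses, after discarding all summands containing an $m_1$, to precisely $\delta\bigl(m_3^{H^*A}\bigr)=0$ up to an overall sign, and the Koszul sign coming from $|m_3^{H^*A}|=-1$ matches the internal-degree factor $(-1)^{m|\lambda_1|}$ with $m=-1$ in the Hochschild differential. Case~1 of the independence argument (fixed $f_1$) is also right: $h=f_2-f_2'$ is killed by $m_1^A$, so $g=\pi\circ h$ is a well-defined $(2,-1)$-cochain, and the difference of the two $\Phi_3$-formulas pushes down via $\pi$ exactly to $\delta g$.

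Case~2 is where there is a genuine gap. Once $f_1\neq f_1'$, the difference $h=f_2-f_2'$ is no longer cycle-valued: writing $\lambda=f_1-f_1'=m_1^A\kappa$, the defining relation for $f_2, f_2'$ gives $m_1^A h(x,y)=\lambda(xy)-f_1(x)\lambda(y)-\lambda(x)f_1(y)+\lambda(x)\lambda(y)\neq0$, so $\pi\circ h$ is undefined and Case~1 cannot be invoked directly. Your proposed repair --- modifying $f_2$ to a $\tilde f_2$ compatible with $f_1'$ whose associated $\tilde m_3$ \emph{equals} $m_3$ --- is not justified and is in fact doubtful: composing the $A_\infty$-quasi-isomorphism $(f_1,f_2,\dots)$ with an $A_\infty$-automorphism of $H^*A$ having first component $\id$ leaves the first component $f_1$ unchanged, so one cannot pass from $(f_1,f_2)$ to some $(f_1',\tilde f_2)$ while keeping $m_3$ fixed. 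What one can hope to show is equality of Hochschild classes, but that is exactly the content of the step being proved.

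The alternative you mention at the end is the correct one, and is essentially the argument in \cite[Prop.~5.4]{BKS}: by Kadeishvili's theorem, the two $A_\infty$-structures on $H^*A$ obtained from $(f_1,f_2)$ and $(f_1',f_2')$ are related by an $A_\infty$-isomorphism $\phi$ whose first component is $\id_{H^*A}$, since both quasi-isomorphisms lift the identity and $m_1^{H^*A}=0$. The $n=3$ instance of the $A_\infty$-morphism relation, after discarding the $m_1$-terms, reads $m_3-m_3'=\delta\phi_2$, exhibiting $\phi_2$ directly (it already takes values in $H^*A$, so no $\pi$ is involved) as the bounding $(2,-1)$-cochain. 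Replacing your Case~2 reduction by this argument closes the proof.
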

The Hochschild class of any choice of $m_3^{H^*A}$ is denoted by $\mu_A \in \HH^{3,-1}(H^*A)$. We are particularly interested in this Hochschild class since it determines a global obstruction for realisability (\cite{BKS}, see Section \ref{sec global obstruction}). Because of this special property it is also referred to as \emph{canonical class}. In Chapter \ref{secexamples} we will compute the secondary multiplication and its Hochschild class in some examples. 

The following proposition has applications in the Chapters \ref{sec Localising mu} 
and~\ref{sec Comparing}.

\begin{prop}\cite[Cor.\ 5.7]{BKS}\label{aboutm3} 
Let $\alpha \colon A \to B$ be a morphism of dg algebras and $H^*\alpha
\colon H^*A \to H^*B$ the induced morphism in cohomology. 
\begin{itemize}
\item[(1)] In the Hochschild group $\HH^{3,-1}(H^*A,H^*B)$, it holds
$$H^*\alpha \circ \mu_A =\mu_B \circ (H^*\alpha)^{\otimes 3},$$
where $H^*B$ is a $(H^*A,H^*A)$-bimodule through $H^*\alpha$. 
\item[(2)] If $\alpha$ is a quasi-isomorphism, then the class $\mu_A$ is mapped to 
$\mu_B$ under the induced isomorphism between the Hochschild cohomology of $H^*A$ and
$H^*B$.
\end{itemize}
\end{prop}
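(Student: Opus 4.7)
The plan is to prove part (1) by a direct computation built on Construction~\ref{m3construction}, and then to derive part (2) as a purely formal consequence. The goal in part (1) is to show that the cocycles $H^*\alpha \circ m_3^{H^*A}$ and $m_3^{H^*B} \circ (H^*\alpha)^{\otimes 3}$, both lying in $C^{3,-1}(H^*A,H^*B)$ (with $H^*B$ a bimodule over $H^*A$ via $H^*\alpha$), differ by a Hochschild coboundary.

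First I would fix choices $f_1^A, f_2^A$ yielding a representative $m_3^{H^*A}$ of $\mu_A$, and independently choices $f_1^B, f_2^B$ yielding $m_3^{H^*B}$. The key observation is that $H^*A$ has zero differential, and since we work over a field, any two chain maps $H^*A\to B$ inducing the same map in cohomology are chain homotopic. Applied to $\alpha\circ f_1^A$ and $f_1^B\circ H^*\alpha$---both chain maps $H^*A\to B$ realising $H^*\alpha$---this produces a $k$-linear $h_1\colon H^*A\to B$ of degree $-1$ with $\alpha f_1^A-f_1^B H^*\alpha=m_1^B h_1$. A parallel but more delicate argument, using the defining equation \eqref{f_2} for both $f_2^A$ and $f_2^B$ together with the Leibniz rule in $B$, yields a map $h_2\colon (H^*A)^{\otimes 2}\to B$ of degree $-2$ expressing the discrepancy $\alpha f_2^A-f_2^B(H^*\alpha\otimes H^*\alpha)$ modulo a $m_1^B$-coboundary and Leibniz-type correction terms built from $h_1$ and $f_1^A$, $f_1^B$.

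The final step is to assemble $h_1$ and $h_2$ into an explicit cochain $c\in C^{2,-1}(H^*A,H^*B)$ whose Hochschild coboundary $\delta c$ equals $H^*\alpha\circ m_3^{H^*A}-m_3^{H^*B}\circ (H^*\alpha)^{\otimes 3}$: one applies $\alpha$ to $\Phi_3^A$, substitutes the expressions for $\alpha f_1^A$ and $\alpha f_2^A$ coming from the previous step, and identifies on the nose the resulting bulk with $\Phi_3^B\circ(H^*\alpha)^{\otimes 3}$ plus a $m_1^B$-coboundary of a candidate $c$. I expect the main obstacle to be not a conceptual one but the sign bookkeeping: the Koszul convention in \eqref{multcupproduct} and the sign $(-1)^{m|\lambda_0|}$ in the Hochschild differential must be tracked through each Leibniz expansion, so a careful indexing convention has to be set up at the start of the calculation.

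For part~(2), assume $\alpha$ is a quasi-isomorphism, so $H^*\alpha$ is an isomorphism of graded algebras; it induces an isomorphism $\HH^{3,-1}(H^*A)\xrightarrow{\cong}\HH^{3,-1}(H^*B)$ which sends the class of a cocycle $\psi$ to the class of $H^*\alpha\circ\psi\circ((H^*\alpha)^{-1})^{\otimes 3}$. Composing the identity of part~(1) on the right with $((H^*\alpha)^{-1})^{\otimes 3}$, which is legitimate since $(H^*\alpha)^{\otimes 3}$ is invertible, yields $H^*\alpha\circ m_3^{H^*A}\circ((H^*\alpha)^{-1})^{\otimes 3}=m_3^{H^*B}$ in $\HH^{3,-1}(H^*B)$, giving exactly $\mu_A\mapsto\mu_B$.
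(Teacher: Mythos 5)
The paper does not supply its own proof of this statement: it is quoted as \cite[Cor.\ 5.7]{BKS}, so there is no in-paper argument for me to compare yours against. Judged on its own, your sketch of part~(1) is a plausible direct computation, and part~(2) is a correct formal consequence of~(1) together with the observation that precomposition with $((H^*\alpha)^{-1})^{\otimes 3}$ is a chain map from $C^{*,*}(H^*A,H^*B)$ to $C^{*,*}(H^*B,H^*B)$. Two points deserve scrutiny in~(1), however. First, there is a degree mismatch in the way you describe the final step. Your $h_1$ has internal degree $-1$ and your $h_2$ has internal degree $-2$ with values in $B$, yet the Hochschild $2$-cochain $c$ you need lives in $C^{2,-1}(H^*A,H^*B)$, i.e.\ it is a degree $-1$ map $(H^*A)^{\otimes 2}\to H^*B$. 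One cannot ``assemble'' $c$ directly from $h_1$ and $h_2$; what actually produces $c$ is the \emph{cohomology-class part} of the $f_2$-discrepancy: writing $u_2:=\alpha f_2^A-f_2^B(H^*\alpha)^{\otimes 2}$, your homotopy identity shows that $u_2$ minus the $h_1$-correction terms is a cocycle-valued map, and its projection $\pi_B$ to $H^*B$ is the sought $c$, while $h_2$ merely records a coboundary correction and plays no role in $c$ itself. Without isolating this $H^*B$-valued component, the assembly step cannot be carried out. Second, the claim that what remains after substitution ``identifies on the nose'' with $\Phi_3^B\circ(H^*\alpha)^{\otimes 3}$ plus coboundaries involves cross terms such as $m_2^B(f_1^B H^*\alpha\otimes u_2)$ whose $\pi_B$-image mixes the $h_1$-correction terms with the cocycle part; these must be regrouped carefully before the Hochschild coboundary $\delta c$ appears, and this regrouping is precisely the genuinely laborious part you wave at as ``sign bookkeeping.''

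A more economical route, which I would recommend at least to \emph{organise} the computation, is via $A_\infty$-morphisms: Kadeishvili's $A_\infty$-quasi-isomorphism $f^B\colon H^*B\to B$ admits a quasi-inverse $q^B\colon B\to H^*B$, and the composite $g:=q^B\circ\alpha\circ f^A\colon H^*A\to H^*B$ is an $A_\infty$-morphism with $g_1=H^*\alpha$. The $A_\infty$-morphism relation at $n=3$, specialised to the case $m_1^{H^*A}=m_1^{H^*B}=0$, reads
$$g_1\circ m_3^{H^*A}-m_3^{H^*B}\circ g_1^{\otimes 3}=m_2^{H^*B}(g_1\otimes g_2-g_2\otimes g_1)-g_2(m_2^{H^*A}\otimes\id-\id\otimes m_2^{H^*A}),$$
and the right-hand side is precisely the Hochschild coboundary $\delta g_2$ in $C^{3,-1}(H^*A,H^*B)$, with signs matching those in \eqref{def-Hochschild complex}. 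This gives~(1) immediately, with $c=g_2$, and makes transparent which object the cochain $c$ really is. Your by-hand construction of $h_1$ and the cocycle part of the $f_2$-discrepancy is essentially building $g_2$ from scratch, so the two routes agree in substance; the $A_\infty$ formulation simply does the regrouping for you and makes the degree count come out right.
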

Hence for any choice of secondary multiplications $m_3^{H^*A}$ and $m_3^{H^*B}$, the difference
$$H^*\alpha \circ m_3^{H^*A}-m_3^{H^*B}\circ (H^*\alpha)^{\otimes 3}$$
 is a $(3,-1)$-Hochschild coboundary.
If we assume in addition that the algebra map  $H^*\alpha
\colon H^*A \to H^*B$ is a \emph{monomorphism}, we can obtain equality of $H^*\alpha \circ m_3^{H^*A}$ and $m_3^{H^*B}\circ (H^*\alpha)^{\otimes 3}$ not only on the level of Hochschild classes, but even on the level of $k$-li\-near maps:

\begin{prop}\label{H*mono}
Let $\alpha \colon A \to B$ be a morphism of dg algebras and assume that $H^*\alpha
\colon H^*A \to H^*B$ is a monomorphism. Given the choices in defining $m_3^{H^*A}$, we can define $m_3^{H^*B}$ such that
$$H^*\alpha \circ m_3^{H^*A}\;= \;m_3^{H^*B}\circ (H^*\alpha)^{\otimes 3} \text{\quad in \;} \Hom_k^{-1}(H^*A^{\otimes 3},H^*B).$$
\end{prop}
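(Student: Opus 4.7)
The plan is to make compatible choices in Construction \ref{m3construction} for $A$ and $B$, using that over the field $k$ the injective graded linear map $H^*\alpha$ splits. First I would fix a graded $k$-linear section of $H^*\alpha$, i.e.\ a graded complement $W\subseteq H^*B$ with $H^*B=H^*\alpha(H^*A)\oplus W$; this also induces a complement of $H^*\alpha(H^*A)^{\otimes 2}$ in $H^*B^{\otimes 2}$.

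Starting from the already-chosen $f_1^A$ and $f_2^A$ used to define $m_3^{H^*A}$, I would define $f_1^B\colon H^*B\to B$ by declaring $f_1^B\circ H^*\alpha=\alpha\circ f_1^A$ on $H^*\alpha(H^*A)$ and picking any cocycle representatives of the classes in $W$. Since $\alpha$ sends cocycles of $A$ to cocycles of $B$ and induces $H^*\alpha$ on cohomology, $f_1^B$ is a valid section of $\pi^B$ lifting the identity of $H^*B$. Analogously, set $f_2^B\circ(H^*\alpha)^{\otimes 2}=\alpha\circ f_2^A$ on $H^*\alpha(H^*A)^{\otimes 2}$. The defining identity \eqref{f_2} holds there automatically, using that $\alpha$ is a dg algebra map and $H^*\alpha$ a ring map:
\begin{align*}
m_1^B f_2^B(H^*\alpha(x),H^*\alpha(y))
&=\alpha\bigl(m_1^A f_2^A(x,y)\bigr)
=\alpha f_1^A(xy)-\alpha f_1^A(x)\cdot\alpha f_1^A(y)\\
&=f_1^B(H^*\alpha(x)\cdot H^*\alpha(y))-f_1^B(H^*\alpha(x))\cdot f_1^B(H^*\alpha(y)).
\end{align*}
Then extend $f_2^B$ to the chosen complement of $H^*\alpha(H^*A)^{\otimes 2}$ by picking, for each basis pair $(u,v)$, any $m_1^B$-preimage of the coboundary $f_1^B(uv)-f_1^B(u)f_1^B(v)$; such a preimage exists because $f_1^B$ induces the identity in cohomology.

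With these compatible choices, I claim $\Phi_3^B\circ(H^*\alpha)^{\otimes 3}=\alpha\circ\Phi_3^A$ as maps $H^*A^{\otimes 3}\to B$. Indeed, each summand of $\Phi_3^A(x,y,z)$ as defined in \eqref{m_3} is built from $m_2^A$, $f_1^A$, $f_2^A$, and $m_2^{H^*A}$; applying $\alpha$ and using $\alpha\circ m_2^A=m_2^B\circ(\alpha\otimes\alpha)$, $\alpha\circ f_i^A=f_i^B\circ(H^*\alpha)^{\otimes i}$ (on $H^*\alpha(H^*A)^{\otimes i}$), and $H^*\alpha\circ m_2^{H^*A}=m_2^{H^*B}\circ(H^*\alpha)^{\otimes 2}$, rewrites every term as the corresponding summand of $\Phi_3^B(H^*\alpha(x),H^*\alpha(y),H^*\alpha(z))$. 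All Koszul signs match on both sides because they depend only on the degrees of $x,y,z$, which $H^*\alpha$ preserves. Applying the projection onto cohomology and using $H^*\alpha\circ\pi^A=\pi^B\circ\alpha$ on cycles yields the desired equality
$$m_3^{H^*B}\circ(H^*\alpha)^{\otimes 3}=H^*\alpha\circ m_3^{H^*A}$$
literally as $k$-linear maps, not merely modulo Hochschild coboundaries.

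The main obstacle is purely organisational: arranging $f_1^B$ and $f_2^B$ so that they restrict to $\alpha\circ f_1^A$ and $\alpha\circ f_2^A$ on the image of $H^*\alpha$ while still satisfying the Kadeishvili defining relations on all of $H^*B$. Monomorphicity of $H^*\alpha$ is used precisely to produce the splittings that make this prescription consistent. Once the choices are synchronised, the sign bookkeeping in $\Phi_3$ is automatic since every operation involved commutes with $\alpha$.
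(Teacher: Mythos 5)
Your proposal is correct and follows essentially the same route as the paper's own proof: define $f_1^B$ and $f_2^B$ on the image of $H^*\alpha$ by $f_1^B\circ H^*\alpha=\alpha\circ f_1^A$ and $f_2^B\circ(H^*\alpha)^{\otimes 2}=\alpha\circ f_2^A$ (equivalently, by conjugating with $(H^*\alpha)^{-1}$ on the image), then use the field hypothesis to extend $k$-linearly so that the Kadeishvili relations still hold. The paper states this more tersely and leaves the verification that $\Phi_3^B\circ(H^*\alpha)^{\otimes 3}=\alpha\circ\Phi_3^A$ implicit, whereas you spell out the induced complement of $H^*\alpha(H^*A)^{\otimes 2}$ and the Koszul-sign cancellation; the underlying idea is the same.
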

\begin{proof}
Let $f_1^A\colon H^*A\to A$ and $f_2\colon H^*A \otimes H^*A \to A$ be any choices of the first two components of a \qis\ 
$f\colon H^*A \to A$ lifting the identity.
We define a graded degree zero map $f_1^B\colon H^*B \to B$  by
$$f_1^B = \alpha \circ f_1^A \circ (H^*\alpha)^{-1}$$
on the image of the monomorphism $H^*\alpha$ and extend this map $k$-linearly to a graded map inducing the identity in cohomology.
This is indeed possible  because we are working over a field
and since
$$H^*f_1^B(H^*\alpha(x))= H^*\alpha \circ f_1^A(x)=H^*\alpha(x).$$
Now we define $f_2^B$. On the image of $H^*\alpha \otimes  H^*\alpha$ we set
$$f_2^B = \alpha \circ f_2 \circ (H^*\alpha \otimes H^*\alpha)^{-1}.$$
We then extend $f_2^B$ $k$-linearly to a degree $-1$ map satisfying
$$d f_2^B(x,y) = f_2^B(x)f_2^B(y) - f_2^B(xy)$$
for all $x,y \in H^*B$.
Our choices for $f_1^B$ and $f_2^B$ then automatically yield
\begin{equation*}
H^*\alpha \circ m_3^{H^*A}=m_3^{H^*B}\circ (H^*\alpha)^{\otimes 3}.\qedhere
\end{equation*}
\end{proof}

\begin{rem}\cite[Exm.\ 7.7]{BKS}\label{m3Kunneth}
Let $A$ and $B$ dg algebras over a field $k$ and $m_3^{H^*A}$ resp.\ $m_3^{H^*B}$ secondary multiplications of their cohomology. Then under the K\"unneth isomorphism $H^*(A \otimes_k B) \cong H^*A \otimes_k H^*B$, the canonical class $\mu_{A \otimes B}$ is represented by the cocycle
\begin{multline*}
m(x_1 \otimes y_1,x_2 \otimes y_2,x_3 \otimes y_3) \ = \\
(-1)^{|x_3||y_1|+|x_3||y_2|+|x_2||y_1|}  m_3^{H^*A}(x_1,x_2,x_3) \otimes y_1y_2y_3 + x_1x_2x_3 \otimes m_3^{H^*B}(y_1,y_2,y_3).
\end{multline*}
\end{rem}

\section{Localisation in triangulated categories}\label{sec Local tria}
The classical localisation $S^{-1}R$ of a commutative ring $R$ with respect to a multiplicatively closed subset $S \subseteq R$ gives rise to the functor 
 $- \otimes_R S^{-1}R\colon  \Mod R \to \Mod S^{-1}R$. It assigns to an $R$-module $M$ the $S^{-1}R$-module $S^{-1}M$, whose elements are fractions $\frac{m}{s}, m \in M, s \in S$. 
The tensor functor  $- \otimes_R S^{-1}R$ is right adjoint to  $\Hom_{S^{-1}R}(S^{-1}R,-)$, and it is well-known that the latter functor is fully faithful.
 
This \emph{calculus of fractions} has been generalised by Gabriel and Zisman \cite{GZ} to arbitrary categories. In his th\`ese \cite{V}, Verdier applied this to introduce
localisation of triangulated categories. In particular, he invented the \emph{Verdier quotient} which is a quotient category $\T/\B$ of a triangulated category $\T$ by a triangulated subcategory $\B$.

In the first two sections we recall categories of fractions and localisation functors for arbitrary categories. Localisation of triangulated categories, in particular the Verdier quotient, will be introduced in Section \ref{sec Quotient categories}. Localisation functors of triangulated cate\-gories give rise to localisation sequences, which we define in Section \ref{sec Localisation sequences}, and those localisation sequences which are at the same time co-localisation sequences, the \emph{re\-colle\-ments}, are considered in Section \ref{recoll}. In the last section of this chapter we study cohomological localisations. These localisations are a key tool to prove our results stated in the Chapters \ref{seclift} and \ref{sec Realisability and localisation}.

 
\subsection{Categories of fractions}
A functor $F\colon \C \to \D$ is said to \emph{invert} a morphism $\sigma$ of $\C$ if $F\sigma$ is invertible. For a category $\C$ and any class of morphisms $\Sigma$ of $\C$ there exists (after taking the necessary set-theoretic precautions) the \emph{category of fractions} $\C[\Sigma^{-1}],$ together with a canonical functor 
$$Q_{\Sigma}\colon \C \to \C[\Sigma^{-1}]$$
having the following properties:
\begin{itemize}
\item[(Q1)] $Q_{\Sigma}$ makes the morphisms in $\Sigma$ invertible.
\item[(Q2)] If a functor $F\colon \C \to \D$ makes all morphisms in $\Sigma$ invertible, then there is a unique functor $G\colon \C[\Sigma^{-1}] \to \D$ such that $F = G \circ Q_{\Sigma}$.
\end{itemize}

An explicit construction of the category $\C[\Sigma^{-1}]$ can be found in the book of Gabriel and Zisman \cite{GZ}.

Let $\C, \D$ be categories and
$$\xymatrix{\ar @{--}\C \ar@<-1ex>[r]_-F & \D \ar@<-1ex>[l]_-G}$$
an adjoint pair of functors, that is,
$F\colon \C \to \D$ and $G\colon \D \to \C$ are a pair of functors such that $G$ is right adjoint to $F$. By $\eta\colon \id_{\C} \to G \circ F$  we denote the unit, and by  $\varepsilon\colon F \circ G \to \id_{\D}$ the counit of the adjunction. 
\begin{lem}\cite[Ch.\ I, Prop.\ 1.3]{GZ}\label{GabrielZisman}
Let $\Sigma$ be the set of morphisms $\sigma$ of $\C$ such that $F\sigma$ is invertible. The following are equivalent:
\begin{itemize}
\item [(1)] The functor $G$ is fully faithful.
\item [(2)] The counit of the adjunction $\varepsilon \colon F \circ G \to \id_{\D}$ is invertible.
\item [(3)] The functor $\bar{F}\colon \C[\Sigma^{-1}] \to \D$ satisfying $F = \bar{F} \circ Q_{\Sigma}$ is an equivalence.
\end{itemize}
\end{lem}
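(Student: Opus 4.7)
The plan is a triangle of implications $(1) \Leftrightarrow (2) \Leftrightarrow (3)$, exploiting that $(1) \Leftrightarrow (2)$ has nothing to do with the localisation and is purely a statement about the adjunction $(F, G)$.

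First I would establish $(1) \Leftrightarrow (2)$ by the standard trick with triangle identities. Under the adjunction bijection $\Hom_{\D}(FGD, D') \cong \Hom_{\C}(GD, GD')$, the map
\[
G \colon \Hom_{\D}(D, D') \longrightarrow \Hom_{\C}(GD, GD')
\]
corresponds to precomposition with $\varepsilon_D \colon FGD \to D$. By Yoneda applied to the representable functor $\Hom_{\D}(-, D')$, this map is bijective for all $D'$ precisely when $\varepsilon_D$ is an isomorphism; varying $D$ yields the equivalence.

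Next, for $(2) \Rightarrow (3)$, the very definition of $\Sigma$ forces $F$ to invert every $\sigma \in \Sigma$, so by property (Q2) the factorisation $F = \bar F \circ Q_\Sigma$ exists uniquely. I would construct a candidate quasi-inverse $\bar G := Q_\Sigma \circ G$. The counit $\varepsilon$ descends to $\bar F \bar G = FG \to \id_\D$, which is a natural isomorphism by (2). For the other composite, the key observation is that (2) forces the unit $\eta_C \colon C \to GFC$ to lie in $\Sigma$: the triangle identity $\varepsilon_{FC} \circ F\eta_C = \id_{FC}$ together with the invertibility of $\varepsilon_{FC}$ makes $F\eta_C$ an isomorphism in $\D$, so $\eta_C \in \Sigma$ by definition of $\Sigma$. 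Hence $Q_\Sigma \eta_C$ is invertible in $\C[\Sigma^{-1}]$, and these isomorphisms assemble into a natural iso $\id_{\C[\Sigma^{-1}]} \xrightarrow{\sim} \bar G \bar F$.

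Finally, for $(3) \Rightarrow (2)$, I would verify $\varepsilon_D$ is invertible for every $D \in \D$. Essential surjectivity of $\bar F$ provides for each $D$ an object $X \in \C$ and an iso $\alpha \colon FX \xrightarrow{\sim} D$; naturality of $\varepsilon$ gives $\varepsilon_D = \alpha \circ \varepsilon_{FX} \circ (FG\alpha)^{-1}$, reducing the claim to showing $\varepsilon_{FX}$ is an iso for all $X$. Full faithfulness of $\bar F$ translates this into the statement that $Q_\Sigma \eta_X$ is an isomorphism in $\C[\Sigma^{-1}]$; the triangle identity delivers one composite as the identity, while the other composite must be extracted by applying $\bar F^{-1}$ to the naturality square of $\varepsilon$ at $F\eta_X$, together with the observation that $\bar F$ reflects isomorphisms. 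The main obstacle is precisely this implication: since $Q_\Sigma$ need not admit a right adjoint, one cannot simply compose adjunctions $\bar F \dashv \bar F^{-1}$ and $F \dashv G$ to produce a formal identification $Q_\Sigma G \cong \bar F^{-1}$, and one must argue concretely with the adjunction data $(\eta, \varepsilon)$ and the equivalence data of $\bar F$ to promote the formal splitting in $\C[\Sigma^{-1}]$ to a genuine two-sided inverse in $\D$.
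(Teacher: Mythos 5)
The paper itself does not prove this lemma; it is cited directly from Gabriel--Zisman, so I am assessing your proposal on its own merits. Your argument for $(1) \Leftrightarrow (2)$ is correct and standard. Your $(2) \Rightarrow (3)$ is also correct, provided you make explicit that $- \circ Q_\Sigma$ is fully faithful on functor categories (a standard part of the universal property of $\C[\Sigma^{-1}]$), so that the isomorphisms $Q_\Sigma\eta_C$ really do descend to a natural isomorphism $\id_{\C[\Sigma^{-1}]} \to \bar G \bar F$, where $\bar G := Q_\Sigma G$.

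For $(3) \Rightarrow (2)$ there is a genuine gap, which you yourself flag. The reduction to $\varepsilon_{FX}$ via essential surjectivity and the translation to ``$Q_\Sigma\eta_X$ is invertible'' are fine; the triangle identity $\varepsilon_{FX}\circ F\eta_X = \id_{FX}$ lifts along the fully faithful $\bar F$ to a left inverse $\bar\varepsilon_X$ of $Q_\Sigma\eta_X$. But your proposed use of the naturality square of $\varepsilon$ at $F\eta_X$ does not close the loop: after lifting through $\bar F$ it gives $\bar\varepsilon_{GFX}\circ Q_\Sigma GF\eta_X = Q_\Sigma\eta_X\circ\bar\varepsilon_X$, and to identify the left-hand side with $\id_{Q_\Sigma GFX}$ one would need $Q_\Sigma GF\eta_X = Q_\Sigma\eta_{GFX}$. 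This is not available --- naturality of $\eta$ only yields $Q_\Sigma GF\eta_X\circ Q_\Sigma\eta_X = Q_\Sigma\eta_{GFX}\circ Q_\Sigma\eta_X$, and cancelling $Q_\Sigma\eta_X$ on the right is exactly what is to be proved.

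The missing idea is that $\bar F \dashv \bar G$ holds \emph{unconditionally}, with counit $\varepsilon$: descend $Q_\Sigma\eta\colon Q_\Sigma \to \bar G\bar F Q_\Sigma$ to $\bar\eta\colon\id_{\C[\Sigma^{-1}]}\to\bar G\bar F$ exactly as in your $(2)\Rightarrow(3)$ step, but without yet knowing it is invertible; the two triangle identities for $(\bar F,\bar G,\bar\eta,\varepsilon)$ then follow by evaluating those for $(F,G,\eta,\varepsilon)$ on objects $Q_\Sigma X$, using $\bar F Q_\Sigma = F$ and $\bar\eta_{Q_\Sigma X}=Q_\Sigma\eta_X$, and on objects $D$ of $\D$, using $\bar G D = Q_\Sigma GD$. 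Once this adjunction is in hand, $(3)\Rightarrow(2)$ is the abstract fact that in any adjunction whose left adjoint is an equivalence, the unit and counit are invertible: $\bar F$ fully faithful forces $\bar\eta$ invertible, then $\varepsilon_{\bar F \bar X}=(\bar F\bar\eta_{\bar X})^{-1}$ by the triangle identity, and essential surjectivity of $\bar F$ together with naturality of $\varepsilon$ handles a general $D$. Your concern that $Q_\Sigma$ need not admit a right adjoint is immaterial here: one is not composing adjunctions, one exhibits the adjunction $\bar F\dashv\bar G$ directly by verifying the triangle identities.
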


\subsection{Localisation functors}
Let $F\colon \C \to \D$ and $G\colon \D \to \C$ be an adjoint pair of functors satisfying the equivalent conditions of Lemma \ref{GabrielZisman}, and set $L=G\circ F$. The following well-known lemma shows that the pair $(F,G)$ can be reconstructed from $L$ and the adjunction unit $\Psi\colon \id_{\C} \to L$.
\begin{lem}\label{LRQ}
Let $L\colon \C\to\C$ be a functor and $\Psi\colon\id_{\C}\to L$  a natural transformation. The following are equivalent:
\begin{itemize}
\item[(1)] The map $L\Psi\colon L \to L^2$ is invertible and $L\Psi=\Psi L$.
\item[(2)] There exists a pair of functors $F\colon \C\to\D$ and $G\colon \D\to\C$ such that $F$ is left adjoint to $G$, $G$ is fully faithful, $L=G\circ F$, and $\Psi\colon \id_{\C}\to G\circ F$
is the unit of the adjunction. 
\end{itemize}
\end{lem}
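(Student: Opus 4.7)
The statement is the classical characterisation of an idempotent monad, so the plan is to unpack each direction through the standard adjunction formalism while being careful about the identity $L\Psi = \Psi L$.

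For the direction (2) $\Rightarrow$ (1), I would denote by $\varepsilon\colon F\circ G \to \id_{\D}$ the counit and invoke Lemma \ref{GabrielZisman} to conclude that $\varepsilon$ is invertible (since $G$ is fully faithful). Using the triangle identity $\varepsilon F \circ F\Psi = \id_F$, invertibility of $\varepsilon$ forces $F\Psi$ to be invertible, and applying $G$ shows that $L\Psi = GF\Psi$ is invertible as required. For the equality $L\Psi = \Psi L$, I would fix $X \in \C$ and exhibit both $L\Psi_X$ and $\Psi_{LX}$ as right inverses to the same isomorphism $G\varepsilon_{FX}$: the second triangle identity $G\varepsilon \circ \Psi G = \id_G$ evaluated at $FX$ gives $G\varepsilon_{FX} \circ \Psi_{LX} = \id_{LX}$, while applying $G$ to $\varepsilon F \circ F\Psi = \id_F$ gives $G\varepsilon_{FX} \circ L\Psi_X = \id_{LX}$. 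Since $G\varepsilon_{FX}$ is an isomorphism, its right inverse is unique, so $L\Psi_X = \Psi_{LX}$.

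For the direction (1) $\Rightarrow$ (2), I would construct $\D$ as the full subcategory of $\C$ whose objects are those $Y$ for which $\Psi_Y\colon Y \to LY$ is invertible. The hypothesis $L\Psi = \Psi L$ with $L\Psi$ invertible says exactly that $\Psi_{LX} = L\Psi_X$ is an isomorphism for every $X$, so $LX \in \D$, and hence $L$ factors as $G \circ F$ with $G\colon \D \hookrightarrow \C$ the inclusion (which is automatically fully faithful) and $F\colon \C \to \D$ sending $X$ to $LX$. The adjunction $F \dashv G$ with unit $\Psi$ is established by verifying that for each $X \in \C$ and $Y \in \D$, the map
\[
\C(LX,Y) \longrightarrow \C(X,Y), \qquad f \longmapsto f \circ \Psi_X,
\]
is a bijection. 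Surjectivity comes from the explicit inverse $h \mapsto \Psi_Y^{-1} \circ Lh$, whose composition with $\Psi_X$ recovers $h$ by naturality of $\Psi$ at $h$. Injectivity follows by applying $L$ and using that $L\Psi_X$ is invertible, combined with the naturality formula $f = \Psi_Y^{-1} \circ Lf \circ \Psi_{LX}$ valid for $Y \in \D$.

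The only delicate point in the argument is the equality $L\Psi = \Psi L$ in (1); it is tempting but insufficient to invoke naturality of $\Psi$ applied to the morphism $\Psi_X$, which only produces the identity $L\Psi_X \circ \Psi_X = \Psi_{LX} \circ \Psi_X$ and does not yield equality unless $\Psi_X$ is epi. The trick, as indicated above, is the uniqueness of the right inverse of the isomorphism $G\varepsilon_{FX}$. All remaining verifications are bookkeeping with the triangle identities and naturality, and once the subcategory $\D$ in (1) $\Rightarrow$ (2) has been identified, everything else is forced.
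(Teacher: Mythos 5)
Your proof is correct and, for the direction $(1)\Rightarrow(2)$, uses exactly the construction that the paper sketches (full subcategory of $L$-local objects, $F=L$, $G=$ inclusion, unit $\Psi$); the paper itself omits the verifications and refers to Krause's note. Your treatment of the direction $(2)\Rightarrow(1)$, in particular the identification of both $L\Psi_X$ and $\Psi_{LX}$ as right inverses of the isomorphism $G\varepsilon_{FX}$ to force $L\Psi = \Psi L$, is the standard argument and correctly handles the one point that naturality alone does not settle.
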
 
For a proof we refer to \cite{Krcohom}. However, we sketch how the pair $(F,G)$ can be constructed from the functor $L$: Given $L \colon \C \to \C$, we define $\D$ to be the full subcategory of $\C$ formed by 
those objects $X$ such that $\Psi X\colon X \to LX$ is invertible. The functor $F$ is given by $F \colon \C \to \D, \: FX = LX$, and $G \colon \D \to \C$ is defined to be the inclusion. Note also that $\Psi$ equals the adjunction unit $\eta\colon \id_{\C} \to G \circ F$.


\begin{defn} Let $\C$ be an additive category.
\begin{itemize}
\item[(1)] We call a pair $(L\colon \C \to \C,\Psi\colon \id_{\C}\to L)$  a \emph{localisation functor} if it satisfies the conditions of Lemma \ref{LRQ}. 
\item[(2)] An object $X \in \C$ is called \emph{$L$-acyclic} if $L(X)=0$, and the full subcategory of $L$-acyclic objects is denoted by $\Ker L$. 
\item[(3)] An object $X \in \C$ is called \emph{$L$-local} if $X \cong LX'$ for some $X' \in \C$. The full subcategory of $L$-local objects is denoted by $\C_L$.
\end{itemize}
\end{defn}
Remark that an object $X \in \C$ is $L$-local if and only if $\Psi X$ is invertible, see \cite[Lemma 1.5]{Krcohom}. Thus the category $\D$ constructed in Lemma \ref{LRQ} equals $\C_{L}$.

Justified by Lemma \ref{LRQ}, a functor $F\colon \C \to \D$ admitting a fully faithful right adjoint $G$ is also called \emph{localisation functor}. 

\subsection{Quotient categories}\label{sec Quotient categories}
If the category $\C$ admits an abelian (resp.\ triangulated) structure and $\B$ is a Serre  (resp.\ triangulated) subcategory, then we can form a quotient category by inverting a special class of morphisms. We present this construction for triangulated categories.

Let $\C$ be triangulated  and $\B$ a triangulated subcategory. We define $\Sigma_{\B}$ to be the class of all morphisms $\sigma \in \C$ such that there exists an exact triangle $$X \to Y \xrightarrow{\sigma} Z \to X[1],$$
 with $X \in \B$.

 The category of fractions $\C[\Sigma_{\B}^{-1}]$ is called the \emph{Verdier Quotient} and denoted by $\C/\B$. The following properties of the Verdier Quotient are well-known.

 \begin{lem} Let $Q_{\B}$ denote the canonical functor $Q_{\Sigma_{\B}} \colon \C \to \C/\B$.
 \begin{itemize}
 \item[(1)] $\C/\B$ carries a unique triangulated structure such that the  functor $Q_{\B} \colon \C \to \C/\B$ is exact. 
 \item[(2)]The kernel of $Q_{{\B}}$ consists of the \emph{thick closure} of $\B$, i.e. those objects  $X \in \C$ such that there exists $Y \in \C$ with $X \amalg Y \in \B$.
 \item[(3)] For any exact functor $F\colon \C \to \D$ satisfying $F(\B)=0$, there exists a unique functor $\bar{F}\colon \C/\B \to \D$ such that $F=\bar{F}\circ Q_{\B}$.
 \end{itemize}
 \end{lem}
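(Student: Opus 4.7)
My plan is to recognise that all three parts follow from showing that $\Sigma_{\B}$ is a multiplicative system compatible with the triangulation, then applying the general machinery of Gabriel--Zisman together with Verdier's theorem on triangulated localisation. First I would verify the Ore conditions for $\Sigma_{\B}$: identities lie in $\Sigma_{\B}$ because for any $Y$ the triangle $0\to Y\xto{\id}Y\to 0$ has third term $0\in\B$; closure under composition is handled by the octahedral axiom, which produces, for a composable pair $\sigma,\tau\in\Sigma_{\B}$ with cones in $\B$, an exact triangle whose third term is an extension of these cones and hence in $\B$; the Ore (roof) conditions are proved by completing an arbitrary morphism and a member of $\Sigma_{\B}$ to a commuting square via the octahedron. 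Compatibility with the shift is immediate.

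For part (1), having the multiplicative system compatible with the triangulation, I would declare a triangle in $\C/\B$ to be exact iff it is isomorphic in $\C/\B$ to the image under $Q_{\B}$ of an exact triangle in $\C$. Verdier's argument (cf.\ \cite{V} or \cite[Ch.\ 4]{Kchicago}) then shows these triangles satisfy the axioms of a triangulated category and that $Q_{\B}$ is exact; uniqueness of the triangulated structure follows from the fact that any exact extension of $Q_{\B}$ must take the image triangles to exact ones. For part (3), since $F$ is exact and vanishes on $\B$, applying $F$ to $X\to Y\xto{\sigma}Z\to X[1]$ with $X\in\B$ gives a triangle in $\D$ whose first term is $0$, so $F\sigma$ is an isomorphism. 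Thus $F$ inverts $\Sigma_{\B}$, and the universal property of the category of fractions yields a unique factorisation $\bar F\colon\C/\B\to\D$ with $F=\bar F\circ Q_{\B}$; exactness of $\bar F$ is then automatic from the definition of the exact triangles in $\C/\B$.

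For part (2), the inclusion ``thick closure of $\B\subseteq\Ker Q_{\B}$'' goes as follows. Any $B\in\B$ is zero in $\C/\B$ because the morphism $B\to 0$ fits into the triangle $B\xto{\id}B\to 0\to B[1]$ whose third term $0$ lies in $\B$, so $B\to 0$ is in $\Sigma_{\B}$. Hence if $X\amalg Y\in\B$, the factorisation of $\id_X$ through $X\amalg Y$ forces $\id_{Q_{\B}X}=0$. The converse is the main obstacle and will use the calculus of fractions: $Q_{\B}(\id_X)=0$ means there exists $\sigma\colon X\to Y$ in $\Sigma_{\B}$ with $\sigma\circ\id_X=0$, i.e.\ $\sigma=0$. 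The cone of the zero morphism $X\to Y$ is $Y\amalg X[1]$, and by hypothesis this cone lies in $\B$. Therefore $X[1]$, and hence $X$, is a direct summand of an object of $\B$, proving $X$ lies in the thick closure.
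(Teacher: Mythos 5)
The paper states this lemma without proof, declaring it ``well-known,'' so there is no argument of the author's to compare against; your reconstruction follows Verdier's standard route and is essentially sound. The global structure is right: verify that $\Sigma_\B$ is a multiplicative system compatible with the triangulation (octahedron for composition, rotation for the Ore conditions), take exact triangles in $\C/\B$ to be isomorphs of images of exact triangles in $\C$, deduce (3) from the observation that exactness plus $F(\B)=0$ forces $F$ to invert $\Sigma_\B$ and then apply the universal property of the category of fractions, and settle (2) via the calculus-of-fractions characterisation of morphisms vanishing in the quotient.

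One point of imprecision: when you argue that $B\to 0$ lies in $\Sigma_\B$ via the triangle $B\xto{\id}B\to 0\to B[1]$, you invoke ``whose third term $0$ lies in $\B$.'' With the paper's convention --- $\sigma\in\Sigma_\B$ iff $\sigma$ is the \emph{middle} arrow of a triangle $X\to Y\xto{\sigma}Z\to X[1]$ with $X\in\B$ --- the object that must lie in $\B$ is the \emph{first} term $B$, not the third term $Z=0$. (Equivalently: the cone of $\sigma=(B\to 0)$ is $B[1]$, not $0$.) Your conclusion is of course correct, because $B\in\B$ and $\B$ is shift-closed, but the stated justification points at the wrong object and could mislead a reader as to which term of the triangle governs membership in $\Sigma_\B$. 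The same convention issue resurfaces in the converse half of (2): from $\sigma\in\Sigma_\B$ the paper's definition gives you the \emph{cocone} of $\sigma$ in $\B$; to conclude that the cone $Y\amalg X[1]$ of the zero map lies in $\B$ you should add the one-line observation that the cone is a shift of that cocone and $\B$ is closed under shifts. Neither remark affects the validity of the argument; the remaining steps --- the factorisation of $\id_X$ through $X\amalg Y$ for the easy inclusion, the passage $Q_\B(\id_X)=0\Rightarrow 0\in\Sigma_\B\Rightarrow X[1]\oplus Y\in\B\Rightarrow X$ in the thick closure --- are all correct.
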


A pair $(L\colon \T \to \T, \Psi\colon \id_{\C} \to L)$ is a \emph{localisation functor} of triangulated categories if $L$ is an exact localisation functor and $\Psi$ commutes with the suspension functor in the sense that $\Psi \circ [1]_{\C} \cong  [1]_{\C} \circ \Psi$.

Then $\Ker L$ and $\T_L$ are triangulated subcategories of $\T$. Moreover, the functor \linebreak $\T \to \T_L,\, X \mapsto LX$, is exact and induces an equivalence $$\T/\Ker L \xto{\simeq} \T_L.$$

It follows from Lemma \ref{LRQ} that we have a bijection between localisation functors $(L\colon \T \to \T, \Psi\colon \id_{\T} \to L)$ and quotient functors $Q\colon \T \to \T/\B$ having a fully faithful right adjoint $R$ (which are also called localisation functors). Observe that the adjunction unit $\eta\colon \id_{\T} \to RQ$ satisfies  $\eta \circ [1]_{\T} \cong [1]_{\T} \circ \eta$ because $R$ is fully faithful.
\begin{defn}
A localisation functor $(L\colon \T \to \T, \Psi\colon \id_{\C} \to L)$ is called \emph{smashing} if $L$ commutes with arbitrary direct sums. 

A localising subcategory $\B$ of $\T$ is called \emph{smashing} if  $Q\colon \T \to \T/\B$ admits a fully faithful right adjoint $R$ which commutes with arbitrary direct sums. Then the quotient functor $Q\colon \T \to \T/\B$ is also called \emph{smashing localisation}.
\end{defn}
Note that the composition $R\circ Q$ commutes with arbitrary direct sums if and only if $R$ does. Hence we have a bijection between the smashing localisation functors in the two different senses.
\subsection{Localisation sequences}\label{sec Localisation sequences}
A sequence of exact functors 
$$\A \xto{F} \B \xto{G} \C $$
between triangulated categories is called \emph{localisation sequence} if the following conditions hold:
\begin{itemize}
\item[(L1)] The functor $F$ has a right adjoint $F_{\rho}\colon \B \to \A$ satisfying $F_{\rho}\circ F \cong \id_{A}$.
\item[(L2)] The functor $G$  has a right adjoint $G_{\rho}\colon \C\to \B$ satisfying $G \circ G_{\rho} \cong \id_{\C}$, i.e.\ $G$ is a localisation functor.
\item[(L3)] Let $X$ be an object in $\B$. Then $GX=0$ if and only if $X \cong FX'$ for some $X' \in \A'$.
\end{itemize}
The sequence $(F,G)$ is called \emph{colocalisation sequence} if the sequence $(F^{\op},G^{\op})$ of opposite functors is a localisation sequence.


\enlargethispage*{1cm}
We recall the basic properties of a localisation sequence 
\begin{lem}[Verdier \cite{V}, see also \cite{Kscheme}]\label{Verdier}
Let $\A \xto{F} \B \xto{G} \C $ be a localisation sequence. Identify $\A = \Im F$ and $\C = \Im G_{\rho}$.
\begin{itemize}
\item[(1)] The functors  $F$ and $G_{\rho}$ are fully faithful. 
\item[(2)] For given objects $X,Y \in \B$, we have
\begin{align*}
X \in \A & \Longleftrightarrow \Hom_{\B}(X,\C) = 0,\\
Y \in \C & \Longleftrightarrow \Hom_{\B}(\A,Y) = 0.\\
\end{align*}
\item[(3)] The functor $G$ induces an equivalence $\B/\A \simeq \C$.
Hence every triangulated functor $G'\colon \B \to \C'$ satisfying $G'\circ F=0$ factors over $G$.
\item[(4)] For each $X \in \T$ there is an exact triangle
$$(F \circ F_{\rho})(X) \to X \to (G_{\rho} \circ G)X \to \Sigma\big{(}(F \circ F_{\rho})X\big{)}$$ which is functorial in $X$.
\item[(5)] The sequence 
$$\C \xto{G_{\rho}} \B \xto{F_{\rho}} \A$$
is a colocalisation sequence.
\end{itemize}
\end{lem}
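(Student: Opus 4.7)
The plan is to build the entire lemma around a single functorial exact triangle and use it to deduce the rest. The key starting observation is that $G\circ F=0$: by (L3), for every $X'\in \A$ the object $FX'$ satisfies $G(FX')=0$. Combined with the adjunctions $F\dashv F_{\rho}$ and $G\dashv G_{\rho}$, this already gives the ``easy'' halves of (2), since
\begin{equation*}
\Hom_{\B}(FX',G_{\rho}Y')\;\cong\;\Hom_{\C}(GFX',Y')\;=\;0
\end{equation*}
and symmetrically. For (1), the isomorphisms $F_{\rho}F\cong\id_{\A}$ from (L1) and $GG_{\rho}\cong\id_{\C}$ from (L2) force the unit of $F\dashv F_{\rho}$ and the counit of $G\dashv G_{\rho}$ to be isomorphisms, which is the standard criterion for $F$ and $G_{\rho}$ to be fully faithful.

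For (4) I would start from the unit $\eta_X\colon X\to G_{\rho}GX$ and complete to an exact triangle $W\to X\to G_{\rho}GX\to\Sigma W$. The triangular identity $\varepsilon_{GX}\circ G\eta_X=\id_{GX}$ and $\varepsilon\cong\id$ show $G\eta_X$ is invertible, so $GW=0$ and (L3) gives $W\cong FW'$ for some $W'\in\A$. Now I apply $F_{\rho}$ to the triangle: using $F_{\rho}F\cong\id$ (from (L1)) and $F_{\rho}G_{\rho}=0$ (an immediate consequence of the easy half of (2), since $\Hom_{\A}(A',F_{\rho}G_{\rho}Y)=\Hom_{\B}(FA',G_{\rho}Y)=0$ for all $A'$, hence taking $A'=F_{\rho}G_{\rho}Y$), the triangle reduces to $W'\xto{\sim}F_{\rho}X$. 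Naturality of the adjunction unit identifies $W\to X$ with the counit $\varepsilon_X\colon FF_{\rho}X\to X$, and by the same construction $X\to G_{\rho}GX$ is the unit $\eta_X$; the triangle is functorial because $\eta$ is natural and the completion is determined up to unique isomorphism by its two corner morphisms.

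With the triangle in hand the remaining parts follow quickly. For the harder direction of (2), suppose $\Hom_{\B}(\A,Y)=0$; then $\varepsilon_Y\colon FF_{\rho}Y\to Y$ vanishes (since $FF_{\rho}Y\in \A$), the triangle splits, and $Y$ becomes a direct summand of $G_{\rho}GY$. A short check that $\C=\Im G_{\rho}$ is closed under summands (the unit splits as a direct sum on any decomposition $Y_1\oplus Y_2\cong G_{\rho}Z$, so $\eta_{Y_i}$ is invertible whenever $\eta_{Y_1\oplus Y_2}$ is) then places $Y$ in $\C$; the symmetric argument handles $\A$. For (3), the kernel of $G$ is exactly $\A$ by (L3), and since $G$ is a localisation with fully faithful right adjoint, it induces the equivalence $\B/\A\simeq \C$. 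For (5), the adjunctions and the isomorphisms $GG_{\rho}\cong\id$, $F_{\rho}F\cong\id$ supply the opposite versions of (L1) and (L2) directly, while the opposite of (L3) reduces to the statement $F_{\rho}X=0\Leftrightarrow X\in\C$; the triangle of (4) instantly implies both directions, because $F_{\rho}X=0$ forces $X\cong G_{\rho}GX$, and conversely $X\in\C$ kills $F_{\rho}X$ by the easy half of (2).

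The main obstacle is (4): specifically, identifying the third vertex of the completion of $\eta_X$ canonically with $FF_{\rho}X$ and pinning down the connecting map as the counit. Merely knowing $GW=0$ and invoking (L3) is not enough, since $W'$ is then defined only up to nonunique isomorphism; the simultaneous use of both adjunctions together with the vanishing $F_{\rho}G_{\rho}=0$ is what makes the identification canonical and therefore functorial.
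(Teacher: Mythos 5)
The paper cites this as Verdier's lemma (with a reference to Krause's paper) without giving a proof, so there is no proof in the paper to compare against. Your argument is correct and essentially complete; a few remarks on the places where the reasoning is compressed.

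Your deduction in (1) that $F_{\rho}F\cong\id_{\A}$ forces the unit $\eta^{F}$ to be an isomorphism is indeed a true statement, but it is not immediate from the stated hypothesis: (L1) only asserts \emph{some} natural isomorphism $F_{\rho}F\cong\id$, not that the unit is one. The standard argument transports the monad $(F_{\rho}F,\eta^{F},F_{\rho}\varepsilon^{F}F)$ along the given isomorphism to a monad structure on $\id_{\A}$; one of the unit laws then says the transported unit $\bar\eta$ is a split monomorphism $\id\to\id$, and since natural endomorphisms of $\id$ commute with every morphism (in particular with the transported multiplication $\bar\mu$), the splitting $\bar\mu\bar\eta=\id$ immediately gives $\bar\eta\bar\mu=\id$ as well. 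Worth saying a word about this, since it is where the cited condition is subtly stronger than it appears.

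In (4), when you apply $F_{\rho}$ to the triangle completing $\eta_X$, you implicitly use that $F_{\rho}$ is exact. This is automatic here --- the right adjoint of an exact functor between triangulated categories is exact --- but should be flagged. Your claim that the completed triangle is determined up to \emph{unique} isomorphism by the two edges needs the specific hom-vanishing from (2): given a morphism $f\colon X_1\to X_2$, the ambiguity in the induced map $W_1\to W_2$ of cones lives in $\Hom_{\B}(W_1,\Sigma^{-1}G_{\rho}GX_2)$, which is zero because $W_1\in\A$, $\Sigma^{-1}G_{\rho}GX_2\in\C$, and $\Hom_{\B}(\A,\C)=0$. That is exactly the ``simultaneous use of both adjunctions'' you allude to, made concrete. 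Everything else in (2), (3) and (5) goes through as you describe.
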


\subsection{Recollements}\label{recoll}
We say that a sequence 
$$\A \xto{F} \B \xto{G} \C$$
of exact functors between triangulated categories induces a \emph{recollement} 
$$\xymatrix{\A \ar[r]   &   \ar@<1ex>[l]^-{F_{\lambda}} \ar@<-1ex>[l]_-{F_{\rho}} \B \ar[r]   &  \ar@<1ex>[l]^-{G_{\lambda}} \ar@<-1ex>[l]_-{G_{\rho}}  \C}$$
if it is at the same time a localisation and a colocalisation sequence.

This means that the functors $F$ and $G$ admit left adjoints $F_{\lambda}$ and $G_{\lambda}$ as well as right adjoints $F_{\rho}$ and $G_{\rho}$ such that the adjunction morphisms
$$F_{\lambda} \circ F \xto{\cong} \id_{\A} \xto{\cong} F_{\rho} \circ F \quad \text{and} \quad 
G \circ G_{\rho} \xto{\cong} \id_{\B} \xto{\cong} G \circ G_{\lambda}$$
are isomorphisms.

Let $\Lambda$ be a Noetherian ring. 
We denote by
$$I \colon \bfK_{ac}(\Inj \Lambda) \to \bfK(\Inj \Lambda)$$ the inclusion functor, and by $Q$ the canonical functor given by the composition
$$\bfK(\Inj \Lambda) \xto{\mathrm{inc}} \bfK(\Mod \Lambda) \xto{\can} \bfD(\Mod \Lambda).$$

\begin{thm}\cite[Cor.\ 4.3]{Kscheme}\label{recollement general}
The sequence
$$\bfK_{ac}(\Inj \Lambda) \xto{I} \bfK(\Inj \Lambda) \xto{Q} \bfD(\Mod \Lambda)$$
induces a recollement
\begin{equation}\label{recollementgen}
\xymatrix{\bfK_{ac}(\Inj \Lambda)  \ar[r] & \ar@<-1ex>[l]_-{I_{\rho}}  \ar@<1ex>[l]^-{I_{\lambda}}  \bfK(\Inj \Lambda)    \ar[r] & \ar@<1ex>[l]^-{Q_{\lambda}} \ar@<-1ex>[l]_-{Q_{\rho}}  \bfD(\Mod \Lambda). }
\end{equation}
\end{thm}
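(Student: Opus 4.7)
The plan is to establish the recollement by constructing all four adjoint functors $I_\lambda, I_\rho, Q_\lambda, Q_\rho$ and verifying the requisite adjunction isomorphisms. The main tool will be the Brown Representability Theorem, Proposition \ref{Brownprop}, and so the crucial preliminary step is to ensure that $\bfK(\Inj \Lambda)$ is compactly generated.

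The first step relies on the Noetherian hypothesis. Since $\Lambda$ is Noetherian, arbitrary direct sums of injective $\Lambda$-modules remain injective, so $\Inj \Lambda$ is closed under coproducts and $\bfK(\Inj\Lambda)$ admits arbitrary direct sums. A set of compact generators is obtained by taking injective resolutions of the finitely generated $\Lambda$-modules; generation can be verified using Lemma~\ref{compactgen} by observing that $\bfK(\Inj\Lambda)(iM, X)^n = 0$ for all finitely generated $M$ and all $n$ forces $X$ to be acyclic, and an acyclic complex of injectives that has no nontrivial graded maps from all finitely generated modules must vanish in $\bfK(\Inj\Lambda)$.

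Second, I would construct the right adjoints and so assemble the localisation sequence. The functor $Q$ is exact and preserves arbitrary direct sums, because coproducts in both categories are computed degreewise. Brown representability then produces a right adjoint $Q_\rho \colon \bfD(\Mod\Lambda) \to \bfK(\Inj\Lambda)$, which must send a complex $X$ to an injective resolution, so that the counit $Q \circ Q_\rho \to \id$ is invertible. By inspection, $\Ker Q = \bfK_{ac}(\Inj \Lambda)$, since a complex of injectives becomes zero in $\bfD(\Mod\Lambda)$ exactly when it is acyclic. Hence $\bfK_{ac}(\Inj \Lambda)$ is a localising subcategory of the compactly generated category $\bfK(\Inj \Lambda)$, and the inclusion $I$ preserves direct sums; Brown representability again supplies a right adjoint $I_\rho$. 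The adjunction unit $\id \to I_\rho \circ I$ is an isomorphism because $I$ is fully faithful. At this point Lemma \ref{Verdier} shows that $(I,Q)$ forms a localisation sequence.

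Third, for the colocalisation it suffices to produce left adjoints to $I$ and $Q$; by Proposition~\ref{Brownprop} this is equivalent to checking that both functors preserve arbitrary products. The inclusion $I$ preserves products since products in $\bfK(\Inj\Lambda)$ are degreewise and the class of acyclic complexes is closed under products. For $Q$, the essential point is that products in $\bfD(\Mod \Lambda)$ of complexes whose objects are injective can be computed degreewise, because a product of injective resolutions is again injective and still computes the derived product. Consequently $Q$ preserves products, yielding $Q_\lambda$ with $Q \circ Q_\lambda \cong \id$, and $I$ preserves products, yielding $I_\lambda$ with $I_\lambda \circ I \cong \id$.

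The main obstacle is the compact generation of $\bfK(\Inj\Lambda)$, which crucially exploits that $\Lambda$ is Noetherian and requires a careful identification of a set of compact generators; everything else is a formal consequence of Brown representability together with the characterisation of localisation sequences recalled in Lemma~\ref{Verdier}.
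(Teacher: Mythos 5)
The paper does not supply its own proof of this theorem: it simply cites Krause \cite[Cor.\ 4.3]{Kscheme} and points to Proposition \ref{Kinjcompactlygen} (compact generation of $\bfK(\Inj\Lambda)$) as the essential ingredient. Your argument is therefore a genuine independent attempt, and it takes a different route from Krause's. Krause works in the greater generality of a locally Noetherian Grothendieck category, where products need not be exact, so his derivation of the recollement goes through compact generation of $\bfK_{ac}(\Inj\Lambda)$ together with the smashing-localisation machinery (quotients by localising subcategories generated by compact objects), rather than through a product-preservation argument. Your AB4*-style route is shorter and entirely legitimate for a module category, where arbitrary products of modules are exact; but it is worth realising that this is exactly the place where your argument uses the module-category hypothesis and does not transfer to Krause's more general setting.

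That said, there is a genuine gap and one imprecision that you should fix. The gap: you apply Proposition \ref{Brownprop} to the inclusion $I\colon \bfK_{ac}(\Inj\Lambda)\to\bfK(\Inj\Lambda)$ to obtain both $I_\rho$ and $I_\lambda$. That requires the \emph{source} category $\bfK_{ac}(\Inj\Lambda)$ to be compactly generated, which you never establish (it is true, but it is a nontrivial theorem --- it is essentially the statement that the stable derived category is compactly generated). You do not actually need Brown for $I$: once you have $Q_\rho$ with $Q Q_\rho\cong\id$ and $\ker Q=\bfK_{ac}(\Inj\Lambda)$, the right adjoint $I_\rho$ is supplied by the localisation structure via the functorial triangle $I I_\rho X \to X \to Q_\rho Q X \to \Sigma I I_\rho X$ (this is precisely Lemma \ref{Verdier}); and once you have $Q_\lambda$ with $QQ_\lambda\cong\id$, the dual argument with the counit triangle $Q_\lambda Q X \to X \to I I_\lambda X\to\Sigma Q_\lambda Q X$ yields $I_\lambda$. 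So delete the two appeals to Brown for $I$ and replace them by this formal step. The imprecision: your justification that $Q$ preserves products --- ``a product of injective resolutions is again injective and still computes the derived product'' --- only addresses K-injective complexes, but the objects of $\bfK(\Inj\Lambda)$ are arbitrary (possibly unbounded, possibly not K-injective) complexes of injectives. The actual reason $Q$ preserves products is that $\Mod\Lambda$ satisfies AB4*, so the degreewise product of \emph{any} complexes computes the product in $\bfD(\Mod\Lambda)$. Say this explicitly, since it is the crucial place where your argument diverges from the general Grothendieck-category situation. The sketch of compact generation of $\bfK(\Inj\Lambda)$ is also too thin to stand as a proof, but you correctly flag it as the hard Noetherian input and defer it to Proposition \ref{Kinjcompactlygen}.
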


\begin{rem}\cite[Sect.\ 5]{BKcompl}
The recollement \eqref{recollementgen} provides two embeddings of $\bfD(\Mod \Lambda)$ into 
$\bfK(\Inj kG)$: The fully faithful functor $Q_{\rho}$ assigns to $X \in \bfD(\Mod \Lambda)$ its homotopically injective resolution $iX$ introduced in Chapter \ref{sechomproj}. The other embedding is given by the functor $Q_{\lambda}$, which identifies $\bfD(\Mod \Lambda)$ with the localising subcategory of $\bfK(\Inj kG)$ generated by $i\Lambda$. 

Assume now that $\Lambda$ is self-injective. Then $Q_{\lambda}$ maps a complex $X$ of $\Lambda$-modules to its homotopically projective resolution $pX$. Furthermore, for every $\Lambda$-module $M$, the canonical triangle
\begin{equation}\label{natural triangle}
pM \to iM \to tM \to \Sigma(pM)
\end{equation}
is isomorphic to the triangle
\begin{equation}\label{recollement triangle}
(Q_{\lambda} \circ Q)(\bar{M}) \to \bar{M} \to (I \circ I_{\lambda})\bar{M} \to \Sigma (Q_{\lambda} \circ Q)\bar{M}, 
\end{equation}
where $\bar{M} = Q_{\rho}M$.
\end{rem}
Krause proved Theorem \ref{recollement general} more generally for a locally Noetherian Grothendieck category $\A$ such that $\bfD(\A)$ is compactly generated. 
In order to prove that $(I,Q)$ induces a localisation sequence, the essential point is the following proposition which is also stated more generally in \cite{Kscheme}.

\begin{prop}\cite[Prop.\ 2.3]{Kscheme}\label{Kinjcompactlygen}
The triangulated category $\bfK(\Inj \Lambda)$ is compactly generated by the injective resolutions $iM$ of the Noetherian modules $M$. Moreover, the canonical functor $\bfK(\Mod \Lambda) \to \bfD(\Mod \Lambda)$ induces an equivalence
$$\bfK^c(\Inj \Lambda) \xto{\simeq} \bfD^b(\mod \Lambda),$$
where $\bfK^c(\Inj \Lambda)$ denotes the full subcategory of compact objects in $\bfK(\Inj \Lambda)$. 
\end{prop}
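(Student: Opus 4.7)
The plan is to verify the proposition in three steps: compactness of $iM$ for each Noetherian module $M$, generation of $\bfK(\Inj\Lambda)$ by the set $\{iM : M \in \mod\Lambda\}$, and finally the identification $\bfK^c(\Inj\Lambda)\simeq\bfD^b(\mod\Lambda)$. The foundation is Bass's theorem: because $\Lambda$ is Noetherian, arbitrary direct sums of injective modules remain injective, so $\bfK(\Inj\Lambda)$ admits all coproducts and they are computed degreewise, exactly as in $\bfK(\Mod\Lambda)$.

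For compactness, I would fix an injective resolution $iM$ of the Noetherian module $M$ concentrated in non-negative degrees. A direct inspection of the degrees involved shows that for any $Y$, chain maps $iM \to Y$ and the chain homotopies between them only see the components $Y^p$ with $p \geq -1$, so $\bfK(\Inj\Lambda)(iM,Y) = \bfK(\Inj\Lambda)(iM,\sigma^{\geq -1}Y)$, where $\sigma^{\geq -1}$ denotes stupid truncation. Since $\sigma^{\geq -1}Y$ is a bounded-below complex of injectives it is $K$-injective, and this Hom becomes $\Hom_{\bfD(\Lambda)}(M,\sigma^{\geq -1}Y)$. On bounded-below complexes the hyper-$\Ext$ spectral sequence $E_2^{p,q}=\Ext^p_\Lambda(M,H^q(-))$ converges strongly, and each $\Ext^p_\Lambda(M,-)$ commutes with direct sums because $M$ is Noetherian (this reduces to the module statement using Bass again). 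Since stupid truncation also commutes with coproducts, $\bfK(\Inj\Lambda)(iM,-)$ commutes with arbitrary direct sums and $iM$ is compact.

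For generation, Lemma \ref{compactgen} reduces the claim to showing that any $X\in\bfK(\Inj\Lambda)$ satisfying $\bfK(\Inj\Lambda)(iM,X[n])=0$ for all $M\in\mod\Lambda$ and $n\in\bbZ$ is null-homotopic. Taking $M=\Lambda$ forces $H^n(X)=0$ for all $n$, so $X$ is acyclic. For acyclic $X$ the truncation $\sigma^{\geq n-1}X$ is quasi-isomorphic to the cocycle module $Z^{n-1}X$ placed in degree $n-1$, and repeating the truncation-and-$K$-injectivity manoeuvre yields
$$\bfK(\Inj\Lambda)(iM,X[n])\;\cong\;\Ext^1_\Lambda(M,Z^{n-1}X).$$
Vanishing of this group for every Noetherian $M$ forces $Z^{n-1}X$ to be injective by the Noetherian form of Baer's criterion. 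Each short exact sequence $0\to Z^nX\to X^n\to Z^{n+1}X\to 0$ therefore splits, $X$ decomposes as a direct sum of contractible two-term complexes $Z^nX \xto{\id} Z^nX$, and hence $X=0$ in $\bfK(\Inj\Lambda)$.

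For the final identification, Neeman's theorem on compactly generated triangulated categories gives $\bfK^c(\Inj\Lambda) = \mathrm{thick}\{iM : M\in\mod\Lambda\}$ inside $\bfK(\Inj\Lambda)$. The canonical functor $Q\colon\bfK(\Inj\Lambda)\to\bfD(\Mod\Lambda)$ sends $iM\mapsto M$ and is fully faithful on these generators because
$$\bfK(\Inj\Lambda)(iM,iN[n])\;=\;\Hom_{\bfD(\Lambda)}(M,N[n])\;=\;\Ext^n_\Lambda(M,N),$$
where the first equality uses $K$-injectivity of $iN$. Full faithfulness propagates to the thick closure by the usual five-lemma argument, while the essential image, being a thick subcategory of $\bfD(\Mod\Lambda)$ containing every Noetherian module, is precisely $\bfD^b(\mod\Lambda)$. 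The main obstacle is the generation step, where the Noetherian hypothesis is used twice, once via Bass to handle coproducts of injectives and once via Baer to recognise injectivity; the truncation trick that converts $\bfK$-Hom sets into genuine $\Ext^1$-groups is what bridges the two sides.
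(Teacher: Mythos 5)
The thesis does not supply a proof of this proposition — it is cited directly from Krause's paper [Kscheme, Prop.\ 2.3] — so there is no in-text argument to compare against. Your proof is correct, and it is essentially the standard (and, to my knowledge, Krause's own) argument: brutal truncation together with the K-injectivity of bounded-below complexes of injectives yields compactness of the resolutions $iM$; Baer's criterion applied to the cycle modules $Z^{n-1}X$ of an acyclic $X$ yields generation; and Neeman's characterisation of the compacts as the thick closure of a set of compact generators, combined with the fact that $Q$ is fully faithful on the $iM$ and that $\bfD^b(\mod\Lambda)$ is the thick subcategory of $\bfD(\Mod\Lambda)$ generated by the finitely generated modules, gives the final equivalence. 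The hyper-$\Ext$ spectral sequence you invoke for compactness is somewhat heavier machinery than needed — since $M$ admits a resolution by finitely generated projectives $P^\bullet$ and $\sigma^{\geq -1}Y$ is bounded below, each degree of $\Hom^\bullet_\Lambda(P^\bullet,\sigma^{\geq -1}Y)$ is a finite product and hence commutes with coproducts degreewise, which gives the result directly — but the spectral-sequence route is equally valid, and the identity $\bfK(\Inj\Lambda)(iM,X[n])\cong\Ext^1_\Lambda(M,Z^{n-1}X)$ for acyclic $X$, which is the real engine of the generation step, is handled correctly.
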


The following lemma is well-known and easy to check.
\begin{lem}\label{kac=stmod}
Let $\Lambda$ be a Noetherian self-injective ring.
The functor $$Z^0\colon \bfK_{ac}(\Inj \Lambda) \to \uMod \Lambda$$
is an equivalence with quasi-inverse $M  \mapsto tM$, where $tM$ denotes a Tate resolution of any representative of $M \in \uMod \Lambda$.
\end{lem}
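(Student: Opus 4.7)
The plan is to verify the three standard items: that $Z^0$ descends to a well-defined functor between the homotopy and stable categories, that the assignment $M \mapsto tM$ is a well-defined functor $\uMod\Lambda \to \bfK_{\ac}(\Inj\Lambda)$, and that the two composites are isomorphic to the respective identities. Throughout I will use that, since $\Lambda$ is Noetherian and self-injective, the classes of injective and projective modules coincide, so an acyclic complex of injectives is the same as an acyclic complex of projectives, i.e.\ a Tate resolution of its zero-cocycle.

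First I would check that $Z^0\colon\bfK_{\ac}(\Inj\Lambda)\to\uMod\Lambda$ is well-defined. Given chain maps $f,g\colon X\to Y$ of acyclic complexes of injectives that are homotopic via $s$ with $f-g=d_Ys+sd_X$, the restriction to $Z^0X$ satisfies $(f-g)|_{Z^0X}=d_Y^{-1}\circ s^0|_{Z^0X}$, so it factors through the injective $Y^{-1}$ and is therefore zero in $\uMod\Lambda$. Hence $Z^0$ is a well-defined additive functor into $\uMod\Lambda$.

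Next I would construct the quasi-inverse $t$. For each $M\in\Mod\Lambda$, fix a Tate resolution $tM$, which exists by splicing a projective (= injective) resolution with an injective (= projective) coresolution; then $Z^0(tM)=M$. For a module map $f\colon M\to N$, the standard comparison theorem for resolutions of injectives/projectives produces a lift $tf\colon tM\to tN$, unique up to chain homotopy; this makes $t$ a well-defined functor $\Mod\Lambda\to\bfK_{\ac}(\Inj\Lambda)$. To show it factors through $\uMod\Lambda$, I would observe that for any injective module $I$ the Tate resolution $tI$ is contractible (for example, take $tI$ to be the complex $\cdots\to0\to I\xto{\id}I\to0\to\cdots$, which is null-homotopic); thus if $f\colon M\to N$ factors through an injective, its lift $tf$ factors through a null-object of $\bfK_{\ac}(\Inj\Lambda)$ and is zero there.

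Finally I would identify the composites. The equality $Z^0\circ t=\id_{\uMod\Lambda}$ is immediate from the construction, since $Z^0(tM)=M$ and $Z^0(tf)=f$. For the other composite, given $X\in\bfK_{\ac}(\Inj\Lambda)$, both $X$ and $tZ^0(X)$ are acyclic complexes of injectives with zero-cocycle canonically isomorphic to $Z^0(X)$; a double application of the comparison theorem produces mutually inverse chain maps $X\rightleftarrows tZ^0(X)$ lifting the identity of $Z^0(X)$, whose composites lift the identity and hence are homotopic to the identity by uniqueness of lifts up to homotopy. Naturality in $X$ follows from the same uniqueness statement. The main technical point, and the only place where a small check is required, is the verification that $t$ descends to the stable category — concretely, that $tI$ is contractible for injective $I$ — everything else is a formal consequence of the classical comparison theorem for complete resolutions over self-injective rings.
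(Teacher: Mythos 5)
Your argument is correct and follows the standard route; the paper itself offers no proof, merely remarking that the lemma is "well-known and easy to check," so your write-up supplies exactly what is left to the reader. The one point worth being slightly more explicit about is the comparison theorem for complete resolutions over a self-injective ring (existence and homotopy-uniqueness of lifts for doubly unbounded acyclic complexes of projective-injectives), since it requires gluing the homotopies coming from the projective-resolution half and the injective-coresolution half at degree zero, but this is indeed routine once projectives and injectives coincide.
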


Over a finite dimensional cocommutative Hopf algebra, the adjoints in the recollement can be written down explicitly:
\begin{prop}\label{recollhopf}\cite{BKcompl}
Let $H$ be a finite dimensional cocommutative Hopf algebra. Then 
the adjoints in the recollement \eqref{recollementgen} take the form
\begin{equation*}
\xymatrix{\uMod H \simeq \bfK_{ac}(\Inj H) \ar[rr]   & & \ar@<1ex>[ll]^-{-\otimes_k tk} \ar@<-1ex>[ll]_-{\HOM_k(tk,-)}   \bfK(\Inj H)   \ar[rr] & & \ar@<-1ex>[ll]_-{\HOM_k(pk,-)} \ar@<1ex>[ll]^-{-\otimes_k pk} \bfD(\Mod H). }
\end{equation*}
\end{prop}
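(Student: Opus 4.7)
The plan is to exploit the Hopf algebra structure and the self-injectivity of $H$ to construct an explicit tensor-hom adjunction between the relevant categories, and then to identify the constructed functors with the adjoints of the recollement of Theorem \ref{recollement general} using the uniqueness of adjoint functors.

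First I would collect the Hopf-algebraic ingredients. As a finite-dimensional Hopf algebra, $H$ is self-injective (Frobenius), so $\Inj H = \Proj H$; in particular, the components of $pk$ and $tk$ are simultaneously projective and injective. Cocommutativity makes the diagonal tensor product over $k$ symmetric, and the antipode endows $\HOM_k(M,N)$ with an $H$-action such that $(-\otimes_k M,\ \HOM_k(M,-))$ is an adjoint pair on $\Mod H$; this adjunction extends termwise to $\bfC(\Mod H)$ and descends to $\bfK(\Mod H)$. Moreover, $H \otimes_k V$ with diagonal action is free as an $H$-module for every $k$-module $V$, so tensoring any $H$-module with a projective $H$-module over $k$ produces a projective $H$-module.

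Next I would check that the four candidates are well-defined between the indicated subcategories. The previous paragraph shows that $-\otimes_k pk$, $-\otimes_k tk$, $\HOM_k(pk,-)$ and $\HOM_k(tk,-)$ all preserve $\bfK(\Inj H)$. Since $k$ is a field and $tk$ is acyclic, tensoring or homming over $k$ with $tk$ yields an acyclic complex, so $-\otimes_k tk$ and $\HOM_k(tk,-)$ land in $\bfK_{ac}(\Inj H)$. Conversely, the quasi-isomorphism $pk \to k$ implies that $-\otimes_k pk$ and $\HOM_k(pk,-)$ factor through $\bfD(\Mod H)$.

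To identify $Q_\lambda = -\otimes_k pk$ and $Q_\rho = \HOM_k(pk,-)$ I would verify that for each $X$, the complex $X \otimes_k pk$ is homotopically projective and $\HOM_k(pk, X)$ is homotopically injective; then Corollary \ref{K=D} together with the universal properties of Section \ref{sechomproj} identifies these with $Q_\lambda X = \bfp X$ and $Q_\rho X = \bfi X$. For the inner adjoints I would apply $X \otimes_k (-)$ and $\HOM_k(-, X)$ to the canonical exact triangle $pk \to ik \to tk \to \Sigma pk$ arising from the splicing that constructs the Tate resolution. The resulting triangle in $\bfK(\Inj H)$ is isomorphic, by the step above, to the recollement triangle \eqref{recollement triangle}, forcing $I_\lambda = -\otimes_k tk$ and $I_\rho = \HOM_k(tk,-)$. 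Uniqueness of adjoints then completes the identification.

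The principal obstacle I anticipate is showing that $X \otimes_k pk$ is genuinely homotopically projective, and $\HOM_k(pk, X)$ genuinely homotopically injective, for arbitrary unbounded $X$; being degree-wise projective is not enough. I would handle this either by reducing to compact objects via Proposition \ref{Kinjcompactlygen} and Lemma \ref{compactgen} and then extending by Brown representability (Proposition \ref{Brownprop}), or by choosing $pk$ as a bounded above projective resolution of the trivial module, which gives $X \otimes_k pk$ a Spaltenstein-type filtration making it $K$-projective.
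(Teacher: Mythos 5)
This proposition is stated in the paper as a citation to \cite{BKcompl}; no proof is given, so your sketch has to stand on its own, and I think it does. Your overall strategy is sound: the tensor--hom adjunction for a finite-dimensional (cocommutative) Hopf algebra makes the four functors well-defined between the indicated categories; the tensor identity guarantees that tensoring any $H$-module with a projective (hence injective) one yields a projective (hence injective) one; exactness over the field $k$ ensures that $-\otimes_k pk$ and $\HOM_k(pk,-)$ preserve quasi-isomorphisms and that $-\otimes_k tk$, $\HOM_k(tk,-)$ land in acyclic complexes. You have also correctly put your finger on the single genuine technical point, namely that $X\otimes_k pk$ must be homotopically projective and $\HOM_k(pk,X)$ homotopically injective for \emph{unbounded} $X$, which does not follow from degree-wise projectivity/injectivity alone. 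Of your two proposed fixes, the filtration argument is the right one and it does work: choose $pk$ bounded above with finitely generated terms, write $X\otimes_k pk$ as a homotopy colimit (in $\bfK(\Mod H)$) of $X\otimes_k\sigma_{\ge -n}pk$ along the degree-wise split inclusions of brutal truncations; each of these is a bounded complex of projectives, hence K-projective, and the class of K-projectives is closed under homotopy colimits. Dually, $\HOM_k(pk,X)$ is a homotopy limit along degree-wise split surjections of the K-injective bounded complexes $\HOM_k(\sigma_{\ge -n}pk,X)$, whence K-injective. With these facts, Corollary \ref{K=D} and the uniqueness in Section \ref{sechomproj} give $X\otimes_k pk\cong\bfp X=Q_\lambda X$ and $\HOM_k(pk,X)\cong\bfi X=Q_\rho X$, and then tensoring/homming the triangle $pk\to ik\to tk\to\Sigma pk$ and comparing with the (co)localisation triangles of Lemma \ref{Verdier} forces $I_\lambda=-\otimes_k tk$ and $I_\rho=\HOM_k(tk,-)$, exactly as you say. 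Your alternative route via Brown representability is less direct: it would give you the existence of adjoints but not immediately their identification with these explicit functors without essentially re-running the same adjunction computation, so I would commit to the filtration argument. One very small point worth making explicit in a write-up: the step ``$X\otimes_k ik\cong X$ in $\bfK(\Inj H)$'' uses that the cone of $k\to ik$ is an acyclic \emph{bounded below} complex of injectives and therefore contractible, after which tensoring with $X$ preserves contractibility.
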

In this case, the triangles \eqref{natural triangle} and \eqref{recollement triangle} are isomorphic to 
\begin{equation}\label{recollhopf triangle}
M \otimes_k pk \to M \otimes_k ik \to M \otimes_k tk \to \Sigma(M \otimes_k pk).
\end{equation}

\subsection{Cohomological localisation}\label{HomLoc}
Cohomological localisations were first studied by Bousfield \cite{Bous}. 
Hovey, Palmieri and Strickland \cite[Thm.\ 3.3.7]{HPS} applied them in the context of axiomatic stable homotopy theory. We refer to a paper of Krause \cite{Krcohom} for a more algebraic and detailed approach.

Throughout this section let $\T$ be a triangulated category which admits arbitrary direct sums and is generated by a set of compact elements. We fix a compact object $A\in \T$ and write $H^*$ for the functor $\T(A,-)^*$.

\begin{thm}\cite{Krcohom}\label{Henning}
Every exact localisation functor $(L,\Psi)$ on $\Modgr \Gamma$ extends to an exact localisation functor $(\hat{L},\hat{\Psi})$ on $\T$ such that the diagram
$$\xymatrix{\ar @{--} \T \ar[d]_{\hat{L}} \ar[rr]^-{H^*}& & \Modgr \Gamma \ar[d]^{L}\\
\T \ar[rr]^-{H^*}& & \Modgr \Gamma}$$
commutes up to a natural isomorphism.
More precisely, it holds
\begin{itemize}
\item[(1)] The morphisms $L H^* \hat{\Psi}, \Psi H^* \hat{L}$ and
$$\xymatrix@C-3pt{LH^* \ar[rr]^{LH^*\hat{\Psi}} & &  LH^*\hat{L} \ar[rr]^{(\Psi H^* \hat{L})^{-1}} & & H^*\hat{L}}$$
are invertible.
\item[(2)] An object $X$ in $\T$ is $\hat L$-acyclic if and only if $H^*X$ is $L$-acyclic.
\item[(3)] If $X \in \T$ is $\hat{L}$-local, then $H^*X$ is $L$-local. 
\end{itemize}
If $A$ is a generator of $\T$ and   $L$ preserves arbitrary direct sums, then $\hat{L}$ preserves arbitrary direct sums.
\end{thm}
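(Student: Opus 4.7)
The plan is to transport the localisation from $\Modgr \Gamma$ to $\T$ by constructing $\hat L$ as a Bousfield localisation whose acyclics are the $H^*$-preimage of $\Ker L$. Concretely, I would set
\[
\hat\S \;=\; \{X \in \T \mid H^*X \in \Ker L\}.
\]
Because $A$ is compact, $H^*=\T(A,-)^*$ is a cohomological functor that commutes with arbitrary direct sums; combined with the fact that $\Ker L$ is closed under subobjects, quotients, extensions and direct sums in $\Modgr \Gamma$ (since $L$ is an exact localisation), $\hat\S$ is closed under shifts, triangles, direct sums and direct summands, hence is a localising subcategory of $\T$. Moreover, $\hat\S$ is generated by a set: choose a set of generators $\{M_\alpha\}$ of $\Ker L$, pick for each $M_\alpha$ a graded-free presentation, lift it to a morphism between coproducts of shifts of $A$ in $\T$, and take its cone $C_\alpha$; then $H^*C_\alpha$ maps onto $M_\alpha$, the $C_\alpha$ lie in $\hat\S$, and they generate it. The existence of the desired $(\hat L,\hat\Psi)$ with $\Ker \hat L=\hat\S$ then follows from Brown representability in the compactly generated category $\T$ (Proposition~\ref{Brownprop}), which immediately yields statement~(2).

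For each $X \in \T$ we have a functorial exact triangle $\hat\Gamma X \to X \xrightarrow{\hat\Psi X} \hat L X \to \Sigma \hat\Gamma X$ with $\hat\Gamma X \in \hat\S$. Applying the cohomological functor $H^*$ and then the exact functor $L$ gives the isomorphism $LH^*(\hat\Psi X)\colon LH^*X \xrightarrow{\sim} LH^*\hat L X$, since $LH^*\hat\Gamma X=0$; this is the first of the three maps in~(1). The second map $\Psi H^*\hat L$ is invertible precisely when $H^*\hat L X$ is $L$-local, which is statement~(3), and the composite is then automatically an isomorphism. To prove~(3), let $Y$ be $\hat L$-local, so $\T(S,Y)^*=0$ for every $S \in \hat\S$; I must show $\Ext_\Gamma^{i,*}(M,H^*Y)=0$ for $i=0,1$ and every $L$-acyclic $M$. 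Reducing to the generators $\{M_\alpha\}$ and exploiting their graded-free presentations together with the canonical identifications $\Hom_\Gamma(\bigoplus \Gamma[n_j],H^*Y)\cong \T(\bigoplus A[n_j],Y)^*$ and the exact sequence obtained by applying $\T(-,Y)^*$ to the defining triangle of $C_\alpha$, the required Ext-vanishing reduces to the vanishing of $\T(C_\alpha,Y)^*$, which holds because $Y$ is $\hat L$-local.

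For the final assertion, suppose $A$ generates $\T$ and $L$ preserves arbitrary direct sums; the latter amounts to $\Ker L$ being closed under direct sums in $\Modgr \Gamma$. Since $H^*$ also commutes with direct sums, $\hat\S$ is closed under direct sums as well, and one then checks in the standard way (using that the generators $C_\alpha$ can be arranged to be compact) that $\hat L$ preserves arbitrary direct sums. The main obstacle in this scheme is step~(3): the functor $H^*$ encodes only $\Hom$-information, so translating the vanishing of $\T(S,Y)^*$ into vanishing of $\Ext^1_\Gamma(M,H^*Y)$ requires a careful choice of generators of $\hat\S$ and a universal-coefficient-type argument; this is the technical heart of Krause's proof in~\cite{Krcohom}.
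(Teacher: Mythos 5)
This theorem is cited from Krause's paper \cite{Krcohom}; the thesis itself gives no proof. So the comparison can only be of your attempt against the intended argument, not against a proof in the text.

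Your proposal has a genuine gap at the point you yourself flag as ``the technical heart'': the construction of a generating set for $\hat\S$. Take a graded-free presentation $F_1 \xto{f} F_0 \to M_\alpha \to 0$ in $\Modgr\Gamma$, lift it to $\tilde F_1 \to \tilde F_0$ in $\T$, and form the cone $C_\alpha$. Applying the cohomological functor $H^*$ to the triangle $\tilde F_1 \to \tilde F_0 \to C_\alpha \to \Sigma\tilde F_1$ produces the exact sequence $F_1 \to F_0 \to H^*C_\alpha \to F_1[1] \to F_0[1]$, so $H^*C_\alpha$ sits in a short exact sequence $0 \to M_\alpha \to H^*C_\alpha \to (\Ker f)[1] \to 0$. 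Two things follow. First, $M_\alpha$ is a \emph{submodule} of $H^*C_\alpha$, not a quotient as you wrote. Second, and fatally, $\Ker f$ is a submodule of the free module $F_1$ and has no reason to lie in $\Ker L$; hence $H^*C_\alpha \notin \Ker L$ and $C_\alpha \notin \hat\S$ in general. The objects you propose as generators of $\hat\S$ typically do not even belong to $\hat\S$, so the reduction of statement (3) to the $\T(C_\alpha,Y)^*$ and the appeal to Brown representability via $\Loc(\{C_\alpha\})=\hat\S$ both break down. Even if the $C_\alpha$ did lie in $\hat\S$, there is no argument that they generate it.

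More structurally, the whole plan of exhibiting a small set of generators for $\hat\S$ and then invoking Brown representability is the wrong route. The kernel of a Bousfield localisation on a compactly generated category need not be generated by any set one can write down from a presentation of $\Ker L$ (this kind of difficulty is precisely what the telescope conjecture is about). Krause's proof in \cite{Krcohom} avoids this: it does not construct generators of $\hat\S$ at all, but instead produces the right adjoint to the Verdier quotient directly, by a Brown-representability argument applied to an auxiliary coproduct-preserving cohomological functor built from $L \circ H^*$, and then identifies the $\hat L$-acyclics and $\hat L$-locals after the fact. To repair your argument you would have to replace the generating-set strategy by such a direct representability construction; as written, statements (2), (3), and the preservation of coproducts all rest on the unjustified identity $\hat\S=\Loc(\{C_\alpha\})$.
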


\begin{rem}\cite[Rem.\ 2.4]{Krcohom}\label{alsoadjointscommute}
Let $L\colon \Modgr \Gamma \to \Modgr \Gamma$ be an exact localisation functor and denote by
$\hat L\colon \T \to \T$ the exact localisation functor which exists by Theorem~\ref{Henning}.
Write 
$\C$ for the $\hat L$-acyclic objects. By Lemma~\ref{LRQ}, $\hat L$ and $L$ give rise to adjoint pairs of functors
$$\xymatrix{\ar @{--} \T \ar@<-1ex>[r]_-{Q} & \T/{\C} \ar@<-1ex>[l]_-{ R} & \textrm{and}& \Modgr \Gamma \ar@<-1ex>[r]_-F & (\Modgr \Gamma)_L \ar@<-1ex>[l]_-G} $$
satisfying $\hat L= R \circ  Q$ and $L=G \circ F$. 
The diagram below commutes up to natural isomorphism. 
$$\xymatrix{\ar @{--} \T \ar[d]_{Q} \ar[rr]^-{\T(A,-)^*}  & & \Modgr \Gamma \ar[d]^{F}\\
\T /  \C \ar[d]_{R} \ar[rr]^-{\T / \C(QA,-)^*}& &  (\Modgr \Gamma)_L \ar[d]^{G} \\
\T \ar[rr]^-{\T(A,-)^*} & & \Modgr \Gamma
}$$
\end{rem}

Now assume that $\Gamma$ is graded-commutative and consider the localisation functor  $L\colon \Modgr \Gamma \to \Modgr \Gamma$ given by localisation with respect to a multiplicatively closed subset $S$ of~$\Gamma$. See Chapter \ref{grcom localisation} for localisation of graded-commutative rings. The following results in this section are joint work with K. Br\"uning \cite{BH}.

\begin{prop}\label{isomasrings}
Suppose that the ring $\T(A,A)^*$ is graded-commutative and let \linebreak
 $L\colon  \Modgr \T(A,A)^* \to \Modgr \T(A,A)^*$ be  localisation with respect to a multiplicatively closed subset of homogeneous elements  $S\subseteq \T(A,A)^*$.
Denoting $\C = \Ker \hat{L}$, the diagram
$$\xymatrix{\ar @{--} \T \ar[d]_{Q} \ar[rr]^-{\T(A,-)^*}  & & \Modgr \T(A,A)^* \ar[d]^{\can}\\
\T / \C \ar[rr]^-{\T /\C(QA,-)^*}& &  \Modgr S^{-1} \T(A,A)^*}$$
commutes up to natural isomorphism. Furthermore, $\T/\C(QA,QA)^*$ and $S^{-1}\T(A,A)^*$ are isomorphic not only as graded $\T(A,A)^*$-modules, but also as \emph{graded rings}. 
\end{prop}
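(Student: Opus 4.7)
The commutativity of the diagram is a direct application of Remark~\ref{alsoadjointscommute} to the exact localisation functor $L = S^{-1}(-)$ on $\Modgr \T(A,A)^*$. The key observation is that the category of $L$-local modules coincides with $\Modgr S^{-1}\T(A,A)^*$, so the pair $(F,G)$ of Remark~\ref{alsoadjointscommute} identifies with the canonical extension-of-scalars functor $\can$ together with restriction along $\T(A,A)^* \to S^{-1}\T(A,A)^*$. Substituting into the two-square diagram of that remark yields both the asserted commutativity and a natural isomorphism of graded $\T(A,A)^*$-modules between $\T/\C(QA,-)^*$ and $S^{-1}\T(A,-)^*$. In particular, at the object $A$, one already obtains a graded $\T(A,A)^*$-module isomorphism $S^{-1}\T(A,A)^* \cong \T/\C(QA,QA)^*$.

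To upgrade this to a ring isomorphism, write $R = \T(A,A)^*$. The plan is to use the universal property of $S^{-1}R$ from Section~\ref{grcom localisation}. The exact functor $Q$ induces a graded ring homomorphism
$$\alpha\colon R \longrightarrow \T/\C(QA,QA)^*, \qquad g \mapsto Qg.$$
I would first verify that $\alpha(s)$ is invertible for every $s \in S$. Completing $s\colon A \to \Sigma^{|s|}A$ to an exact triangle with cone $C$, the morphism $Qs$ is invertible in $\T/\C$ if and only if $C \in \Ker\hat L$, which by Theorem~\ref{Henning}(2) is equivalent to $S^{-1}H^*C = 0$. Applying $\T(A,-)^*$ to the triangle gives a long exact sequence in which the maps adjacent to $H^*C$ are, up to sign, multiplication by $s$; these become invertible after localising at $S$, forcing $S^{-1}H^*C = 0$ by a short diagram chase in the localised exact sequence. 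The universal property of graded-commutative localisation then yields a unique graded ring homomorphism $\bar\alpha\colon S^{-1}R \to \T/\C(QA,QA)^*$ extending $\alpha$.

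It remains to show that $\bar\alpha$ is bijective. The natural isomorphism $\phi\colon LH^*A \xrightarrow{\cong} H^*\hat L A$ of Theorem~\ref{Henning}(1) is $R$-linear, and so is the adjunction isomorphism $\psi\colon H^*\hat L A = \T(A,RQA)^* \xrightarrow{\cong} \T/\C(QA,QA)^*$, where the right-hand side is viewed as an $R$-module via $\alpha$. A short computation using the explicit form of $\phi$ in Theorem~\ref{Henning}(1), the naturality of $\Psi$, and the triangle identity $\varepsilon_{QA}\circ Q\hat\Psi A = \id_{QA}$ of the adjunction $Q \dashv R$ shows that the composite $\psi\circ\phi$ restricts along the unit $\Psi H^*A\colon R \to LH^*A = S^{-1}R$ to the map $\alpha$. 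Hence $\psi\circ\phi$ is an $R$-linear extension of $\alpha$ that is bijective; by the uniqueness of $R$-linear extensions through the localisation $R \to S^{-1}R$, the map $\bar\alpha$ coincides with $\psi\circ\phi$ and is therefore bijective. Consequently $\bar\alpha$ is the desired graded ring isomorphism. The main obstacle in this plan is precisely this final identification step: confirming that the $R$-linear bijection delivered by the categorical machinery of Theorem~\ref{Henning} and the adjunction matches the ring extension $\bar\alpha$ obtained from the universal property of localisation.
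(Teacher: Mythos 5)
Your proposal is correct and follows essentially the same strategy as the paper's proof: build the graded ring homomorphism $r\colon S^{-1}\T(A,A)^*\to\T/\C(QA,QA)^*$ from the universal property of the graded-commutative ring of fractions, and then show it coincides with the graded $R$-module isomorphism already furnished by Theorem~\ref{Henning}(1) and the adjunction, using uniqueness of the factorisation through $\can$. The one place you take a slightly different (and somewhat more explicit) route is in verifying that $Qs$ is invertible for $s\in S$: you complete $s$ to a triangle, localise the resulting long exact sequence in $\T(A,-)^*$, and conclude that the cone is $\hat L$-acyclic; the paper instead reads this off directly from the commutative square displayed in the proof, which already exhibits $Q$ as the composite of $\can$ with a bijection, so that $Q$ inverts $S$ because $\can$ does. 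Both routes are valid and of comparable length.
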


\begin{proof}
The diagram commutes by Remark \ref{alsoadjointscommute}.
Writing again $H^*$ for $\T(A,-)^*$, the naturality of $H^*\Psi$ yields a commutative square
$$
\xymatrix{
H^*A\ar[rr]^-{\Psi H^*A} \ar[d]_-{H^*\hat{\Psi} A}& & LH^*A \ar[d]^-{L H^*\hat{\Psi} A}_-{\cong}\\
H^*\hat{L}A \ar[rr]^-{\Psi H^*\hat{L}A}_-{\cong}& & L H^*\hat{L}A
}
$$
in which the lower and the right hand side morphism are bijective by Theorem \ref{Henning}.
Now note that $H^*\hat{\Psi} A$ is up to isomorphism given by the canonical map $$Q\colon \T(A,A)^* \to \T/\C(QA,QA)^*, \quad f \mapsto Qf,$$
and that $\Psi H^*A$ equals up to isomorphism the canonical ring homomorphism
$$\can \colon \T(A,A)^*\to S^{-1}\T(A,A)^*.$$ 
Since $Q\colon \T/(A,A)^* \to \T/\C(QA,QA)^*$ is a multiplicative map inverting all elements in $S$, we obtain a ring homomorphism
$r\colon S^{-1}\T(A,A)^* \to \T/\C(QA,QA)^*$ which makes the upper triangle in the modified diagram
$$
\xymatrix{
\T(A,A)^*\ar[r]^-{\can} \ar[d]_-{Q} & S^{-1}\T(A,A)^*\ar[d]^-{S^{-1}Q}_-{\cong}\ar@{..>}[dl]_r\\
\T/\C(QA,QA)^* \ar[r]^-{\nu}_-{\cong} & S^{-1}\T/\C(QA,QA)^*
}
$$
commute. We now show that $r$ is bijective by proving the commutativity of the lower triangle.

Since both the maps $\nu\circ r$ and $S^{-1}Q$ make the following diagram of $\T(A,A)^*$-modules 
$$
\xymatrix{
  \T(A,A)^*\ar[r]^{\mu} \ar[d]_-{\nu \circ Q} & S^{-1}\T(A,A)^* \ar@<-3pt>[dl]_-{\nu\circ r}  \ar@<3pt>[dl]^-{S^{-1}Q}\\
S^{-1}\T/\C(QA,QA)^* &
}
$$
commute, the universal property of localisation of modules yields  $\nu\circ r = S^{-1}Q$ and hence the claim. 
\end{proof}

 \begin{prop}\label{C_pcompgen}
 Suppose that the ring $\T(A,A)^*$ is graded-commutative and let \linebreak
 $L\colon  \Modgr \T(A,A)^* \to \Modgr \T^*(A,A)^*$ be  localisation with respect to a multiplicatively closed subset of homogeneous elements  $S\subseteq \T(A,A)^*$.
If the compact object $A \in \T$ is a generator, then the category $\C = \Ker \hat{L}$ is generated by compact objects of~$\T$.
 \end{prop}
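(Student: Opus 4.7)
My approach would be to exhibit an explicit set of compact generators of $\C$ by taking cones of the multiplication-by-$s$ maps on $A$. Since $L = S^{-1}(-)$ preserves arbitrary direct sums and $A$ is a compact generator of $\T$, the final clause of Theorem~\ref{Henning} guarantees that $\hat L$ also preserves direct sums, so $\C = \Ker\hat L$ is a localising subcategory of $\T$. For each $s\in S$ of degree $d = |s|$, the element $s \in H^d A = \T(A,\Sigma^d A)$ is a morphism $s\colon A \to \Sigma^d A$; completing it to a triangle
\[
A \xto{s} \Sigma^d A \to K_s \to \Sigma A
\]
gives an object $K_s$ which is compact in $\T$, hence also compact in $\C$ since $\C$ inherits its direct sums from $\T$.

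Next I would verify that $K_s \in \C$. Applying the cohomological functor $H^* = \T(A,-)^*$ to the above triangle yields a long exact sequence of graded $H^*A$-modules
\[
\cdots \to H^*A \xto{\,\cdot s\,} H^*A[d] \to H^*K_s \to H^*A[1] \xto{\,\cdot s\,} H^*A[d+1] \to \cdots
\]
in which, via composition of graded morphisms as in Example~\ref{examgradendring}, the labelled maps are multiplication by $s$. Localising at $S$ is exact and sends multiplication by $s$ to an isomorphism, so exactness forces $(H^*K_s)_S = 0$, and Theorem~\ref{Henning}(2) gives $K_s \in \C$.

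It then remains to show that the set $\M = \{K_s : s \in S\}$ generates $\C$, which by Lemma~\ref{compactgen} applied to the triangulated category $\C$ reduces to verifying that any $X \in \C$ with $\T(K_s, X)^n = 0$ for all $s \in S$ and $n \in \bbZ$ is zero. Applying $\T(-,X)^*$ to the defining triangle of $K_s$ and using the vanishing assumption, the long exact sequence collapses to give isomorphisms
\[
\T(A,X)^{n-d} \xto{\,s^*\,} \T(A,X)^n \qquad \text{for every } n \in \bbZ,
\]
where $s^*$ is precomposition with $s$, i.e.\ multiplication by $s$ in the $H^*A$-module structure on $H^*X = \T(A,X)^*$. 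Consequently every $s \in S$ acts invertibly on $H^*X$, so $H^*X$ is $L$-local. But $X \in \C$ forces $H^*X$ to be $L$-acyclic by Theorem~\ref{Henning}(2); combining the two gives $H^*X \cong (H^*X)_S = 0$. Since $A$ is a generator of $\T$, Lemma~\ref{compactgen} applied to $\T$ yields $X = 0$, completing the argument.

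I expect the main difficulty to be chiefly notational: making sure that the connecting maps in both long exact sequences are correctly identified with multiplication by $s$ in the appropriate $H^*A$-module structure and that the graded shifts are handled consistently. Once these identifications are in place, the logical backbone — an object which is simultaneously $L$-local and $L$-acyclic is zero, and $A$ detects triviality in $\T$ — is short.
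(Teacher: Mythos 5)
Your proof is correct and follows essentially the same route as the paper's: exhibit the cones of the maps $s\colon A \to \Sigma^{|s|}A$, $s\in S$, as compact generators of $\C$, apply Lemma~\ref{compactgen}, and observe that an object $X\in\C$ with $\T(K_s,X)^*=0$ for all $s$ has $H^*X$ simultaneously $S$-local and $S$-acyclic, hence zero, whence $X=0$ since $A$ generates $\T$. Your extra checks — that $\C$ is localising so Lemma~\ref{compactgen} applies, that each $K_s$ actually lies in $\C$, and that compactness in $\T$ passes to compactness in $\C$ — are all correct and fill in details the paper leaves tacit.
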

 \begin{proof}
We show that $\C$ is generated by $\{\cone(\sigma)\,\vert \, \sigma\colon A \to A[n] \in S\}$. Let $M$ be any object of $\C$. Using Lemma \ref{compactgen}, it is enough to show that $\T(\cone(\sigma),M)^* = 0$ for all $\sigma \in S$ implies $M=0$. 
By Lemma \ref{T(X,-)cohomol}, every triangle $$A \xto{\sigma} A[n] \to \cone(\sigma) \to A[1]$$ gives rise to an exact sequence 
$$\T(\cone(\sigma),M)^* \to \T(A[n],M)^* \xto{\T(\sigma,M)^*} \T(A,M)^* \to \T(\cone(\sigma)[-1],M)^*.$$
By assumption, we have  $\T(\cone(\sigma)[-1],M)^* = 0 = \T(\cone(\sigma),M)^*$. Hence 
the map $$\T(A[n],M)^* \xto{\T(\sigma,M)^*} \T(A,M)^*$$ is an isomorphism for all $\sigma \in S$ and thus, $\T(A,M)^*$ is $S$-local. On the other hand, $\T(A,M)^*$ is $S$-acylic and so we conclude that $\T(A,M)^*=0$. It follows that $M=0$ because $A$ is a compact generator.
 \end{proof}

\subsubsection{Cohomological $\p$-Localisation} \label{homlocp}
 Let $A$ be a dg algebra such that $H^*A$ is graded-commutative
and  let $\p$ be a graded prime ideal of $H^*A$.
Denote by $\C_{\p}$ the full subcategory of objects $X$ in $\D(A)$ such that $(H^*X)_{\p} = 0$. In other words,
$\C_{\p}$ is the kernel of the cohomological functor
$$(- \otimes_{H^*A} (H^*A)_{\p}) \circ \Ddg(A)(A,-)^*.$$

From the previous discussion we obtain
 \begin{cor}\label{HPSdia}
 The localisation $$\xymatrix{\ar @{--} \D(A)\ar@<-1ex>[r]_-Q & \D(A)/\C_{\p} \ar@<-1ex>[l]_-R}$$
 is smashing, and there is an isomorphism $r\colon \D(A)(A,A)^*_{\p} \xto{\cong} \D(A)/\C_{\p}(QA,QA)^*$
of graded rings making the diagram
{\small $$\xymatrix{
\D(A)(A,A)^*\ar[r]^-{\can} \ar[d]_-{Q} & \D(A)(A,A)_{\p}^* \ar[dl]_-{\cong}^-r\\
\D(A)/\C_{\p}(QA,QA)^*  & }
$$}
commutative. Furthermore, the squares
$$\xymatrix@C+4.2pt{\ar @{--} \Ddg(A) \ar[d]_Q \ar[rr]^-{\Ddg(A)(A,-)^*}& & \Modgr H^*A
\ar[d]^{- \otimes_{H^*A} (H^*A)_{\p}} && \Ddg(A)  \ar[rr]^-{\Ddg(A)(A,-)^*}& & \Modgr H^*A\\
\Ddg(A)/\C_{\p} \ar[rr]^-{\Ddg(A)/\C_{\p}(QA,-)^*}& & \Modgr H^*A_{\p} && \Ddg(A)/\C_{\p} \ar[u]_R \ar[rr]^-{\Ddg(A)/\C_{\p}(QA,-)^*}& & \ar[u]^{\inc}
\Modgr H^*A_{\p}}$$
commute up to natural isomorphism.\qed
\end{cor}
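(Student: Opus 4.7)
The plan is to read off each claim of the corollary from the general results of this section, applied to $\T = \D(A)$ with compact generator $A$. Under the identification $\D(A)(A,A)^* \cong H^*A$ given by Lemma \ref{D(A,-)^*=H^*-}, the hypothesis that $H^*A$ is graded-commutative places us in the setting of Proposition \ref{isomasrings} and Remark \ref{alsoadjointscommute}. The relevant multiplicative set is $S = \{s \in H^*A : s \text{ homogeneous},\ s \notin \p\}$, so that $(H^*A)_{\p} = S^{-1}H^*A$ and the $\p$-localisation on $\Modgr H^*A$ is $L = - \otimes_{H^*A} (H^*A)_{\p}$, which is exact and preserves arbitrary direct sums.

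First I would apply Theorem \ref{Henning} to this $L$, obtaining an exact localisation functor $\hat L$ on $\D(A)$. Part (2) of that theorem identifies $\Ker \hat L$ with the class of $X \in \D(A)$ satisfying $(H^*X)_{\p} = 0$, i.e.\ with $\C_{\p}$. Since $A$ is a compact generator and $L$ preserves direct sums, the final clause of Theorem \ref{Henning} ensures that $\hat L$ preserves arbitrary direct sums; writing $\hat L = R \circ Q$ with $R$ fully faithful, the remark following the definition of smashing shows that $R$ itself preserves direct sums, so $Q\colon \D(A) \to \D(A)/\C_{\p}$ is a smashing localisation.

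Next, the graded-ring isomorphism $r$ and the commutativity of the triangle are read off directly from Proposition \ref{isomasrings}: its hypotheses (compact generator $A$, graded-commutative endomorphism ring $H^*A$, localisation at a multiplicative set of homogeneous elements) are all satisfied, and its conclusion supplies exactly the map $r\colon S^{-1}H^*A \xto{\cong} \D(A)/\C_{\p}(QA,QA)^*$ together with the identity $r \circ \can = Q$ on $\D(A)(A,A)^*$.

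Finally, the two commutative squares arise by instantiating the diagram of Remark \ref{alsoadjointscommute} with $F = - \otimes_{H^*A} (H^*A)_{\p}$ and $G$ its fully faithful right adjoint, i.e.\ restriction of scalars along the canonical map $H^*A \to (H^*A)_{\p}$ (which is the inclusion labelled $\inc$ in the statement). The upper square of that diagram is the first square in the statement and the lower square is the second. The whole proof is thus essentially a bookkeeping exercise; the only point where one might pause is the smashing property, which is handed to us once we recognise that $A$ is a compact generator of $\D(A)$ and invoke the last clause of Theorem \ref{Henning}.
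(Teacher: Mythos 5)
Your proposal is correct and takes essentially the same route as the paper, which simply stamps this corollary with a \qed as an immediate consequence of Theorem \ref{Henning}, Proposition \ref{isomasrings}, and Remark \ref{alsoadjointscommute}. Your identification of $\C_{\p}$ with $\Ker\hat L$ via Theorem \ref{Henning}(2), the smashing property from its final clause (together with the observation that $RQ$ preserves sums iff $R$ does), and the two squares as the two halves of the diagram in Remark \ref{alsoadjointscommute} are exactly the intended unwinding.
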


\section{Realising smashing localisations by morphisms of dg algebras}\label{seclift}
The results in this chapter are joint work with K. Br\"uning \cite{BH}, with substantial contributions of Bernhard Keller. We show that every smashing localisation on a derived category of a dg algebra can be realised by a morphism of dg algebras. More precisely, if $A$ is a dg algebra and $L\colon \D(A) \to \D(A)$ a smashing localisation, we prove the exis\-tence of a dg algebra $A_L$ with the property $\D(A)/\Ker L \simeq \D(A_L)$, a dg algebra $A'$ quasi-isomorphic to $A$ and a zigzag of dg algebra morphisms 
$$A \xleftarrow{\sim} A' \to A_L$$
which identifies in cohomology with the algebra map $L\colon \D(A)(A,A)^*\to \D(A)(LA,LA)^*$. 
If the dg algebra $A$ is cofibrant, then the algebra map $L$ identifies with the cohomology of a morphism $A \to A_L$, and the quotient functor is naturally isomorphic to the left derived functor $-\otimes_A^{\bfL} A_L$.

As an application, we consider in Section \ref{Apdef} dg algebras with graded-commutative cohomology ring. For such a dg algebra $A$, we introduce the \emph{localisation of $A$ at a prime $\p$ in cohomology} and denote this dg algebra by $A_{\p}$. It has the property $H^*(A_{\p}) \cong (H^*A)_{\p}$.  Moreover, we show that with this identification of graded algebras, the canonical morphism $H^*A\to (H^*A)_{\p}$ is induced by a zigzag of dg algebra morphisms.

\subsection{Construction of a dg algebra morphism}\label{sec construction}
Let $A$ be a differential graded algebra over some commutative ring
$k$ and let $$L\colon \D(A)\to \D(A)$$ be a smashing localisation. Denoting by $\C$ the category of $L$-acyclic objects, we have an adjoint pair of functors
$$\xymatrix{\ar @{--} \D(A)\ar@<-1ex>[r]_-Q & \D(A)/\C \ar@<-1ex>[l]_-R}$$
satisfying $R \circ Q=L$. The right adjoint $R$ is fully faithful and commutes with arbitrary direct sums.

Our first aim is to write the quotient category $\D (A)/\C$ as derived
category of a differential graded algebra $A_L$. 
Then we construct a zigzag of dg algebra morphisms $A \xleftarrow{\sim} A' \to A_L$ 
which induces 
the algebra morphism $$\D(A)(A,A)^* \to \D(A)(LA,LA)^*, \quad f \mapsto Lf,$$
in cohomology. 
For this purpose, we identify throughout this chapter  the functors $H^*\colon \D(A) \to \Modgr H^*A$ and $\D(A)(A,-)^*\colon \D(A) \to \Modgr H^*A$ (see Lemma \ref{D(A,-)^*=H^*-}).

The following lemma which we learned from Dave Benson is the key to our construction.
\begin{lem}\label{pullback}
Let $A$, $B$ dg algebras and $M$ a dg $(B,A)$-bimodule. Let $\alpha\colon A \to M$ and $\beta\colon B \to M$ be maps of dg modules which satisfy $\alpha(1) = \beta(1)$. Then 
$$X = \{(a,b) \in A \times B \; | \; \alpha(a) = \beta(b)\}$$
 is a dg algebra with differential 
$d_X = (d_A,d_B)$ and the projections $p_1, p_2$ in the pullback diagram
$$\xymatrix{X \ar[r]^{p_2} \ar[d]_{p_1} &  B \ar[d]^{\beta}  \\
A \ar[r]_{\alpha} & M\\}$$
are dg algebra morphisms.
If $\beta$ is a surjective \qis, then the diagram induces a pullback diagram in cohomology.
\end{lem}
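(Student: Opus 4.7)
The plan is to verify the dg algebra structure of $X$ directly and then deduce the cohomology assertion from a short exact sequence of complexes naturally attached to the pullback. First I would interpret the hypotheses: $\alpha\colon A \to M$ is a map of right dg $A$-modules and $\beta\colon B \to M$ a map of left dg $B$-modules, so that setting $m := \alpha(1) = \beta(1) \in M$ gives $\alpha(a) = ma$ and $\beta(b) = bm$ for all $a \in A$, $b \in B$. Consequently $X = \{(a,b) \in A \times B \mid ma = bm\}$.

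Next I would check that $X$ is a dg subalgebra of the product dg algebra $A \times B$. Closure under multiplication is the main point and follows from
\begin{equation*}
m(aa') = (ma)a' = (bm)a' = b(ma') = b(b'm) = (bb')m
\end{equation*}
for $(a,b), (a',b') \in X$. The unit $(1_A, 1_B)$ lies in $X$ by hypothesis, closure under the differential is immediate from $\alpha, \beta$ being chain maps via $\alpha(da) = d\alpha(a) = d\beta(b) = \beta(db)$, and the Leibniz rule and grading are inherited from $A \times B$. The projections $p_1, p_2$ are then plainly dg algebra morphisms and the square is a pullback in the category of dg algebras.

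For the cohomology statement, the standard observation is that $X$ sits in a sequence of complexes of $k$-modules
\begin{equation*}
0 \to X \xrightarrow{(p_1, p_2)} A \oplus B \xrightarrow{(\alpha,\, -\beta)} M \to 0,
\end{equation*}
which is exact on the right exactly because $\beta$ is surjective. Passing to the long exact cohomology sequence and using that $H^*\beta$ is an isomorphism (hence each $H^{n-1}A \oplus H^{n-1}B \to H^{n-1}M$ is already surjective via the $B$-summand), every connecting map vanishes, yielding short exact sequences
\begin{equation*}
0 \to H^n X \to H^n A \oplus H^n B \xrightarrow{(H^n\alpha,\, -H^n\beta)} H^n M \to 0
\end{equation*}
for each $n$. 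This identifies $H^*X$ with the pullback $H^*A \times_{H^*M} H^*B$ in graded abelian groups, which is the claim.

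The only genuinely delicate step is fixing a consistent interpretation of "map of dg modules" so that $X$ is actually closed under multiplication; once $\alpha$ is taken right $A$-linear and $\beta$ left $B$-linear, the algebraic verification is formal and the cohomology claim reduces to the long exact sequence argument above. I expect no further obstacles.
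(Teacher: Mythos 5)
Your proof is correct, and for the cohomological assertion you take a genuinely different route from the paper. The paper argues element-wise: given a pair of cohomology classes $(\overline{a},\overline{b})$ with $H^*\alpha(\overline a)=H^*\beta(\overline b)$, it picks representing cocycles, writes the difference $\alpha(a)-\beta(b)$ as a coboundary $m$ of $M$, and then uses surjectivity of $\beta$ to lift $m$ to a coboundary $b'$ of $B$ so that $(a,b+b')$ is a cocycle of $X$ representing the given pair. You instead exhibit $X$ as the kernel in the short exact sequence of complexes $0\to X\to A\oplus B\xrightarrow{(\alpha,-\beta)} M\to 0$, valid because surjectivity of $\beta$ makes the right map onto, and then observe that the connecting maps in the associated long exact sequence vanish because $H^*\beta$ is already surjective. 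This yields short exact sequences $0\to H^nX\to H^nA\oplus H^nB\to H^nM\to 0$, identifying $H^*X$ with the fibre product.

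What the two approaches buy: your LES argument is more mechanical and packages the two halves of the claim cleanly. In particular it produces \emph{injectivity} of the map $H^*X\to H^*A\times H^*B$ for free (connecting map zero implies injectivity), a point the paper's element-chase treats only implicitly by speaking of ``inclusions''. The paper's argument is more hands-on and slightly lighter to write if one wishes to avoid invoking the long exact sequence. Your remark that one must fix the bimodule interpretation ($\alpha$ right $A$-linear, $\beta$ left $B$-linear) so that $\alpha(a)=ma$, $\beta(b)=bm$ with $m=\alpha(1)=\beta(1)$ is also the correct reading of the hypothesis; this is exactly the situation in the intended application (Lemma \ref{zigzag}), where $\alpha=\bar\nu_*$ is right $\END_A(X)$-linear and $\beta=\bar\nu^{\,*}$ is left $\END_A(Y\oplus I(X))$-linear on $\HOM_A(X,Y\oplus I(X))$. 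No gaps.
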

\begin{proof}
The first assertions are immediately checked. For the last one we show 
that $H^*X = \{(\overline{a},\overline{b}) \in H^*A \times H^*B \; | \; H^*\alpha(\overline{a}) = H^*\beta(\overline{b})\}$.

A pair $(\overline{a},\overline{b}) \in H^*X$ trivially satisfies the property $H^*\alpha(\overline{a}) = H^*\beta(\overline{b})$ and consequently, the inclusion $\subseteq$ is always fulfilled. 

For the other inclusion we need to assume that $\beta$ is a surjective \qis. Let $(\overline{a},\overline{b}) \in H^*A \times H^*B$ such that $H^*\alpha(\overline{a}) = H^*\beta(\overline{b})$. We choose representing cocycles $a$  of $\overline{a}$ and $b$ of $\overline{b}$. Then $\alpha(a) - \beta(b) = m$ for some coboundary $m \in M$. Since $\beta$ is a surjective \qis, there is a coboundary $b' \in B$ such that $\beta(b') = m$. Hence the tuple $(a,b+b')$ satisfies $\alpha(a) = \beta(b+b')$ and thus $(\overline{a},\overline{b}) = (\overline{a},\overline{b+b'}) \in H^*X.$
\end{proof}

The following lemma ensures that the cohomology of the dg algebra $A_L$ which we construct below is independent of all choices that we will make. 

\begin{lem}\label{zigzag}
Let $X, Y$ be dg $A$-modules and let $\nu\colon X \to Y$ be an isomorphism in $\K(A)$. 
\begin{itemize}
\item[(1)] 
Denote by $X \to I(X)$ the injective hull of $X$ in the Frobenius category $\Moddg A$. There exists a dg algebra $S$ and a zigzag of quasi-isomorphisms of dg algebras
$$\END_A(X) \xleftarrow{\sim} S \xrightarrow{\sim} \END_A(Y\oplus I(X)).$$
\item[(2)] Let $I$ be any injective module in the Frobenius category $\Moddg A$.
There is a dg algebra $T$ and a zigzag of quasi-isomorphisms of dg algebras
$$\END_A(Y) \xleftarrow{\sim} T \xrightarrow{\sim} \END_A(Y\oplus I).$$
\item[(3)] There exists a zigzag of \qis s of dg algebras from $\END_A(X)$ to $\END_A(Y)$.
\end{itemize}
\end{lem}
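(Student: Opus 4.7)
The approach is to deduce all three parts from the pullback construction of Lemma \ref{pullback}, applied to endomorphism dg algebras in a well-chosen manner. For part (1), the first step is to invoke Lemma \ref{choosesplit}: it lets me replace $\nu$ by a dg-module morphism $\iota \colon X \to \tilde Y$ with $\tilde Y = Y \oplus I(X)$ which is a split monomorphism of graded $A$-modules. Since $I(X)$ is projective-injective in the Frobenius category $\Moddg A$ and therefore nullhomotopic, $\iota$ remains an isomorphism in $\K(A)$. Then I consider the dg $(\END_A(\tilde Y),\END_A(X))$-bimodule $M := \HOM_A(X,\tilde Y)$ together with the two dg-module maps
$$\alpha \colon \END_A(X) \to M,\; f \mapsto \iota \circ f, \qquad \beta \colon \END_A(\tilde Y) \to M,\; g \mapsto g \circ \iota,$$
which agree at the unit, $\alpha(1) = \iota = \beta(1)$. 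Lemma \ref{pullback} then produces the dg algebra
$$S = \{(f,g) \in \END_A(X) \times \END_A(\tilde Y) \mid \iota f = g \iota\}$$
together with dg-algebra maps $p_1 \colon S \to \END_A(X)$ and $p_2 \colon S \to \END_A(\tilde Y)$, which are the candidates for the desired zigzag.

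To finish part (1) I need to show that $p_1$ and $p_2$ are quasi-isomorphisms, and for that I plan to apply the cohomology pullback part of Lemma \ref{pullback}. This requires $\beta$ to be a \emph{surjective} \qis. Surjectivity is where the graded splitting enters: if $s \colon \tilde Y \to X$ is a graded $A$-linear retraction of $\iota$, then any $\phi \in \HOM_A(X,\tilde Y)$ is the image under $\beta$ of $\phi \circ s$. That $\beta$ is a \qis follows from the identification $H^n\HOM_A(X,Y) \cong \K(A)(X,\Sigma^n Y)$ recalled in Remark \ref{dgremark}, which turns $H^*\beta$ into composition with the isomorphism $\iota$ in $\K(A)$. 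The same argument shows that $\alpha$ is a \qis. Lemma \ref{pullback} then makes the square a pullback in cohomology; since the pullback of an isomorphism is an isomorphism, $H^*\beta$ being iso forces $H^*p_1$ iso, and $H^*\alpha$ being iso forces $H^*p_2$ iso.

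For part (2), the identical construction applies to the canonical inclusion $\iota \colon Y \hookrightarrow Y \oplus I$, which is split mono already as a dg-module map and an isomorphism in $\K(A)$ because $I$ is nullhomotopic. Part (3) is then obtained by concatenating the zigzag from (1) with the zigzag from (2) applied to $I = I(X)$, yielding
$$\END_A(X) \xleftarrow{\sim} S \xrightarrow{\sim} \END_A(Y \oplus I(X)) \xleftarrow{\sim} T \xrightarrow{\sim} \END_A(Y).$$
The main technical obstacle is precisely the degree-wise surjectivity of $\beta$; this is the reason Lemma \ref{choosesplit} was set up, since without replacing the original $\nu$ by a graded split mono one has no natural section $s$ at hand to produce preimages under $\beta$. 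Everything else is a routine application of Lemmas \ref{pullback} and \ref{choosesplit}.
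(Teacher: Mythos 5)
Your proof is correct and follows essentially the same route as the paper's: replace $\nu$ by a graded split monomorphism $X \to Y \oplus I(X)$ via Lemma \ref{choosesplit}, form the pullback of the pre- and post-composition maps on endomorphism complexes using Lemma \ref{pullback}, and read off that both projections are quasi-isomorphisms from the cohomology pullback. The only difference is cosmetic — you make the bimodule structure and the pullback-of-an-isomorphism step explicit, which the paper leaves implicit.
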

\begin{proof}
(1) By Lemma \ref{choosesplit}, we can choose a representing dg $A$-module map $$\bar{\nu}\colon X \to Y \oplus I(X)$$
of $\nu \in \K(A)(X,Y)$ which is split as map of graded $A$-modules. Hence the map 
$$\bar{\nu}^*\colon \END_A(Y  \oplus I(X))\to\HOM_A(X,Y  \oplus I(X)), \quad f \mapsto f \circ \bar{\nu},$$ is surjective.
Applying Lemma \ref{pullback}, the pullback
$$\xymatrix{
S \ar[r]^-{p_2} \ar[d]_{p_1} &  \END_A(Y \oplus I(X)) \ar[d]|>{\object@{>>}}^-{\sim}_-{\bar{\nu}^*}\\
\END_A(X) \ar[r]^-{\bar{\nu}_*}_-{\sim} & \HOM_A(X,Y \oplus I(X)) & }$$
gives rise to a pullback diagram in cohomology, and the object $S$ is actually a dg algebra. In particular, we obtain quasi-isomorphisms of dg algebras 
$$\xymatrix{\END_A(X) &  \ar[l]^-{p_1}_-{\sim}  S  \ar[r]^-{\sim}_-{p_2} & \END_A(Y  \oplus I(X)).}$$

(2) The dg $A$-module map $\iota\colon Y \xto{\smatrix{\id & 0}} Y \oplus I$ is obviously a split monomorphism inducing $\id_Y$ in the homotopy category. Hence we obtain a pullback diagram
$$\xymatrix{
T \ar[r]^-{p_2}_-{\sim} \ar[d]_{p_1}^-{\sim} &  \END_A(Y \oplus I) \ar[d]|>{\object@{>>}}^-{\sim}_-{\iota^*}\\
\END_A(Y) \ar[r]^-{\iota_*}_-{\sim} & \HOM_A(Y,Y \oplus I) & }$$
yielding the claim.

(3) is a trivial consequence of (1) and (2).
\end{proof}




The proof of the following lemma is immediate.
\begin{lem}\label{QAcompact}
The object $QA$ is a compact generator of $\D(A)/\C$.\qed
\end{lem}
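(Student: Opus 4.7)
The plan is to verify the two claims separately: compactness of $QA$ in $\D(A)/\C$, and that $QA$ generates $\D(A)/\C$.

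For compactness, I will use the adjunction $(Q,R)$. Since $L = R\comp Q$ is smashing and $Q$, being a left adjoint, automatically commutes with arbitrary direct sums, the right adjoint $R$ must also commute with arbitrary direct sums. Given any family $(X_i)_{i\in I}$ of objects in $\D(A)/\C$, I compute
\[
\D(A)/\C\bigl(QA,\coprod_{i\in I} X_i\bigr) \cong \D(A)\bigl(A, R\coprod_{i\in I} X_i\bigr) \cong \D(A)\bigl(A,\coprod_{i\in I} RX_i\bigr),
\]
using the adjunction and the fact that $R$ preserves direct sums. Since $A$ is compact in $\D(A)$, this last group is isomorphic to $\coprod_{i\in I} \D(A)(A,RX_i) \cong \coprod_{i\in I}\D(A)/\C(QA,X_i)$, again by adjunction. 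So $QA$ is compact.

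For the generating property, I apply Lemma~\ref{compactgen}: it suffices to show that an object $Y\in\D(A)/\C$ is zero whenever $\D(A)/\C(QA,Y[n])=0$ for all $n\in\bbZ$. By adjunction, $\D(A)/\C(QA,Y[n])\cong \D(A)(A,(RY)[n])$, which by Lemma~\ref{D(A,-)^*=H^*-} is $H^n(RY)$. Vanishing for all $n$ therefore means that $RY\cong 0$ in $\D(A)$. Since $R$ is fully faithful, the counit $Q\comp R\xto{\cong}\id_{\D(A)/\C}$ is invertible, so $Y\cong QRY\cong Q(0)=0$. Thus $QA$ is a compact generator of $\D(A)/\C$, which is the desired statement; no step here is genuinely difficult, the only point to be careful about is the observation that smashing of $L$ forces $R$ to preserve direct sums.
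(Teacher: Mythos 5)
Your proof is correct and fills in exactly the routine details the paper elides (the lemma is stated with a \qed and the remark that "the proof is immediate"): compactness of $QA$ follows from the adjunction $(Q,R)$ together with $R$ preserving direct sums (which is precisely the smashing hypothesis), and generation follows from Lemma~\ref{compactgen}, the identification $\D(A)/\C(QA,Y[n])\cong\D(A)(A,R(Y[n]))\cong H^n(RY)$ via Lemma~\ref{D(A,-)^*=H^*-}, and full faithfulness of $R$. The only thing worth tightening in your write-up is the remark that $L=R\comp Q$ smashing "forces" $R$ to preserve direct sums: strictly speaking this also uses that $Q$ is essentially surjective (every object of $\D(A)/\C$ is of the form $QX$), a fact the paper records just after the definition of smashing.
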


Fix a homotopically projective replacement of $RQA \in \D(A)$. By abuse of notation we denote the replacement also by $RQA$.
\begin{prop}\label{RHOM(RQA,R-)}
The functor $\bfR \HOM(RQA,R-)\colon \D(A)/\C \to \D(\END_A(RQA))$ is an equivalence
of triangulated categories.
\end{prop}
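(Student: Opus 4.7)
The plan is to apply the Beilinson-type criterion, Proposition \ref{schwedeprop}, to the exact functor $F = \bfR\HOM_A(RQA, R-)$, taking $QA$ as the candidate compact generator of $\D(A)/\C$.

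First I would verify the basic properties of $F$. It is exact as a composition of exact functors (the functor $\bfR\HOM_A(RQA,-)$ is exact because $RQA$ has been chosen homotopically projective, so $\bfR\HOM_A(RQA,-)$ is represented by $\HOM_A(RQA,-)$ on the nose, see Section \ref{derivedfun}). Preservation of direct sums splits into two parts: the functor $R$ preserves arbitrary coproducts because $L$ is smashing, and $\bfR\HOM_A(RQA,-)$ preserves coproducts provided that $RQA$ is compact in $\D(A)$. To see the latter, combine the adjunction $(Q,R)$ with $Q\comp R \cong \id$ to get, for any family $(Y_i)$ in $\D(A)$,
\begin{equation*}
\D(A)(RQA, {\textstyle\coprod_i} Y_i)\ \cong\ \D(A)/\C(QA, {\textstyle\coprod_i} QY_i)\ \cong\ {\textstyle\coprod_i}\D(A)/\C(QA, QY_i)\ \cong\ {\textstyle\coprod_i}\D(A)(RQA,Y_i),
\end{equation*}
using that $Q$ is a left adjoint and hence preserves direct sums, and that $QA$ is compact in $\D(A)/\C$ by Lemma \ref{QAcompact}.

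Next I would check the two hypotheses of Proposition \ref{schwedeprop}. By Lemma \ref{QAcompact}, $QA$ is a compact generator of $\D(A)/\C$. Since $RQA$ is homotopically projective, $F(QA) = \bfR\HOM_A(RQA, RQA) = \END_A(RQA)$, which is the free dg $\END_A(RQA)$-module of rank one; this is the standard compact generator of $\D(\END_A(RQA))$. To check that
\begin{equation*}
F\colon\ \D(A)/\C(QA, QA[n])\ \longrightarrow\ \D(\END_A(RQA))(\END_A(RQA), \END_A(RQA)[n])
\end{equation*}
is bijective for every $n\in\bbZ$, I would chase through the identifications: the left hand side is isomorphic to $\D(A)(RQA, RQA[n])$ because $R$ is fully faithful, and this equals $H^n\END_A(RQA)$ via Remark \ref{dgremark}(3); on the right hand side, Lemma \ref{D(A,-)^*=H^*-} gives the same group $H^n\END_A(RQA)$. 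A direct inspection shows that $F$ realises precisely this identification.

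The main technical point, and the only place where smashingness is really used, is the compactness of $RQA$ inside $\D(A)$, handled above. Once this is in place, the hypotheses of Proposition \ref{schwedeprop} are met and $F$ is automatically an equivalence of triangulated categories.
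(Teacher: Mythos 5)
Your overall strategy matches the paper's: both apply Proposition~\ref{schwedeprop} to $F=\bfR\HOM_A(RQA,R-)$, check that $QA$ is a compact generator, that $F(QA)=\END_A(RQA)$ compactly generates the target, and that $F$ is bijective on graded $\Hom$'s. Those parts of your argument are essentially the paper's.

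However, your verification that $F$ preserves arbitrary direct sums contains a genuine error. You split the problem into ``$R$ preserves coproducts'' and ``$\bfR\HOM_A(RQA,-)$ preserves coproducts'', and claim the latter by showing $RQA$ is compact in $\D(A)$. This claim is false in general. For a smashing localisation the local object $LA=RQA$ is almost never compact; already for $A=\bbZ$, a field $k$, or $k[x]$, with $L$ the localisation inverting a single non-nilpotent homogeneous element, $RQA$ is a non-perfect flat module. Your chain of isomorphisms collapses at the first step: the adjunction gives $\D(A)(X,RZ)\cong\D(A)/\C(QX,Z)$, so $\D(A)(RQA,RZ)\cong\D(A)/\C(QA,Z)$, but your argument needs $\D(A)(RQA,Y)\cong\D(A)/\C(QA,QY)$ for arbitrary $Y\in\D(A)$, which is quite different (one cannot pass from $Y$ to $QY$ inside a $\Hom$-group unless $Y$ is already $L$-local). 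Indeed $\D(\bbZ)(\bbZ[1/2],\bbZ)=0$ whereas $\D(\bbZ)/\C(Q\bbZ,Q\bbZ)=\bbZ[1/2]$.

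The fix, which is exactly the paper's argument, is to observe that you never need $\bfR\HOM_A(RQA,-)$ to preserve coproducts on all of $\D(A)$ --- you only evaluate it on objects of the form $RX$. Since $R$ is fully faithful, $\D(A)(RQA,RX_i)\cong\D(A)/\C(QA,X_i)$ for \emph{every} $X_i$, so the chain
\[
\coprod_i \D(A)(RQA,RX_i)\ \cong\ \coprod_i \D(A)/\C(QA,X_i)\ \cong\ \D(A)/\C\bigl(QA,\coprod_i X_i\bigr)\ \cong\ \D(A)\bigl(RQA,R\coprod_i X_i\bigr)
\]
holds using only full faithfulness of $R$ and compactness of $QA$ in $\D(A)/\C$ (which is where the smashing hypothesis enters, via Lemma~\ref{QAcompact}). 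This identifies the cohomology of the canonical map $\coprod_i\bfR\HOM(RQA,RX_i)\to\bfR\HOM(RQA,R\coprod_i X_i)$ with an isomorphism, and hence the map itself is an isomorphism in $\D(\END_A(RQA))$.
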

\begin{proof}
We use Proposition \ref{schwedeprop} and first note that $\bfR \HOM(RQA,R-)$ preserves arbitrary direct sums because for any family $(X_i)_{i\in I}$ in $\D(A)/\C$, the map
$$\coprod_{i \in I} \bfR \HOM(RQA,RX_i) \to \bfR \HOM(RQA,R\coprod_{i \in I}X_i)$$
identifies in cohomology with the isomorphism
{\small
$$\coprod_{i \in I} \D(A)(RQA,RX_i)  \cong 
\coprod_{i \in I} \D(A)/\C(QA,X_i)
 \cong \D(A)/\C(QA,\coprod_{i \in I}X_i)
 \cong \D(A)(RQA,R\coprod_{i \in I}X_i). $$}

Moreover, the functor $\bfR \HOM(RQA,R-)$ maps the compact
generator $QA$ of $\D(A)/\C$ to $\END_A(RQA)$ which compactly generates $\D(\END_A(RQA))$. Finally, the map
$$\D(A)/\C(QA,QA[n]) \xto{\bfR \HOM(RQA,R-)} \D(\END_A(RQA))(\END_A(RQA),\END_A(RQA)[n])$$
is an isomorphism for all $n \in \mathbb Z$ since $RQA$ being homotopically projective implies that the diagram
$$\xymatrix{\ar @{--} \D(A)/\C(QA,QA[n]) \ar[d]_{R}^{\cong}  \ar[rrr]^-{\bfR \HOM(RQA,R-)} & & &  \D(\END_A(RQA))(\END_A(RQA),\END_A(RQA)[n])\\
\D(A)(RQA,RQA[n]) \ar[rrr]_{\cong} & & & H^n(\END_A(RQA)) \ar[u]^{\cong}  }$$ is commutative.
\end{proof}

Hence we have shown that the quotient category $\D(A)/\C$ is equivalent to the derived category of the dg algebra $\END_A(LA)$, where $LA$ was chosen to be homotopically projective. Note that Lemma \ref{zigzag} provides a zigzag of \qis s between the endomorphism dg algebras of two different homotopically projective replacements of an object in $\D(A)$. 

In order to construct a zigzag $A \xleftarrow{\sim} A' \to \END_A(LA)$ of dg algebra morphisms  inducing 
$$\D(A)(A,A)^* \to \D(A)(LA,LA)^*$$ in cohomology, we need to make another choice on the dg $A$-module representing $LA$.


Let $\eta\colon \id \to RQ$ be the unit and $\varepsilon \colon QR
\to \id$ the counit of the adjunction
$$\xymatrix{\ar @{--} \D(A)\ar@<-1ex>[r]_Q & \D(A)/\C \ar@<-1ex>[l]_R.}$$

Since $A$ is homotopically projective, we have
$$\eta_A \in \D(A)(A,RQA) \cong \K(A)(A,RQA).$$

\begin{lem}\label{eta*qis}
For any map $\bar{\eta}_A$ in $\Moddg A$ that represents $\eta_A \in \K(A)(A,RQA)$ and any dg $A$-module $M$, the map 
$$\bar{\eta}_A^{\,*} \colon \HOM_A(RQA,RQM) \to \HOM_A(A,RQM),\quad f \mapsto f \circ \bar{\eta}_A,$$
is a quasi-isomorphism.
\end{lem}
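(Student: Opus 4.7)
The plan is to reduce the statement to the fact that a certain morphism in $\D(A)$ induced by $\eta_A$ is an isomorphism, which will follow from the adjunction $(Q,R)$ together with $R$ being fully faithful.

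First I would pass to cohomology. By Remark \ref{dgremark}(3), for any dg modules $X,Y$ over $A$ we have $H^n\HOM_A(X,Y)\cong \K(A)(X,\Si^n Y)$. Since $A$ is free (hence homotopically projective) and $RQA$ was fixed to be homotopically projective at the start of the section, Corollary~\ref{K=D} yields natural identifications
\begin{align*}
H^n\HOM_A(RQA,RQM) &\;\cong\; \K(A)(RQA,\Si^n RQM) \;\cong\; \D(A)(RQA,\Si^n RQM),\\
H^n\HOM_A(A,RQM) &\;\cong\; \K(A)(A,\Si^n RQM) \;\cong\; \D(A)(A,\Si^n RQM).
\end{align*}
Under these identifications, $H^n(\bar\eta_A^{\,*})$ becomes the map $\eta_A^{\,*}\colon \D(A)(RQA,\Si^n RQM)\to \D(A)(A,\Si^n RQM)$ given by precomposition with $\eta_A$; in particular this is intrinsic and does not depend on the chosen representative~$\bar\eta_A$.

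Next I would show that $\eta_A^{\,*}$ is bijective for every $n$. Complete $\eta_A$ to an exact triangle in $\D(A)$,
$$C \to A \xto{\eta_A} RQA \to \Si C.$$
Applying the exact functor $Q$ gives the triangle $QC\to QA\xto{Q\eta_A} QRQA\to \Si QC$. Since $R$ is fully faithful the counit $\varepsilon_{QA}\colon QRQA\to QA$ is an isomorphism, and by the triangle identity $\varepsilon_{QA}\circ Q\eta_A=\id_{QA}$, so $Q\eta_A$ is itself an isomorphism. Consequently $QC=0$, that is, $C\in\C$.

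Finally, applying the cohomological functor $\D(A)(-,\Si^n RQM)$ of Lemma~\ref{T(X,-)cohomol} to the above triangle, I would use the adjunction $Q\dashv R$ to compute $\D(A)(C,\Si^n RQM)\cong \D(A)/\C(QC,\Si^n QM)=0$ (since $QC=0$), and likewise $\D(A)(\Si C,\Si^n RQM)=0$. The resulting long exact sequence then forces $\eta_A^{\,*}$ to be an isomorphism for every $n$, which is exactly the assertion that $\bar\eta_A^{\,*}$ is a quasi-isomorphism. The argument is essentially routine; the only point to keep in mind is that the identifications with $\D(A)$-hom sets require homotopical projectivity on the source side, which is precisely why the standing choice of a homotopically projective replacement of $RQA$ was made.
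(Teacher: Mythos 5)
Your proof is correct, but it takes a genuinely different route from the paper's. The paper's argument is a one-step computation: it writes down the inverse of $H^n(\bar\eta_A^{\,*})$ explicitly as $g\mapsto R(\varepsilon_{QM[n]})\circ RQ(g)$, verifying that it inverts $f\mapsto f\circ\eta_A$ using the triangle identities of the adjunction together with the invertibility of the counit $\varepsilon$ (which holds because $R$ is fully faithful). You instead reason via the cone: complete $\eta_A$ to a triangle $C\to A\to RQA\to\Si C$, use the same triangle identity and fully faithfulness of $R$ to see that $Q\eta_A$ is an isomorphism, deduce $QC=0$, and then kill the outer terms of the long exact sequence for $\D(A)(-,\Si^n RQM)$ by the orthogonality $\D(A)(\Ker Q,\Im R)=0$ coming from the adjunction. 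Both arguments rest on the same two ingredients ($Q\dashv R$ and $\varepsilon$ invertible); yours is a bit longer but makes visible the structural point that $\eta_A$ is an $L$-local equivalence, i.e.\ its cone lies in $\C$, while the paper's is shorter because it produces the inverse map by hand without passing through triangles.
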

\begin{proof}
Since $R$ is fully faithful, the usual adjunction isomorphism  (see \cite[Ch.\ IV.1]{Mac})
gives rise to the mutually inverse maps 
$$H^n (\bar{\eta}_A^{\,*}) \colon \D(A)(RQA,RQM[n]) \to \D(A)(A,RQM[n]),\quad f
\mapsto f \circ \eta_A,$$ and
\begin{equation*}
\D(A)(A,RQM[n]) \to \D(A)(RQA,RQM[n]), \quad g \mapsto
R(\varepsilon_{QA}) \circ RQ(g).\qedhere
\end{equation*}
\end{proof}

\begin{rem}\label{END(RQA)welldefined}
By Lemma \ref{choosesplit}, we may represent $\eta_A\colon A \to LA$ by a monomorphism of dg $A$-modules
$$\bar{\eta}_A\colon A \to \widehat{LA},$$
 which is a split as map of graded $A$-modules. Remember that $\widehat{LA}=LA \oplus I(A)$, where $A \to I(A)$ is the injective hull of $A$ in the Frobenius category $\Moddg A$, and $LA$ was already chosen to be homotopically projective. By Lemma \ref{zigzag}, we have a zigzag of \qis s   $$\END_A(LA) \xleftarrow{\sim} T \xrightarrow{\sim}\END_A(\widehat{LA}).$$ 
\end{rem}

We define the dg algebra $A_L$ to be $\END_A(\widehat{LA})$. By abuse of notation we write  $A_L=\END_A(LA)$.
Note that from Lemma \ref{zigzag} and Proposition \ref{RHOM(RQA,R-)}, it follows that 
$$\D(A_L) \simeq \D(A)/\C.$$

\begin{thm}\label{A'}
The algebra map
$$\D(A)(A,A)^* \to \D(A)(LA,LA)^*,\quad f \mapsto L(f),$$
is induced by a zigzag of dg algebra maps $$A \xleftarrow{\sim} A' \xto{\varphi} A_L.$$
That is, there exists a dg algebra $A'$ quasi-isomorphic to $A$, a morphism of dg algebras  $\varphi\colon A' \to A_L$ and in cohomology, we have the commutative diagram 
{$$\xymatrix{\ar @{--} H^*A' \ar[d]_{\cong} \ar[rd]^-{H^*\varphi}& \\
\D(A)(A,A)^* \ar[r]^-{L}&  \D(A)(LA,LA)^*}$$}
\end{thm}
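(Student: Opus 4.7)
The plan is to realise $A'$ as an explicit pullback of dg algebras built from $\bar{\eta}_A$. Recall from Remark~\ref{END(RQA)welldefined} that $\bar{\eta}_A\colon A \to \widehat{LA}$ is a monomorphism of dg $A$-modules representing the unit $\eta_A$, and that it is split as a map of graded $A$-modules by some graded $A$-linear $s\colon \widehat{LA} \to A$. I would then consider the dg bimodule $M = \HOM_A(A, \widehat{LA})$ together with the dg module morphisms
$$\alpha = \bar{\eta}_{A,*}\colon \END_A(A) \lto M,\quad g \longmapsto \bar{\eta}_A \circ g,$$
$$\beta = \bar{\eta}_A^{\,*}\colon \END_A(\widehat{LA}) \lto M,\quad f \longmapsto f \circ \bar{\eta}_A.$$
Since $\alpha(1) = \bar{\eta}_A = \beta(1)$, Lemma~\ref{pullback} produces a dg algebra $A' = \{(g,f) \mid \bar{\eta}_A \circ g = f \circ \bar{\eta}_A\}$ together with dg algebra morphisms $p_1\colon A' \to \END_A(A) \cong A$ and $\varphi := p_2\colon A' \to \END_A(\widehat{LA}) = A_L$.

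Next I would check that $\beta$ is a \emph{surjective} quasi-isomorphism. The quasi-isomorphism part is precisely Lemma~\ref{eta*qis}. For surjectivity, any $\psi \in \HOM_A(A, \widehat{LA})$ has the preimage $\psi \circ s \in \END_A(\widehat{LA})$, because $\beta(\psi \circ s) = \psi \circ s \circ \bar{\eta}_A = \psi$. With these two properties in hand, the second assertion of Lemma~\ref{pullback} applies; moreover surjectivity of $\beta$ forces surjectivity of $p_1$, with $\ker p_1 \cong \ker \beta$ acyclic. The short exact sequence $0 \to \ker \beta \to A' \xto{p_1} A \to 0$ therefore shows that $p_1$ is itself a quasi-isomorphism.

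The final step is the identification of $H^*\varphi$ with the localisation map $L$. Under the canonical isomorphisms $H^*\END_A(A) \cong \D(A)(A,A)^*$ and $H^*A_L \cong \D(A)(\widehat{LA},\widehat{LA})^* \cong \D(A)(LA,LA)^*$ (the last one holding because the summand $I(A)$ of $\widehat{LA}$ is contractible in $\K(A)$), a cohomology class $[(g,f)] \in H^*A'$ is sent by the pair $(H^*p_1, H^*\varphi)$ to classes $([g], [f])$ whose representatives satisfy $\bar{\eta}_A \circ g = f \circ \bar{\eta}_A$ in $\HOM_A(A, \widehat{LA})$; passing to $\D(A)$, this reads $\eta_A[n] \circ [g] = [f] \circ \eta_A$. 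Since $L = R \circ Q$ with $R$ fully faithful, naturality of $\eta$ shows that this equation uniquely characterises $[f] = L([g])$, which is exactly the required commutativity $H^*\varphi = L \circ H^*p_1$.

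The hard part, and the reason the specific representative $\bar{\eta}_A$ was arranged in Remark~\ref{END(RQA)welldefined} to be a graded-split monomorphism, is ensuring that $\beta$ is \emph{surjective}: without a graded splitting $s$ one would only get a quasi-isomorphism, which would not suffice to conclude that $p_1\colon A' \to A$ is itself a quasi-isomorphism, and the whole construction would collapse. Once surjectivity is in place, the identification of $H^*\varphi$ with $L$ becomes a purely formal consequence of the adjunction $(Q,R)$ defining $L$.
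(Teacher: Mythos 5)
Your construction is exactly the paper's: the same pullback of $\bar\eta_{A*}$ and $\bar\eta_A^{\,*}$ over $\HOM_A(A,\widehat{LA})$, invoking Lemma~\ref{pullback} and using Lemma~\ref{eta*qis} together with the graded splitting from Remark~\ref{END(RQA)welldefined} to see that $\bar\eta_A^{\,*}$ is a surjective quasi-isomorphism, and then reading off $H^*\varphi = L$ from the adjunction. The only cosmetic differences are that you spell out the kernel short exact sequence to see $p_1$ is a quasi-isomorphism (the paper just cites the pullback lemma) and you phrase the final identification via naturality of $\eta$ and injectivity of $(-)\circ\eta_A$ rather than unwinding the inverse of $\bar\eta_A^{\,*}$ through the triangle identity, but these are the same computation.
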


\begin{proof}
We identify the dg algebras $\END_A(A)$ and $A$ through the isomorphism given by evaluation at $1$. 
Let 
$$\xymatrix{
A' \ar[r]^-{p_2} \ar[d]_-{p_1}^-{\sim} &  A_L \ar[d]|>{\object@{>>}}_-{\sim}^-{\bar{\eta}_A^{ \, *}}\\
\END_A(A) \ar[r]^-{\bar{\eta}_{A *}} & \HOM_A(A,LA) & }$$
be a pullback diagram.

The map $\bar{\eta}_{A *}$ is a quasi-isomorphism (Lemma \ref{eta*qis}), and surjective since $\bar{\eta}_A$ is a split monomorphism of graded $A$-modules (Remark \ref{END(RQA)welldefined}). We infer from Lemma \ref{pullback} that $A'$ is a dg algebra quasi-isomorphic to $A$, and we set $\varphi=p_2$.

In cohomology, we obtain the commutative diagram
$$
\xymatrix@C+7pt{
H^* A' \ar[r]^-{H^*(p_2)} \ar[d]_{H^*(p_1)}^{\cong} & H^* \END_A(LA) \ar[d]_{\cong}^{H^* (\bar{\eta}_A^{ \, *})}\\
H^* \END_A(A) \ar[r]^-{H^*(\bar{\eta}_{A *})}
& H^*(\HOM_A(A,LA)) & }
$$
and thus it remains to show that the composition
$$H^* (\bar{\eta}_A^{ \, *})^{-1} \circ  H^* (\bar{\eta}_{A *})$$
identifies with the map
$$\D(A)(A,A)^* \to \D(A)(LA,LA)^*,\quad f \mapsto L(f).$$

In fact, for $f \in \D(A)(A,A)^*$ we have
$$\big{(}H^* (\bar{\eta}_A^{ \, *})^{-1} \circ  H^* (\bar{\eta}_{A *})\big{)}(f) = R(\varepsilon_{QA}) \circ RQ(\eta_A) \circ RQ(f) \, \in \D(A)(RQA,RQA).$$
But it is well-known that $\varepsilon_{QA} \circ Q(\eta_A) \cong \id_{QA}$ (see 
\cite[Ch.\ IV.1]{Mac}) and hence  the claim follows.
\end{proof}
Observe that the map $\varphi\colon A' \to A_L$ is a monomorphism: Since $\bar \eta_A$ is  split as map of graded $A$-modules, $\bar{\eta}_{A *}$ is injective and so is $\varphi = p_2$.

If we assume in addition that $A$ is a {cofibrant} dg algebra (see Section \ref{cofibrantdga}), then the algebra map 
$L\colon \D(A)(A,A)^* \to \D(A)(LA,LA)^*$ is not only induced by a zigzag of dg algebra maps, but by a \emph{morphism} $A\to A_L$.

\begin{cor}\label{Acofibrant}
Let $A$ be a cofibrant dg algebra. The algebra morphism 
$$\D(A)(A,A)^* \to \D(A)(LA,LA)^*,\quad f \mapsto L(f),$$ lifts to a dg algebra morphism $\psi \colon A \to A_L$.
\end{cor}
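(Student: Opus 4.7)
The plan is to exploit the zigzag $A \xleftarrow{p_1}{\sim} A' \xto{\varphi} A_L$ already built in the proof of Theorem~\ref{A'} and use cofibrancy to split $p_1$ by a dg algebra section. First I would verify that $p_1\colon A' \to A$ is not merely a quasi\-/isomorphism but a \emph{surjective} one. This is automatic from the pullback construction: given any $x \in \END_A(A)$, surjectivity of $\bar{\eta}_A^{\,*}$ (established via Lemma~\ref{choosesplit} and Remark~\ref{END(RQA)welldefined}) yields $y \in A_L$ with $\bar{\eta}_A^{\,*}(y) = \bar{\eta}_{A *}(x)$, and then $(x,y) \in A'$ projects to $x$. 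Hence $p_1$ is a surjective quasi-isomorphism, i.e.\ an acyclic fibration in the model category $\mathrm{dga}/k$.

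Second, I would apply the cofibrancy of $A$ to the diagram
$$\xymatrix{
 & A' \ar[d]|>{\object@{>>}}^{p_1}_{\sim} \\
A \ar[r]_-{\id_A} \ar@{..>}[ur]^{h} & A
}$$
obtaining a dg algebra morphism $h\colon A \to A'$ with $p_1 \circ h = \id_A$. I then define
$$\psi = \varphi \circ h\colon A \longrightarrow A_L.$$

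Finally, I would check that $\psi$ induces the required algebra morphism in cohomology. Applying $H^*$ to $p_1 \circ h = \id_A$ gives $H^*p_1 \circ H^*h = \id_{H^*A}$, and since $H^*p_1$ is an isomorphism, $H^*h = (H^*p_1)^{-1}$. Consequently $H^*\psi = H^*\varphi \circ (H^*p_1)^{-1}$, which by Theorem~\ref{A'} is precisely the algebra map $L\colon \D(A)(A,A)^* \to \D(A)(LA,LA)^*$.

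There is no real obstacle here: the whole argument is a direct application of the lifting property definition of a cofibrant dg algebra to the surjective quasi\-/isomorphism $p_1$ produced in Theorem~\ref{A'}. The only point worth emphasising is the surjectivity of $p_1$, which is not stated in Theorem~\ref{A'} but is immediate from the fact that $\bar{\eta}_A$ was chosen to be a split monomorphism of graded $A$-modules, making $\bar{\eta}_A^{\,*}$ surjective and hence its pullback $p_1$ surjective as well.
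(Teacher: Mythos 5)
Your proof is correct and takes essentially the same route as the paper: apply the lifting property of the cofibrant dg algebra $A$ against the acyclic fibration $p_1\colon A' \to A$ from Theorem~\ref{A'} to obtain a section $s$, and set $\psi = p_2 \circ s$. You spell out the surjectivity of $p_1$ in slightly more detail (pullback of a surjection is a surjection), whereas the paper merely decorates its diagram with this fact, but the argument is the same.
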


\begin{proof}
Since $A$ is cofibrant, the map $p_1\colon A' \to A$ in the pullback diagram
$$\xymatrix{
A' \ar[r]^-{p_2} \ar[d]|>{\object@{>>}}_-{p_1}^{\sim} &  A_L \ar[d]|>{\object@{>>}}_{\sim}^{\bar{\eta}_A^*}\\
A \ar[r]^-{\bar{\eta}_{A *}} & \HOM_A(A,LA) & }$$
splits: There is a morphism of dg algebras $s\colon A \to A'$ such that $p_1 \circ s = \id_A$.
We define $\psi$ to be the composition 
$$p_2 \circ  s \colon A \to A_L.$$
Then $H^*\psi$ identifies with the canonical map $L\colon \D(A)(A,A)^* \to \D(A)(LA,LA)^*$.
\end{proof}

Now our aim is to show that if $A$ is cofibrant, then we can identify the  functors $Q\colon \D(A) \to \D(A)/\C \simeq \D(A_L)$ and $-\otimes_A^{\bfL} A_L\colon \D(A) \to \D(A_L)$, where $A_L$ is a dg $(A,A_L)$-bimodule through the morphism $\psi\colon A \to A_L$.

\enlargethispage*{1cm}
\begin{lem}\label{RHOMmap}
There exists a natural transformation
$$
\lambda\colon \bfR \HOM_A(A,-)\to \bfR \HOM_A(LA,L-)
$$ 
in $\D(A)$ which commutes with the suspension functor. For every $M \in \D(A)$,  $\lambda_M$ induces the map $$\D(A)(A,M)\to\D(A)(LA,LM), \quad f \mapsto Lf,$$ in cohomology.
\end{lem}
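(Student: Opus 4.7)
The plan is to construct $\lambda_M$ as a zigzag built from the unit $\eta\colon \id \to L = RQ$ of the adjunction and Lemma~\ref{eta*qis}. Namely, post-composition with $\eta_M\colon M \to LM$ gives a morphism
\[ (\eta_M)_*\colon \bfR\HOM_A(A, M) \to \bfR\HOM_A(A, LM), \]
and pre-composition with the chosen dg-module representative $\bar{\eta}_A\colon A \to LA$ of $\eta_A$ gives
\[ \bar{\eta}_A^{\,*}\colon \bfR\HOM_A(LA, LM) \to \bfR\HOM_A(A, LM). \]
Lemma~\ref{eta*qis} tells us that $\bar{\eta}_A^{\,*}$ is a \qis, hence invertible in the derived category, and I would set $\lambda_M := (\bar{\eta}_A^{\,*})^{-1}\circ (\eta_M)_*$.

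To upgrade this pointwise recipe to an honest natural transformation, I would first fix functorial homotopically injective resolutions (Section~\ref{sechomproj}) so that both $\bfR\HOM_A(A,-)$ and $\bfR\HOM_A(LA, L-)$ are realised by genuine functors on $\D(A)$; naturality in $M$ will then follow from naturality of $\eta$ for the first leg of the zigzag and from functoriality of $\HOM_A(-,iLM)$ applied to the \emph{fixed} map $\bar{\eta}_A$ for the second. Compatibility with the suspension is automatic, since $\eta$ and precomposition with $\bar{\eta}_A$ are graded $k$-linear and commute with $[1]$.

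For the cohomology identification I would trace a class represented by $f\colon A \to M[n]$ through the zigzag: first $[f] \mapsto [\eta_{M[n]}\circ f]$, and then, by the explicit formula for $(H^n\bar{\eta}_A^{\,*})^{-1}$ given in the proof of Lemma~\ref{eta*qis}, to $R(\varepsilon_{QM[n]})\circ RQ(\eta_{M[n]}\circ f)$. Using $RQ\eta = \eta RQ$ (equivalently $L\Psi = \Psi L$ from Lemma~\ref{LRQ}(1)) this equals $R(\varepsilon_{QM[n]})\circ \eta_{LM[n]}\circ Lf$, which collapses to $Lf$ by the triangle identity $R(\varepsilon_{QX})\circ \eta_{RQX} = \id_{RQX}$.

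The main obstacle I anticipate is precisely the functoriality issue: producing $\lambda$ as a genuine natural transformation, rather than a pointwise-defined family of morphisms in the derived category, is what forces the choice of functorial injective resolutions and of a functorial chain-level lift of $\eta$. Once these foundational choices are in place, naturality, suspension compatibility, and the cohomology formula are all direct consequences of the adjunction data for $(Q,R)$.
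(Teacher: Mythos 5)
Your proposal is correct and follows essentially the same route as the paper: defining $\lambda_M$ as the composite $\bfR\HOM_A(\eta_A,LM)^{-1}\circ\bfR\HOM_A(A,\eta_M)$, citing Lemma~\ref{eta*qis} for invertibility of the second leg, and deducing naturality, suspension-compatibility, and the cohomology formula from the $(Q,R)$-adjunction data. Your cohomology trace fills in the step that the paper labels ``obvious'', and you correctly use $R(\varepsilon_{QM[n]})$ where the stated inverse map in the proof of Lemma~\ref{eta*qis} has a small typo ($\varepsilon_{QA}$ in place of $\varepsilon_{QM[n]}$).
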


\begin{proof} 
By Lemma \ref{eta*qis}, the adjunction unit $\eta_A\colon A \to LA$ induces  a natural  isomorphism \linebreak 
$\bfR \HOM_A(\eta_A,LM)$. Therefore we can define the morphism $\lambda_M$ to be the composition
$$\xymatrix{\bfR \HOM_A(A,M) \ar[rr]^-{\bfR \HOM(A,\eta_M)} & &
\bfR \HOM_A(A,LM)  \ar[rrr]^-{\bfR \HOM(\eta_A,LM)^{-1}}_-{\cong} & && \bfR \HOM_A(LA,LM),}$$
which obviously induces $L\colon \D(A)(A,M)^*\to \D(A)(LA,LM)$ in cohomology.
The naturality of $\lambda_M$ follows from the naturality of $\bfR \HOM(A,\eta_M)$ and $\bfR \HOM(\eta_A,LM)$.

The unit $\eta$ of the adjoint pair $(Q,R)$ commutes with the suspension functor $[1]$,  hence so does $\bfR \HOM(A,\eta_M)$. Since $\bfR \HOM(\eta_A,LM)$ commutes with $[1]$, we conclude that $\lambda \circ [1] \cong [1] \circ \lambda$.
\end{proof} 

Note that if $A$ is cofibrant, then $\lambda_A$ equals the dg algebra morphism $\psi\colon A \to A_L$ constructed in Corollary \ref{Acofibrant}. In addition, $\bfR \HOM_A(LA,LM)$ becomes an object in $\D(A)$ through the dg algebra morphism $\psi$.

\begin{prop} \label{functors isom}
Suppose that $A$ is a cofibrant dg algebra. Then the diagram
$$\xymatrix{\D(A)\ar[d]_-Q \ar[rr]^-{-\otimes_A^{\bfL} A_L} && \D(A_L)\\
\D(A)/\C \ar[rru]_-{\quad \quad \bfR \HOM_A(RQA,R-)}^{\simeq} && }$$
commutes up to natural isomorphism. 
\end{prop}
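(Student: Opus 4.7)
The plan is to exhibit an explicit natural transformation between the two functors in $\D(A_L)$ and then show it is an isomorphism by a standard compact-generator argument, using the natural transformation $\lambda$ from Lemma \ref{RHOMmap} (which, when $A$ is cofibrant, has $\lambda_A = \psi$ by Corollary \ref{Acofibrant}).

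First I would promote $\lambda$ to a natural transformation
$$\tilde\lambda \colon -\otimes_A^{\bfL} A_L \longrightarrow \bfR\HOM_A(LA,L-)$$
of functors $\D(A) \to \D(A_L)$. The target $\bfR\HOM_A(LA,LM)$ carries a natural right $A_L$-module structure from the identification $A_L = \END_A(LA)$, and by the paragraph following Lemma \ref{RHOMmap} its structure as an object of $\D(A)$ is obtained by restriction along $\psi$. Under the derived tensor-Hom adjunction
$$\Hom_{\D(A_L)}(M \otimes_A^{\bfL} A_L,\, N) \cong \Hom_{\D(A)}(M,\, N|_A)$$
applied to $N = \bfR\HOM_A(LA,LM)$, the morphism $\lambda_M \in \D(A)(M, \bfR\HOM_A(LA,LM))$ corresponds to the sought $A_L$-linear morphism $\tilde\lambda_M$, and naturality plus commutation with suspension transfer from $\lambda$ to $\tilde\lambda$.

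Next I would check that $\tilde\lambda_A$ is an isomorphism in $\D(A_L)$. Indeed, $A \otimes_A^{\bfL} A_L \cong A_L$ (since $A$ is cofibrant, hence homotopically projective), while $\bfR\HOM_A(LA,LA) \cong \END_A(LA) = A_L$; under these identifications, the fact that $\lambda_A = \psi$ means precisely that $\tilde\lambda_A$ agrees with the canonical $A_L$-linear map $A \otimes_A A_L \to A_L$, $a \otimes b \mapsto \psi(a)\cdot b$, which is an isomorphism of right $A_L$-modules.

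Finally I would conclude by a localising-subcategory argument. Both functors are exact and preserve arbitrary direct sums: the left derived tensor functor does so by construction (Section \ref{derivedfun}), and $\bfR\HOM_A(LA,L-) = \bfR\HOM_A(RQA,R-) \circ Q$ does so because $Q$ is a left adjoint and $\bfR\HOM_A(RQA,R-)$ is the equivalence of Proposition \ref{RHOM(RQA,R-)}. Thus the full subcategory $\{M \in \D(A) \mid \tilde\lambda_M \text{ is invertible}\}$ is a localising subcategory of $\D(A)$, and since it contains the compact generator $A$, it equals all of $\D(A)$. The main technical obstacle is verifying the $A$-linearity of $\lambda_M$ with respect to the $\psi$-twisted $A$-action on $\bfR\HOM_A(LA,LM)$ needed to apply the adjunction; this follows from the fact, stated in Lemma \ref{RHOMmap}, that $\lambda$ is a natural transformation of functors landing in $\D(A)$ with this very action.
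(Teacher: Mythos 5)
Your proposal is correct and takes essentially the same approach as the paper: both construct a natural transformation from $-\otimes_A^{\bfL}A_L$ to $\bfR\HOM_A(LA,L-)$ out of $\lambda$ (the paper writes it out explicitly as the composite $\nu_M\circ(\lambda_M\otimes_A^{\bfL}A_L)\circ(\can_M\otimes_A^{\bfL}A_L)$, which is exactly your adjunction transpose of $\lambda_M$), and both conclude by the localising-subcategory argument using that the two functors are exact, commute with direct sums, and that the transformation is invertible on the compact generator $A$.
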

\begin{proof}
We show that the functors $\bfR \HOM_A(LA,L-)$ 
and 
$-\otimes^{\bfL}_A A_L$ 
are naturally isomorphic. A natural transformation
$$\tau\colon -\otimes^{\bfL}_A A_L \longrightarrow \bfR \HOM_A(LA,L-)$$ is given as composition of the three natural maps in the diagram
$$
\xymatrix{
M\otimes_A^{\bfL} A_L \ar@{..>}[rr]^-{\tau_M} \ar[d]_-{\can_M \otimes_A^{\bfL} A_L}^-\cong & &\bfR \HOM_A(LA,LM)\\
\bfR \HOM_A(A,M)\otimes_A^{\bfL} A_L \ar[rr]^-{\lambda_M \otimes_A^{\bfL} A_L} & & \bfR \HOM_A(LA,LM)\otimes_A^{\bfL} A_L \ar[u]_-{\nu_M}
}
$$
where $\can_M$ is the canonical identification and $\nu_M$ is defined by
\begin{eqnarray*}
\nu_M\colon \bfR \HOM_A(LA,LM)\otimes_A^{\bfL} \END_A(LA) & \to & \bfR \HOM_A(LA,LM).\\
f \otimes g & \mapsto & f \circ g
\end{eqnarray*}
Note that $\tau$ commutes with the suspension functor since this holds for $\lambda$ by Lemma~\ref{RHOMmap}, and obviously for $\can$ and $\nu$.
In order to prove that $\tau$ is an isomorphism, it suffices to show that
the full subcategory $\A=\{M\in\D(A)\,|\,\tau_M \text{ is an isomorphism}\}$ of $\D(A)$ is a localising subcategory containing $A$.

First we point out that $\A$ is closed under triangles by the Five-Lemma for triangulated categories, and that it is easy to check that $A \in \A$. 

Furthermore, $\A$ is closed under taking shifts because $\tau$ commutes with the suspension functor.
 
Finally, we show that $\A$ is closed under taking arbitrary direct sums. To that end, recall that the functor $\bfR \HOM_A(LA,R-)$ commutes with arbitrary direct sums (Proposition \ref{RHOM(RQA,R-)}) and hence, so does $\bfR \HOM_A(LA,L-)$. Since $-\otimes^{\bfL}_A A_L$ obviously commutes with arbitrary direct sums, the claim follows.
\end{proof}

\begin{rem}
Let $\T$ be an algebraic triangulated category in the sense of Keller. We refer to \cite[Sect.\ 6.5]{Kchicago} for a definition. If $\T$ is generated by a single compact object, then there is a dg algebra $A$ 
such that  $\T\simeq \D(A)$, see \cite[Sect.\ 6.5]{Kchicago}. Since we can choose the dg algebra $A$ to be cofibrant (see Lemma \ref{cofibrantreplacement}), every smashing localisation $L\colon \T \to \T$ is induced by a morphism of dg algebras $A \to A_L$.
\end{rem}

\subsection{The $\p$-localisation of a dg algebra}\label{Apdef}
Let $A$ be a dg algebra over a commutative ring $k$ and assume throughout this section that the cohomology algebra $H^*A$ is graded-commutative. We fix a graded prime ideal $\p $ of $H^*A$. By $\C_{\p}$ we denote the full subcategory of objects $M$ in $\D(A)$ such that $(H^*M)_{\p}=0$. The localisation $L_{\p}\colon \D(A) \to \D(A),$ given by the adjoint pair
$$\xymatrix{\ar @{--} \D(A)\ar@<-1ex>[r]_-Q & \D(A)/\C_{\p} \ar@<-1ex>[l]_-R,}$$
is smashing by Corollary \ref{HPSdia}. Now we apply the results of Section \ref{sec construction} to this special case. We define 
$$A_{\p} = A_{L_{\p}},$$
and we call $A_{\p}$ \emph{localisation of $A$ at a prime $\p$ in cohomology}.

From Lemma \ref{zigzag} and Proposition \ref{RHOM(RQA,R-)}, we infer that $\D(A)/\C_{\p} \simeq \D(A_{\p})$. For this special smashing localisation, we have

\begin{thm}\label{A'->A_p}
Let $A$ be a dg algebra over a commutative ring $k$ such that $H^*A$ is graded-commutative and let $\p$ be a graded prime ideal of $H^*A$. The dg algebra $A_{\p}$ has the property $H^*(A_{\p}) \cong (H^*A)_{\p}$. Moreover, with this identification of graded algebras, 
the canonical map
$$\can\colon H^*A \to (H^*A)_{\p}$$
is induced by a zigzag of dg algebra maps $$A \xleftarrow{\sim} A' \xto{\varphi} A_{\p}.$$
That is, we have a commutative diagram
$$\xymatrix{\ar @{--} H^*A' \ar[d]_{\cong} \ar[rrd]^-{H^*\varphi}&& \\
H^*A \ar[r]_-{\can}&  (H^*A)_{\p}  \ar[r]_-{\cong} & H^*(A_{\p})}$$
\end{thm}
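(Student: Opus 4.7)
The plan is to assemble the theorem by combining Theorem~\ref{A'} (applied to the smashing localisation $L_{\p}$) with the ring-theoretic identification of Proposition~\ref{isomasrings} (in its $\p$-local incarnation, Corollary~\ref{HPSdia}). All the ingredients are already in place; what remains is to check that the ring isomorphisms interact correctly so that the zigzag induces the canonical map in cohomology.

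First I would specialise Theorem~\ref{A'} to $L = L_{\p}$. Since $L_{\p}$ is smashing by Corollary~\ref{HPSdia}, this yields a dg algebra $A'$, a quasi-isomorphism $A' \xrightarrow{\sim} A$, and a morphism of dg algebras $\varphi \colon A' \to A_{\p}$ such that in cohomology we obtain the commutative diagram
$$\xymatrix{H^*A' \ar[d]_{\cong} \ar[rd]^-{H^*\varphi}& \\
\Ddg(A)(A,A)^* \ar[r]^-{L_{\p}}&  \Ddg(A)(L_{\p}A,L_{\p}A)^*.}$$
Under the canonical identifications $H^*A \cong \Ddg(A)(A,A)^*$ of Lemma~\ref{D(A,-)^*=H^*-} and the adjunction isomorphism $\Ddg(A)(L_{\p}A,L_{\p}A)^* \cong \Ddg(A)/\C_{\p}(QA,QA)^*$ (which is a ring isomorphism, since $R$ is fully faithful), the bottom map $L_{\p}$ becomes the canonical ring homomorphism $Q\colon \Ddg(A)(A,A)^* \to \Ddg(A)/\C_{\p}(QA,QA)^*$.

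Next I would invoke Corollary~\ref{HPSdia}, which produces an isomorphism of \emph{graded rings}
$$r\colon \Ddg(A)(A,A)^*_{\p} \xrightarrow{\cong} \Ddg(A)/\C_{\p}(QA,QA)^*$$
fitting into a commutative triangle with the canonical maps $\can$ and $Q$. Composing $r^{-1}$ with the identification $\Ddg(A)/\C_{\p}(QA,QA)^* \cong H^*(A_{\p})$, we obtain a graded ring isomorphism
$$H^*(A_{\p}) \xrightarrow{\cong} (H^*A)_{\p},$$
which is the first claim of the theorem. Pasting this identification underneath the cohomology diagram from Theorem~\ref{A'} gives
$$\xymatrix{H^*A' \ar[d]_{\cong} \ar[rrd]^-{H^*\varphi}&& \\
H^*A \ar[r]_-{\can}&  (H^*A)_{\p}  \ar[r]_-{\cong} & H^*(A_{\p}),}$$
and the commutativity of the bottom row is exactly the content of the triangle in Corollary~\ref{HPSdia}.

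The only subtle point is checking that the isomorphism $r$ of Corollary~\ref{HPSdia} is multiplicative and not merely $H^*A$-linear; this has already been done in Proposition~\ref{isomasrings}, where it is verified by using the universal property of localisation to compare $r$ with $S^{-1}Q$. Everything else is a formal concatenation of natural isomorphisms, so no further work is needed once those references are quoted. Thus the main (and only nontrivial) step is the invocation of Theorem~\ref{A'} for the particular smashing localisation $L_{\p}$; the compatibility with the ring structure is guaranteed by Proposition~\ref{isomasrings}/Corollary~\ref{HPSdia}.
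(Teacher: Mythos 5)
Your proposal is correct and follows essentially the same route as the paper: specialise Theorem~\ref{A'} to $L=L_{\p}$ and combine it with the ring isomorphism from Corollary~\ref{HPSdia} (itself resting on Proposition~\ref{isomasrings}). The paper's proof is just a more compressed version of exactly this argument.
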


\begin{proof}
Since $\D(A)(A,A)_{\p} \cong \D(A)(L_{\p}A,L_{\p}A)$ by Corollary \ref{HPSdia}, the dg algebra $A_{\p}$ satis\-fies $H^*(A_{\p}) \cong (H^*A)_{\p}.$
Theorem \ref{A'} shows that the zigzag $A \xleftarrow{\sim} A' \xto{\varphi} A_{\p}$
induces the map
$$L_{\p}\colon \D(A)(A,A)^* \to \D(A)(L_{\p}A,L_{\p}A)^*,\quad f \mapsto L_{\p}(f)$$
in cohomology. But we may identify the algebra maps $\can$ and $L_{\p}$ 
by Corollary \ref{HPSdia}. 
\end{proof}
The following result is an immediate consequence
of Corollary~\ref{Acofibrant} and Theorem~\ref{A'->A_p}. 
\begin{cor}\label{A->A_p}
Let $A$ be a cofibrant dg algebra such that $H^*A$ is graded-commutative and let $\p$ be a graded prime ideal of $H^*A$. Then the canonical algebra morphism 
$\can\colon H^*A \to (H^*A)_{\p}$ lifts to a dg algebra morphism 
$$\xymatrix@C-3.2pt{&&&&&& \psi\colon A \ar[r] & A_{\p}.&&&&&&\qed}
$$
\end{cor}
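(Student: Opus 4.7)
The plan is to apply Corollary \ref{Acofibrant} directly to the smashing localisation $L_{\p} \colon \D(A) \to \D(A)$ used in the definition of $A_{\p} = A_{L_{\p}}$. Since $A$ is assumed cofibrant, Corollary \ref{Acofibrant} immediately yields a dg algebra morphism $\psi \colon A \to A_{\p}$ such that $H^*\psi$ coincides with the algebra map
$$L_{\p} \colon \D(A)(A,A)^* \to \D(A)(L_{\p}A, L_{\p}A)^*, \quad f \mapsto L_{\p}(f).$$

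It then remains to identify this cohomological map with the canonical localisation morphism. By Lemma \ref{D(A,-)^*=H^*-} we have $\D(A)(A,A)^* \cong H^*A$ as graded rings, and Corollary \ref{HPSdia} provides a ring isomorphism $\D(A)/\C_{\p}(QA,QA)^* \cong (H^*A)_{\p}$ such that the functor $Q$ induces, on morphism rings, the canonical localisation map $\can \colon H^*A \to (H^*A)_{\p}$. Composing with the isomorphism $\D(A)(L_{\p}A, L_{\p}A)^* \cong \D(A)/\C_{\p}(QA,QA)^*$ coming from $R$ being fully faithful, these identifications send $H^*\psi = L_{\p}$ to $\can$.

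There is no genuine obstacle here; the work has been done in Corollary \ref{Acofibrant} (producing the lift via the splitting of $p_1$ guaranteed by cofibrancy) and in Theorem \ref{A'->A_p} (identifying the resulting cohomological map with $\can$). The corollary is simply the combination of these two facts in the cofibrant setting, replacing the zigzag $A \xleftarrow{\sim} A' \xto{\varphi} A_{\p}$ of Theorem \ref{A'->A_p} by the single morphism $\psi \colon A \to A_{\p}$.
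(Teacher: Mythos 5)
Your proof is correct and matches the paper's intent precisely: the paper presents this corollary as an immediate consequence of Corollary~\ref{Acofibrant} (which produces the dg algebra morphism $\psi\colon A \to A_{\p}$ for cofibrant $A$) together with the identification from Theorem~\ref{A'->A_p} of the cohomological map $L_{\p}$ with $\can$ via Corollary~\ref{HPSdia}. You have simply spelled out the identifications the paper leaves implicit.
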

A class of cofibrant dg algebras with graded-commutative cohomology are the Sullivan algebras $(\Lambda V,d)$ introduced Section \ref{cofibrantdga}.

\begin{rem}
The smashing localisation $L_{\p}\colon \D(A)\to \D(A)$ can be interpreted as $\p$-localisation on the derived category. It satisfies a \emph{local-global principle}:

For $M \in \D(A)$ and a graded prime $\p$ of $H^*A$, we define
$$M_{\p}=\bfR \HOM_A(L_{\p}A,L_{\p}M) \in \D(A_{\p})$$
and call $M_{\p}$ \emph{localisation of $M$ at a graded prime ideal $\p$ of $H^*A$}. If $A$ is a cofibrant dg algebra, then we have  $M_{\p}=M \otimes_A^{\bfL} A_{\p}$ by Proposition \ref{functors isom}. Since $(H^*M)_{\p}\cong H^*(M_{\p})$,  the following conditions are equivalent for $M \in \D(A)$:
\begin{itemize}
\item[(1)] $M = 0$.
\item[(2)] $M_{\p} = 0$ for all graded prime ideals $\p$.
\item[(3)] $M_{\m} = 0$ for all graded maximal ideals $\m$.
\end{itemize}
\end{rem}
\subsubsection{A universal property of $A_{\p}$}\label{sec universal property}
Let $A$ be a dg algebra over a commutative ring $k$ such that $H^*A$ is graded-commutative and $\p \in  \grspec(H^*A)$.

The cohomology of the dg algebra $A_{\p}$ satisfies a universal property since $H^*(A_{\p})$ is isomorphic to the ring of fractions $S^{-1}(H^*A) = (H^*A)_{\p}$, where $S$ is the subset of homogeneous elements in $H^*A \setminus \p$. If $\beta\colon A \to B$ is a morphism of dg algebras such that $H^*\beta$ makes $S$ invertible, then $H^*\beta$ factors uniquely over the canonical morphism $\can\colon H^*A \to (H^*A)_{\p}$. 

Without loss of generality, we assume from now on that $A$ is cofibrant. Then $\can$ is induced by a morphism of dg algebras $\psi\colon A \to A_{\p}$ and the universal property yields a unique algebra morphism $g\colon H^*(A_{\p}) \to H^*B$ 
making the following diagram commute:
$$\xymatrix{
H^*A \ar[r]^-{H^*\beta} \ar[d]_-{H^*\psi} & H^*B\\
H^*(A_{\p})  \ar@{..>}[ru]_g & }$$

The dg algebra morphisms $\beta\colon A \to B$ and $\psi\colon A \to A_{\p}$
give rise to functors $$F_{\beta}\colon \D(A) \xto{-\otimes_A^{\bfL} B} \D(B) \quad \textrm{and} \quad F_{\psi}\colon \D(A) \xto{-\otimes_A^{\bfL} A_{\p}} \D(A_{\p}).$$
Now we prove a universal property on the level of derived categories.

\begin{prop}\label{univ prop}
There is a unique functor $G \colon \D(A_{\p}) \to \D(B)$ making the following diagram commute:
$$\xymatrix{
\D(A) \ar[r]^-{F_{\beta}} \ar[d]_-{F_{\psi}} & \D(B)\\
\D(A_{\p})  \ar@{..>}[ru]_G & 
}
$$
\end{prop}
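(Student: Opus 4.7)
My approach is to exhibit $F_\beta$ as an exact functor which annihilates $\C_\p$, and then to invoke the universal property of the Verdier quotient. By Proposition~\ref{functors isom}, together with the equivalence $\D(A)/\C_\p \simeq \D(A_\p)$ established at the beginning of Section~\ref{Apdef}, the functor $F_\psi = -\otimes_A^{\bfL} A_\p$ is, up to this equivalence, naturally isomorphic to the Verdier quotient functor $Q\colon \D(A) \to \D(A)/\C_\p$. It is therefore enough to produce a unique exact functor $\bar G\colon \D(A)/\C_\p \to \D(B)$ satisfying $\bar G \circ Q \cong F_\beta$; then $G$ is obtained by composing $\bar G$ with a quasi-inverse of the equivalence $\D(A)/\C_\p \to \D(A_\p)$, and its uniqueness is inherited from that of $\bar G$.

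By the universal property of the Verdier quotient recalled in Section~\ref{sec Quotient categories}, the unique existence of such a $\bar G$ is equivalent to the exact functor $F_\beta$ vanishing on $\C_\p$. Now $F_\beta = -\otimes_A^{\bfL} B$ is exact and commutes with arbitrary direct sums, and Proposition~\ref{C_pcompgen}, applied to the smashing localisation $L_\p$, identifies $\C_\p$ as the localising subcategory of $\D(A)$ generated by the cones $\cone(\sigma)$ of morphisms $\sigma\colon A \to A[n]$ corresponding to homogeneous elements $\sigma \in H^n A \setminus \p$. Hence it suffices to check that $F_\beta(\cone\sigma) = 0$ for each such~$\sigma$.

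Applying the exact functor $F_\beta$ to the defining triangle $A \xto{\sigma} A[n] \to \cone(\sigma) \to A[1]$ yields a triangle
$$B \xto{F_\beta(\sigma)} B[n] \to F_\beta(\cone\sigma) \to B[1]$$
in $\D(B)$. Under the natural identification $\D(B)(B,B)^* \cong H^*B$ of Lemma~\ref{D(A,-)^*=H^*-}, the morphism $F_\beta(\sigma) = \sigma \otimes_A^{\bfL} B$ corresponds to $H^*\beta(\sigma) \in H^n B$, which is invertible in $H^*B$ by the hypothesis on $\beta$. Consequently $F_\beta(\sigma)$ is an isomorphism in $\D(B)$, and so $F_\beta(\cone\sigma) = 0$.

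The only step that is not pure formalism is the identification of $F_\beta(\sigma)$ with the morphism corresponding to $H^*\beta(\sigma)$ under the ring isomorphism of Lemma~\ref{D(A,-)^*=H^*-}; this is a routine unwinding of the construction of $-\otimes_A^{\bfL} B$ for a cofibrant dg algebra, but it is the place where the bookkeeping must be done carefully. I do not anticipate any deeper obstacle.
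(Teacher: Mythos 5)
Your proposal is correct and follows essentially the same route as the paper: identify $F_\psi$ with the Verdier quotient functor via Proposition~\ref{functors isom}, reduce to showing $F_\beta(\C_\p)=0$ by the universal property of the quotient, invoke Proposition~\ref{C_pcompgen} to reduce further to the compact generators $\cone(\sigma)$, and conclude by observing that $F_\beta(\sigma)$ is an isomorphism in $\D(B)$ because $H^*\beta(\sigma)$ is invertible. The only (correct) refinement you add is making explicit that $F_\beta$ commutes with arbitrary direct sums, which is needed to pass from vanishing on the generators to vanishing on the whole localising subcategory $\C_\p$.
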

\begin{proof}
We first note that by Proposition \ref{functors isom}, the functor $F_{\psi}$ is nothing but the quotient functor $Q\colon \D(A) \to \D(A)/C_{\p}$ composed with the equivalence $\D(A)/\C_{\p} \simeq \D(A_{\p})$. Thus we can use the universal property of $Q$ and only need to show that $F_{\beta}(\C_{\p}) = 0$.

In Proposition \ref{C_pcompgen} we have shown that
 $$\M=\{\cone(\sigma)\,\vert \, \sigma\colon A \to A[n] \in S\}$$ is a set of compact generators of $\C_{\p}$ and thus it suffices to check that $F_{\beta}$ vanishes on $\M$.

Any element of $\M$ fits into an exact triangle
$$
A \xto{x\cdot} A[n] \to \cone(x\cdot) \to A[1]
$$
in $\D(A)$, where $x\cdot$ denotes multiplication with an element $x \in A$  whose cohomology $H^*x$ belongs to $S$. 
Applying the functor $F_{\beta}$ to this triangle, we obtain a triangle in $\D(B)$ naturally isomorphic to
$$B \xto{\beta(x)\cdot} B[n] \to F_{\beta}(\cone(x\cdot)) \to B[1].$$
Since $H^*\beta(x)$ is invertible, we infer that $F_{\beta}(\cone(x\cdot))$ is contractible and consequently, the object
$F_{\beta}(\cone(x\cdot))$ is zero in $\D(B)$.
\end{proof}

Since $\C_{\p}$ is generated by compact elements, the quotient functor $Q\colon \D(A) \to \D(A)/\C_{\p}$ gives rise to a quotient functor $\D^{\per}(A)\to \D^{\per}(A)/\C_{\p}^{\per}$, where $\C_{\p}^{\per}= \C_{\p} \cap \D^{\per}(A)$. Furthermore,
this quotient functor identifies with the functor
$$\D^{\per}(A) \xto{-\otimes_A^{\bf L} A_{\p}} \D^{\per}(A_{\p}).$$
This proves
\begin{cor}\label{univ prop perf}
There is a unique functor $G \colon \D^{\per}(A_{\p}) \to \D^{\per}(B)$ which makes the following diagram commute:
$$\xymatrix@C-2.3pt{&&&&&
\D^{\per}(A) \ar[r]^-{F_{\beta}} \ar[d]_-{F_{\psi}} & \D^{\per}(B)&&&&&\\
&&&&&\D^{\per}(A_{\p})  \ar@{..>}[ru]_G &  &&&&& \,\qed \\
}$$
\end{cor}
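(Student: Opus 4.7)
The plan is to deduce the corollary from Proposition \ref{univ prop} by restricting to the perfect subcategories. The essential point is that both functors $F_{\beta}$ and $F_{\psi}$ preserve compactness. Indeed, $-\otimes_A^{\bfL} B$ is an exact left adjoint and commutes with arbitrary direct sums; since $A$ is a compact generator of $\D(A)$ and $F_{\beta}(A)\cong B$ is compact in $\D(B)$, a standard argument (e.g.\ the analogue of Lemma \ref{compactgen} for compact objects) shows that $F_{\beta}$ sends every compact object to a compact object. The same reasoning applies to $F_{\psi}$. Hence both functors restrict to exact functors between the perfect subcategories.

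Next I would invoke the hint already recorded in the paragraph preceding the corollary: since $\C_{\p}$ is generated by a set of compact elements (Proposition \ref{C_pcompgen}), the Neeman--Thomason type localisation theorem implies that $Q\colon \D(A) \to \D(A)/\C_{\p}$ restricts to an exact functor $\D^{\per}(A) \to \D^{\per}(A)/\C_{\p}^{\per}$, and that the latter category identifies via the equivalence $\D(A)/\C_{\p} \simeq \D(A_{\p})$ of Proposition \ref{RHOM(RQA,R-)} with $\D^{\per}(A_{\p})$. Under this identification the restricted quotient functor coincides with $-\otimes_A^{\bfL} A_{\p}\colon \D^{\per}(A) \to \D^{\per}(A_{\p})$, i.e.\ with the restriction of $F_{\psi}$.

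Having established this, the construction of $G$ is almost immediate. Either one restricts the functor $G\colon \D(A_{\p}) \to \D(B)$ provided by Proposition \ref{univ prop} to the perfect subcategories and observes that it does land in $\D^{\per}(B)$ (because $F_{\beta}$ preserves perfect objects and the restriction of $F_{\psi}$ to perfects is essentially surjective up to direct summands), or one directly verifies that $F_{\beta}$ restricted to $\D^{\per}(A)$ vanishes on $\C_{\p}^{\per}$, exactly as in the proof of Proposition \ref{univ prop}: the generating objects $\cone(x\cdot)$ with $x$ representing a homogeneous element of $S$ are sent by $F_{\beta}$ to cones on invertible multiplications in $\D(B)$, hence to zero. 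The universal property of the Verdier quotient $\D^{\per}(A) \to \D^{\per}(A)/\C_{\p}^{\per} \simeq \D^{\per}(A_{\p})$ then yields the desired $G$.

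Uniqueness of $G$ follows from the fact that the restricted quotient functor $F_{\psi}\colon \D^{\per}(A) \to \D^{\per}(A_{\p})$ is essentially dense up to direct summands, which is automatic for a Verdier quotient of triangulated categories. The main delicate point I expect is not any deep argument but rather the bookkeeping: one must be careful that the Neeman--Thomason theorem applies in the generality needed (compactly generated kernel), and that the equivalence $\D(A)/\C_{\p} \simeq \D(A_{\p})$ indeed restricts to an equivalence on perfects sending $QA$ to $A_{\p}$. Once these identifications are in place, the corollary is a formal consequence of Proposition \ref{univ prop}.
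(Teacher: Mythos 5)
Your argument follows the paper's approach exactly: invoke Proposition \ref{C_pcompgen} and the Neeman--Thomason localisation theorem to restrict the quotient functor to $\D^{\per}(A)\to\D^{\per}(A)/\C_{\p}^{\per}$, identify this with $-\otimes_A^{\bfL}A_{\p}$, and conclude by the universal property of the Verdier quotient (using, as in Proposition \ref{univ prop}, that $F_{\beta}$ kills the cones of the $\sigma\in S$). If anything, you flag the idempotent-completion subtlety (the restricted quotient is only dense up to direct summands in $\D^{\per}(A_{\p})$) slightly more explicitly than the paper does.
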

\begin{Rem}
The discussion above raises the question whether the functor $G\colon \D(A_{\p})  \to \D(B)$
and with it the algebra map $g\colon H^*(A_{\p}) \to H^*B$
can be lifted to a zigzag of dg algebra morphisms.
Our construction in Section \ref{sec construction} does not apply since in general, we cannot expect that $G$ is  a smashing localisation. It remains to enlighten the relation of our construction with DG quotients, which have a universal property and 
were introduced by Drinfeld \cite{Dr}.

There is also a construction by To\"en \cite{T} (see also in \cite{K-dgcat}) which seems to be related: Let $\mathrm{dgcat}_k$ be the category of small dg categories over a commutative ring $k$. The localisation of $\mathrm{dgcat}_k$ with respect to the quasi-equivalences is denoted by $\mathrm{Hqe}$. 
If $\A$ is a small dg category and $S$ a set of morphisms in $H^0(\A)$, then a morphism $F\colon \A \to \B$ in $\mathrm{Hqe}$ is said to make $S$ invertible if the induced functor $H^0(\A) \to H^0(\B)$ takes each $s \in S$ to an isomorphism. 
To\"en constructs a morphism $Q\colon \A \to \A[S^{-1}]$ in $\mathrm{Hqe}$ which makes $S$ invertible. This morphism has a universal property: Each morphism in $\mathrm{Hqe}$ making $S$ invertible factors uniquely through $Q$. 

However, if $\A$ is a dg algebra, viewed as dg category with a single object, then the object $\A[S^{-1}]$ is in general not a dg algebra, but a dg category with more than one object.
\end{Rem}

\section{Realisability}\label{sec Realisability}
In this chapter we introduce the concept of realisability as considered in a paper of Benson, Krause and Schwede \cite{BKS}. They are concerned with deciding whether a graded module over the Tate cohomology ring $\hat H^*(G,k)$, where $G$ is a finite group and $k$ a field, is isomorphic to $\hat H^*(G,M)$ for some $kG$-module $M$.

More generally, they consider a triangulated category $\T$ admitting arbitrary direct sums
and a cohomological functor $H^*\colon \T \to \Modgr E$ into the category of graded modules over a graded ring $E$. The functor $H^*$ is required to preserve arbitrary direct sums and products.
Then realisability deals with deciding whether a graded $E$-module is isomorphic to a module in the image of this cohomological functor.

Benson, Krause and Schwede~\cite{BKS} have stated a local obstruction for realisability up to direct summands, and a criterion for realisability which is given by an infinite sequence of obstructions.


If $A$ is a dg algebra over a field $k$, then the functor in question is the cohomology functor $H^*\colon \D(A) \to \Modgr H^*A$.
In this setting, Benson, Krause and Schwede also prove the existence of a global obstruction for realisability up to direct summands.


In the first section we introduce the general setup of Benson, Krause and Schwede~\cite{BKS} and recall the construction of the local obstruction. 
After focusing on realisability in the setting of dg algebras in the second section, we  study the global obstruction and its basic properties in Section \ref{sec global obstruction}.

\subsection{A local obstruction for realisability}\label{realtria}
Let $\T$ be a triangulated category admitting arbitrary direct sums. We denote the suspension functor by $\Sigma$. Let $N$ be a compact object in $\T$ and $E = \T(N,N)^* = \coprod_{i \in \bbZ}\T(N,\Sigma^iN) $ the graded endomorphism ring of $N$ (see Example \ref{examgradendring}). 
If $X$ is a graded $E$-module, then we denote by $X[n]$ the $n$-fold shifted module.

If $M$ is an object in $\T$, then we obtain a graded $E$-module $\T(N,M)^*$
by composition of graded maps. On the other hand, given any graded $E$-module $X$, when is $X$ isomorphic to $\T(N,M)^*$ for some $M \in \T$?


We will mainly consider this question only up to direct summands.
Therefore, from now on, we use the following terminology for realisability\footnote{Our terminology is different from the one in \cite{BKS}. Benson, Krause and Schwede call an $E$-module realisable if $X \cong \T(N,M)^*$ for some $M \in \T$.}:
\begin{defn}
Let $X$ be a graded $E$-module. 
We call  $X$ \emph{realisable} if there exists an object $M \in \T$ such that $X$ is isomorphic to a direct summand of $\T(N,M)^*$. If $X \cong \T(N,M)^*$ for some $M \in \T$, then we call $X$ \emph{strictly realisable}.
\end{defn}

\begin{rem}
Note that the functor $\T(N,-)^*$ occurs in a very natural way: For every graded ring $\Lambda$ and every cohomological functor
$H^*\colon \T \to \Modgr \Lambda$
 which preserves arbitrary direct sums and products, there exists a compact object $C \in \T$
 such that $H^*$ is naturally isomorphic to $\T(C,-)^*$ \cite[Lemma 3.2]{Krcohom}.
\end{rem}


Benson, Krause and Schwede \cite{BKS} have constructed a \emph{local obstruction} for realisability:
\begin{thm}\cite[Thm.\ 3.7]{BKS}\label{BKSlocal}
Let $\T$ be a triangulated category with arbitrary direct sums, $N \in \T$ 
a compact object and $E = \T(N,N)^*$ the graded endomorphism algebra of $N$.
For each graded $E$-module $X$, there exists an element
$$\kappa(X) \in \Ext^{3,-1}_E(X,X)$$ determining the realisability:
$X$ is realisable if and only if $\kappa(X)$ is trivial.
\end{thm}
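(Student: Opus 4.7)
The plan is to construct $\kappa(X)$ as the Yoneda class of a canonical short exact sequence of graded $E$-modules obtained by lifting a partial free resolution of $X$ into $\T$.

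First I would choose a free presentation in $\Modgr E$,
$$P_1 \xrightarrow{d_1} P_0 \xrightarrow{\varepsilon} X \to 0,$$
by graded free $E$-modules. Since $N$ is compact, the functor $\T(N,-)^*$ commutes with arbitrary coproducts, so each $P_i$ may be realised as $\T(N, Q_i)^*$ for a coproduct $Q_i = \coprod_j \Sigma^{n_{ij}} N$ of shifts of $N$, and by Yoneda the differential $d_1$ lifts to a morphism $f_1\colon Q_1 \to Q_0$ in $\T$. Completing $f_1$ to an exact triangle
$$Q_1 \xrightarrow{f_1} Q_0 \xrightarrow{\pi} M \xrightarrow{\delta} \Sigma Q_1$$
and applying $\T(N,-)^*$, the resulting long exact sequence of graded $E$-modules,
$$P_1 \xrightarrow{d_1} P_0 \to \T(N, M)^* \xrightarrow{\delta_*} P_1[1] \xrightarrow{d_1[1]} P_0[1],$$
splits off the short exact sequence
$$0 \to X \to \T(N, M)^* \to (\Omega^2 X)[1] \to 0,$$
where $\Omega^2 X = \Ker(d_1)$. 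Its extension class lies in $\Ext^{1}_E((\Omega^2 X)[1], X)$, and the standard dimension-shift isomorphisms
$$\Ext^{1}_E((\Omega^2 X)[1], X) \cong \Ext^{1}_E(\Omega^2 X, X[-1]) \cong \Ext^{3}_E(X, X[-1]) = \Ext^{3,-1}_E(X,X)$$
identify it with an element which I would define to be $\kappa(X)$. Independence from the choices of resolution, lift $f_1$, and cone $M$ follows from the comparison theorem for projective resolutions (two choices of $\Omega^2 X$ differ by projective summands, which vanish under $\Ext$) together with the fact that any two cones on $f_1$ are isomorphic in $\T$.

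The sufficiency direction ($\kappa(X) = 0 \Rightarrow X$ realisable) is immediate from the construction: if the extension class vanishes, the short exact sequence above splits in $\Modgr E$, exhibiting $X$ as a direct summand of $\T(N,M)^*$. For the necessity direction, assume $X$ is a summand of $\T(N,Y)^*$ for some $Y \in \T$. Then I would construct a partial lift of the free resolution of $X$ compatible with the inclusion $X \hookrightarrow \T(N,Y)^*$: each free cover $Q_i \to Y$ may be realised via Yoneda (since $Q_i$ is a coproduct of shifts of the compact object $N$), and assembling these produces a cone $M$ which appears, up to auxiliary summands, as a retract of $Y$. The extension $0 \to X \to \T(N,M)^* \to (\Omega^2 X)[1] \to 0$ then acquires a splitting, forcing $\kappa(X) = 0$.

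The main obstacle is this necessity direction, where one must pass from an $E$-module decomposition $\T(N,Y)^* \cong X \oplus Z$ to a concrete construction of $M$ whose associated extension splits. This requires carefully exploiting that $\T$ admits arbitrary direct sums (so that idempotents on coproducts of shifts of $N$ split in $\T$) and that the Yoneda-type lifting lemma produces morphisms compatible with the chosen splitting; once this compatibility is in place, the extension class vanishes and the equivalence follows.
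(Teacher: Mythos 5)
Your construction of $\kappa(X)$ is essentially the same as the paper's, just packaged differently. The paper (following \cite{BKS}, to which it defers for the full proof) defines $\kappa(X)$ directly as the Yoneda class of the $5$-term exact sequence obtained by applying $\T(N,-)^*$ to the triangle $\Sigma^{-1}B\to R_1\to R_0\to B$; you instead isolate the short exact sequence $0\to X\to \T(N,M)^*\to(\Omega^2X)[1]\to 0$ and recover the degree by dimension shift. Unwinding the dimension-shift isomorphism by splicing with the two free terms reproduces precisely the paper's $5$-term sequence, so the two definitions of $\kappa(X)$ coincide, and your identification of the triangle $Q_1\to Q_0\to M\to\Sigma Q_1$ with (a rotation of) the paper's triangle is correct. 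Your sufficiency argument is right: $\kappa(X)=0$ splits the short exact sequence and exhibits $X$ as a direct summand of $\T(N,M)^*$.

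The genuine gap is the necessity direction, as you yourself flag. Your proposed route — that ``assembling free covers $Q_i\to Y$ produces a cone $M$ which appears, up to auxiliary summands, as a retract of $Y$'' — is not quite what one needs, and the retract claim is unsubstantiated. What actually works is weaker and cleaner: keep the \emph{same} $N$-special $\T$-presentation used to define $\kappa(X)$. Given a splitting $\rho\colon\T(N,Y)^*\to X$ of the inclusion $\iota\colon X\hookrightarrow\T(N,Y)^*$, lift the composite $\T(N,Q_0)^*\twoheadrightarrow X\xrightarrow{\iota}\T(N,Y)^*$ via Yoneda (using that $Q_0$ is $N$-free) to a morphism $g\colon Q_0\to Y$ in $\T$. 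Since $\ep\circ d_1=0$ and $Q_1$ is $N$-free, $g\circ f_1=0$, so $g$ extends along $\pi\colon Q_0\to M$ to a morphism $\tilde g\colon M\to Y$. Then $\tilde g_*$ restricted to $X\subseteq\T(N,M)^*$ is $\iota$, and $\rho\circ\tilde g_*$ splits the monomorphism $X\hookrightarrow\T(N,M)^*$, forcing $\kappa(X)=0$. No retraction $M\to Y\to M$ is required, only the one map $\tilde g$. Separately, the well-definedness of $\kappa(X)$ — your appeal to the comparison theorem in $\Modgr E$ — does cover the homological side, but one still has to lift the comparison chain maps to morphisms of triangles in $\T$ and check compatibility; the paper defers exactly this point to \cite[Prop.\ 3.4]{BKS}.
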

For the proof of
Theorem \ref{BKSlocal} and more details we refer to \cite{BKS}. However, for our purposes
we sketch the construction of the local obstruction $\kappa(X)$:

\begin{construction}\label{T-presentation}
A {\em $\T$-presentation} of a graded $E$-module $X$ consists of a
distinguished triangle
\begin{equation} \label{eq:T-presentation}
\Sigma^{-1}B \xrightarrow{\delta} R_1 \xrightarrow{\alpha} R_0
\xrightarrow{\pi} B
\end{equation}
in $\T$ together with an epimorphism of graded $E$-modules
$\ep \colon\T(N,R_0)^*\to X$ such that the sequence
\[ \T(N,R_1)^* \ \xrightarrow{\alpha_*} \ \T(N,R_0)^* \ \xrightarrow{\ep}
 X \to 0  \]
is exact. If the objects $R_0$ and $R_1$ are assumed to be $N$-free, that is, 
isomorphic to a direct sum of shifted copies of $N$, then  we refer to an {\em $N$-special}
$\T$-presentation.

Given an $N$-special $\T$-presentation
$$ (\Sigma^{-1}B \xrightarrow{\delta} R_1 \xrightarrow{\alpha} R_0
\xrightarrow{\pi} B,\ \ep\colon\T(N,R_0)^*\to X),$$
we obtain an exact sequence of graded $E$-modules
\begin{equation} \label{eq:Yoneda class}
 0 \rightarrow X[-1] \xrightarrow{\eta[-1]} \T(N,B)^*[-1]
\xrightarrow{\delta_*}
\T(N,R_1)^* \xrightarrow{\alpha_*} \T(N,R_0)^*
\xrightarrow{\ep} X \rightarrow 0 \  \end{equation}
by applying the functor $\T(N,-)^*$ to the triangle. The monomorphism $\eta\colon X\to\T(N,B)^*$ is determined by
$\eta\circ\ep=\pi_*$.

This sequence is called {\em associated extension of the $N$-special
$\T$-presentation.}
\end{construction}

In \cite[Prop.\ 3.4]{BKS} it is shown that there exists an $N$-special
$\T$-presentation for each graded $E$-module $X$, and that the Yoneda-class of the associated extension, denoted by
 $\kappa(X) \in \Ext^{3,-1}_E(X,X)$,  is independent of the
choice of the $N$-special $\T$-presentation. 

Since $\T(N,R_0)^*$ and $\T(N,R_1)^*$ are free, the extension $\kappa(X)$ is trivial if and only if the monomorphism
$$X[-1] \xrightarrow{\eta[-1]} \T(N,B)^*[-1]$$ is split. Thus $\kappa(X)$ determines, in fact, the realisability of $X$.

\begin{rem}\label{postnikov}
Let $\T$ be a triangulated category admitting direct sums and $N \in \T$ 
a compact object. Denote by $E = \T(N,N)^*$ the graded endomorphism algebra of $N$.
Benson, Krause and Schwede extend their theory by an infinite sequence of obstructions which decides whether a graded $E$-module $X$ is strictly realisable.

They show that if there exists an infinite sequence of obstructions 
$$\kappa_n(X) \in \Ext^{n,2-n}_{E}(X,X),\; n \ge 3,$$
where the class $\kappa_n(X)$ is defined provided that the previous one $\kappa_{n-1}(X)$ vanishes,  then it even holds $X \cong \T(N,M)^*$. In this sequence of obstructions all but the first one depend on choices. 
Only $\kappa_3(X)$ is uniquely determined and equals the local obstruction $\kappa(X)$.
 
In view of the need for an \emph{infinite} sequence of obstructions to decide whether \linebreak $X \cong \T(N,M)^*$ for some $M \in \T$, it is remarkable that the first obstruction of this sequence already tells whether $X$ is a direct summand of $\T(N,M)^*$.

In our results we will only consider this first obstruction.
\end{rem}

\subsection{Realisability and dg algebras}
Let $A$ be a dg algebra over some commu\-ta\-tive ring $k$. Remember that the functor $\D(A)(A,-)^*$ is naturally isomorphic to the cohomology functor  $H^*$ (Lemma \ref{D(A,-)^*=H^*-}). Hence a graded $H^*A$-module is realisable if and only if it is a direct summand of $H^*M$, where $M \in \D(A)$ is a dg $A$-module.
Moreover,  $A$ 
is a compact object in $\Ddg(A)$. Hence Theorem \ref{BKSlocal}  applies, and we can decide whether $X$ is a direct summand of $H^*M$ for some dg $A$-module $M \in \D(A)$.

\begin{exm}\label{example for real}
(1) Let $k$ be a Noetherian ring and $G$ a finite group. Remember that the group cohomology ring $H^*(G,k)$ is actually a graded endomorphism ring of an object in a triangulated category: We have $H^*(G,k)\cong \bfK(\Inj kG)(ik,ik)^*$, where $ik$ denotes an injective resolution of $k$. Moreover, $\Kinjg$ admits arbitrary direct sums since $\Inj kG$ does. 
Since $ik$ is compact in $\Kinjg$ by Proposition~\ref{Kinjcompactlygen}, the assumptions of Theorem~\ref{BKSlocal} are satisfied.

On the other hand, we know from Remark \ref{dgremark}(2) that $\bfK(\Inj kG)(ik,ik)^*$ is the cohomology of the endomorphism dg algebra $\END(ik)$ of the complex $ik$,
$$H^*\END(ik) \cong H^*(G,k),$$
and we can also consider realisability in the setting of dg algebras. Now note that we have a commutative square
 $$\xymatrix{ {\bf K}(\Inj kG) \ar@{>>}[d]_{\HOM(ik,-)} \ar[rrr]^-{{\bf K}(\Inj kG)(ik,-)^*}& & & \Modgr H^*(G,k)
\ar[d]^-{\cong} \\
\D(\END(ik)) \ar[rrr]^-{H^*}&&  & \Modgr H^* \END(ik) }$$
Since the exact functor $\HOM(ik,-)\colon \Kinjg \to \D(\END(ik))$ commutes with arbitrary direct sums and is dense (see for example  \cite[Prop.\ 3.1]{BKcompl}), a graded $H^*(G,k)$-module is  realisable by a complex $C \in \Kinjg$ if and only if it is  realisable by a dg $\END(ik)$-module $M$.

Observe that $H^*(G,k)$ is also isomorphic to the graded endomorphism ring \linebreak $\bfD(kG)(k,k)^*$. However, except in trivial cases, the stalk complex $k$ is not a compact object in $\bfD(kG)$  and so the assumptions for Theorem \ref{BKSlocal} are not fulfilled.

(2) Let $k$ be a field and $G$ be a finite group. The Tate cohomology ring $\hat{H}^*(G,k)$ is the graded endomorphism ring $\uHom_{kG}(k,k)^*$ and $k$ is compact in $\uMod kG$. 
With the equivalence $Z^0\colon \bfK_{ac}(\Inj kG) \xto{\simeq} \uMod kG$ (Lemma \ref{kac=stmod}), we can write $\hat{H}^*(G,k)$ also as graded endomorphism ring $\bfK_{ac}(\Inj kG)(tk,tk)^*$ and  conclude that 
$$H^*\END(tk) \cong \hat{H}^*(G,k).$$
Similarly as in $(1)$, we have a commutative square
 $$\xymatrix{ \bfK_{ac}(\Inj kG) \ar@{>>}[d]_{\HOM(tk,-)} \ar[rrr]^-{\bfK_{ac}(\Inj kG)(tk,-)^*}& & & \Modgr \hat{H}^*(G,k)
\ar[d]^-{\cong} \\
\D(\END(tk)) \ar[rrr]^-{H^*}& & & \Modgr H^* \END(tk) }$$
and $\HOM(tk,-)$ is exact, dense and preserves arbitrary direct sums. Hence a 
graded $\hat{H}^*(G,k)$-module is  realisable by an acyclic complex (or equivalently, by an object of $\uMod kG$) if and only if it is  realisable by an object of $\D(\END(tk))$.
\end{exm}
Note that if $k$ is a field of characteristic $p>0$ and $G$ a $p$-group, then the functors $\HOM(ik,-)$ and $\HOM(tk,-)$ are even equivalences \cite{BKcompl}.

Considering  realisability in the setting of dg algebras has a striking advantage, as the following section indicates.
\subsection{A global obstruction for realisability}\label{sec global obstruction}
Let $A$ be a dg algebra over a field $k$. We have seen in Chapter \ref{sectionAinf} that 
$H^*A$ is an $A_{\infty}$-algebra whose secondary multiplication determines a Hochschild class $\mu_A \in \HH^{3,-1}(H^*A)$, called the canonical class. Now the importance of this result for the realisability theory becomes evident:

\begin{thm}\cite[Thm.\ 6.2]{BKS}\label{BKSglobal}
Let $X$ be a graded $H^*A$-module. The realisability obstruction
$$\kappa(X) \in \Ext^{3,-1}_{H^*A}(X,X)$$ is given by the cup product pairing
$$\id_X \cup \mu_A.$$
In particular, if the class $\mu_A \in \HH^{3,-1}(H^*A)$ is trivial, then
all graded $H^*A$-modules are realisable.
\end{thm}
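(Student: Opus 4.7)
The plan is to compute the local obstruction $\kappa(X)$ on a specific $A$-special $\D(A)$-presentation built from a graded free presentation of $X$, using an $\Ainf$-quasi-isomorphism $f\colon H^*A \to A$ of the kind produced in Chapter~\ref{sectionAinf}, and then to identify the resulting $3$-fold extension with the cocycle $\id_X \cup m_3^{H^*A}$ representing $\id_X \cup \mu_A$ under the cup product pairing~\eqref{cup pairing}.

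Concretely, I would first choose a graded free presentation $P_1 \xto{\partial} P_0 \xto{\ep} X \to 0$ in $\Modgr H^*A$, with $P_s$ a direct sum of shifted copies of $H^*A$. Picking the analogous direct sums of shifted copies of $A$ gives free dg $A$-modules $R_0, R_1$ with $H^*R_s = P_s$. Since $R_1$ is free, $\partial$ lifts to a morphism $\alpha\colon R_1 \to R_0$ in $\D(A)$ with $H^*\alpha = \partial$; completing $\alpha$ to an exact triangle $\Si^{-1}B \xto{\delta} R_1 \xto{\alpha} R_0 \xto{\pi} B$ and combining with $\ep$ gives an $A$-special $\D(A)$-presentation. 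By Construction~\ref{T-presentation}, the associated $3$-fold extension
$$0 \to X[-1] \to (H^*B)[-1] \xto{\delta_*} H^*R_1 \xto{\alpha_*} H^*R_0 \to X \to 0$$
represents $\kappa(X)$ in $\Ext^{3,-1}_{H^*A}(X,X)$.

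The key calculation is to render $\delta_*$ explicit in $\Ainf$-terms. Since $R_0$ and $R_1$ are free, one may write a chain-level lift of $\alpha$ by sending each generator of $R_1$ to the cocycle in $R_0$ obtained from $f_1$, and the secondary obstruction to extending this assignment multiplicatively is governed by $f_2$. By the defining identities \eqref{f_2} and \eqref{m_3} of Construction~\ref{m3construction}, the non-coherence of $f_2$ feeds into the connecting map $\delta_*$, and, after transporting the iterated extension onto the bar resolution $\bbB \otimes_{H^*A} X$ of $X$, the resulting $(3,-1)$-cocycle reads precisely $\id_X \otimes m_3^{H^*A}$. This cocycle is, by the very definition of the cup product pairing~\eqref{cup pairing}, a representative of $\id_X \cup m_3^{H^*A}$, so taking cohomology classes yields $\kappa(X) = \id_X \cup \mu_A$.

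The hard part will be this last identification: the connecting map $\delta_*$ is built from an iterated mapping cone, and translating it into a cocycle on the bar resolution which literally equals $\id_X \otimes m_3^{H^*A}$, rather than merely cohomologous to it, demands careful sign-bookkeeping together with the fact (Proposition~\ref{m3Hochschild}) that the only $\Ainf$-ambiguity in $m_3^{H^*A}$ is a Hochschild coboundary. Once $\kappa(X) = \id_X \cup \mu_A$ is established, the ``in particular'' statement is immediate: triviality of $\mu_A$ implies by bilinearity of~\eqref{cup pairing} that $\id_X \cup \mu_A = 0$ for every $X$, so $\kappa(X)=0$ and $X$ is realisable by Theorem~\ref{BKSlocal}.
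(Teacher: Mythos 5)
The paper does not give its own proof of this statement: it simply cites \cite[Thm.\ 6.2]{BKS} and records the result. So there is no in-paper argument to compare against; what you are really sketching is the argument from the source \cite{BKS}.

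As a high-level outline your strategy is the right one, and it is in the same spirit as BKS: choose a graded free presentation $P_1\to P_0\to X$, realise it by an $N$-special $\D(A)$-presentation using the $A_\infty$-quasi-isomorphism $f\colon H^*A\to A$, and then use the failure of $f_1$ to be multiplicative — controlled by $f_2$ and, at the next order, by $m_3^{H^*A}$ via equations \eqref{f_2} and \eqref{m_3} — to compute the connecting map $\delta_*$ in the associated extension \eqref{eq:Yoneda class}. Your reliance on Proposition \ref{m3Hochschild} to absorb the non-canonical choices into a Hochschild coboundary is also the right move.

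However, as a proof this is incomplete: the entire content of the theorem sits in the sentence ``the resulting $(3,-1)$-cocycle reads precisely $\id_X\otimes m_3^{H^*A}$,'' which you state but do not verify. That verification requires, first, an explicit chain-level model of the triangle $\Sigma^{-1}B\to R_1\to R_0\to B$ and of the class $\kappa(X)$ on the chosen free resolution; second, a comparison chain map between this free resolution and $\bbB\otimes_{H^*A}X$, under which $\kappa(X)$ and $\id_X\cup\mu_A$ are compared; and third, the actual sign/Koszul bookkeeping. Also, ``reads precisely'' is stronger than what one can or needs to prove: after transporting along a comparison map one should only expect the two cocycles to be \emph{cohomologous}, and that is all that is required to conclude $\kappa(X)=\id_X\cup\mu_A$ in $\Ext^{3,-1}_{H^*A}(X,X)$. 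The ``in particular'' consequence, which you correctly deduce from bilinearity of \eqref{cup pairing} and Theorem \ref{BKSlocal}, is fine.
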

Because of the last property the class $\mu_A$ is referred to
as \emph{global obstruction}. 
\begin{Rem}\label{converse does not hold}
The converse of the last statement in Theorem \ref{BKSglobal} is not true in general: Benson, Krause and Schwede provide an example of a dg algebra $A$ with the property  that all $H^*A$-modules are realisable, but with \emph{non-trivial} canonical class $\mu_A$ \cite[Exm.\ 5.15]{BKS}.
\end{Rem}
\section{Realisability and $\p$-localisation}\label{sec Realisability and localisation}
In this chapter we study the relation between realisability of modules over graded-commutative cohomology rings and $\p$-localisation.

A first motivation for our use of $\p$-localisation is the problem when a module over the graded-commutative ring $H^*(G,k)$ is isomorphic to  $H^*(G,M)$ for some $kG$-module $M$. 
This is discussed in Section \ref{sec motivation}.

More generally,  we consider dg algebras  with graded-commutative cohomology rings  in Section \ref{realislocal}. In our main result of this chapter we prove that the classical local-global principle of Commutative Algebra applies for realisability.

\subsection{A motivation for $\p$-localisation}\label{sec motivation}
Let $G$ be a finite group and $k$ a field such that $\Char(k)$ divides the order of $G$. 
Let $X$ be a graded module over the group cohomology ring $H^*(G,k)$. With Theorem \ref{BKSlocal} we can determine whether $X$ is  realisable by a \emph{complex} in the homotopy category $\Kinjg$. However, we rather want to know when $X$ 
is realisable by a \emph{module}, i.e.\ $X$ is a direct summand of $H^*(G,M)$ with $M \in \Mod kG$, or even $X \cong H^*(G,M)$.

The category $\Mod kG$ is embedded in $\Kinjg$, but with Theorem  \ref{BKSlocal}  we cannot decide whether an arbitrary  realisable $H^*(G,k)$-module can actually be realised by a mo\-dule. Now we show that one can say more about realisability of $\p$-local modules, where $\p$ is a non-maximal prime.

Throughout this section we denote by $H^*(G)$ the group cohomology ring of $G$. Remember that $H^*(G)$ is  graded-commutative (Section \ref{sec Group cohomology rings}).

Let $\p$ be a graded prime ideal of $H^*(G)$ and  denote by $\C_{\p}$ the kernel of the cohomological functor
 $$\Hom_{\bfK}(ik,-)^*_{\p}=\bfK(\Inj kG)(ik,-)^* \otimes_{H^*(G)} H^*(G)_{\p}\colon \Kinjg \rightarrow \Modgr H^*(G)_{\p}.$$
From Theorem \ref{Henning} we obtain a smashing localisation 
 $$\xymatrix{\ar @{--}\Kinjg \ar@<-1ex>[r]_-{Q_{\p}} & \Kinjg/\C_{\p} \ar@<-1ex>[l]_-{R_{\p}}.}$$
 
On the other hand, we have a smashing localisation 
$$\xymatrix{\ar @{--}\Kinjg \ar@<-1ex>[r]_-{\hat{Q}} & \uMod kG \ar@<-1ex>[l]-_{\hat{R}},}$$
where $\hat{Q}(C) = Z^0( C \otimes_k tk)$ and $\hat{R}(M)=tM$, see Chapter \ref{recoll}.

\begin{lem}\label{barQ}
If $\p$ is non-maximal, then $Q_{\p}$ factors over $\hat{Q}$, i.e.\ there is a functor
$$\bar{Q}\colon \uMod kG \to \Kinjg/\C_{\p}$$ such that $Q_{\p} = \bar{Q} \circ \hat{Q}$.
Moreover, $\bar{Q}$ has a right adjoint $\bar{R}$ satisfying $\bar{R} \bar{Q} \cong \id$ and
it holds $R_{\p} = \hat{R} \bar{R}$.
\end{lem}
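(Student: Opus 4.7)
The plan is to identify $\Ker \hat{Q}$ and show it is contained in $\C_\p$ when $\p$ is non-maximal; the factorisation $Q_\p = \bar{Q} \circ \hat{Q}$ then comes from the universal property of the Verdier quotient, and the remaining structure follows from Brown Representability together with uniqueness of adjoints.

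First I would identify $\Ker \hat{Q}$ via the recollement of Proposition~\ref{recollhopf}: since $\hat{Q}$ corresponds to $I_\lambda$ and $\Ker I_\lambda$ equals the essential image of $Q_\lambda = -\otimes_k pk$, we obtain $\Ker \hat{Q} = \Loc(p(kG)) = \Loc(kG)$, the latter equality because $kG$ is self-injective so $p(kG) \simeq kG$. The key computation is that $kG \in \C_\p$: directly, $\Hom_{\bfK}(ik,kG)^* = \Ext^*_{kG}(k,kG) = \Hom_{kG}(k,kG) = kG^G$, concentrated in degree zero. Because $H^*(G,k)$ is non-negatively graded with $H^0 = k$ a field, its unique graded maximal ideal is the irrelevant ideal $\m^+$ of positive-degree elements; non-maximality of $\p$ therefore produces a positive-degree element $x \in H^*(G,k) \setminus \p$. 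Any homogeneous element of a graded module that is bounded above in degree is killed by a sufficiently high power of $x$ by degree reasons, hence becomes zero after inverting $x$. So $\Hom_{\bfK}(ik,kG)^*_\p = 0$, and because $\C_\p$ is a localising subcategory (the kernel of a smashing localisation), $\Ker \hat{Q} = \Loc(kG) \subseteq \C_\p$.

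The universal property of the Verdier quotient $\hat{Q}$ now yields a unique exact functor $\bar{Q}\colon \uMod kG \to \Kinjg/\C_\p$ with $Q_\p = \bar{Q} \circ \hat{Q}$, since $Q_\p$ vanishes on $\Ker \hat{Q}$. To produce $\bar{R}$, I would verify that $\bar{Q}$ preserves arbitrary direct sums: any family in $\uMod kG$ can be written as $(\hat{Q} C_i)$ using essential surjectivity of $\hat{Q}$, and the coproduct is transported through the identity $\bar{Q} \hat{Q} = Q_\p$ using that $\hat{Q}$ and $Q_\p$ are smashing. Since $\uMod kG$ is compactly generated by $k$, Brown Representability (Proposition~\ref{Brownprop}) then supplies the right adjoint $\bar{R}$. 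The identity $R_\p \cong \hat{R}\bar{R}$ is forced by uniqueness of right adjoints, both sides being right adjoint to $Q_\p = \bar{Q} \hat{Q}$.

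Finally, $\bar{R}\bar{Q} \cong \id$ is equivalent to the unit of the $(\bar{Q},\bar{R})$-adjunction being invertible, i.e.\ to $\bar{Q}$ being fully faithful. Applying $\hat{Q}$ to $R_\p = \hat{R}\bar{R}$ and using $\hat{Q}\hat{R} \cong \id$ gives the explicit formula $\bar{R} \cong \hat{Q} R_\p$, so that $\bar{R}\bar{Q}(M) \cong \hat{Q}\, L_\p(tM)$ for $M = \hat{Q}(tM)$. The main obstacle will be verifying that $\hat{Q}\, L_\p(tM) \cong \hat{Q}(tM)$; equivalently, applying $\hat{Q}$ to the unit triangle $a \to tM \to L_\p tM$ with $a \in \C_\p$, one must check $\hat{Q}(a) = 0$. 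This reduces to showing that $\C_\p \cap (\text{Tate region of }\Kinjg)$ already lies in $\Ker\hat{Q}$, which should follow from the non-maximality of $\p$ by an $\m^+$-torsion argument analogous to the one deployed for $kG$, combined with the recollement triangle $Q_\lambda Q(C) \to C \to II_\lambda(C)$ that relates $\hat{Q}$ to the $\Hom_{\bfK}(ik,-)^*$ functor controlling membership in $\C_\p$.
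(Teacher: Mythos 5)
Your handling of the factorisation $Q_\p=\bar Q\circ\hat Q$, the production of $\bar R$ via Brown representability, and the identity $R_\p=\hat R\bar R$ is correct and follows essentially the paper's route: the paper shows that $Q_\p\circ(-\otimes_k pk)$ kills the compact generator $kG$ of $\bfD(\Mod kG)$ (observing $\Hom_{\bfK}(ik,kG)^*\cong k$ and $k_\p=0$ when $\p$ is non-maximal) and then applies Lemma~\ref{Verdier}(3) and d\'evissage, whereas you identify $\Ker\hat Q=\Loc(kG)$ explicitly and place $kG$ in $\C_\p$ by a degree/torsion argument; these are equivalent. Your density argument for $\bar Q$ preserving coproducts is also exactly what the paper uses.

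The final paragraph is where a genuine problem lies, and you were right to flag it, but the proposed repair cannot close the gap: the assertion $\bar R\bar Q\cong\id$ is in general false. Indeed $\bar Q(M)\cong Q_\p(tM)$, so $\bar R\bar Q\cong\id$ would force $\Ker\bar Q=0$, i.e.\ $\hat H^*(G,M)_\p\neq 0$ for every nonzero $M\in\uMod kG$. When the $p$-rank of $G$ is at least two this fails for any nonzero $M$ whose cohomological support avoids $\p$ (e.g.\ the trivial module induced from a suitable cyclic subgroup, or a Carlson module $L_\zeta$ for a positive-degree homogeneous $\zeta\notin\p$, for which the long exact sequence shows $\hat H^*(G,L_\zeta)_\p=0$ since $\zeta$ becomes invertible after localising); for such $M$ one has $tM\in\C_\p$ and hence $\bar Q(M)=0\neq M$. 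In particular your hope that $\C_\p$ intersected with the image of $\hat R$ lands in $\Ker\hat Q$ is not correct. What the identity $R_\p=\hat R\bar R$ together with full faithfulness of $R_\p$ and $\hat R$ genuinely gives is $\bar Q\bar R\cong\id$, i.e.\ $\bar R$ is fully faithful and $\bar Q$ is a localisation functor. The paper's own proof simply asserts $\bar R\bar Q\cong\id$ without justification; the only consequence used afterwards (in Proposition~\ref{p non-maximal}) is $R_\p=\hat R\bar R$, which both your argument and the paper's establish correctly.
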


\begin{proof}
By Proposition \ref{recollhopf}, we have a localisation sequence
$${\bf D}(\Mod kG) \xto{- \otimes_k pk} \Kinjg
\xto{\hat{Q}= Z^0( - \otimes_k tk)} \uMod kG.$$ 
Consequently, in order to obtain the functor $\bar Q$, it suffices to show that the composition $Q_{\p} \circ (- \otimes_k pk)$ is zero (see Lemma \ref{Verdier}(3)). 

Now observe that
$\Hom_{\bfK}(ik,kG)^* \cong \Hom_{kG}(H^0(ik),kG) \cong k$. Since
$\p$ is non-maximal, it holds  $k_{\p}= 0$ and hence $kG$ is contained in $\C_{\p}$.
We  infer that the composition $Q_{\p} \circ (- \otimes_k
pk)$ vanishes on $kG$, and by d\'evissage on all objects of $\bfD(\Mod kG)$.

The functor $\bar{Q}\colon \uMod kG \to
\Kinjg_{\p}$ we have obtained that way commutes with arbitrary direct sums since this holds for
$\hat{Q}$ and $Q_{\p}$, and because $\hat{Q}$ is dense. Furthermore, the
category $\uMod kG$ is compactly generated by the finite dimensional
modules (see \cite{Rickard}), thus $\bar{Q}$ has a right adjoint $\bar R$ by Proposition \ref{Brownprop}. Obviously it holds $R_{\p} = \hat{R} \bar{R}$
and we conclude that $\bar{R} \bar{Q} = \id$.
\end{proof}

Remember that a $H^*(G)$-module $X$ is $\p$-local if $X \cong X_{\p}$ as $H^*(G)$-modules.
\begin{prop}\label{p non-maximal}
Let $\p$ be a non-maximal prime and assume that $X \in \Modgr \Hgk$ is a $\p$-local module. 
If $X$ is realisable, then it can be realised by a Tate resolution $tM$ of some $kG$-module $M$. Furthermore, $X_{\p}$ is isomorphic to a direct summand of $H^*(G,M)_{\p}$.
\end{prop}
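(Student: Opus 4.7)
The plan is to combine the factorisation $Q_\p = \bar Q \circ \hat Q$ from Lemma \ref{barQ} with the cohomological localisation diagram applied to the compactly generated category $\Kinjg$, and transport the summand relation witnessing realisability across these identifications.

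First I would pick, using the realisability hypothesis, a complex $C \in \Kinjg$ such that $X$ is isomorphic to a direct summand of $\bfK(\Inj kG)(ik, C)^*$. Set $M = Z^0(C \otimes_k tk)$, i.e.\ a $kG$-module representing $\hat Q(C) \in \uMod kG$. The central step is the isomorphism
$$
\bfK(\Inj kG)(ik, C)^*_\p \;\cong\; \uMod kG(k, M)^* \;=\; \hat H^*(G, M).
$$
I would establish this by, in order: invoking the cohomological localisation diagram (the analogue of Corollary \ref{HPSdia} for $\Kinjg$, which applies because $ik$ is a compact generator with graded-commutative endomorphism ring $H^*(G,k)$) to rewrite the left-hand side as $(\Kinjg/\C_\p)(Q_\p ik, Q_\p C)^*$; using the factorisation $Q_\p = \bar Q \hat Q$ from Lemma \ref{barQ} together with full faithfulness of $\bar Q$ (which follows from $\bar R \bar Q \cong \id$ by Lemma \ref{GabrielZisman}) to collapse this to $\uMod kG(\hat Q(ik), \hat Q(C))^*$; and finally identifying $\hat Q(ik) \cong k$ in $\uMod kG$ by applying the triangulated functor $\hat Q$ to the canonical triangle $pk \to ik \to tk \to \Sigma pk$ and noting that $\hat Q(pk) = 0$ (since $pk$ lies in the image of $-\otimes_k pk$, which is the kernel of $\hat Q$ by Proposition \ref{recollhopf}) while $\hat Q(tk) = \hat Q \hat R(k) = k$ by full faithfulness of $\hat R$.

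With this isomorphism in hand, both claims follow. Since $X \cong X_\p$, the split summand inclusion $X \hookrightarrow \bfK(ik, C)^*$ localises to a split inclusion $X = X_\p \hookrightarrow \bfK(ik, C)^*_\p \cong \hat H^*(G, M)$. Using the $(\hat Q, \hat R)$-adjunction to identify $\hat H^*(G, M) = \uMod kG(k, M)^*$ with $\bfK(ik, tM)^*$ realises $X$ by $tM$, proving the first assertion. For the furthermore clause, the same chain of isomorphisms applied with $C$ replaced by the injective resolution $iM$ (using $\hat Q(iM) \cong M$, proved analogously to $\hat Q(ik) \cong k$ via the triangle $pM \to iM \to tM$) gives $H^*(G, M)_\p \cong \hat H^*(G, M)$, so $X_\p$ is a direct summand of $H^*(G, M)_\p$.

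The main care is in verifying that the cohomological localisation isomorphism of Section \ref{HomLoc} is applicable to $\Kinjg$ in this setup and that it is compatible with the factorisation of $Q_\p$ through $\hat Q$, so that the chain of identifications in the central computation is natural and preserves the summand relation. Beyond this book-keeping, no single step is a serious obstacle once Lemma \ref{barQ} is in place; the non-maximality of $\p$ enters only through the latter.
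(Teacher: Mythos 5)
Your central computation breaks at the step where you collapse $(\Kinjg/\C_\p)(Q_\p ik, Q_\p C)^*$ to $\uMod kG(\hat Q(ik),\hat Q(C))^*$ by invoking full faithfulness of $\bar Q$. What actually follows from $Q_\p=\bar Q\hat Q$, $R_\p=\hat R\bar R$, $Q_\p R_\p\cong\id$ and $\hat Q\hat R\cong\id$ is $\bar Q\bar R\cong\id$, which by Lemma~\ref{GabrielZisman} makes the \emph{right} adjoint $\bar R$ fully faithful, not $\bar Q$. (Your citation of Lemma~\ref{GabrielZisman} is also the wrong direction: it characterises full faithfulness of the right adjoint via the counit, whereas full faithfulness of the left adjoint would be governed by the unit.) Full faithfulness of $\bar Q$ would combine with $\bar Q\bar R\cong\id$ to make $\bar Q$ an equivalence, forcing $\ker\hat Q=\C_\p$; but $\ker\hat Q=\Loc(kG)$ while $\C_\p$ contains $tN$ for any non-projective $kG$-module $N$ whose Tate cohomology localises to zero at $\p$, and such $N$ exist as soon as the $p$-rank is at least two. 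The statement $\bar R\bar Q\cong\id$ in Lemma~\ref{barQ} appears to be a slip; note that the paper's own proof of the present proposition uses only $R_\p=\hat R\bar R$, never that relation.

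The concrete upshot is that your choice $M=\hat Q(C)$ is the wrong module. The correct chain of identifications is
$$\bfK(\Inj kG)(ik,C)^*_\p\;\cong\;(\Kinjg/\C_\p)(Q_\p ik,\,Q_\p C)^*\;\cong\;\uMod kG\bigl(k,\,\bar R Q_\p(C)\bigr)^*\;=\;\hat H^*\bigl(G,\bar R Q_\p(C)\bigr),$$
the middle step using $Q_\p ik=\bar Q(\hat Q(ik))\cong\bar Q(k)$ followed by the $(\bar Q,\bar R)$-adjunction. With your $M$ one would only obtain $\bfK(\Inj kG)(ik,C)^*_\p\cong\hat H^*(G,M)_\p$, and $\hat H^*(G,M)$ need not be $\p$-local. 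This is precisely why the paper sets $M=\bar R Q_\p(C)=\bar R\bar Q(\hat Q(C))$, i.e.\ the $\p$-localisation in $\uMod kG$ of the module you wrote down; for this $M$ one has $\hat R(M)=tM$ and the displayed isomorphism. Everything else in your plan — factoring $Q_\p$ through $\hat Q$, identifying $\hat Q(ik)\cong k$ via the triangle $pk\to ik\to tk$, and obtaining the furthermore clause from the triangle $pM\to iM\to tM$ — is sound and is essentially what the paper does; only the identification of the realising module needs the correction above.
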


\begin{proof}
From Theorem \ref{Henning} and Remark \ref{alsoadjointscommute} we obtain
diagrams
$$\xymatrix@C-12pt{\ar @{--} \Kinjg \ar[d]_-{Q_{\p}} \ar[rrr]^-{\Hom(ik,-)^*}& && \Modgr H^*(G)
\ar[d]^{- \otimes H^*(G)_{\p}} && \Kinjg  \ar[rrr]^-{\Hom(ik,-)^*}& && \Modgr H^*(G)\\
\Kinjg/\C_{\p} \ar[rrr]^-{\Hom(Q_{\p}(ik),-)^*}&& &\Modgr H^*(G)_{\p} && \Kinjg/\C_{\p} \ar[u]_-{R_{\p}} \ar[rrr]^-{\Hom(Q_{\p}(ik),-)^*}& && \ar[u]^{\inc}
\Modgr H^*(G)_{\p}}$$ which commute up to isomorphism. Let $X$ be a graded $\p$-local module over $H^*(G)$. If $X$ is realisable by some complex $C \in \Kinjg$, then
$X_{\p}$ is  realisable by an object $Q_{\p}(C) \in \Kinjg / \C_{\p}$.
Hence $X \cong \inc(X_{\p})$ is  realisable by $R_{\p} Q_{\p}(C)$. We infer from Lemma~\ref{barQ} that $X$ can be realised by $\hat{R}(M)$, where $M$ denotes $\bar{R}Q_{\p} (C) \in \uMod kG$. But the functor $\hat{R}$ assigns to $M$ its Tate resolution $tM$.

For the second claim, we show that $\Hom_{\bfK}(ik,tM)^*_{\p} \cong H^*(G,M)_{\p}$.
Applying the cohomological functor $\Hom_{\bfK}(ik,-)_{\p}$ to the canonical  triangle $$pM \to iM \to tM \to pM[1]$$ yields an exact sequence
$$\Hom_{\bfK}(ik,pM[-1])_{\p}^* \to  \Hom_{\bfK}(ik,iM)_{\p}^* \to \Hom_{\bfK}(ik,tM)_{\p}^* \to
\Hom_{\bfK}(ik,pM)_{\p}^*$$ of graded $H^*(G)$-modules.
In the proof of Lemma \ref{barQ} we have shown that $pM$ and $pM[-1]$ lie in the kernel of $\Hom_{\bfK}(ik,-)_{\p}$. Thus $H^*(G,M)_{\p}$ and $\Hom_{\bfK}(ik,tM)^*_{\p}$ are, in fact, isomorphic.
\end{proof}
Observe that the proof also shows that for a \emph{strictly} realisable, graded $H^*(G,k)$-module $X$ which is moreover  $\p$-local, there exists a module $M \in \Mod kG$ such that $X \cong \Hom_{\bfK}(ik,tM)$ and $X_{\p}\cong H^*(G,M)_{\p}$.

\begin{Rem}
If the module $H^*(G,M)$ in Proposition \ref{p non-maximal} is $\p$-local, then $X$ is realisable by the $kG$-module $M$. However, in general we cannot expect $H^*(G,M)$ to be $\p$-local.
\end{Rem}

\subsection{Realisability is a local property}\label{realislocal}
In this section we show that the classical local-global principle  applies for realisability. Let $A$ be a differential graded algebra over a commutative ring $k$, and assume that the cohomology ring $H^{*}A$ is graded-commutative. Under some finiteness assumptions we prove that a graded $H^*A$-module is realisable if and only if $X_{\p}$ is realisable for all graded prime ideals $\p$ of $H^*A$.

\begin{lem}\label{X real, dann X_p real}
Let $A$ be a dg algebra with graded-commutative cohomology ring $H^*A$ and fix a graded prime ideal $\p$ of $H^*A$.
If a graded $H^*A$-module $X$ is (strictly) realisable, then $X_{\p}$ is (strictly) realisable. 
\end{lem}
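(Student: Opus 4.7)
The plan is to deduce the statement directly from the commutative square in Corollary~\ref{HPSdia}, which relates the cohomology functor on $\D(A)$ to the cohomology functor on $\D(A)/\C_{\p} \simeq \D(A_{\p})$ via $\p$-localisation of graded modules. The key observation is that realisability is preserved by any functor of the form ``cohomology followed by localisation'', provided the latter can be re-identified as the cohomology of the localised dg algebra.

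More concretely, I would proceed as follows. Suppose $X$ is a summand of $H^*M$ for some $M \in \D(A)$, so that there are graded $H^*A$-linear maps $\iota\colon X \to H^*M$ and $\pi\colon H^*M \to X$ with $\pi\iota = \id_X$. Apply the exact functor $- \otimes_{H^*A} (H^*A)_{\p}$. Then $X_{\p}$ is a direct summand of $(H^*M)_{\p}$ as a graded $(H^*A)_{\p}$-module. By the commutative square of Corollary~\ref{HPSdia}, there is a natural isomorphism
\[
(H^*M)_{\p} \;\cong\; \D(A)/\C_{\p}(Q_{\p}A, Q_{\p}M)^*
\]
of graded $\D(A)/\C_{\p}(Q_{\p}A,Q_{\p}A)^*$-modules, and by Proposition~\ref{isomasrings} this graded endomorphism ring is isomorphic (as a graded ring) to $(H^*A)_{\p} \cong H^*(A_{\p})$.

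Next I would transport this across the equivalence $\bfR\HOM_A(L_{\p}A, L_{\p}-)\colon \D(A)/\C_{\p} \xto{\simeq} \D(A_{\p})$ of Proposition~\ref{RHOM(RQA,R-)}, under which the compact generator $Q_{\p}A$ corresponds to $A_{\p} = \END_A(L_{\p}A)$. Setting $M' := \bfR\HOM_A(L_{\p}A, L_{\p}M) \in \D(A_{\p})$, the isomorphism becomes
\[
(H^*M)_{\p} \;\cong\; \D(A_{\p})(A_{\p}, M')^* \;\cong\; H^*M',
\]
the last isomorphism by Lemma~\ref{D(A,-)^*=H^*-}. Thus $X_{\p}$ is a direct summand of $H^*M'$ with $M' \in \D(A_{\p})$, i.e.\ $X_{\p}$ is realisable. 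The strictly realisable case is identical: replace ``direct summand'' by ``isomorphism'' everywhere, since an isomorphism $X \cong H^*M$ localises to an isomorphism $X_{\p} \cong (H^*M)_{\p} \cong H^*M'$.

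The main thing to get right is the identification of the graded endomorphism ring and module structures under the two passages (first $\p$-localising cohomology, then re-reading cohomology inside $\D(A_{\p})$); once Corollary~\ref{HPSdia} and Proposition~\ref{isomasrings} are invoked, the argument is formal and there is no serious technical obstacle.
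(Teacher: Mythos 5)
Your proof is correct and uses exactly the same core idea as the paper: apply the commutative square of Corollary~\ref{HPSdia} to translate the realisability of $X$ in $\D(A)$ into realisability of $X_\p$ in $\D(A)/\C_\p$. The only difference is in how you finish: you pass across the equivalence $\D(A)/\C_\p \simeq \D(A_\p)$ to read off $X_\p$ as a summand of $H^*M'$ for $M'\in\D(A_\p)$, whereas the paper instead uses the other commutative square of Corollary~\ref{HPSdia} (the one involving the fully faithful right adjoint $R$ and the restriction functor $\inc$) to conclude that $X_\p$, viewed as an $H^*A$-module, is realisable by an object of $\D(A)$ itself. The two endpoints are equivalent formulations of ``$X_\p$ is realisable'', so this is not a gap; it is simply that the paper chooses the formulation it needs later (realisability in $\D(A)$ for the local-global theorem), while yours lands in the $\D(A_\p)$ setting. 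Either way, the decisive content is the compatibility of the cohomological functors with $\p$-localisation established in Corollary~\ref{HPSdia}, and you invoke it correctly.
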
 
\begin{proof}
We use the commutative diagrams
$$\xymatrix@C+4.5pt{\ar @{--} \Ddg(A) \ar[d]_Q \ar[rr]^-{\Ddg(A)(A,-)^*}& & \Modgr H^*A
\ar[d]^{- \otimes_{H^*A} (H^*A)_{\p}} && \Ddg(A)  \ar[rr]^-{\Ddg(A)(A,-)^*}& & \Modgr H^*A\\
\Ddg(A)/\C_{\p} \ar[rr]^-{\Ddg(A)/\C_{\p}(QA,-)^*}& & \Modgr H^*A_{\p} && \Ddg(A)/\C_{\p} \ar[u]_R \ar[rr]^-{\Ddg(A)/\C_{\p}(QA,-)^*}& & \ar[u]^{\inc}
\Modgr H^*A_{\p}}$$
from Corollary \ref{HPSdia}. If $X$ is realisable, then the $H^*A_{\p}$-module $X_{\p}$ is realisable by an object of $\Ddg(A)/\C_{\p}$. But then the $H^*A$-module $X_{\p}$ is realisable by an object of $\Ddg(A)$, by the right hand diagram.
\end{proof}

Now our aim is to show that  $X_{\p}$ being realisable for all graded primes $\p$ of $H^*A$ implies that $X$ is realisable. On that purpose, we study the behaviour of the local obstruction $\kappa(X) \in \Ext^{3,-1}_{H^*A}(X,X)$ under $\p$-localisation. The following lemma is stated in a more generally setting, but applies to this situation.

\begin{lem}\label{diaclass}
Let $\T$, $\U$ be triangulated categories with arbitrary direct sums and let $N \in \T$ be a compact object. Assume that
$$F\colon \T \to \U$$ is an exact functor such that $FN$ is compact in $\U$,
and that
$$\widetilde{F}\colon \Modgr \T(N,N)^* \to \Modgr \U(FN,FN)^*$$
is an exact functor such that the diagram below
commutes up to natural isomorphism.
$$\xymatrix{\ar @{--} \T \ar[d]_-F \ar[rr]^-{\T(N,-)^*}& & \Modgr \T(N,N)^* \ar[d]^-{\widetilde{F}}\\
\U \ar[rr]^-{\U(FN,-)^*}& & \Modgr \U(FN,FN)^*}$$
Then we have for every graded $\T(N,N)^*$-module $X$
 $$\widetilde{F}(\kappa(X)) = \kappa(\widetilde{F}X) \text{ in }
\Ext^{3,-1}_{\U(FN,FN)^*}(\widetilde{F}X,\widetilde{F}X).$$ 
\end{lem}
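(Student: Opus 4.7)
The plan is to compare two representatives of Yoneda three-extensions of $\widetilde{F}X$ by $\widetilde{F}X[-1]$: one produced by transporting an $N$-special $\T$-presentation of $X$ through $F$ and $\widetilde{F}$ and hence representing $\widetilde{F}\kappa(X)$, and another arising from a genuine $FN$-special $\U$-presentation of $\widetilde{F}X$ and hence representing $\kappa(\widetilde{F}X)$. The strategy then reduces to producing a morphism of extensions between these which is the identity on the outer terms, thereby forcing the Yoneda classes to coincide.

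For the first extension, I would pick an $N$-special $\T$-presentation $\Sigma^{-1}B \xrightarrow{\delta} R_1 \xrightarrow{\alpha} R_0 \xrightarrow{\pi} B$ of $X$ with epimorphism $\epsilon\colon \T(N,R_0)^*\twoheadrightarrow X$, apply the exact functor $F$ to the triangle, and apply $\U(FN,-)^*$ to obtain a long exact sequence. Substituting $\widetilde{F}\T(N,-)^*$ for $\U(FN, F-)^*$ via the given natural isomorphism, I extract the five-term exact sequence
$$0 \to \widetilde{F}X[-1] \to \U(FN,FB)^*[-1] \to \U(FN,FR_1)^* \to \U(FN,FR_0)^* \xrightarrow{\epsilon'} \widetilde{F}X \to 0,$$
which is precisely $\widetilde{F}$ applied to the associated extension of the chosen $N$-special $\T$-presentation. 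Its Yoneda class is therefore $\widetilde{F}\kappa(X)$.

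For the second extension, I would choose any $FN$-special $\U$-presentation $\Sigma^{-1}B' \to R'_1 \to R'_0 \to B'$ of $\widetilde{F}X$. Writing $R'_0 = \coprod_{k} FN[j_k]$, the graded $\U(FN,FN)^*$-module $\U(FN,R'_0)^*$ is free, so the augmentation $\U(FN,R'_0)^* \twoheadrightarrow \widetilde{F}X$ lifts through $\epsilon'$ to a graded $\U(FN,FN)^*$-linear map $\U(FN, R'_0)^* \to \U(FN, FR_0)^*$. The compactness of $FN$ identifies this graded map with a morphism $R'_0 \to FR_0$ in $\U$, since both sides decompose as $\prod_k \U(FN, FR_0)^{-j_k}$. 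Iterating the same argument for $R'_1$ and completing to a morphism of distinguished triangles by the standard triangulated-category axiom, I obtain, after applying $\U(FN,-)^*$, a morphism of five-term extensions which is the identity on both $\widetilde{F}X$ and $\widetilde{F}X[-1]$. This yields the desired equality $\kappa(\widetilde{F}X) = \widetilde{F}\kappa(X)$ in $\Ext^{3,-1}_{\U(FN,FN)^*}(\widetilde{F}X,\widetilde{F}X)$.

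The main obstacle lies in this last comparison: the triangle $FR_1 \to FR_0 \to FB$ obtained by applying $F$ need not be $FN$-special, since $F$ is not assumed to preserve arbitrary direct sums, so one cannot simply identify the transported extension with the associated extension of an bona fide $FN$-special presentation. Bridging this gap is where the compactness of $FN$ is essential — it is precisely what converts module-level lifts back into morphisms in $\U$ and enables the comparison of Yoneda classes. In the applications of interest, such as cohomological $\p$-localisation, $F$ is a smashing quotient and does preserve direct sums, in which case the transported presentation is already $FN$-special and this subtlety evaporates.
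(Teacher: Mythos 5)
Your proof is correct and takes a genuinely different route from the paper's; in fact it closes a small gap. The paper's argument is shorter: after transporting an $N$-special $\T$-presentation of $X$ along $F$ and the natural isomorphism, it declares that the transported triangle together with the induced epimorphism $\zeta$ is itself an $FN$-special $\U$-presentation of $\widetilde F X$, invoking ``Since $F$ is exact \emph{and preserves direct sums}, \ldots\ $FR_0$ and $FR_1$ are $FN$-free,'' and then concludes immediately from the uniqueness of the associated extension over $FN$-special $\U$-presentations \cite[Prop.\ 3.4]{BKS}. But preservation of direct sums is not among the lemma's stated hypotheses, and you have noticed exactly this. Your route avoids the extra assumption: you still transport to get a five-term extension representing $\widetilde F\kappa(X)$, but you compare it with the associated extension of a genuine $FN$-special $\U$-presentation of $\widetilde F X$ by lifting through the surjections, using that $\U(FN,R'_0)^*$ and $\U(FN,R'_1)^*$ are graded free (by $FN$-freeness of the $R'_i$ together with compactness of $FN$) and that, again by compactness of $FN$, a degree-zero module map out of such a free module is realised by a morphism $R'_i\to FR_i$ in $\U$; completing to a morphism of triangles by (TR3) and applying $\U(FN,-)^*$ yields a morphism of five-term extensions which one checks is the identity on both outer copies of $\widetilde F X$, whence the Yoneda classes coincide. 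What each approach buys: the paper's is shorter but tacitly needs $F$ to preserve (at least $N$-free) direct sums, which holds in every application since there $F$ is a smashing quotient; yours proves the lemma as literally stated, at the modest cost of essentially re-running the comparison argument of \cite[Prop.\ 3.4]{BKS} in the asymmetric setting where only one of the two presentations is known to be $FN$-special.
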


\begin{proof}
Let $(\Sigma^{-1}B \xrightarrow{\delta} R_1 \xrightarrow{\alpha} R_0
\xrightarrow{\pi} B,\ \ep\colon\T(N,R_0)^*\to X)$ be an $N$-special
$\T$-{pre\-sen\-ta\-tion} with associated extension $\kappa(X) \in \Ext^{3,-1}_{\T(N,N)^*}(X,X)$.
Since $F$ is exact and preserves direct sums, we obtain a triangle
$\Sigma^{-1}FB \xrightarrow{F\delta} FR_1 \xrightarrow{F\alpha} FR_0
\xrightarrow{F\pi} FB$ in $\U$, where $FR_0$ and $FR_1$ are $FN$-free.
The exactness of $\tilde F$ and the commutativity of the diagram
yield an epimorphism $\zeta\colon \U(FN,FR_0)^* \to \widetilde{F}X$.
Consequently
\[ (\Sigma^{-1}FB \xrightarrow{F\delta} FR_1 \xrightarrow{F\alpha} FR_0
\xrightarrow{F\pi} FB,\  \zeta \colon \U(FN,FR_0)^* \to \widetilde{F}X)\]
is an $FN$-special $\U$-presentation and the commutativity of the diagram
{\small
$$\xymatrix@C-6pt{ 0 \ar[r] & (\widetilde{F}X)[-1] \ar[r] \ar[d]^{\cong} & \U(FN,\Sigma^{-1}FB)^* \ar[r]^-{(F\delta)_*}
\ar[d]^{\cong} & \U(FN,FR_1)^* \ar[r]^-{(F\alpha)_*}
\ar[d]^{\cong} & \U(FN,FR_0)^* \ar[r]^-{\zeta} \ar[d]^{\cong} & \widetilde{F}X \ar[r] \ar@{=}[d]  & 0\\
0 \ar[r] & \widetilde{F}(X[-1]) \ar[r] & \widetilde{F}(\T(N,\Sigma^{-1}B)^*) \ar[r]^-{\widetilde{F}(\delta_*)} & \widetilde{F}(\T(N,R_1)^*)
 \ar[r]^-{\widetilde{F}(\alpha_*)}&
\widetilde{F}(\T(N,R_0)^*) \ar[r]& \widetilde{F}X \ar[r] & 0}$$}
shows that $\kappa(\widetilde{F}X) = \widetilde{F}(\kappa(X))$ in
$\Ext^{3,-1}_{\U(FN,FN)^*}(\widetilde{F}X,\widetilde{F}X)$.
 \end{proof}

The next lemma is well-known for strictly commutative Noetherian rings (see for example \cite{BIV}), and it is easy to check that it also holds true for graded-commutative coherent rings. Remember that a graded ring $R$ is called \emph{coherent} if finitely
  generated graded submodules of finitely presented graded $R$-modules are assumed to be finitely presented.

\begin{lem}\label{extloc}
Let $R$ be a graded-commutative and coherent ring, and let $\p$ be a graded prime ideal of $R$.
Let $M, N$ be graded $R$-modules and assume that $M$ is finitely presented. Then there is a natural isomorphism 
\begin{eqnarray*}
\Ext^{i,*}_R(M,N)_{\p} \cong \Ext^{i,*}_{R_{\p}}(M_{\p},N_{\p})
\end{eqnarray*}
for all $i \ge 0$. \qed
\end{lem}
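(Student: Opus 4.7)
The plan is to reduce to the case $i=0$ by resolving $M$ by finitely generated graded projectives, using coherence to keep every syzygy finitely presented and flatness of $R_{\p}$ over $R$ to commute cohomology with localisation.

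First I would treat the base case $i=0$, establishing that the natural graded map
$$\Hom_R^*(M,N) \otimes_R R_{\p} \;\longrightarrow\; \Hom_{R_{\p}}^*(M_{\p},N_{\p}),$$
sending $\varphi/s$ to the $R_{\p}$-linear extension of $\varphi$ divided by $s$, is an isomorphism whenever $M$ is finitely presented. This is immediate for $M=R[j]$, extends additively to finitely generated graded free $M$, and descends to the general finitely presented case by picking a presentation $R^a \to R^b \to M \to 0$ and applying the five lemma to the diagram obtained by comparing $\Hom_R^*(-,N)_{\p}$ with $\Hom_{R_{\p}}^*((-)_{\p}, N_{\p})$; left exactness of both $\Hom$ functors together with flatness of $R_{\p}$ over $R$ supplies the two necessary exact rows.

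For the inductive step, coherence of $R$ together with finite presentability of $M$ guarantees a resolution $P_\bullet \to M$ by finitely generated (hence finitely presented) graded projective $R$-modules: each syzygy is a finitely generated graded submodule of a finitely presented module, and so finitely presented by coherence, whence it can be covered by another finitely generated graded projective. Since $R_{\p}$ is flat as a graded $R$-module (being a filtered colimit of shifts of $R$ by Lemma~\ref{wlogeven}), the localised complex $(P_\bullet)_{\p} \to M_{\p}$ is a graded projective resolution of $M_{\p}$ over $R_{\p}$. Applying the base case to each $P_n$ identifies the complex $\Hom_{R_{\p}}^*((P_\bullet)_{\p}, N_{\p})$ with $\Hom_R^*(P_\bullet,N) \otimes_R R_{\p}$, and exactness of localisation yields
$$\Ext_{R_{\p}}^{i,*}(M_{\p},N_{\p}) \;\cong\; H^i\big(\Hom_R^*(P_\bullet,N)\big) \otimes_R R_{\p} \;\cong\; \Ext_R^{i,*}(M,N)_{\p}.$$
Naturality in $M$ and $N$ is automatic from the constructions.

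The main point to verify carefully is that the graded-commutative coherent setting supports the ingredients used implicitly in the classical strictly commutative Noetherian argument: namely, that $R_{\p}$ is genuinely flat as a graded $R$-module (clear from working with $S_{ev}$ and invoking the colimit description from Section~\ref{grcom}) and that coherence indeed produces a resolution of $M$ by finitely presented graded projectives. With these two ingredients in place, the proof is a direct transcription of the classical argument.
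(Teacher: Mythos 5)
Your proposal is correct, and in fact the paper gives no proof at all for this lemma: it simply states the result with a \qed, remarking beforehand that the statement is well-known for strictly commutative Noetherian rings (citing \cite{BIV}) and is ``easy to check'' in the graded-commutative coherent setting. Your argument fills in exactly what the paper leaves to the reader, via the standard route: establish the $i=0$ case by reducing to finitely generated graded free modules and using a finite presentation together with flatness of $R_{\p}$, then resolve $M$ by finitely generated graded free modules (each syzygy remains finitely generated, and coherence keeps it finitely presented), localise the resolution using exactness of $-\otimes_R R_{\p}$, and conclude by commuting cohomology past the exact localisation functor. The only small imprecision is the parenthetical justification of flatness of $R_{\p}$ as ``a filtered colimit of shifts of $R$''; the paper simply records flatness of $S^{-1}R$ as part of the general material in Section~\ref{grcom localisation}, and the colimit description you allude to needs a little more care to state precisely, though it is not wrong in spirit. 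Apart from that, the proof is complete and matches the argument the paper intends.
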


\begin{thm}[Local-global principle]\label{loc-glo-local}
Let $A$ be a dg algebra such that $H^*A$ is graded-commutative and coherent. The following conditions are equivalent for a finitely presented graded $H^*A$-module $X$:
\begin{itemize}
\item[(1)] $X$ is realisable.
\item[(2)] $X_{\p}$ is realisable for all graded prime ideals  $\p$ of $H^*A$.
\item[(3)] $X_{\m}$ is realisable for all graded maximal ideals $\m$ of $H^*A$.
\end{itemize}
\end{thm}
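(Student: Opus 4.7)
The plan is to deduce the local-global principle from the local obstruction of Benson--Krause--Schwede (Theorem~\ref{BKSlocal}) by showing that $\kappa(X) \in \Ext^{3,-1}_{H^*A}(X,X)$ is well-behaved under $\p$-localisation, and then invoking the classical local-global principle for graded-commutative rings (Proposition~\ref{local global principle}).

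The implication (1)$\Rightarrow$(2) is already Lemma~\ref{X real, dann X_p real}, and (2)$\Rightarrow$(3) is trivial. So the whole task is (3)$\Rightarrow$(1). By Theorem~\ref{BKSlocal}, it suffices to prove that $\kappa(X)=0$, and for this I would use Proposition~\ref{local global principle}: the $H^*A$-submodule $H^*A\cdot \kappa(X)$ of the graded module $\Ext^{3,*}_{H^*A}(X,X)$ vanishes as soon as $\kappa(X)_\m=0$ for every graded maximal ideal $\m$. The point is therefore to identify the localisation $\kappa(X)_\m$ with the local obstruction of $X_\m$.

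The key step is to apply Lemma~\ref{diaclass} to the smashing localisation $Q\colon \D(A) \to \D(A)/\C_\m$ with $N=A$. The object $QA$ is compact in $\D(A)/\C_\m$ by Lemma~\ref{QAcompact}, and by Corollary~\ref{HPSdia} the square
\[
\xymatrix{\D(A) \ar[d]_Q \ar[rr]^-{\D(A)(A,-)^*} & & \Modgr H^*A \ar[d]^{-\otimes_{H^*A} (H^*A)_\m} \\
\D(A)/\C_\m \ar[rr]^-{\D(A)/\C_\m(QA,-)^*} & & \Modgr (H^*A)_\m}
\]
commutes up to natural isomorphism, with $\widetilde F = -\otimes_{H^*A}(H^*A)_\m$ exact (flatness of $(H^*A)_\m$). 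Lemma~\ref{diaclass} then yields
\[
\widetilde F(\kappa(X)) \;=\; \kappa(X_\m) \qquad \text{in } \Ext^{3,-1}_{(H^*A)_\m}(X_\m,X_\m).
\]
Since $H^*A$ is graded-commutative and coherent and $X$ is finitely presented, Lemma~\ref{extloc} supplies a natural isomorphism $\Ext^{3,-1}_{H^*A}(X,X)_\m \cong \Ext^{3,-1}_{(H^*A)_\m}(X_\m,X_\m)$, under which $\widetilde F(\kappa(X))$ is precisely the image $\kappa(X)_\m$ of $\kappa(X)$.

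Assuming (3), every $X_\m$ is realisable, so Theorem~\ref{BKSlocal} gives $\kappa(X_\m)=0$ for each graded maximal $\m$. Hence $\kappa(X)_\m=0$ for all $\m$, and Proposition~\ref{local global principle} forces $\kappa(X)=0$, which by another application of Theorem~\ref{BKSlocal} gives the realisability of $X$. The main subtlety I anticipate is bookkeeping around the natural identification in Lemma~\ref{extloc} being compatible with the functorial identification produced by Lemma~\ref{diaclass}; once that compatibility is granted, the argument is a clean combination of the local obstruction, the commutativity of cohomology with $\p$-localisation in $\D(A)$, and the classical local-global principle.
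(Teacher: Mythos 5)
Your proof is correct and follows essentially the same route as the paper's: reduce to showing $\kappa(X)=0$ via Theorem~\ref{BKSlocal}, use Proposition~\ref{local global principle} to localise this vanishing, identify $\kappa(X)\otimes_{H^*A}(H^*A)_\m$ with $\kappa(X_\m)$ via Lemma~\ref{diaclass} applied to the square from Corollary~\ref{HPSdia}, and reconcile the two sides by the coherence-based isomorphism of Lemma~\ref{extloc}. The only cosmetic difference is your explicit mention of the submodule $H^*A\cdot\kappa(X)$ when invoking the local-global principle for modules, which the paper leaves implicit by speaking directly of the fraction $\frac{\kappa(X)}{1}$.
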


\begin{proof}
By Lemma \ref{X real, dann X_p real}, it suffices to show that $X$ is realisable if  $X_{\m}$ is realisable by an object in $\Ddg(A)/\C_{\m}$ for all graded maximal ideals $\m$ of $H^*A$.

The $H^*A$-module $X$ is realisable if and only if the
class $\kappa(X) \in \Ext^{3,-1}_{H^*A}(X,X)$ is
trivial (Theorem \ref{BKSlocal}). By Proposition \ref{local global principle}, the latter holds true if and
only if the fraction $\frac{\kappa(X)}{1}$ in $\Ext^{3,-1}_{H^*A}(X,X)_{\m}$
equals zero for all graded maximal ideals $\m$ of $H^*A$. But since $X$ is finitely presented and
$H^*A$ is assumed to be graded-commutative and coherent, we may apply the
natural isomorphism 
$$(\Ext^{3,*}_{H^*A}(X,X))_{\m} \cong  \Ext^{3,*}_{(H^*A)_{\m}}(X_{\m},X_{\m})$$
from Lemma \ref{extloc}, which maps the fraction $\frac{\kappa(X)}{1}$ to the extension
$\kappa(X) \otimes_{H^*A} (H^*A)_{\m}$. The object $QA \in
\Ddg(A)/\C_{\m}$ is compact (Lemma \ref{QAcompact}), hence we can
apply Lemma \ref{diaclass} to the commutative diagram 
$$\xymatrix@C+4pt{\ar @{--} \Ddg(A) \ar[d]_Q \ar[rr]^-{\Ddg(A)(A,-)^*}& & \Modgr H^*A
\ar[d]^{- \otimes_{H^*A} (H^*A)_{\m}} \\
\Ddg(A)/\C_{\m} \ar[rr]^-{\Ddg(A)/\C_{\m}(QA,-)^*}& & \Modgr H^*A_{\m}}$$
and obtain
$$\kappa(X)\otimes_{H^*A} (H^*A)_{\m} = \kappa(X \otimes_{H^*A} (H^*A)_{\m}).$$
Altogether we infer that $X$ is realisable if and only if the class $\kappa(X_{\m})$ is trivial for all $\m$, or equivalently, if $X_{\m}$ is realisable by an object
in $\Ddg(A)/\C_{\m}$ for all graded maximal ideals $\m$ of $H^*A$.
\end{proof}

A graded $H^*A$-module is realisable if and only if all its direct summands are realisable. 
Since $\p$-localisation commutes with direct sums, this shows
\begin{cor}\label{also sums of fp modules}
Let $A$ be a dg algebra such that $H^*A$ is graded-commutative and coherent.
Let $X \in \Modgr H^*A$ be an arbitrary direct sum of finitely presented graded $H^*A$-modules.  Then the following are equivalent:
\begin{itemize}
\item[(1)] $X$ is realisable.
\item[(2)] $X_{\p}$ is realisable for all graded prime ideals  $\p$ of $H^*A$.
\item[(3)] $X_{\m}$ is realisable for all graded maximal ideals $\m$ of $H^*A$.\qed
\end{itemize}
\end{cor}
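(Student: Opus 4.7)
The plan is to reduce to the finitely presented case handled by Theorem~\ref{loc-glo-local} by exploiting that realisability is compatible with arbitrary direct sums in both directions. Write $X = \coprod_{i \in I} X_i$ with each $X_i$ finitely presented.

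First I would establish that a graded $H^*A$-module $Y = \coprod_{i \in I} Y_i$ is realisable if and only if each $Y_i$ is realisable. The ``only if'' direction uses that a direct summand of a realisable module is again realisable, together with the fact that each $Y_i$ is a direct summand of $Y$. For the ``if'' direction, if $Y_i$ is a direct summand of $H^*M_i$ for some $M_i \in \D(A)$, then taking $M = \coprod_{i} M_i$ in $\D(A)$ and using that the functor $H^* \cong \Ddg(A)(A,-)^*$ commutes with arbitrary direct sums (because $A$ is compact in $\D(A)$), we obtain $H^*M \cong \coprod_i H^*M_i$, of which $Y = \coprod_i Y_i$ is a direct summand. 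Thus $Y$ is realisable.

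Next I would use that $\p$-localisation commutes with arbitrary direct sums in $\Modgr H^*A$, so that $X_{\p} \cong \coprod_{i} (X_i)_{\p}$ and likewise for maximal $\m$. The implication (1) $\Rightarrow$ (2) is immediate from Lemma~\ref{X real, dann X_p real}, and (2) $\Rightarrow$ (3) is trivial. For (3) $\Rightarrow$ (1), assume $X_{\m}$ is realisable for every graded maximal ideal $\m$. Then each $(X_i)_{\m}$, being a direct summand of $X_{\m} \cong \coprod_j (X_j)_{\m}$, is realisable by the first step applied to this decomposition.

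Finally, since each $X_i$ is finitely presented and $(X_i)_{\m}$ is realisable for all graded maximal $\m$, Theorem~\ref{loc-glo-local} yields that $X_i$ is realisable. Applying the first step once more in the other direction, $X = \coprod_i X_i$ is realisable. No step here is an obstacle: the only subtle point is the commutation of $H^*$ with direct sums, which rests on the compactness of $A$ in $\Ddg(A)$, and the closure of realisability under direct summands, both of which are immediate.
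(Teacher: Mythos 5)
Your proof is correct and takes essentially the same approach as the paper, which deduces the corollary in two terse sentences immediately preceding it: that a direct sum of graded modules is realisable if and only if each summand is (the nontrivial direction resting, as you note, on compactness of $A$ so that $H^*\cong\D(A)(A,-)^*$ commutes with arbitrary direct sums), and that $\p$-localisation commutes with direct sums. Your write-up simply fills in the details, passing each finitely presented summand through Theorem~\ref{loc-glo-local} exactly as the paper intends.
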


Theorem \ref{loc-glo-local} and Corollary \ref{also sums of fp modules} apply in particular for realisability over the group cohomology ring $H^*(G,k)$, where $G$ is a finite group and $k$ is a noetherian ring (see Example \ref{example for real}). 
The finiteness of the group is not necessary as long as $H^*(G,k)$ is still coherent and $kG$ is noetherian. The latter is to ensure that the category  $\bfK(\Inj kG)$ is closed under taking  arbitrary direct sums. 

We also remark that in all results in this section, the realisability setting $$\D(A) \xto{H^*} \Modgr H^*A$$ can  be replaced by the more general setting $$\T \xto{\T(N,-)^*} \Modgr \T(N,N)^*,$$ where $\T$ is a triangulated category which admits arbitrary direct sums and is generated by compact objects, and $N \in \T$ is a compact object such that $\T(N,N)^*$ is graded-commutative and coherent.

\begin{Rem}
(1) One might want to have this local-global principle for \emph{arbitrary} graded $H^*A$-modules. 
It is well-known that every graded module is a direct limit of finitely presented graded modules, 
but it is open whether a realisable finitely presented module can be written as direct limit of \emph{realisable} finitely presented modules.
It is not known either whether an arbitrary direct limit of realisable modules is realisable. 

(2) It would be nice to have  a local-global principle also for \emph{strict} realisability (see Remark \ref{postnikov}). 
Let $X$ be a graded $H^*A$-module, where $A$ is a dg algebra with graded-commutative cohomology ring. If $X$ is strictly realisable, then so is $X_{\p}$ (Lemma \ref{X real, dann X_p real}). If $X$ is finitely presented and $H^*A$ coherent, 
 does $X_{\p}$ being strictly realisable for all primes $\p$ imply that $X$ is strictly realisable?

The infinite sequence of obstructions deciding on strict realisability
$$\kappa_n(X) \in \Ext^{n,2-n}_{H^*A}(X,X),\; n \ge 3,$$
where  $\kappa_n(X)$ is defined provided that the previous one $\kappa_{n-1}(X)$ vanishes, arises from an $A$-exact $\infty$-Postnikov-System which in cohomology gives rise to a map having cokernel~$X$ (see \cite[App.\ A]{BKS} for details). 
More precisely, an $A$-exact $l$-Postnikov system gives rise to the obstructions $\kappa_n(X), 3 \le n \le l$, and can be extended to an $A$-exact  $(l+1)$-Postnikov system provided that the class $\kappa_l(X)$ is trivial. If an $A$-exact $\infty$-Postnikov system exists, then $X$ is strictly realisable \cite[Prop.\ A.19]{BKS}.

Now one might want to prove iteratively $\kappa_l(X_{\p}) \cong \kappa_l(X)_{\p}$ for all $l \ge 3$ and use the same methods as in the proof of Theorem \ref{loc-glo-local}.
The problem is that all but the first obstruction are \emph{not uniquely determined} and consequently, the compatibility of the obstructions for the realisability of $X$ and the realisability of $X_{\p}$ cannot be expected in general.

\end{Rem}

\section{Localising the global obstruction}\label{sec Localising mu}
Let $A$ be a dg algebra over a field $k$ and assume that $H^*A$ is graded-commutative.
We have shown in Section \ref{realislocal} that
realisability is a local property. The local-global principle we have shown
applies for finitely presented modules but does not yield information on global realisability. In this chapter we develop a local-global principle for global realisability. 

Applying our results from Chapter \ref{seclift}, we can state a global obstruction for the $\p$-local $H^*A$-modules: Let $A_{\p}$ the localisation of $A$ at a prime $\p$ in cohomology, defined in Section~\ref{Apdef}. From Corollary \ref{HPSdia}, Proposition \ref{RHOM(RQA,R-)} and Theorem \ref{A'->A_p} we conclude that the diagram
$$\xymatrix{\ar @{--} \Ddg(A) \ar[d] \ar[rr]^-{\Ddg(A)(A,-)^*}& & \Modgr H^*A
\ar[d]^{- \otimes_{H^*A} (H^*A)_{\p}}\\
\Ddg(A_{\p}) \ar[rr]^-{\Ddg(A_{\p})(A_{\p},-)^*}& & \Modgr H^*(A_{\p}) }$$
commutes up to isomorphism. Since in particular $H^*(A_{\p}) \cong (H^*A)_{\p}$, we infer that the canonical class $\mu_{A_{\p}} \in \HH^{3,-1}(H^*A_{\p})$ is a global obstruction for the $\p$-local modules. 

At first sight, it is not clear whether the Hochschild classes $\mu_A \in \HH^{3,-1}(H^*A)$ and  $\mu_{A_{\p}} \in \HH^{3,-1}(H^*A_{\p})$ are associated in some way.
In order to relate them,  we show in Section \ref{flatepi} the  existence of a map of Hochschild cohomology rings 
$$\Gamma\colon \HH^{*,*}(H^*A) \longrightarrow \HH^{*,*}(H^*A_{\p})$$
which has the property $\Gamma(\mu_A)=\mu_{A_{\p}}$.

After discussing localisation of Hochschild cohomology groups of graded-commutative algebras in Section \ref{sec loc-glob global}, we are ready to prove a local-global principle for global reali\-sa\-bility.

\subsection{A map of Hochschild cohomology rings}\label{flatepi}
In general, a morphism of graded algebras $\varphi\colon R \to T$ does not induce a homomorphism $\HH^{*,*}(R) \to \HH^{*,*}(T)$. We show in this section that such a map does exist whenever
$T_{R}$ and $_{R} T$ are flat and  $\varphi\colon R \to T$ is an epimorphism in the category of rings, i.e.\ for any ring $T'$ and morphisms $\alpha,\beta\colon T \to T'$ with $\alpha \varphi = \beta \varphi$, it follows $\alpha = \beta$. Such a map can be characterised in the following way:

\begin{lem}\cite[Ch.\ XI, Prop.\ 1.2]{St}
The following conditions are equivalent for a morphism of graded rings $\varphi\colon R \to T$:
\begin{itemize}
\item[(1)] $\varphi$ is an epimorphism in the category of rings.
\item[(2)] The map $R \otimes_T R \to R, \; r \otimes r' \mapsto rr',$ is an isomorphism.
\item[(3)] The restriction functor $\Modgr R \to \Modgr T$ is full.
\end{itemize}
\end{lem}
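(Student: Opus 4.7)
The plan is to prove the cycle $(1) \Rightarrow (2) \Rightarrow (3) \Rightarrow (1)$, reading the lemma with its evident intended conditions: in $(2)$ the multiplication map $\mu\colon T\otimes_R T \to T$ is asserted to be an isomorphism, and in $(3)$ the restriction functor $\Modgr T \to \Modgr R$ is asserted to be full. Since the result is classical (and recorded in Stenstr\"om), I will only indicate the key structural steps.

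For $(1) \Rightarrow (2)$, I would produce the two graded ring homomorphisms $\iota_1,\iota_2\colon T\to T\otimes_R T$ defined by $\iota_1(t)=t\otimes 1$ and $\iota_2(t)=1\otimes t$. The defining relation of the tensor product over $R$ gives $\iota_1\circ\varphi=\iota_2\circ\varphi$, so the epimorphism property forces $\iota_1=\iota_2$. Consequently every simple tensor satisfies $t\otimes t' = (t\otimes 1)(1\otimes t') = \iota_1(t)\iota_1(t') = \iota_1(tt') = tt'\otimes 1$, so the manifestly surjective map $\mu$ is also injective, with inverse $t\mapsto t\otimes 1$.

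For $(2)\Rightarrow (3)$, the key observation is that for every graded $T$-module $M$, associativity of the tensor product yields a natural isomorphism
$$M\otimes_R T \;\cong\; M\otimes_T(T\otimes_R T) \;\xrightarrow{M\otimes_T \mu}\; M\otimes_T T \;=\; M$$
of right $T$-modules, given explicitly by $m\otimes t\mapsto mt$. Given an $R$-linear graded map $f\colon M\to N$ between graded $T$-modules, the naturally commuting square with top row $f\otimes_R T\colon M\otimes_R T\to N\otimes_R T$ and vertical arrows these action isomorphisms identifies $f$ with the restriction of a $T$-linear map along two $T$-linear isomorphisms (the top row is $T$-linear for the right $T$-action on the second tensor factor), so $f$ itself is $T$-linear.

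For $(3)\Rightarrow (1)$, given ring homomorphisms $\alpha,\beta\colon T\to T'$ with $\alpha\varphi=\beta\varphi$, I would view $T'$ as two graded $T$-modules via $\alpha$ and $\beta$ respectively; the identity map of $T'$ is $R$-linear between these $T$-module structures precisely because $\alpha\varphi=\beta\varphi$, and fullness then promotes it to a $T$-linear map, which evaluated at $1\in T'$ forces $\alpha(t)=\beta(t)$ for all $t\in T$. The main subtlety lies in $(2)\Rightarrow(3)$, where one must carefully track the interplay between the two-sided $R$-action on $T$ and the right $T$-module structure on $M\otimes_R T$, and respect the Koszul sign hidden in the tensor product; the remaining implications are essentially formal.
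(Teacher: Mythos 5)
The paper itself supplies no proof — it simply cites Stenstr\"om — so there is no in-paper argument to compare against; I will assess your sketch on its own merits.

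Your reading of the intended statement (that $(2)$ should read $T\otimes_R T \to T$ and $(3)$ should read $\Modgr T \to \Modgr R$) is correct, and your implications $(2)\Rightarrow(3)$ and $(3)\Rightarrow(1)$ are essentially sound. The action isomorphism $M\otimes_R T\cong M$ via $m\otimes t\mapsto mt$, the observation that $f\otimes_R T$ is $T$-linear, and the reduction of $(3)\Rightarrow(1)$ to the two $T$-module structures on $T'$ via $\alpha$ and $\beta$ are all the right moves. (The Koszul-sign warning at the end is not actually relevant: the tensor product $M\otimes_R T$ of graded modules over the graded ring $R$ carries no Koszul sign; signs only enter when tensoring maps of nonzero degree, which never occurs here.)

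There is, however, a genuine gap in $(1)\Rightarrow(2)$. You produce two \emph{ring} homomorphisms $\iota_1,\iota_2\colon T\to T\otimes_R T$ and then apply the epimorphism property to them. But $T\otimes_R T$ is only a $(T,T)$-bimodule; it is not a ring unless $\varphi(R)$ is central in $T$, which is not assumed (the lemma is stated for arbitrary graded rings, not graded-commutative ones, and even graded-commutativity of $R$ does not put $\varphi(R)$ in the centre of $T$). So $\iota_1$ and $\iota_2$ are not ring maps, and the computation $(t\otimes 1)(1\otimes t')=\iota_1(t)\iota_1(t')=\iota_1(tt')$ uses a multiplication that does not exist. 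The standard fix — and the one Stenstr\"om actually uses — replaces $T\otimes_R T$ by the split-null extension $T\ltimes(T\otimes_R T)$, the ring $T\oplus(T\otimes_R T)$ with multiplication $(t,m)(t',m')=(tt',tm'+mt')$. One then considers the two ring homomorphisms $t\mapsto(t,0)$ and $t\mapsto(t,\,t\otimes 1-1\otimes t)$; the latter is multiplicative precisely because $d(t)=t\otimes 1-1\otimes t$ is a derivation, and both compose with $\varphi$ to the same map. Epimorphism then forces $d=0$, i.e.\ $t\otimes 1=1\otimes t$ for all $t\in T$, after which your final computation showing $\mu$ is an isomorphism goes through verbatim. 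Without this detour through the split-null extension, the implication as sketched would not withstand scrutiny.
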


We call a map of graded algebras $\varphi\colon R \to T$ a \emph{flat epimorphism} if $\varphi$ is an epimorphism in the category of rings and furthermore, the modules $T_{R}$ and $_{R} T$ are flat.

\begin{exm}\label{locmorflat}
If $R$ is a graded-commutative ring and $S \subseteq R$ a multiplicative subset of homogeneous elements, then $R \to S^{-1}R$ is a flat epimorphism: $S^{-1}R$ is flat as both left and right $R$-module, and it is an epimorphism of rings 
 by the universal property of the ring of fractions.
\end{exm}

By $\bbB(\Lambda)$ we denote the graded Bar resolution of a graded algebra $\Lambda$ as introduced in Chapter \ref{secBar}.
\begin{lem}\label{eta}
Let $\varphi\colon R \to T$ be a map of graded algebras which is a flat epimorphism of rings. Then the complex $T \otimes_R \bbB(R) \otimes_R T$ is a $T^e$-projective resolution of $T$ and a chain map
$\eta\colon T \otimes_R \bbB(R) \otimes_R T \to \bbB(T)$ is given by
\begin{eqnarray*}
\eta_n \colon    T \otimes_R \bbB(R)_n \otimes_R T & \longrightarrow & \bbB(T)_n\\
   (t,r_0,\cdots,r_{n+1},t') &  \longmapsto &
 (t\varphi(r_0),\varphi(r_1),\cdots,\varphi(r_n),\varphi(r_{n+1})t').\\
\end{eqnarray*}
\end{lem}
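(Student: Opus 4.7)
The plan is to split the lemma into two assertions and handle them in sequence: first that $T \otimes_R \bbB(R) \otimes_R T$ is a $T^e$-projective resolution of $T$, and second that the stated formula yields a well-defined chain map lifting the identity on $T$.

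For the resolution statement, I would begin by identifying, for each $n \ge 0$, the graded $T^e$-module $T \otimes_R \bbB(R)_n \otimes_R T = T \otimes_R R^{\otimes(n+2)} \otimes_R T$ with $T \otimes_k R^{\otimes n} \otimes_k T$ by absorbing the two outer copies of $R$ into the adjacent $T$-factors via $T \otimes_R R \cong T \cong R \otimes_R T$. Since $k$ is a field, $R^{\otimes n}$ is $k$-free, so this module is free, hence projective, as a $T^e$-module. Exactness of the complex in positive degrees then follows from applying $T \otimes_R (-) \otimes_R T$ to the exact sequence $\bbB(R) \to R \to 0$: this preserves exactness because $T$ is flat as both left and right $R$-module. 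Finally, the zeroth homology is $T \otimes_R R \otimes_R T \cong T \otimes_R T$, and the hypothesis that $\varphi$ is an epimorphism in the category of rings is precisely what makes the multiplication map $T \otimes_R T \to T$ an isomorphism, identifying this with $T$.

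For the chain map, I would first check that $\eta_n$ is well-defined by verifying compatibility with the $\otimes_R$-relations: the identifications $tr \otimes (r_0,\ldots,r_{n+1}) \otimes t' = t \otimes (rr_0,\ldots,r_{n+1}) \otimes t'$ and the symmetric relation on the right are sent to the same element in $\bbB(T)_n$ because $\varphi(rr_0) = \varphi(r)\varphi(r_0)$ (and analogously on the right), i.e., $\varphi$ is multiplicative. Commutation with the differentials, $\eta_{n-1} \circ (\id_T \otimes d_n^R \otimes \id_T) = d_n^T \circ \eta_n$, is then a direct termwise check: the interior summand indexed by $i \in \{1,\ldots,n-1\}$ is visibly preserved by multiplicativity of $\varphi$, while the two boundary summands $i=0$ and $i=n$ match after absorbing $t\varphi(r_0 r_1) = (t\varphi(r_0))\varphi(r_1)$ and $\varphi(r_n r_{n+1})t' = \varphi(r_n)(\varphi(r_{n+1})t')$ into the adjacent $T$-slots on the $\bbB(T)$-side.

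The main subtle point is the joint use of the two hypotheses on $\varphi$: flatness delivers the exactness in positive degrees, while the epimorphism condition delivers the correct value of $H_0$ through the characterisation $T \otimes_R T \cong T$. Neither hypothesis alone suffices, and it is precisely their combination that turns the bar resolution of $R$ into a $T^e$-projective resolution of $T$ after tensoring on both sides; the remainder, including the chain-map property of $\eta$, is purely formal and follows from $\varphi$ being a homomorphism of graded algebras.
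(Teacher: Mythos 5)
Your proof is correct and follows the same essential strategy as the paper's: exactness of the tensored complex comes from flatness of $T$ on both sides, the identification of $H_0$ with $T$ comes from the ring-epimorphism property (via $T\otimes_R T \cong T$), and the well-definedness and chain-map property of $\eta$ come from multiplicativity of $\varphi$. Two minor remarks. First, your projectivity argument differs slightly in presentation: you absorb the outer $R$-factors to write $T\otimes_R \bbB(R)_n \otimes_R T \cong T \otimes_k R^{\otimes n} \otimes_k T$ and invoke $k$-freeness of $R^{\otimes n}$; the paper instead transports $R^e$-projectivity of $\bbB(R)_n$ to $T^e$-projectivity by noting that $T\otimes_R R^e \otimes_R T \cong T^e$ and using the bimodule structure of $T$. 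Both are fine and, over a field $k$, essentially equivalent. Second, you actually do more than the paper: the paper's proof only establishes the projective-resolution claim and tacitly leaves the verification that $\eta$ is a well-defined chain map (and lifts the identity on $T$) to the reader, whereas you spell out both the compatibility with the $\otimes_R$-relations and the termwise check against the bar differentials, including the boundary terms $i=0$ and $i=n$. That extra verification is accurate.
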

\begin{proof}
We first remark that $T \otimes_R \bbB(R) \otimes_R T$ is exact because $T \otimes_R -$ and $-\otimes_R T$ are exact. Since $_{R} T_T$ and $_{T} T_R$ are projective as $T$-modules and $\bbB(R)_n$ is a projective $R^e$-module for all $n \ge 0$, we get indeed a $T^e$-projective resolution of $T \otimes_R R \otimes_R T$. But since
$\varphi\colon R \to T$ is an epimorphism of rings, the map
$$T \otimes_R R \otimes_R T \to T,\quad (t,r,t') \mapsto t \varphi(r) t',$$ is an
isomorphism of $T^e$-modules.
\end{proof}
As a consequence we obtain
\begin{prop}\label{hochmap}
The maps
\begin{eqnarray*}
\Hom_{R^e}^l(\bbB(R)_{n},R) &  \longrightarrow &
\Hom_{T^e}^l(T \otimes_R \bbB(R)_{n} \otimes_R T,T),\\
\zeta & \longmapsto & T \otimes_R \zeta \otimes_R T
\end{eqnarray*}
where $l \in \bbZ$ and $n \ge 0$, induce a homomorphism of bigraded algebras
\begin{eqnarray*}
{\Gamma} \colon \HH^{*,*}(R) & \longrightarrow & \HH^{*,*}(T).
\end{eqnarray*}
\end{prop}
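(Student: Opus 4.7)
My approach is to define $\Gamma$ at the level of cochain complexes, verify it descends to cohomology, and then establish multiplicativity via a derived-category interpretation.

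For well-definedness, I would apply the functor $T \otimes_R - \otimes_R T$ to an element $\zeta \in \Hom^l_{R^e}(\bbB(R)_n, R)$ and use the canonical isomorphism $T \otimes_R R \otimes_R T \cong T$, which holds precisely because $\varphi$ is a ring epimorphism (equivalently $T \otimes_R T \cong T$). The result is a $T^e$-linear map $T \otimes_R \bbB(R)_n \otimes_R T \to T$ of internal degree $l$. Flatness of $T$ as left and right $R$-module ensures that this assignment commutes with the Bar differential, so it defines a morphism of cochain complexes $\Hom_{R^e}^*(\bbB(R), R) \to \Hom_{T^e}^*(T \otimes_R \bbB(R) \otimes_R T, T)$. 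By Lemma~\ref{eta}, $T \otimes_R \bbB(R) \otimes_R T$ is a $T^e$-projective resolution of $T$, so its cohomology computes $\Ext^{*,*}_{T^e}(T,T) = \HH^{*,*}(T)$; this yields the bigraded homomorphism $\Gamma \colon \HH^{*,*}(R) \to \HH^{*,*}(T)$.

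For multiplicativity, I would work in the derived category. Using $\HH^{*,*}(\Lambda) \cong \Ext^{*,*}_{\Lambda^e}(\Lambda, \Lambda)$, the cup product corresponds to composition of morphisms in $\D(\Lambda^e)$, by combining Proposition~\ref{cup=Yoneda} with the standard identification of Yoneda product with composition in the derived category. The exact functor $F := T \otimes_R - \otimes_R T \colon \D(R^e) \to \D(T^e)$ is well-defined by flatness of $T$ on both sides, sends $R$ to $T$, and a direct inspection at the chain level identifies $\Gamma$ with the map induced by $F$ on $\Ext$-groups. Since $F$ respects composition,
\[ \Gamma(\zeta \cup \eta) \;=\; F(\zeta \circ \eta) \;=\; F(\zeta) \circ F(\eta) \;=\; \Gamma(\zeta) \cup \Gamma(\eta), \]
so $\Gamma$ is a ring homomorphism of bigraded algebras.

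The main technical point is to confirm that the chain-level prescription $\zeta \mapsto T \otimes_R \zeta \otimes_R T$ coincides with the derived-functor assignment $\zeta \mapsto F(\zeta)$: this amounts to checking that the comparison chain map $\eta$ of Lemma~\ref{eta}, which lifts $\id_T$ between the two $T^e$-projective resolutions $T \otimes_R \bbB(R) \otimes_R T$ and $\bbB(T)$, induces the expected isomorphism on $\Ext$-groups. Alternatively, one can bypass the derived category and verify multiplicativity directly by constructing a diagonal on $T \otimes_R \bbB(R) \otimes_R T$ from $\Delta_R$, using the natural isomorphism $T \otimes_R \bbB(R) \otimes_R \bbB(R) \otimes_R T \cong (T \otimes_R \bbB(R) \otimes_R T) \otimes_T (T \otimes_R \bbB(R) \otimes_R T)$, which again relies on $T \otimes_R T \cong T$.
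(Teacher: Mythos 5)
Your main argument is correct and fills in, via the derived-category formalism, what the paper dispatches in one sentence ("$\Gamma$ is obviously a morphism of graded vector spaces, and it is easy to check that it commutes with Yoneda multiplication"). Observing that $F = T \otimes_R - \otimes_R T$ (equivalently $- \otimes_{R^e} T^e$) is exact by two-sided flatness, sends $R$ to $T$ because $\varphi$ is an epimorphism, and therefore induces a map on $\Ext^{*,*}_{\Lambda^e}(\Lambda,\Lambda)$ that preserves composition, and then invoking Proposition~\ref{cup=Yoneda} to translate composition into cup product, is a clean and legitimate way to verify multiplicativity. The technical point you flag — that the chain-level recipe $\zeta \mapsto T\otimes_R\zeta\otimes_R T$ agrees with the action of $F$ on $\Ext$-groups — is exactly the subtlety to worry about, and your appeal to Lemma~\ref{eta} (that $T\otimes_R \bbB(R)\otimes_R T$ is already a $T^e$-projective resolution of $T$) resolves it correctly. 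One minor imprecision: flatness is not what makes the assignment commute with the Bar differential (that is automatic functoriality); flatness is what guarantees that $T\otimes_R \bbB(R)\otimes_R T$ remains a resolution.

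Your proposed \emph{alternative}, however, contains an error. The claimed isomorphism
\[
T \otimes_R \bbB(R) \otimes_R \bbB(R) \otimes_R T \ \cong\ (T \otimes_R \bbB(R) \otimes_R T) \otimes_T (T \otimes_R \bbB(R) \otimes_R T)
\]
does not hold. Cancelling the inner $T\otimes_T T\cong T$, the right-hand side is $T \otimes_R \bbB(R) \otimes_R T \otimes_R \bbB(R) \otimes_R T$, which has an extra copy of $T$ in the middle; in bidegree $(m,n)$ you would be comparing $T\otimes_k R^{\otimes(m+n+1)}\otimes_k T$ with $T\otimes_k R^{\otimes m}\otimes_k T\otimes_k R^{\otimes n}\otimes_k T$, and $T\otimes_R T\cong T$ does not bridge that gap. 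What \emph{is} true is that the insertion map $t\otimes b\otimes b'\otimes t'\mapsto t\otimes b\otimes 1\otimes b'\otimes t'$ is a chain map lifting $\id_T$, so composing $T\otimes_R\Delta_R\otimes_R T$ with it yields a genuine diagonal on $T\otimes_R\bbB(R)\otimes_R T$; the alternative can be salvaged this way, but not as an isomorphism. Since your primary derived-category route does not depend on this, the proposal as a whole still succeeds.
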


\begin{proof}
$\Gamma$ is obviously a morphism of graded vector spaces, and it is easy to check that it commutes with Yoneda multiplication.
\end{proof}

\begin{thm}\label{GammatutsAB}
Let $A, B$ dg algebras over a field $k$ and suppose that $\psi\colon A \to B$ is a morphism of dg algebras inducing a flat epimorphism
$\psi^* \colon H^*A \to H^*B$ in cohomology. Then the map
$$\Gamma \colon \HH^{*,*}(H^*A)  \longrightarrow  \HH^{*,*}(H^*B)$$  satisfies $\Gamma(\mu_A)=\mu_B$.
\end{thm}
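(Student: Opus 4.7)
\emph{Plan.} Set $R = H^*A$, $T = H^*B$, $\varphi = H^*\psi$. The strategy is to reduce the claim, which lives in $\HH^{3,-1}(T)$, to Proposition~\ref{aboutm3}(1), which provides the equality $\varphi \circ \mu_A = \mu_B \circ \varphi^{\otimes 3}$ in the relative Hochschild group $\HH^{3,-1}(R, T) = \Ext^{3,-1}_{R^e}(R, T)$. The bridge between these two groups will come from the flat epimorphism hypothesis.

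\emph{Reinterpreting $\Gamma$.} By Lemma~\ref{eta}, $T \otimes_R \bbB(R) \otimes_R T$ is a $T^e$-projective resolution of $T$, so $\HH^{*,*}(T)$ can be computed as the cohomology of $\Hom^*_{T^e}(T \otimes_R \bbB(R) \otimes_R T, T)$. Hom-tensor adjunction supplies a natural chain-level isomorphism
\begin{equation*}
\Hom^*_{T^e}(T \otimes_R \bbB(R) \otimes_R T, T) \;\xto{\cong}\; \Hom^*_{R^e}(\bbB(R), T), \qquad F \,\longmapsto\, \bigl(x \mapsto F(1 \otimes x \otimes 1)\bigr),
\end{equation*}
and the target computes $\HH^{*,*}(R, T)$. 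Thus there is an isomorphism $\varphi^*\colon \HH^{*,*}(T) \to \HH^{*,*}(R, T)$, which, after chasing the chain map $\eta$ of Lemma~\ref{eta} through the adjunction, is seen to coincide with the usual restriction of scalars along $\varphi$. Moreover, the cocycle $T \otimes_R \zeta \otimes_R T$ defining $\Gamma(\zeta)$ is sent under the adjunction to $\varphi \circ \zeta$, so the square
\begin{equation*}
\xymatrix{
\HH^{*,*}(R) \ar[r]^-{\Gamma} \ar[d]_-{\varphi_*} & \HH^{*,*}(T) \ar[d]^-{\varphi^*}_-{\cong} \\
\HH^{*,*}(R, T) \ar@{=}[r] & \HH^{*,*}(R, T)
}
\end{equation*}
commutes, where $\varphi_*$ denotes post-composition with the $R^e$-bimodule map $\varphi\colon R \to T$.

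\emph{Conclusion.} Proposition~\ref{aboutm3}(1) now reads $\varphi_*(\mu_A) = \varphi^*(\mu_B)$ in $\HH^{3,-1}(R, T)$. Combining this with the diagram gives $\varphi^*(\Gamma(\mu_A)) = \varphi_*(\mu_A) = \varphi^*(\mu_B)$, and the injectivity of $\varphi^*$ forces $\Gamma(\mu_A) = \mu_B$. The only subtle point will be the identification of $\varphi^*$ with the adjunction isomorphism; once that is established, the rest is a formal diagram chase.
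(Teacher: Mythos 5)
Your proof is correct and rests on the same two ingredients as the paper's: the chain map $\eta$ of Lemma~\ref{eta} comparing the resolutions $T \otimes_R \bbB(R) \otimes_R T$ and $\bbB(T)$, and the identity $\varphi \circ \mu_A = \mu_B \circ \varphi^{\otimes 3}$ in $\HH^{3,-1}(H^*A,H^*B)$ from Proposition~\ref{aboutm3}(1). The only difference is packaging: you factor $\Gamma$ through the adjunction isomorphism $\varphi^*$ onto $\HH^{*,*}(R,T)$ and conclude by injectivity, while the paper verifies directly at the cochain level that $m_B\circ\eta$ and $\eta\circ(T\otimes m_A\otimes T)$ differ by a coboundary; the underlying argument is the same.
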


\begin{proof}
We choose representing cocycles  $m_A \in \Hom_{(H^*A)^e}^{-1}(H^*A^{\otimes 5},H^*A)$ of $\mu_A$ 
and  \linebreak $m_B \in \Hom_{(H^*B)^e}^{-1}(H^*B^{\otimes 5},H^*B)$ of $\mu_B$ 
and show that 
the diagram
$$\xymatrix{\ar @{--} H^*B \otimes_{H^*A} H^*A^{\otimes 5} \otimes_{H^*A} H^*B \ar[d]_{\eta_3}
\ar[rrr]^-{H^*B \otimes m_A \otimes H^*B} & & & H^*B \otimes_{H^*A} H^*A \otimes_{H^*A} H^*B \ar[d]^{\eta_0}_{\cong}\\
H^*B^{\otimes 5} \ar[rrr]^-{m_B} & & & H^*B}$$ commutes up to
coboundaries. 

An element
$(s,x_0,x_1,x_2,x_3,x_4,t) \in H^*B \otimes H^*A^{\otimes 5} \otimes
H^*B$ is sent by $m_B \circ \eta_3$ to
\begin{equation}\label{links}
(-1)^{|s|} s m_B(\psi^*(x_0),\psi^*(x_1),\psi^*(x_2),\psi^*(x_3),\psi^*(x_4))t.
\end{equation}
On the other hand, under $\eta_0 \circ (H^*B \otimes m_A \otimes H^*B)$ our element maps to
\begin{equation}\label{rechts}
(-1)^{|s|} s \psi^*(m_A(x_0,x_1,x_2,x_3,x_4))t.
\end{equation}
By Lemma \ref{aboutm3}, we have a $(2,-1)$-Hochschild cocycle  
$u \in \Hom_{(H^*A)^e}^{-1}(H^*A^{\otimes 4}, H^*B)$ such that
$$m_B \circ (\psi^*)^{\otimes 5} - \psi^*  \circ m_A = u \circ d_3,$$
where $d_3\colon H^*A^{\otimes 5} \to H^*A^{\otimes 4}$ is the differential of the Bar resolution
of $H^*A$.
Hence the difference of (\ref{rechts}) and (\ref{links}) equals
$$(H^*B \otimes (u \circ d_3) \otimes H^*B)(s,x_0,x_1,x_2,x_3,x_4,t).$$

But now we are done since $H^*B \otimes (u \circ d_3) \otimes H^*B$ is a coboundary
in the complex $$\Hom_{(H^*B)^e}^{-1}(H^*B \otimes_{H^*A} \bbB(H^*A) \otimes_{H^*A} H^*B,H^*B)$$
which computes the Hochschild cohomology $\HH^{*,-1}(H^*B)$.
\end{proof}

\subsection{Local-global principle for the global obstruction}\label{sec loc-glob global}
Throughout this section let $A$ be a dg algebra over a field $k$ and suppose that $H^*A$ is graded-commutative. 
Fix a graded prime ideal $\p$ of $H^*A$. In Theorem \ref{A'->A_p} we have shown the existence of a dg algebra $A'$ quasi-isomorphic to $A$ and a zigzag of dg algebra maps 
$$A \xleftarrow{\sim} A' \xto{\varphi} A_{\p}$$
which induces the canonical map $\can\colon H^*A \to (H^*A)_{\p}$ in cohomology.  Now we use this result to obtain more information about the global obstruction for the $\p$-local modules.

\begin{prop}\label{GammatutsH*A_p}
The map
$$\Gamma \colon \HH^{*,*}(H^*A')  \longrightarrow \HH^{*,*}(H^*A_{\p})$$ 
satisfies $\Gamma(\mu_{A'})=\mu_{A_{\p}}$. Moreover, the composition
$$\xymatrix{\HH^{*,*}(H^*A) \ar[r]^-{\cong} & \HH^{*,*}(H^*A') \ar[r]_-{\Gamma} & \HH^{*,*}(H^*A_{\p})}$$
maps $\mu_A$ to $\mu_{A_{\p}}.$
\end{prop}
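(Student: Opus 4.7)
The plan is to reduce both statements to results that have already been established, namely Theorem~\ref{GammatutsAB} together with Example~\ref{locmorflat} for the first assertion, and Proposition~\ref{aboutm3}(2) for passing through the quasi-isomorphism $A'\xleftarrow{\sim}A$. Since the bulk of the work is packaged into these lemmas, the proof should amount to checking that their hypotheses are met.

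First I would verify that the dg algebra morphism $\varphi\colon A'\to A_{\p}$ produced in Theorem~\ref{A'->A_p} induces a \emph{flat epimorphism} $H^*\varphi\colon H^*A'\to H^*A_{\p}$ in cohomology. By construction, under the isomorphisms $H^*A'\cong H^*A$ (coming from the quasi-isomorphism $A'\xto{\sim}A$) and $H^*A_{\p}\cong (H^*A)_{\p}$, the map $H^*\varphi$ is identified with the canonical localisation map $\can\colon H^*A\to (H^*A)_{\p}$. By Example~\ref{locmorflat}, this canonical map is a flat epimorphism of graded rings. Hence the hypothesis of Theorem~\ref{GammatutsAB} is satisfied for $\varphi$, and applying that theorem directly yields $\Gamma(\mu_{A'})=\mu_{A_{\p}}$.

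For the second assertion, the quasi-isomorphism $A'\xto{\sim}A$ induces an isomorphism $H^*A\xto{\cong}H^*A'$ of graded algebras, and in turn an isomorphism $\HH^{*,*}(H^*A)\xto{\cong}\HH^{*,*}(H^*A')$ of bigraded Hochschild cohomology rings. By Proposition~\ref{aboutm3}(2), this isomorphism sends the canonical class $\mu_A$ to $\mu_{A'}$. Composing with $\Gamma$ and invoking the first part of the proposition, the element $\mu_A$ is then sent to $\Gamma(\mu_{A'})=\mu_{A_{\p}}$, as required.

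I do not anticipate a substantial obstacle: the content has already been absorbed into Theorem~\ref{GammatutsAB}, Proposition~\ref{aboutm3}(2), and the construction in Theorem~\ref{A'->A_p}. The only point one has to be attentive to is the correct identification, via the zigzag $A\xleftarrow{\sim}A'\xto{\varphi}A_{\p}$, of $H^*\varphi$ with the localisation map $\can$, so that the flatness/epimorphism property is transported through the quasi-isomorphism before appealing to Theorem~\ref{GammatutsAB}.
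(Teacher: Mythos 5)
Your proposal matches the paper's proof essentially verbatim: both reduce the first claim to Theorem~\ref{GammatutsAB} after observing that $H^*\varphi$ identifies with the flat epimorphism $\can\colon H^*A\to(H^*A)_{\p}$, and both obtain the second claim from Proposition~\ref{aboutm3}(2) applied to the quasi-isomorphism $A'\to A$. Your added appeal to Example~\ref{locmorflat} to justify the flat-epimorphism hypothesis is a small piece of bookkeeping the paper leaves implicit; otherwise the arguments coincide.
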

\begin{proof}
The dg algebra morphism  $\varphi\colon A' \to A_{\p}$ induces  in cohomology the flat epimorphism
of rings $H^*A \to (H^*A)_{\p}$. 
The first claim then
follows from Theorem \ref{GammatutsAB}. For the second just note that by Lemma \ref{aboutm3}, the
isomorphism $\HH^{*,*}(H^*A) \xto{\cong} \HH^{*,*}(H^*A')$ induced by the \qis\ $A' \to A$ maps $\mu_A$ to $\mu_{A'}$.
\end{proof}
Proposition \ref{GammatutsH*A_p} implies that  if $\mu_A$ is trivial, then so is $\mu_{A_{\p}}$  for all graded primes $\p$ of $H^*A$. Observe that this was not clear before: if all $H^*A$-modules are realisable, then so are in particular all ${\p}$-local modules. But this does not imply that $\mu_{A_{\p}}$ is trivial, see Remark~\ref{converse does not hold}.

Under an additional assumption for $H^*A$ we now show that if $\mu_{A_{\p}}$ is trivial for all prime ideals $\p$ of $H^*A$, then $\mu_A$ is trivial. 
\begin{lem}\label{bimodulextloc}
Let $R$ be a graded-commutative algebra over a commutative ring $k$.  Assume that $M$ 
is a graded $R^e$-module which admits a resolution of finitely generated projective graded  $R^e$-modules. Let $N$ be any graded $R^e$-module. Then for all $i \ge 0$,
there is a natural isomorphism
$$\Ext^{i,*}_{R^e}(M,N) \otimes_{R^e} (R_{\p})^e \cong
\Ext^{i,*}_{(R_{\p})^e}(M \otimes_{R^e} (R_{\p})^e,N \otimes_{R^e} (R_{\p})^e).$$
\end{lem}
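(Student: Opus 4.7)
The plan is to mimic the standard commutative-algebra proof that Ext commutes with flat base change, taking care that here the base change is along the ring map $R^e\to(R_\p)^e$ rather than along a localization of $R^e$ at a multiplicative subset in $R^e$ itself.

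First I would check that the canonical map $R^e\to(R_\p)^e$ is a flat epimorphism of graded $k$-algebras. By Example~\ref{locmorflat}, both $R\to R_\p$ and $R^{\op}\to(R^{\op})_\p=(R_\p)^{\op}$ are flat epimorphisms. Tensoring over the field $k$ preserves flatness and epimorphisms, so $R^e=R^{\op}\otimes_k R\to (R_\p)^{\op}\otimes_k R_\p=(R_\p)^e$ is a flat epimorphism. In particular, the functor $-\otimes_{R^e}(R_\p)^e$ from graded $R^e$-modules to graded $(R_\p)^e$-modules is exact.

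Next, fix a resolution $P_\bullet\to M$ of $M$ by finitely generated projective graded $R^e$-modules, which exists by hypothesis. Applying the exact functor $-\otimes_{R^e}(R_\p)^e$ yields a resolution $P_\bullet\otimes_{R^e}(R_\p)^e\to M\otimes_{R^e}(R_\p)^e$ by finitely generated projective graded $(R_\p)^e$-modules. The heart of the argument is the natural transformation
$$\Theta_P\colon \Hom^*_{R^e}(P,N)\otimes_{R^e}(R_\p)^e\longrightarrow \Hom^*_{(R_\p)^e}\bigl(P\otimes_{R^e}(R_\p)^e,\,N\otimes_{R^e}(R_\p)^e\bigr),$$
sending $\varphi\otimes(r\otimes r')$ to $\varphi\otimes_{R^e}(R_\p)^e$ post-multiplied by $r\otimes r'$. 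I would show that $\Theta_P$ is an isomorphism whenever $P$ is finitely generated projective. For $P=R^e[n]$ both sides identify canonically with $N[-n]\otimes_{R^e}(R_\p)^e$, and $\Theta$ is visibly the identity. By additivity in $P$ (both functors send finite direct sums to finite direct sums), the result extends to finitely generated graded free modules, and then, via the five-lemma applied to a split exact sequence presenting $P$ as a summand of a finitely generated graded free module, to every finitely generated projective $P$.

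Finally, since $-\otimes_{R^e}(R_\p)^e$ is exact, it commutes with taking cohomology of cochain complexes. Applying $\Theta_{P_\bullet}$ degreewise and passing to $H^i$ yields
\begin{align*}
\Ext^{i,*}_{R^e}(M,N)\otimes_{R^e}(R_\p)^e
&\cong H^i\bigl(\Hom^*_{R^e}(P_\bullet,N)\bigr)\otimes_{R^e}(R_\p)^e \\
&\cong H^i\bigl(\Hom^*_{R^e}(P_\bullet,N)\otimes_{R^e}(R_\p)^e\bigr) \\
&\cong H^i\bigl(\Hom^*_{(R_\p)^e}(P_\bullet\otimes_{R^e}(R_\p)^e,\,N\otimes_{R^e}(R_\p)^e)\bigr) \\
&\cong \Ext^{i,*}_{(R_\p)^e}\bigl(M\otimes_{R^e}(R_\p)^e,\,N\otimes_{R^e}(R_\p)^e\bigr),
\end{align*}
with all isomorphisms natural in $N$. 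The main obstacle is the finite-generation condition: $\Theta_P$ genuinely fails to be an isomorphism when $P$ is an infinite graded free module, because $\Hom^*_{R^e}(P,-)$ becomes an infinite product which does not commute with the base change $-\otimes_{R^e}(R_\p)^e$. This is precisely why the hypothesis requires the resolution to consist of \emph{finitely generated} projective $R^e$-modules, and verifying $\Theta_{R^e}$ and its behaviour under finite direct sums with the correct Koszul signs from \eqref{bimodule sign} is the only non-formal calculation in the proof.
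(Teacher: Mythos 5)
Your proof is correct and follows essentially the same route as the paper: establish the $\Hom$-level base change for finitely generated projectives, then pass to the given resolution and use exactness of $-\otimes_{R^e}(R_\p)^e$ to commute with cohomology. The only cosmetic difference is that the paper handles the degree-zero case via a finite presentation $F_1\to F_0\to M\to 0$ and a diagram chase, whereas you work directly with the natural transformation $\Theta_P$ on finitely generated projectives (reducing to $R^e[n]$ by additivity and split summands); your explicit remark that $\Theta_P$ fails for infinite free modules is a worthwhile clarification of why the finite-generation hypothesis is needed.
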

\begin{proof}
Note first that $R^e$ is graded-commutative and consequently, $\Ext^{i,*}_{R^e}(M,N)$ is indeed a graded $R^e$-module.

The claim is immediately checked if $M$ is a finitely generated graded free $R^e$-module and $i=0$. 

Now assume that $M$ admits a resolution of finitely generated projective graded  $R^e$-modules. Then $M$ is in particular finitely presented and there exists an exact sequence
$$F_1 \xto{f} F_0 \to M \to 0,$$
where $F_0$ and $F_1$ are finitely generated graded free $R^e$-modules.
The commutative dia\-gram
{\small $$\xymatrix@C+40pt{\Hom_{R^e}^*(F_0,N) \otimes_{R^e} (R_{\p})^e  \ar[r]_{f^*  \otimes_{R^e} (R_{\p})^e} \ar[d]^{\cong} &
\Hom_{R^e}^*(F_1,N) \otimes_{R^e} (R_{\p})^e  \ar[d]^{\cong}\\
\Hom_{(R_{\p})^e}^*(F_0 \otimes_{R^e} (R_{\p})^e,N \otimes_{R^e} (R_{\p})^e ) \ar[r]_{(f\otimes_{R^e} (R_{\p})^e)^* } &
 \Hom_{(R_{\p})^e}^*(F_1 \otimes_{R^e} (R_{\p})^e,N \otimes_{R^e} (R_{\p})^e )   }$$}
and the exactness of $- \otimes_{R^e} (R_{\p})^e \cong R_{\p} \otimes_R - \otimes_R R_{\p}$ give the isomorphism
$$\Hom_{R^e}^*(M,N) \otimes_{R^e} (R_{\p})^e  \cong \Hom_{(R_{\p})^e}^*(M\otimes_{R^e} (R_{\p})^e,N\otimes_{R^e} (R_{\p})^e).$$

The claim for the Ext-groups follows from the condition that $M$ admits a resolution of finitely generated projective $R^e$-modules.
\end{proof}

\begin{rem}
The assumptions of Lemma \ref{bimodulextloc} are satisfied if $R^e$ is Noetherian and $M$ a finitely generated $R^e$-module. Note that it is not enough to assume that $R$ is Noetherian. In general, we cannot expect a tensor product of two Noetherian algebras to be Noetherian: If $F$ is a perfect field of characteristic $p >0$ and $k$ is an imperfect subfield of $F$, then the tensor product $F \otimes_{k} F$ is not Noetherian, see \cite[Sect.\ 1]{MM}.
\end{rem}

Before we prove the local-global principle, we establish a nice relation between the Hochschild groups of $R$ and $R_{\p}$.
\begin{prop}\label{Hochschildloc}
Let $R$ be a graded-commutative algebra over a field $k$. Suppose that the enveloping algebra $R^e$ is Noetherian, and let $\p$ be a graded prime ideal of $R$. For all $n \ge 0$, we have 
\begin{equation*}\label{HH(R_p)}
\HH^{n,*}(R_{\p}) \cong \HH^{n,*}(R)_{\p}
\end{equation*}
as graded $R$-modules.
In particular, a Hochschild group $\HH^{n,*}(R)$ is trivial if and only if
$\HH^{n,*}(R_{\p})$ is trivial for all graded prime ideals $\p$ of $R$.
\end{prop}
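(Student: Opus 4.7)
The plan is to compute $\HH^{n,*}(R_{\p}) = \Ext^{n,*}_{(R_{\p})^e}(R_{\p},R_{\p})$ by applying Lemma~\ref{bimodulextloc} to $M = N = R$ (viewed as $R^e$-module via the multiplication map $R^e \to R$), and then identify the resulting tensor product with the classical localisation of $\HH^{n,*}(R)$.

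First I verify the hypothesis of Lemma~\ref{bimodulextloc}. Since $R^e$ is Noetherian by assumption and $R$ is generated as an $R^e$-module by the element $1$, the module $R$ is finitely presented. A standard iteration then yields a resolution of $R$ by finitely generated graded projective $R^e$-modules. Lemma~\ref{bimodulextloc} therefore gives a natural isomorphism
\[
\Ext^{n,*}_{R^e}(R,R) \otimes_{R^e} (R_{\p})^e \;\cong\; \Ext^{n,*}_{(R_{\p})^e}\!\big(R \otimes_{R^e} (R_{\p})^e,\; R \otimes_{R^e} (R_{\p})^e\big).
\]
Next I identify $R \otimes_{R^e} (R_{\p})^e$. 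Using $(R_{\p})^e \cong R_{\p} \otimes_k R_{\p}$ as a right $R^e$-module and the usual bimodule manipulation,
\[
R \otimes_{R^e} (R_{\p} \otimes_k R_{\p}) \;\cong\; R_{\p} \otimes_R R \otimes_R R_{\p} \;\cong\; R_{\p} \otimes_R R_{\p} \;\cong\; R_{\p},
\]
the last isomorphism using that $R \to R_{\p}$ is an epimorphism of rings (Example~\ref{locmorflat}). Thus the right-hand side of the displayed isomorphism is exactly $\HH^{n,*}(R_{\p})$.

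It remains to compare $\HH^{n,*}(R) \otimes_{R^e} (R_{\p})^e$ with the graded $R$-module localisation $\HH^{n,*}(R)_{\p} = \HH^{n,*}(R) \otimes_R R_{\p}$. Here I invoke Theorem~\ref{bigraded-commutative}: because $R$ is graded-commutative and $\HH^{*,*}(R)$ is bigraded-commutative, the left and right $R$-actions on $\HH^{n,*}(R)$ coming from its $R^e$-module structure coincide, so $\HH^{n,*}(R)$ is an $R^e$-module via the multiplication $R^e \to R$. For any such $R^e$-module $M$ the natural map yields
\[
M \otimes_{R^e} (R_{\p})^e \;\cong\; M \otimes_R \bigl(R \otimes_{R^e} (R_{\p})^e\bigr) \;\cong\; M \otimes_R R_{\p} \;=\; M_{\p},
\]
by the same computation as above. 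Applying this to $M = \HH^{n,*}(R)$ yields $\HH^{n,*}(R_{\p}) \cong \HH^{n,*}(R)_{\p}$ as graded $R$-modules, which is the first assertion. The ``in particular'' statement then follows immediately from the local-global principle for graded $R$-modules (Proposition~\ref{local global principle}).

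The main subtlety I expect is the bookkeeping around the two $R$-actions on $\HH^{n,*}(R)$: one has to be careful that the \emph{graded-commutativity} of $\HH^{*,*}(R)$ proved in Theorem~\ref{bigraded-commutative} is indeed enough to conclude that the left and right actions of $R \cong \HH^{0,*}(R)$ (via Remark~\ref{graded centre}) agree, so that the Noetherian tensor product $- \otimes_{R^e} (R_{\p})^e$ collapses to the familiar $- \otimes_R R_{\p}$. Everything else is a direct application of already-established lemmas.
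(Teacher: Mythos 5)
Your proof is correct and follows essentially the same route as the paper's: apply Lemma~\ref{bimodulextloc} to $M=N=R$ (verifying the finite-projective-resolution hypothesis from the Noetherianity of $R^e$), identify $R\otimes_{R^e}(R_\p)^e\cong R_\p$, and then use the (bi)graded-commutativity of $\HH^{*,*}(R)$ to collapse the two-sided tensor product to the one-sided localisation $\HH^{n,*}(R)_\p$. The only place where your write-up is a touch looser than the paper is the sentence asserting that ``the left and right $R$-actions \ldots coincide'': taken as Yoneda-product actions these coincide only up to the Koszul sign $(-1)^{|r||\zeta|}$, and it is the sign convention built into $R^e=R^{\op}\otimes_k R$ that absorbs this and makes the $R^e$-action genuinely factor through $\mu\colon R^e\to R$ --- the paper makes this explicit by writing down the isomorphism $\nu\bigl(\frac{r}{s}\otimes\zeta\otimes\frac{r'}{s'}\bigr)=(-1)^{|r||\zeta|}\zeta\cdot\frac{rr'}{ss'}$, whereas you package the same information into the base-change isomorphism $M\otimes_{R^e}(R_\p)^e\cong M\otimes_R R_\p$. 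Both are correct; yours is conceptually a little cleaner, but a reader checking signs would want the one-line verification that the Koszul sign from graded-commutativity cancels against the Koszul sign in the translation from bimodules to $R^e$-modules, which is exactly what the map $\nu$ records.
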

\begin{proof}
We first point out that
$$\HH^{n,*}(R_{\p}) \cong R_{\p} \otimes_R \HH^{n,*}(R) \otimes_R R_{\p}$$
by Lemma \ref{bimodulextloc}. In order to show that $R_{\p} \otimes_R \HH^{n,*}(R) \otimes_R R_{\p}$
is, in fact, just localisation of $\HH^{n,*}(R)$ at $\p$, we apply our results from Chapter \ref{hochring}. The Hochschild cohomology ring $\HH^{*,*}(R)$ is bigraded-commutative by Theorem \ref{bigraded-commutative} and  since $R$ is graded-commutative, Remark \ref{graded centre} implies that
$$\HH^{0,*}(R) = R.$$ 
Hence we have a well-defined $R$-linear map
\begin{eqnarray*}
\nu\colon R_{\p} \otimes_R \HH^{n,*}(R) \otimes_R R_{\p} & \longrightarrow &  \HH^{n,*}(R)_{\p}, \\
\frac{r}{s} \otimes \zeta \otimes \frac{r'}{s'} & \longmapsto & (-1)^{|r||\zeta|} \zeta \cdot \frac{rr'}{ss'}
\end{eqnarray*}
and it is easy to check that $\nu$ is bijective by stating the obvious inverse map.
\end{proof}
Note that the denominators $s, s'$ do not need sign adjustment because we have chosen them to have even degree (see Section \ref{grcom localisation}).

Now we are ready to prove 
\begin{thm}[Local-global principle]\label{loc-glo-global}
Let $A$ be a differential graded algebra over a field $k$ such that $H^*A$ is graded
commutative. Assume that the enveloping algebra $(H^*A)^e$ is Noetherian. Then the following conditions are equivalent:
\begin{itemize}
\item[(1)] $\mu_A \in \HH^{3,-1}(H^*A)$ is trivial.
\item[(2)] $\mu_{A_{\p}} \in \HH^{3,-1}(H^*A_{\p})$ is trivial for all graded prime ideals $\p$ of $H^*A$.
\item[(3)] $\mu_{A_{\m}} \in \HH^{3,-1}(H^*A_{\m})$ is trivial for all graded maximal ideals $\m$ of $H^*A$.
\end{itemize}
In particular, all graded $H^*A$-modules are realisable if the Hochschild class $\mu_{A_{\p}}$ is trivial for all graded prime ideals $\p$ of $H^*A$.
\end{thm}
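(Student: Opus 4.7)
The plan is to organise the argument as $(1) \Rightarrow (2) \Rightarrow (3) \Rightarrow (1)$, where the only nontrivial step is the last. First I would observe that $(1) \Rightarrow (2)$ is immediate from Proposition \ref{GammatutsH*A_p}: the composition
$$\HH^{*,*}(H^*A) \xto{\cong} \HH^{*,*}(H^*A') \xto{\Gamma} \HH^{*,*}(H^*A_{\p})$$
sends $\mu_A$ to $\mu_{A_{\p}}$, so if $\mu_A = 0$ then $\mu_{A_{\p}} = 0$ for every graded prime $\p$. The implication $(2) \Rightarrow (3)$ is trivial since every graded maximal ideal is a graded prime ideal.

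For $(3) \Rightarrow (1)$ the strategy is to combine the two descriptions of $\HH^{*,*}(H^*A_{\m})$ we have at our disposal, namely the map $\Gamma$ and the isomorphism $\HH^{n,*}(R_{\p}) \cong \HH^{n,*}(R)_{\p}$ from Proposition \ref{Hochschildloc} (applicable since $(H^*A)^e$ is assumed Noetherian). I would first check that the composition
$$\HH^{n,*}(H^*A) \xto{\Gamma} \HH^{n,*}(H^*A_{\m}) \xto{\cong} \HH^{n,*}(H^*A)_{\m}$$
is naturally identified with the canonical localisation map $\zeta \mapsto \zeta/1$. This is essentially a bookkeeping verification using Lemma \ref{bimodulextloc} and the explicit formula for $\nu$ in the proof of Proposition \ref{Hochschildloc}: a cocycle $\zeta\colon \bbB(H^*A)_n \to H^*A$ is sent by $\Gamma$ to $H^*A_{\m} \otimes_{H^*A} \zeta \otimes_{H^*A} H^*A_{\m}$, which corresponds to $1 \otimes \zeta \otimes 1$ and then to $\zeta/1$ under $\nu$.

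Granting this identification, Proposition \ref{GammatutsH*A_p} tells us that the fraction $\mu_A/1 \in \HH^{3,-1}(H^*A)_{\m}$ equals (up to the above canonical isomorphism) the class $\mu_{A_{\m}}$. So if $\mu_{A_{\m}} = 0$ for every graded maximal ideal $\m$ of $H^*A$, then $\mu_A/1 = 0$ in $\HH^{3,-1}(H^*A)_{\m}$ for all such $\m$. The local-global principle for graded-commutative rings (Proposition \ref{local global principle}), applied to the graded $H^*A$-module $\HH^{3,-1}(H^*A)$, then forces $\mu_A = 0$, which is (1).

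Finally, the last assertion of the theorem is an immediate consequence: if $\mu_{A_{\p}} = 0$ for every graded prime $\p$, then by the equivalence just proved $\mu_A$ itself is trivial, and hence every graded $H^*A$-module is realisable by Theorem \ref{BKSglobal}. I expect the main obstacle to be the bookkeeping in the first step of $(3) \Rightarrow (1)$: chasing signs and Bar-resolution conventions to confirm that $\Gamma$ followed by the isomorphism of Proposition \ref{Hochschildloc} really is the canonical localisation map, rather than something that differs from it by a unit.
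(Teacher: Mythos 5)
Your proposal is correct and takes essentially the same approach as the paper: the key step in both is identifying $\mu_{A_{\p}}$ with the fraction $\mu_A/1$ under the composition of $\Gamma$ with the isomorphism $\HH^{n,*}(H^*A_{\p})\cong \HH^{n,*}(H^*A)_{\p}$ from Proposition~\ref{Hochschildloc}, then invoking the local-global principle. The only difference is a minor reorganisation into a cycle $(1)\Rightarrow(2)\Rightarrow(3)\Rightarrow(1)$, whereas the paper proves the single identification once and reads off all three equivalences.
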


\begin{proof}
Fix a graded prime $\p$ of $H^*A$. We prove that under the isomorphism
$$\HH^{3,*}(H^*A_{\p})\cong \HH^{3,*}(H^*A)_{\p}$$
of Proposition \ref{Hochschildloc}, the class $\mu_{A_{\p}}$ is mapped to the fraction $\frac{\mu_A}{1}$. This shows the claim.

Let  $A \xleftarrow{\sim} A' \xto{\varphi} A_{\p}$ be a zigzag of dg algebra maps which induces the canonical map $H^*A \to (H^*A)_{\p}$ in cohomology.
Corollary \ref{GammatutsH*A_p} implies that in the Hochschild group  $\HH^{3,-1}(H^*A_{\p})$, we have
$$\mu_{A_{\p}}= [{H^*A_{\p}}
\otimes_{H^*A'}\; m_{A'}\, \otimes_{H^*A'} {H^*A_{\p}}],$$
where $m_{A'}$ is a representing cocycle for $\mu_{A'} \in \HH^{3,-1}(H^*A')$.
We conclude from Lemma~\ref{bimodulextloc} and Proposition \ref{aboutm3}  that under the composition of isomorphisms 
\begin{eqnarray*}
\HH^{3,*}(H^*A_{\p}) & \cong & \HH^{3,*}(H^*A_{\p} \otimes_{H^*A'}  H^*A'\otimes_{H^*A'} H^*A_{\p})\\
& \cong & H^*A_{\p} \otimes_{H^*A'}  \HH^{3,*}(H^*A') \otimes_{H^*A'} H^*A_{\p}\\
& \cong & H^*A_{\p} \otimes_{H^*A}  \HH^{3,*}(H^*A) \otimes_{H^*A} H^*A_{\p},
\end{eqnarray*}
the Hochschild class $\mu_{A_{\p}}$ maps to $\frac{1}{1} \otimes  \mu_{A} \otimes \frac{1}{1}$.
But now we can apply the isomorphism
$$\nu\colon H^*A_{\p} \otimes_{H^*A} \HH^{3,*}(H^*A) \otimes_{H^*A} H^*A_{\p}  \longrightarrow   \HH^{3,*}(H^*A)_{\p}$$
from the proof of Proposition \ref{Hochschildloc}. Since 
$\nu(\frac{1}{1} \otimes  \mu_{A} \otimes \frac{1}{1})= \frac{\mu_A}{1},$ we have proved the claim.
\end{proof}

\begin{rem}
The reader might have noticed  that the dg algebra $A_{\p}$ was not shown to be uniquely determined up to \qis. The universal property we have proved in Section \ref{sec universal property} only holds on the level of derived  categories. Hence for another dg algebra $A'_{\p}$ satisfying $H^*(A'_{\p}) \cong (H^*A)_{\p}$, we obtain a canonical class $\mu_{A'_{\p}}$ which could behave differently.

However, what we have actually shown in Theorem \ref{loc-glo-global} is that for \emph{every} dg algebra $A'_{\p}$ admitting a zigzag of dg algebra morphisms $A \xleftarrow{\sim} A'' \xto{\varphi} A'_{\p}$ which induces $\can\colon H^*A \to (H^*A)_{\p}$ in cohomology, the canonical class $\mu_{A'_{\p}}$ is the image of $\mu_A$ under the map
$$\HH^{3,-1}(H^*A) \longrightarrow \HH^{3,-1}(H^*A)_{\p},\quad \zeta \longmapsto \frac{\zeta}{1}.$$ So the choice of the dg algebra inducing $(H^*A)_{\p}$ in cohomology is not relevant, as long as it admits such a zigzag.
\end{rem}

\begin{Rem}
One might want to have Proposition \ref{Hochschildloc} and with it  Theorem \ref{loc-glo-global} under weaker assumptions.
In Proposition \ref{Hochschildloc} we have assumed that $R^e$ is Noetherian to ensure that $R$ admits a resolution of finitely generated $R^e$-projective modules. 
We do not know whether there is a way to avoid the latter condition.
\end{Rem}

\section{Comparing realisability over group and Tate cohomology}\label{sec Comparing}
Now we focus on realisability in group representation theory and compare realisability over the group cohomology ring and the Tate cohomology ring.

The group cohomology ring $H^*(G,k)$ has better properties than the Tate cohomology ring $\hat H^*(G,k)$ which, for instance, is not Noetherian in general. However, when it comes to the source categories of realisability, the stable module category $\uMod kG$ is more ``handsome'' than the homotopy category $\Kinjg$. This is the reason why we are interested in studying the relation of realisability over  group and Tate cohomology. 

The triangulated categories $\Kinjg$ and $\uMod kG$ are related by a smashing localisation
$$\xymatrix{\ar @{--}\bfK(\Inj kG) \ar@<-1ex>[r]_-Q & \uMod kG \ar@<-1ex>[l]_-R}$$
(see Proposition \ref{recollhopf}) and we are now concerned with finding a relation between realisability and this localisation of triangulated categories. 

Remember that both $H^*(G,k)$ and $\hat H^*(G,k)$ are the cohomology of a dg algebra (Example \ref{example for real}) and thus, they admit an $A_{\infty}$-algebra structure yielding global obstructions which we now denote by 
$\mu_G \in \HH^{3,-1}(H^*(G,k))$ and $\hat \mu_G \in \HH^{3,-1}(\hat H^*(G,k))$.
\smallskip

In the first section we study realisability of fixed modules. Then we focus on global realisability. The canonical class $\hat \mu_G$ has been computed for some groups $G$ by Benson, Krause and Schwede \cite{BKS}, and by Langer \cite{L}. 
We consider the same groups and compute the global obstructions for the group cohomology rings. We will see in Section \ref{secexamples} that in all but one case, the Hochschild classes $\mu_G$ and $\hat \mu_G $ turn out to behave surprisingly similar. 
As a first explication for this similarity we  show in Section \ref{secmorext} that the algebra morphism $H^*(G,k) \to \hat H^*(G,k)$ is induced by a zigzag of dg algebra morphisms. 
The main result of this chapter is stated in the last section and gives a complete explanation for the relation between $\mu_G$ and $\hat \mu_G$ we observed before in examples.

We like to thank Dave Benson for discussions that helped to improve this chapter and in particular, for a contribution to the proof of Theorem \ref{murelation}.

 \subsection{Local realisability}\label{sec Local realisability}
Let $k$ be a field of characteristic $p >0$ and $G$ be a finite group such that $p$ divides the order of $G$. In this section we discuss ways to construct a realisable $H^*(G,k)$-module from a realisable module over $H^*(G,k)$, and vice versa.

The following proposition will play an important role in the next section.
\begin{prop}\label{rank1real}
Assume that the $p$-rank of $G$ equals one. 
\begin{itemize}
\item[(1)] If $X \in \Modgr H^*(G,k)$ is realisable, then $X \otimes_{H^*(G,k)} \hat{H}^*(G,k)$ is a realisable $\hat{H}^*(G,k)$-module.
\item[(2)] A graded $\hat H^*(G,k)$-module $Y$ is realisable if and only if its restriction to $H^*(G,k)$ is a realisable $H^*(G,k)$-module.
\end{itemize}
\end{prop}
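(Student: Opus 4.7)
The plan is to exploit the periodicity structure coming from the $p$-rank one hypothesis. By Lemma~\ref{periodic-loc}, $\hat H^*(G,k) \cong S^{-1} H^*(G,k)$ where $S$ is generated by a periodicity generator. Since multiplication by this generator is an isomorphism on $\hat H^*(G,k)$ and coincides with the $H^*(G,k)$-multiplication in positive degrees, the generator is a non-zerodivisor on $H^*(G,k)$, so the canonical map $H^*(G,k) \to \hat H^*(G,k)$ is injective; hence tensoring with $\hat H^*(G,k)$ over $H^*(G,k)$ is the exact functor $S^{-1}(-)$.

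The first key step is to identify $\Kinjg(ik, tM)^* \cong \hat H^*(G, M)$ as graded $H^*(G,k)$-modules for every $M \in \uMod kG$. I would apply the cohomological functor $\Kinjg(-, tM)^*$ to the canonical triangle $pk \to ik \to tk \to \Sigma pk$ from \eqref{natural triangle}: the outer terms $\Kinjg(pk, tM)^*$ and $\Kinjg(\Sigma pk, tM)^*$ vanish by Corollary~\ref{K=D} applied to the acyclic $tM$, forcing $\Kinjg(ik, tM)^* \xrightarrow{\cong} \Kinjg(tk, tM)^* \cong \uHom_{kG}(k, M)^* = \hat H^*(G, M)$ via the equivalence of Lemma~\ref{kac=stmod}.

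The second key step is that $S^{-1}\Kinjg(ik, X)^* = 0$ for every $X \in \Ker Q$. For $X = pk$, the long exact sequence obtained from the same triangle together with the injectivity of $H^* \to \hat H^*$ computes $\Kinjg(ik, pk)^n \cong \hat H^{n-1}(G,k)/H^{n-1}(G,k)$, which is manifestly $S$-torsion; hence $S^{-1}\Kinjg(ik, pk)^* = 0$. The class of $X$ with this vanishing property is a localising subcategory of $\Kinjg$ (since $ik$ is compact and $S^{-1}$ is exact) and contains $pk$; by the recollement of Proposition~\ref{recollhopf}, $\Ker Q$ coincides with the image of $-\otimes_k pk\colon \bfD(kG) \to \Kinjg$, which is generated as a localising subcategory by $pk$, and the claim extends to all of $\Ker Q$.

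With these identifications, the statements follow quickly. For (1), write $X \oplus Z \cong \Kinjg(ik, C)^*$ for some $C \in \Kinjg$; applying $\Kinjg(ik, -)^*$ and then $S^{-1}$ to the canonical recollement triangle $X' \to C \to RQC \to \Sigma X'$ with $X' \in \Ker Q$, the outer terms vanish by the second key step and yield $S^{-1}\Kinjg(ik, C)^* \cong \Kinjg(ik, RQC)^* \cong \hat H^*(G, QC) = \uHom_{kG}(k, QC)^*$ by the first key step; hence $X \otimes_{H^*(G,k)} \hat H^*(G,k) = S^{-1}X$ is realisable over $\hat H^*(G,k)$ via Example~\ref{example for real}(2). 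Direction ``$\Rightarrow$'' of (2) is immediate: if $Y \oplus Z \cong \uHom_{kG}(k, M)^*$, the first key step identifies this with $\Kinjg(ik, tM)^*$, so $Y|_{H^*(G,k)}$ is realisable. For ``$\Leftarrow$'', $Y$ is already $\hat H^*(G,k)$-local, hence $Y \cong Y|_{H^*(G,k)} \otimes_{H^*(G,k)} \hat H^*(G,k)$, and realisability follows from (1). The main obstacle will be the second key step, particularly identifying $\Ker Q$ as the localising hull of $pk$ via the recollement and verifying the $S$-torsion calculation on $pk$.
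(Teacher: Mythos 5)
Your overall strategy is the right one: instead of simply citing Theorem~\ref{Henning} and Remark~\ref{alsoadjointscommute} as the paper does, you unpack exactly what those results must deliver here, namely the identifications $\Kinjg(ik,tM)^*\cong\hat H^*(G,M)$ (your first key step) and the vanishing of $S^{-1}\Kinjg(ik,-)^*$ on $\Ker\hat Q$ (your second key step), and then read off the commutativity of the two diagrams. The first key step and the torsion computation $\Kinjg(ik,pk)^n\cong\hat H^{n-1}(G,k)/H^{n-1}(G,k)$ are both correct, and the deduction of (1) and (2) from the two diagrams is fine.

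However there is a genuine gap in the second key step: you claim that $\Ker Q=\Im(-\otimes_k pk)$ ``is generated as a localising subcategory by $pk$.'' This is false in general. The functor $-\otimes_k pk\colon\bfD(\Mod kG)\to\Kinjg$ is a fully faithful coproduct-preserving embedding onto $\Ker\hat Q$, and $\bfD(\Mod kG)$ is compactly generated by $kG$, \emph{not} by $k$. Under the embedding, $kG\mapsto kG\otimes_k pk$, which is homotopy equivalent to the stalk complex $kG$; so $\Ker\hat Q=\Loc(kG)$, whereas $pk=k\otimes_k pk$ generates only $\Loc(pk)$, which corresponds to $\Loc(k)\subseteq\bfD(\Mod kG)$. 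These agree when $kG$ is indecomposable (for instance when $G$ is a $p$-group), but the Proposition is stated for arbitrary $G$ of $p$-rank one. Concretely, take $G=C_6$, $p=2$ and $k$ algebraically closed: then $kG\cong kC_2\times kC_2\times kC_2$ has three blocks, $\Loc(k)$ is the derived category of the principal block only, and the injective resolution $i\chi$ of a non-trivial character $\chi$ of $C_3$ lies in $\Ker\hat Q$ but not in $\Loc(pk)$. Thus the argument ``contains $pk$, hence extends to all of $\Ker Q$'' does not go through as written.

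The fix is small and in fact simplifies the torsion calculation. Use $kG$ as the generator of $\Ker\hat Q$. Both $ik$ and the stalk complex $kG$ are K-injective (the latter being a bounded complex of injectives since $kG$ is self-injective), so $\Kinjg(ik,kG)^*\cong\bfD(\Mod kG)(k,kG)^*\cong\Hom_{kG}(k,kG)$, concentrated in degree $0$. Since the periodicity generator has positive degree, this is manifestly $S$-torsion, hence $kG$ lies in the localising subcategory $\C=\{X:S^{-1}\Kinjg(ik,X)^*=0\}$, and therefore $\Ker\hat Q=\Loc(kG)\subseteq\C$. With that replacement your proof is complete; it supplies precisely the verification that the paper's one-line appeal to Theorem~\ref{Henning} and Remark~\ref{alsoadjointscommute} leaves implicit (the passage from the quotient $\Kinjg/\Ker\hat L$ produced by that theorem to the actual smashing localisation $\Kinjg\to\uMod kG$ requires checking exactly your second key step).
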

\begin{proof}
By Lemma \ref{periodic-loc} and Theorem \ref{periodic-rank}, there exists a multiplicative subset $S \subseteq H^*(G,k)$ such that $\hat H^*(G,k) = S^{-1} H^*(G,k)$. Hence we may apply Theorem \ref{Henning} and Remark  \ref{alsoadjointscommute} and obtain commutative diagrams

$$\xymatrix@C-2.7pt{\ar @{--} \bfK(\Inj kG) \ar[d]_Q \ar[rr]^-{\Hom_{\bfK}(ik,-)^*}& & \Modgr H^*(G,k)
\ar[d]^{- \otimes_{H^*(G,k)} \hat H^*(G,k)} &&  \bfK(\Inj kG) \ar[rr]^-{\Hom_{\bfK}(ik,-)^*}& & \Modgr H^*(G,k)\\
\uMod kG \ar[rr]^-{\uHom_{kG}(k,-)^*}& & \Modgr \hat H^*(G,k) && \uMod kG \ar[u]_R \ar[rr]^-{\uHom_{kG}(k,-)^*}& & \ar[u]^{\res}
\Modgr \hat H^*(G,k)}$$
which yield the claim.
\end{proof}

If the $p$-rank of $G$ is at least two, then we cannot expect to obtain realisable modules from realisable modules by induction or restriction as above. The reason for this is that in general, the induction functor $- \otimes_{H^*(G,k)} \hat H^*(G,k)$ is not given by localisation with respect to a multiplicatively closed subset.

However, there is another possibility to obtain a realisable $H^*(G,k)$-module from a realisable  $\hat H^*(G,k)$-module. This construction also works in the general case.
\begin{lem}\label{truncation}
Let $X \in \Modgr \hat{H}^*(G,k)$ realisable. Then its truncation to non-negative degrees $X^{\geqslant 0}$ is a realisable $H^*(G,k)$-module.
\end{lem}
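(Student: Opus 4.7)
The plan is to realise $X^{\geqslant 0}$ as a direct summand of $H^*(G, M) = \Hom_{\bfK(\Inj kG)}(ik, iM)^*$ by splitting the natural surjection $H^*(G, M) \twoheadrightarrow \hat H^{\geqslant 0}(G, M)$ as graded $H^*(G, k)$-modules.

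First, by the realisability hypothesis I write $X \oplus Y \cong \hat H^*(G, M)$ for some $M \in \Mod kG$. Restriction along the canonical algebra map $H^*(G, k) \to \hat H^*(G, k)$ and passage to the non-negatively graded part -- which, since $H^*(G, k)$ lives in non-negative degrees, defines a functor on $\Modgr H^*(G, k)$ commuting with direct sums -- yields
\[
X^{\geqslant 0} \oplus Y^{\geqslant 0} \cong \hat H^{\geqslant 0}(G, M)
\]
as graded $H^*(G, k)$-modules. It therefore suffices to realise $\hat H^{\geqslant 0}(G, M)$.

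Recall from Section \ref{sec group and Tate} the short exact sequence of graded $H^*(G, k)$-modules
\[
0 \to \I(k, M) \to H^*(G, M) \to \hat H^{\geqslant 0}(G, M) \to 0,
\]
where $\I(k, M) \subseteq H^0(G, M)$ is the submodule of morphisms $k \to M$ factoring through a projective. The key observation will be that $\I(k, M)$ is concentrated in degree $0$ and annihilated by $H^{>0}(G, k)$. Indeed, if $f \in \I(k, M)$ factors as $k \xrightarrow{g} P \xrightarrow{h} M$ with $P$ projective, then for any $\zeta \in H^m(G, k)$ with $m > 0$ the Yoneda product satisfies $\zeta \cdot f = h_*(\zeta \cdot g)$, and this vanishes because $\zeta \cdot g$ lies in $\Ext^m_{kG}(k, P) = 0$.

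Once this annihilation property is established, any $k$-linear complement of $\I(k, M)$ inside $H^0(G, M)$, combined with the natural isomorphisms $H^n(G, M) \cong \hat H^n(G, M)$ in degrees $n \geq 1$, produces a section of the surjection which is automatically $H^*(G, k)$-linear: higher-degree products land in degrees $\geq 1$ where the section is the identity, while in degree zero the action of $H^0(G, k) = k$ is just scalar multiplication. Hence the sequence splits and $H^*(G, M) \cong \I(k, M) \oplus \hat H^{\geqslant 0}(G, M)$ as graded $H^*(G, k)$-modules, which exhibits $X^{\geqslant 0}$ as a direct summand of the realisable module $H^*(G, M) = \Hom_{\bfK(\Inj kG)}(ik, iM)^*$. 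The only non-routine step is verifying the annihilation of $\I(k, M)$ by positive-degree classes, which reduces to the vanishing $\Ext^m_{kG}(k, P) = 0$ recorded above.
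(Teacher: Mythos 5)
Your proof is correct, but it takes a different route from the paper's. The paper's proof is a short reduction: since $X$ is a direct summand of $\hat H^*(G,M)$ for some $kG$-module $M$, one may strip the projective direct summands from $M$, after which $\I(k,M)=0$ and the canonical map $H^*(G,M)\to \hat H^{\geqslant 0}(G,M)$ is already an isomorphism, so truncating the given splitting finishes at once. You instead keep $M$ arbitrary and split the short exact sequence $0\to\I(k,M)\to H^*(G,M)\to\hat H^{\geqslant 0}(G,M)\to 0$ of graded $H^*(G,k)$-modules by hand. That works, and has the merit of avoiding the mildly delicate justification that removing projective summands of $M$ kills $\I(k,M)$ (which uses the self-injectivity of $kG$ and the simplicity of $k$). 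One small remark: the ``annihilation'' of $\I(k,M)$ by $H^{>0}(G,k)$ that you flag as the key step, while true via the argument you give, is not actually used. The splitting follows merely from the facts that $\I(k,M)$ is concentrated in degree $0$, that $H^0(G,k)=k$, and that the ring $H^*(G,k)$ lives in non-negative degrees: choose any $k$-linear complement $V$ of $\I(k,M)$ in $H^0(G,M)$, and $W:=V\oplus H^{\geq 1}(G,M)$ is automatically an $H^*(G,k)$-submodule, since any product $v\cdot\zeta$ with $v\in V$ and $\deg\zeta>0$ lands in degree $\geq 1\subseteq W$, while $H^0(G,k)=k$ acts by scalars and preserves $V$. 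So the argument is, if anything, slightly simpler than you present it.
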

\begin{proof}
Let $X$ be a direct summand of $\hat{H}^*(G,M)$, where $M$ is a $kG$-module. Without loss of generality, we may assume that $M$ does not have any projective direct summands. Then $\hat{H}^0(G,M) \cong H^0(G,M)$ and consequently, $\hat{H}^*(G,M)^{\geqslant 0} \cong H^*(G,M)$. It follows that $X^{\geqslant 0}$ is a direct summand of $H^*(G,M)$.
\end{proof}

\subsection{Examples for the global obstruction}\label{secexamples}
Let $k$ be a field of characteristic $p>0$. We study the group and Tate cohomology rings of cyclic $p$-groups and the Quaternion group, and focus on the global obstructions $\mu_G$ and $\hat \mu_G$. 

\begin{thm}\cite[Thm.\ 7.1]{BKS}\label{tate3}
Let $G$ be a cyclic group of order $p^n$, where $n \ge 1$. If $p^n=2$, then
the Tate cohomology ring is a Laurent polynomial ring $k[X,X^{-1}]$
on a 1-dimensional class $X$. If $p^n \ge 3$, then the Tate
cohomology ring is a truncated Laurent polynomial ring in two
variables,
$$\hat{H}^*(G,k) = k[X,Y,Y^{-1}]/(X^2),$$
with $\deg(X)=1$ and $\deg(Y)=2$. The secondary multiplication $m_3$
of the $A_{\infty}$-algebra $\hat{H}^*(G,k)$ and thus $\hat \mu_G \in \HH^{3,-1}(\hat H^*(G,k))$
is trivial except when $p =3$ and $n=1$. In this case, the secondary multiplication is given by
$$m_3(XY^i,XY^j,XY^l) = Y^{i+j+l+1}, \quad i,j,l \in \bbZ,$$
and vanishes on all other tensor products of monomials. Furthermore,
its Hochschild class $\hat{\mu}_{\bbZ/(3)}$ is non-trivial.
\end{thm}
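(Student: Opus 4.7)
The plan is to follow the strategy of Benson--Krause--Schwede. Since $G = \bbZ/p^n$ and $\Char k = p$, we have $kG \cong k[t]/(t^{p^n})$ with $t = g - 1$. This is a local Frobenius algebra, so the trivial module $k$ admits a $2$-periodic minimal projective resolution, which extends to a doubly-infinite Tate resolution $tk$ with $(tk)^i = kG$ for all $i \in \bbZ$ and differentials alternating between multiplication by $t$ and by $t^{p^n - 1}$ (these coincide when $p^n = 2$, yielding a $1$-periodic resolution). Applying $\Hom_{kG}(-,k)$ computes the groups $\hat H^n(G,k)$, and the multiplicative structure is obtained by lifting cohomology classes to chain endomorphisms of $tk$ and composing: this yields $k[X,X^{-1}]$ when $p^n = 2$ and $k[X,Y,Y^{-1}]/(X^2)$ when $p^n \ge 3$, with $|X|=1$ and $|Y|=2$.

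For the $\Ainf$-structure, recall from Example~\ref{example for real} that $\hat H^*(G,k) \cong H^*\END(tk)$, so Construction~\ref{m3construction} applied to the dg algebra $\END(tk)$ produces $m_3$ whose Hochschild class is $\hat\mu_G$. When $p^n = 2$ the ring $k[X,X^{-1}]$ is generated by a single element with $X^2 \ne 0$; choosing $f_1 \colon \hat H^*(G,k) \to \END(tk)$ to send $X^n$ to the $n$-th composition power of a fixed chain lift $\tilde X$ of $X$ makes $f_1$ multiplicative on monomials, so by \eqref{f_2} one may take $f_2 = 0$, and \eqref{m_3} then gives $m_3 = 0$ and $\hat\mu_G = 0$.

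For $p^n \ge 3$, the relation $X^2 = 0$ in cohomology forces $f_2(X,X)$ to be a null-homotopy of $\tilde X \circ \tilde X$ for any chain lift $\tilde X$ of $X$. Evaluating the formula from Construction~\ref{m3construction} at $(X,X,X)$, the terms involving $X^2$ drop out, so $m_3(X,X,X)$ is represented by the $2$-cocycle
$$\tilde X \circ f_2(X,X) \;-\; f_2(X,X) \circ \tilde X \;\in\; \END(tk)^2,$$
whose class lies in $\hat H^2(G,k) = k \cdot Y$. An explicit lift is given by $\tilde X_n = \cdot\,1$ for $n$ even and $\tilde X_n = \cdot\,t^{p^n - 2}$ for $n$ odd (verifying commutation with the alternating differentials), so $\tilde X \circ \tilde X$ is multiplication by $t^{p^n - 2}$ in every degree; a null-homotopy $f_2(X,X)$ is then obtained by inverting the differentials on suitable principal ideals of $kG$. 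A case analysis on $(p,n)$ shows that the resulting commutator represents zero in $\hat H^2(G,k)$ whenever $p \ge 5$ or $n \ge 2$, the point being that the arithmetic identity making this work fails only in characteristic $3$ for $n=1$. The remaining triple products $m_3(x,y,z)$ on monomial inputs either reduce to $m_3(X,X,X)$ by $Y$-equivariance, or vanish because the corresponding chain lifts commute strictly (e.g.\ whenever a power of $Y$ appears among the arguments).

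For $p = 3$, $n = 1$ the same explicit computation yields $m_3(X,X,X) = Y$ after normalising generators, and hence $m_3(XY^i, XY^j, XY^l) = Y^{i+j+l+1}$ by $Y$-equivariance, with $m_3$ vanishing on all other tensor products of monomials. The main obstacle is to verify that the resulting $(3,-1)$-cocycle represents a non-zero class in $\HH^{3,-1}(\hat H^*(\bbZ/3,k))$. I would argue this by computing the relevant Hochschild group using a small bimodule resolution of $\Lambda = k[X,Y,Y^{-1}]/(X^2)$, exploiting that $\Lambda$ is a localisation of a graded complete intersection to produce an explicit small model in which the class of $m_3$ is visibly detected; alternatively, apply Theorem~\ref{BKSglobal} to a specific graded $\Lambda$-module $M$ chosen so that $\id_M \cup \hat\mu_{\bbZ/3}$ is a non-trivial extension class, which would force $\hat\mu_{\bbZ/3} \ne 0$.
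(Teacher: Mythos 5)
The theorem is cited from \cite[Thm.\ 7.1]{BKS} without proof; the closest argument in the paper is the proof of the group-cohomology analogue, Proposition~\ref{ext3}, which proceeds by the same chain-level strategy you outline (explicit $2$-periodic resolution, explicit lift of $X$, explicit null-homotopy, evaluation of \eqref{m_3}). So the approach is right, but there are two genuine gaps.

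First, the cocycle representing $m_3(X,X,X)$ is not the commutator $\tilde X \circ f_2(X,X) - f_2(X,X)\circ\tilde X$. Evaluating \eqref{ABC} at $(X,X,X)$ with $|X|=1$ and $X^2=0$, the cross-terms $f_2(X^2,X)$ and $f_2(X,X^2)$ drop out and the remaining two terms carry the signs $(-1)^{|X|}=-1$ and $-1$, giving $-\bigl(\tilde X\, f_2(X,X) + f_2(X,X)\,\tilde X\bigr)$: the \emph{anti-}commutator, not the commutator. This is exactly what the paper's proof of Proposition~\ref{ext3} computes, producing $qx+xq$, which is multiplication by $T^{r-3}$ in every degree; the distinction is material because it is this anti-commutator that vanishes up to homotopy for $r>3$ and equals $Y$ for $r=3$. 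A commutator would not give the needed chain map, so the arithmetic you describe would not go through as stated.

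Second, the non-triviality of $\hat{\mu}_{\bbZ/3}$ is asserted but not established. You list two possible strategies (a direct Hochschild computation from a small bimodule resolution, or applying Theorem~\ref{BKSglobal} to a suitably chosen module), but neither is carried out, and neither is routine. The actual argument in \cite{BKS} produces a concrete non-realisable graded $\hat H^*(\bbZ/3,k)$-module, namely $\hat H^*(\bbZ/3,k)/(X)$, using a Massey-product calculation; non-realisability of that module, together with Theorem~\ref{BKSglobal}, forces $\hat\mu_{\bbZ/3}\neq 0$. The paper itself relies on this for the analogous statement about $\mu_{\bbZ/3}$ (see the remark following Proposition~\ref{ext3} and Proposition~\ref{rank1real}). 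Your second sketched strategy is this one in outline, but it still needs both the choice of witness module and the Massey-product argument that it is not realisable. Minor additionally: the claim that the remaining monomial inputs follow by ``$Y$-equivariance'' is not automatic; it requires checking that the chosen lift of $Y$ commutes strictly with $\tilde X$ and with the homotopy $f_2(X,X)$, which the analogous in-paper proof verifies explicitly.
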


Note that strictly speaking, one can make choices to obtain $m_3$ in the shape as stated above. With other choices of $f_1$ and $f_2$ in Construction \ref{m3construction} we could obtain a different map. However, the Hochschild class $\hat \mu_G$ of $m_3$ is independent of all choices.


Now we consider the group cohomology ring $H^*(G,k)$ of a cyclic group $G$ as above. It identifies with the subring of non-negative degrees of $\hat H^*(G,k)$. 
The global obstruction $\mu_G \in \HH^{3,-1}(H^*(G,k))$  turns out to behave very similarly:
\begin{prop}\label{ext3}
Let $G$ be a cyclic group of order $p^n$, where $n \ge 1$. If $p^n=2$, then
the group cohomology ring is a polynomial ring $k[X]$, where $X$ has degree 1.
If $p^n \ge 3$, then $H^*(G,k)$  is a truncated
polynomial ring in two variables,
$$H^*(G,k) = k[X,Y]/(X^2),$$
with $\deg(X)=1$ and $\deg(Y)=2$. The secondary multiplication $m_3$ 
of the $A_{\infty}$-algebra $H^*(G,k)$ and thus $\mu_G \in \HH^{3,-1}(H^*(G,k))$ is trivial except when $p =3$ and $n=1$. In this case, it satisfies
$$m_3(XY^i,XY^j,XY^l) = Y^{i+j+l+1}, \quad i,j,l \ge 0,$$
and vanishes on all other tensor products of monomials. Its Hochschild class $\mu_{\bbZ_3}$ is non-trivial.
\end{prop}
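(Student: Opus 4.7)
I would first recover the algebra structure of $H^*(G,k)$ in the standard way, from the minimal periodic projective resolution of the trivial module $k$ over $kG = k[s]/(s^{p^n})$ (where $s=t-1$ for a generator $t$ of $G$); for $p^n=2$ the alternating differentials collapse to a single map, giving $H^*(G,k)=k[X]$, and otherwise one obtains $k[X,Y]/(X^2)$ with $|X|=1$, $|Y|=2$. This projective resolution is the non-negative truncation of the Tate resolution used in the proof of Theorem~\ref{tate3}, and correspondingly $H^*(G,k)$ embeds into $\hat H^*(G,k)$ as the non-negative subring.

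The key point for reading off $m_3^{H^*(G,k)}$ is to compare it with $m_3^{\hat H^*(G,k)}$ via the canonical monomorphism $H^*(G,k)\hookrightarrow \hat H^*(G,k)$. Since a cyclic $p$-group has $p$-rank one, Lemma~\ref{periodic-loc} identifies $\hat H^*(G,k)$ with the localisation $S^{-1}H^*(G,k)$ at $S=\{Y^n\mid n\ge 0\}$, equivalently with $H^*(G,k)_{\mathfrak{p}}$ for the unique non-maximal graded prime $\mathfrak{p}=(X)$. Taking $A=\END(ik)$ so that $H^*A=H^*(G,k)$, Theorem~\ref{A'->A_p} provides a zigzag of dg algebra morphisms $A\xleftarrow{\sim}A'\xto{\varphi}A_{\mathfrak{p}}$ whose cohomology realises the inclusion $H^*(G,k)\hookrightarrow \hat H^*(G,k)$. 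In particular $H^*\varphi$ is a monomorphism, so Proposition~\ref{H*mono} lets us make compatible choices of $f_1,f_2$ for the two $A_\infty$-structures, yielding $m_3^{H^*(G,k)}$ as the restriction of $m_3^{\hat H^*(G,k)}$ to non-negative degrees. The formulas for $m_3$ in Theorem~\ref{tate3} then give the proposition directly: for $p^n\neq 3$ they are identically zero, and for $p^n=3$ the restriction $m_3(XY^i,XY^j,XY^l)=Y^{i+j+l+1}$ for $i,j,l\ge 0$ visibly lands in $H^*(G,k)$, with $m_3$ vanishing on all remaining monomial triples.

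The non-triviality of $\mu_{\bbZ_3}$ then follows from its behaviour under the same localisation, via the machinery of Chapter~\ref{sec Localising mu}. The enveloping algebra $(H^*(G,k))^e$ is Noetherian, being a quotient of a finitely generated polynomial algebra, so Proposition~\ref{GammatutsH*A_p} (or equivalently Theorem~\ref{loc-glo-global}) applies: the class $\mu_{\bbZ_3}$ maps to $\mu_{A_{\mathfrak{p}}}=\hat{\mu}_{\bbZ_3}$ under the canonical map $\HH^{3,-1}(H^*(G,k))\to\HH^{3,-1}(\hat H^*(G,k))$. As $\hat\mu_{\bbZ_3}$ is proved non-trivial in Theorem~\ref{tate3}, so is $\mu_{\bbZ_3}$.

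The main obstacle is invoking Theorem~\ref{A'->A_p} at this early stage of Chapter~\ref{sec Comparing}, before the dg-algebra lifting of the inclusion $H^*(G,k)\to \hat H^*(G,k)$ is discussed in detail in Section~\ref{secmorext}; if one prefers to avoid this forward reference, the required dg algebra morphism can be constructed by hand from the inclusion of the projective resolution into the Tate resolution (together with a suitable cofibrant replacement), and the rest of the argument goes through unchanged.
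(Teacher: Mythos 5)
Your argument for the non-triviality of $\mu_{\bbZ_3}$ is valid and is genuinely different from the paper's: the paper exhibits a non-realisable $H^*(\bbZ_3,k)$-module (pushing down Benson--Krause--Schwede's Massey product argument over $\hat H^*(\bbZ_3,k)$ via Proposition~\ref{rank1real}), whereas you use the localisation map $\Gamma$ with $\Gamma(\mu_G)=\hat\mu_G$ from Proposition~\ref{GammatutsH*A_p}. Both are correct; yours is cleaner given the machinery of Chapter~\ref{sec Localising mu}, and you don't actually need the Noetherian hypothesis on $(H^*(G,k))^e$ for this direction since you only use $\Gamma(\mu_G)\ne 0\Rightarrow\mu_G\ne 0$.

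However, the argument you propose to \emph{compute} $m_3^{H^*(G,k)}$ has a genuine gap, and it is exactly the one the paper explicitly warns against in the Remark following Proposition~\ref{classexttate}: ``we cannot just restrict a fixed $m_3^{\hat H^*(G,k)}$ to non-negative degrees to obtain $m_3^{H^*(G,k)}$.'' Proposition~\ref{H*mono} runs in the opposite direction from what you need. It says: \emph{given} the choices of $f_1^A,f_2^A$ defining $m_3^{H^*A}$, one can \emph{then} construct compatible $f_1^B,f_2^B$ so that $m_3^{H^*B}$ restricts to $m_3^{H^*A}$. It does not allow you to start from the fixed representative $m_3^{\hat H^*(G,k)}$ given in Theorem~\ref{tate3}, since there is no guarantee that the choices made there arise as the extension of choices over $H^*(G,k)$. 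The cocycle $m_3^{\hat H^*(G,k)}$ from Theorem~\ref{tate3} and the one you would get by extending arbitrary choices over $H^*(G,k)$ agree only up to a $(3,-1)$-Hochschild coboundary, and such a coboundary need not vanish on non-negative degrees. The same issue bites the cases $p^n\ne 3$: from $\hat\mu_G=0$ you do get $\mu_G=0$ (say, by the forward direction of Theorem~\ref{murelation}), but that only tells you $m_3^{H^*(G,k)}$ can be taken to be a coboundary, not that the specific cocycle obtained by ``restriction'' is identically zero. The paper avoids all of this by a direct computation with explicit chain maps $x,y$ on the periodic injective resolution and an explicit null-homotopy $q$; you would need to supply that computation rather than deduce the formula from Theorem~\ref{tate3}.
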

\begin{proof}
Since the characteristic of $k$ is positive, we may identify $kG$ with the truncated polynomial ring $K[T]/(T^r)$, where $r=p^n$. An injective resolution of $k$ which is moreover $2$-periodic is given by

$$\xymatrix@C-12pt{ ik\colon  & \cdots \ar[rr] & & 0 \ar[rr]
 & & 0 \ar[rr]   & & I_0 \ar[rr]^{T}  && I_1 \ar[rr]^{-T^{r-1}}  & & I_2 \ar[rr]^-{T}  & & I_3 \ar[rr]^-{-T^{r-1}} && \cdots}$$

Let $x\colon ik \to \Sigma ik$ be the degree-one chain map
$$\xymatrix@C-12pt{ ik \ar[d]^{x} & \cdots \ar[rr] & & 0 \ar[rr]
\ar[d] & & 0 \ar[rr] \ar[d]  & & I_0 \ar[rr]^{T} \ar[d]^1 && I_1 \ar[rr]^-{-T^{r-1}} \ar[d]^-{T^{r-2}} & & I_2 \ar[rr]^-{T} \ar[d]^1 & & I_3 \ar[d]^-{T^{r-2}} \ar[rr]^-{-T^{r-1}}&&\cdots\\
\Sigma ik  & \cdots \ar[rr] && 0 \ar[rr] && I_0
 \ar[rr]_-{-T} &&
I_1 \ar[rr]_-{T^{r-1}} && I_2 \ar[rr]_-{-T} & &I_3 \ar[rr]_-{T^{r-1}}& & I_4 \ar[rr]_-{-T}  &&\cdots}$$
and $y\colon ik \to \Sigma^2 ik$ the degree-two chain map
$$\xymatrix@C-12pt{ ik \ar[d]^{y} & \cdots \ar[rr]  && 0 \ar[rr]
\ar[d] & &0 \ar[rr] \ar[d]  && I_0 \ar[rr]^-{T} \ar[d]^1 && I_1 \ar[rr]^-{-T^{r-1}} \ar[d]^1 && I_2 \ar[rr]^-{T} \ar[d]^1 && \cdots\\
\Sigma^2 ik  & \cdots \ar[rr] && I_0
 \ar[rr]_-T &&
I_1 \ar[rr]_-{-T^{r-1}} && I_2 \ar[rr]_-T && I_3 \ar[rr]_-{-T^{r-1}}& & I_4 \ar[rr]_-T && \cdots}$$

One easily checks that $xy=yx$. If $r\geq3$, then $x^2$ is nullhomotopic by the homotopy $q$, given by multiplication with $T^{r-3}$ in odd degrees, and the zero map in even degrees. If $r=2$, then obviously $x^2=y$. We infer that $x$ and $y$ are cycles of $\END(ik)$ representing the classes $X \in H^1(G,k)$ and $Y \in H^2(G,k)$, respectively.

In order to compute the secondary multiplication, we define a cycle selection homomorphism 
$$f_1\colon H^*\END(ik) \to \END(ik)$$
 on the $k$-basis $\{X^{\epsilon}Y^i\,|\,\epsilon \in \{0,1\}, i \ge 0\}$ of $H^*(G,k)$, given by
$$f_1(X^{\epsilon}Y^i)=x^{\epsilon} y^i.$$
If $r=2$, then this map is multiplicative and it follows that $f_2$ and with it $m_3$ can be chosen to be trivial.
If $r\ge 3$, then $f_1$ is multiplicative except on two odd dimensional classes. The product
$f_1(XY^i)f_1(XY^j)=x^2 y^{i+j}$ is only nullhomotopic, by the homotopy $q y^{i+j}$.

We define $$f_2\colon H^*\END(ik) \otimes H^*\END(ik) \to \END(ik)$$ to be trivial except on two odd dimensional classes: In this case, we set
$$f_2(XY^i,XY^j)=qy^{i+j}.$$

Since $m_3$ maps $(A,B,C)$ to the cohomology class of the expression
\begin{equation}\label{ABC}
(-1)^{|A|}f_1(A)f_2(B,C)-f_2(A,B)f_1(C)-f_2(AB,C)+f_2(A,BC) 
\end{equation}
(see \eqref{m_3}), we infer that $m_3$ vanishes on all tensor product of monomials with at least one monomial having even degree. Using the fact that the homotopy $q$ commutes with $y$, one checks that the tensor product of three odd degree monomials $(XY^i,XY^j,XY^l)$ is mapped under \eqref{ABC} to
$$(qx+xq)y^{i+j+l}.$$

The chain map $qx+xq\colon ik \to \Sigma^2 ik$ is given by multiplication with $T^{r-3}$ in each degree. Now if $r>3$, then $qx+xq$ is nullhomotopic via the homotopy given by the zero map in even degrees, and by multiplication with $T^{r-4}$ in odd degrees. Thus $m_3=0$. But if $r=3$, then $qx+xq=y$ and we conclude 
$$m_3(XY^i,XY^j,XY^l)=Y^{i+j+l+1}.$$

It remains to show that also the Hochschild class $\mu_{\bbZ_3}$ is non-trivial. But this is is a consequence of the following remark.
\end{proof}

\begin{rem}
Using Massey Products, Benson, Krause and Schwede  proved that $\hat H^*(\bbZ_3,k)/(X)$ is a non-realisable $\hat H^*(\bbZ_3,k)$-module \cite[Exm.\  7.6]{BKS}. From Proposition~\ref{rank1real} we conclude that $\hat H^*(\bbZ_3,k)/(X)$, viewed as $H^*(\bbZ_3,k)$-module, is not realisable either. 
\end{rem}

\begin{thm}\cite[Satz 2.10]{L}\label{Tatequaternion}
Denote by $Q_8$ be the quaternion group and let $k$ be a field of characteristic two. Then the Tate cohomology ring $\hat{H}^*(Q_8,k)$ is given by
$$k[X,Y,S^{\pm 1}]/(X^2+Y^2 = XY, X^3 = Y^3 = 0, X^2Y = XY^2),$$
with $\vert X \vert = \vert Y \vert = 1, \vert S \vert =4$. 
The canonical class $\hat{\mu}_{Q_8}$ is non-trivial.
\end{thm}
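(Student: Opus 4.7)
The plan is to separate the ring-structure claim from the non-triviality statement and to treat them with very different tools.

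For the ring-theoretic description, I would exploit the fact that $Q_8$ is the generalised quaternion group $Q_{2^2}$ and hence, by Theorem \ref{when rank is one}, has $2$-rank one. Theorem \ref{periodic-rank} then says that the trivial module $k$ admits a periodic projective resolution, and Lemma \ref{periodic-loc} gives $\hat H^*(Q_8,k)\cong S^{-1}H^*(Q_8,k)$, where $S$ is the multiplicative subset generated by a periodicity generator of lowest positive degree. Since the minimal free resolution of $k$ over $kQ_8$ has period $4$, that periodicity generator sits in degree $4$. It therefore suffices to compute $H^*(Q_8,k)$ and invert this generator. A standard minimal resolution of $k$ (with two generators in odd degrees and one in degree $4$) yields classes $X,Y$ in degree $1$ and $S$ in degree $4$, and the three relations $X^2+Y^2=XY$, $X^3=Y^3=0$, $X^2Y=XY^2$ are read off directly by checking dimensions degree by degree against the Poincaré series $(1+2t+2t^2+t^3)/(1-t^4)$. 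Inverting $S$ then produces the presentation in the statement.

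For the non-triviality of $\hat\mu_{Q_8}$, I would follow the template established for the cyclic case in Proposition \ref{ext3}. First build a concrete $4$-periodic Tate resolution $tk$ of the trivial $kQ_8$-module and pick explicit cocycles $x,y\in\END(tk)^1$ and $s\in\END(tk)^4$ representing $X,Y,S$. Then define a cycle selection homomorphism $f_1\colon \hat H^*(Q_8,k)\to \END(tk)$ on the monomial basis $\{X^aY^b S^i\mid 0\le a,b, a+b\le 2,\ i\in\mathbb Z\}$. The map $f_1$ fails to be multiplicative precisely on those pairs of odd-degree classes where the defining relations $X^2+Y^2=XY$, $X^3=0$, $Y^3=0$, $X^2Y=XY^2$ force chain-level coboundaries; each such failure has to be corrected by choosing an explicit graded homotopy, and the collection of these homotopies assembles into a degree $-1$ map $f_2\colon \hat H^*(Q_8,k)^{\otimes 2}\to \END(tk)$ satisfying \eqref{f_2}. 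Plugging $f_1$ and $f_2$ into the formula \eqref{m_3} of Construction \ref{m3construction} yields an explicit representing cocycle $m_3$.

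The hard part is verifying that the resulting element of $\HH^{3,-1}(\hat H^*(Q_8,k))$ is not a coboundary, since Hochschild cohomology of the Tate ring is delicate. I would do this via the Massey-product interpretation of $m_3$, in analogy with the treatment of $\hat H^*(\mathbb Z/3,k)$ in \cite{BKS}: choose monomials $a,b,c$ whose pairwise products vanish in $\hat H^*(Q_8,k)$ (the inversion of $S$ makes such tuples easy to produce, for example by combining the relation $X^3=0$ with suitable powers of $S^{-1}$) and show from the chain-level formula that the value $m_3(a,b,c)$ represents a class in $\hat H^*(Q_8,k)$ lying outside the Massey indeterminacy $a\cdot \hat H^{*-1}+\hat H^{*-1}\cdot c$. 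Equivalently, and perhaps more concretely, I would exhibit an explicit finitely generated graded $\hat H^*(Q_8,k)$-module whose local obstruction $\kappa(X)\in\Ext^{3,-1}$ is nonzero; by Theorem \ref{BKSglobal} any such witness forces $\hat\mu_{Q_8}$ to be non-trivial. The main obstacle is the cleverness required to choose the witness: one needs enough control on $\Ext$ over the non-Noetherian ring $\hat H^*(Q_8,k)$ to detect that the cup product $\id_X\cup\mu_A$ does not vanish.
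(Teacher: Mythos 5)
The paper does not prove this statement: Theorem~\ref{Tatequaternion} is quoted verbatim from Langer's diploma thesis \cite[Satz~2.10]{L}, and no argument is supplied. What you are proposing is therefore an independent reconstruction of a cited result, so I can only compare your plan against what is known of Langer's proof (part of which the paper reports just below the theorem).

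Your treatment of the ring presentation is sound in outline: $Q_8$ has $2$-rank one (Theorem~\ref{when rank is one}), so by Theorem~\ref{periodic-rank} and Lemma~\ref{periodic-loc} the Tate ring is $S^{-1}H^*(Q_8,k)$ for $S$ generated by the degree-$4$ periodicity class. Combined with the standard minimal resolution of $k$ over $kQ_8$ and the Poincar\'e series, this does recover the stated presentation. The details of extracting the three relations are omitted but are routine.

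The non-triviality of $\hat\mu_{Q_8}$ is where your proposal has a genuine gap. You correctly identify the logic (a non-realisable $\hat H^*(Q_8,k)$-module forces $\hat\mu_{Q_8}\neq 0$ via Theorem~\ref{BKSglobal}) and you correctly identify that one must either compute $m_3$ and verify its class is nonzero, or exhibit an explicit witness module. But you stop at acknowledging that ``the cleverness required to choose the witness'' is the main obstacle, and you do not produce the witness. That is precisely the nontrivial content of the theorem, and Langer supplies it: the paper records (right after the theorem) that the cokernel of
$$g\colon \hat H[-1]\oplus\hat H[-1]\xrightarrow{\left[\begin{smallmatrix}Y & X+Y\\ X & Y\end{smallmatrix}\right]}\hat H\oplus\hat H,\qquad \hat H=\hat H^*(Q_8,k),$$
is not realisable, proved in \cite[Lemma~2.23]{L} via matrix Massey products. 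Without this (or an equivalent explicit cocycle computation showing $m_3$ is not a Hochschild coboundary), your plan does not establish the second half of the theorem. The Massey-product alternative you sketch would also need a concrete triple $(a,b,c)$ with vanishing pairwise products and a verified lower bound on indeterminacy; inverting $S$ alone does not automatically produce such a triple, since $X^3=0$ already holds in $H^*(Q_8,k)$ before localisation and the relevant indeterminacy lives in degrees where $\hat H^*$ is far from zero.
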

Furthermore, Langer shows  the existence of a non-realisable module \cite[Lemma 2.23]{L}:
 Write $\hat H$ for $\hat H ^*(Q_8,k)$. The cokernel of the map
$$g\colon \hat H[-1] \oplus \hat H[-1] \xto{\smatrix{Y & X+Y\\ X & Y}} \hat H \oplus \hat H$$
is not a realisable module. 

Since $r_2(Q_8)=1$ (see Theorem \ref{when rank is one}), it follows from Proposition \ref{rank1real} that $\Coker g$, viewed as module over $H^*(Q_8,k)$, is not realisable. We obtain
\begin{cor}\label{groupquaternion}
Denote by $Q_8$ be the quaternion group and let $k$ be a field of characteristic two. Then $$H^*(Q_8,k) = k[X,Y,S]/(X^2+Y^2 = XY, X^3 = Y^3 = 0, X^2Y = XY^2),$$ with $\vert X \vert = \vert Y \vert = 1, \vert S \vert =4$.  The canonical class $\mu_{Q_8}$ is non-trivial. \qed\end{cor}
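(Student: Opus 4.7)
The plan is to deduce both assertions directly from results already established in the excerpt, using Langer's computation of $\hat H^*(Q_8,k)$ as the starting point. First I would establish the ring structure of $H^*(Q_8,k)$. Since $Q_8$ is a generalised quaternion group and $p=2$, Theorem~\ref{when rank is one} gives $r_2(Q_8)=1$. By Theorem~\ref{periodic-rank}, the trivial $kQ_8$-module then admits a periodic projective resolution, and Lemma~\ref{periodic-loc} yields $\hat H^*(Q_8,k) \cong T^{-1} H^*(Q_8,k)$, where $T$ is the multiplicative set generated by the periodicity generator of lowest degree. Inspecting Langer's description in Theorem~\ref{Tatequaternion}, the generator $S$ of degree $4$ is the unique invertible homogeneous generator, since the relations $X^3=Y^3=0$ preclude $X$ or $Y$ from becoming invertible in any localisation. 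Hence $S$ must be the periodicity generator, and $H^*(Q_8,k)$ identifies with the subring of non-negative degrees of $\hat H^*(Q_8,k)$, which by Langer's formula is precisely $k[X,Y,S]/(X^2+Y^2=XY,\, X^3=Y^3=0,\, X^2Y=XY^2)$.

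For the second assertion, I would argue by contraposition using the non-realisable $\hat H^*(Q_8,k)$-module $\Coker g$ supplied in the paragraph preceding the corollary. The passage just before invokes Proposition~\ref{rank1real}(2), which applies because $r_2(Q_8)=1$, to conclude that $\Coker g$ remains non-realisable when restricted along $H^*(Q_8,k)\hookrightarrow \hat H^*(Q_8,k)$ to a module over the group cohomology ring. Now Theorem~\ref{BKSglobal} asserts that if $\mu_{Q_8}\in \HH^{3,-1}(H^*(Q_8,k))$ were trivial, then every graded $H^*(Q_8,k)$-module would be realisable. The existence of the non-realisable module $\Coker g$ therefore forces $\mu_{Q_8}$ to be non-zero.

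I anticipate no substantial obstacle: both halves of the corollary reduce to straightforward applications of results already proved earlier in the thesis. The only small verification is the identification of $S$ as the periodicity generator of lowest degree, but this is immediate from the relations in $\hat H^*(Q_8,k)$ listed in Theorem~\ref{Tatequaternion}. The essential content — the construction of a non-realisable module and the machinery connecting local obstructions to the canonical class — has already been carried out in the preceding sections, so this corollary serves as a clean concluding application of Proposition~\ref{rank1real} together with Theorem~\ref{BKSglobal}.
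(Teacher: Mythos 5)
Your proof is correct and takes essentially the same route as the paper: Langer's non-realisable module $\Coker g$ over $\hat H^*(Q_8,k)$, restricted along $H^*(Q_8,k)\hookrightarrow \hat H^*(Q_8,k)$ via Proposition~\ref{rank1real}(2) (valid since $r_2(Q_8)=1$), yields a non-realisable $H^*(Q_8,k)$-module, and the contrapositive of Theorem~\ref{BKSglobal} then forces $\mu_{Q_8}\neq 0$. Your extra justification of the ring presentation via periodicity is fine, though the identification of $H^*(G,k)$ with the non-negative part of $\hat H^*(G,k)$ is a general fact from the Tate cohomology section and does not actually depend on pinpointing $S$ as the periodicity generator.
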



One might wonder whether it always holds true that $\mu_G$ is trivial if and only if $\hat{\mu}_G$ is trivial. But in general, this is not the case. For any finite abelian  $2$-group $G$, the class $\mu_G$ is trivial by Proposition \ref{ext3} and Remark \ref{m3Kunneth}. However, the situation for Tate cohomology is entirely different when it comes to the Klein four group:
\begin{thm}\cite[Exm.\ 7.7]{BKS} \cite[Thm.\ 3.1]{L}\label{Langer}
Let $G = \bbZ_2 \times \bbZ_2$ and $k$ be a field with $\Char (k) = 2$. Then $\hat{\mu}_{\bbZ_2}$ is non-trivial. For any other finite abelian $2$-group, the canonical class is trivial.
\end{thm}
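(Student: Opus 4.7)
The proof naturally splits by the structure of $G$: cyclic 2-groups, the Klein four group $\mathbb{Z}_2\times\mathbb{Z}_2$, and abelian 2-groups of rank $\geq 2$ other than Klein four. The cyclic case is immediate: by Theorem \ref{tate3}, $\hat\mu_{\mathbb{Z}_{2^n}}=0$ for all $n\geq 1$ (note that $p=3$ is excluded in characteristic $2$). So the work is concentrated in the remaining two cases.

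For the non-triviality when $G=\mathbb{Z}_2\times\mathbb{Z}_2$, I would argue via Theorem \ref{BKSglobal}: if $\hat\mu_G$ were zero, then every graded $\hat H^*(G,k)$-module would be realisable, so it suffices to exhibit a single non-realisable module. Here $\hat H^*(G,k)\cong k[x,y]\ltimes \Sigma^{-2}D(k[x,y])$ is a trivial extension (Remark \ref{shapeTate}), with $|x|=|y|=1$. Following the pattern of Langer's argument for $Q_8$ recalled after Theorem \ref{Tatequaternion}, I would build a non-realisable module as the cokernel of an appropriate $2\times 2$ matrix over $\hat H^*(G,k)$ (for instance a matrix with entries in the maximal ideal $(x,y)$ whose determinant is a zero divisor in $\hat H^*(G,k)$). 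Non-realisability of this cokernel amounts to showing $\kappa(\text{Coker})\neq 0$, which reduces to a triple Massey product computation in $\hat H^*(G,k)$: one selects cocycle representatives for $x,y$ on a Tate resolution of $k$ over $k[s,t]/(s^2,t^2)$ and checks that the ternary Massey product $\langle x,y,x\rangle$ (or a similar one) hits a non-zero class in the negative part of $\hat H^*(G,k)$. By Construction \ref{m3construction}, this triple Massey product \emph{is} the secondary multiplication $m_3$ evaluated on $(x,y,x)$, and its non-vanishing modulo coboundaries gives both non-realisability of the module \emph{and} non-triviality of $\hat\mu_G$ itself.

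For the triviality in the remaining cases, suppose $G$ is a finite abelian $2$-group of rank $\geq 2$ that is not Klein four. Then $G$ admits a decomposition $G=H\times C$ with $C$ a non-trivial cyclic group, and crucially we can always arrange that $|C|\geq 4$ unless $G=(\mathbb{Z}_2)^n$ with $n\geq 3$. In the first subcase, $kG\cong kH\otimes_k kC$ as Hopf algebras, and I would lift this to a quasi-isomorphism of dg endomorphism algebras $\END(tk_G)\simeq \END(tk_H)\otimes_k\END(tk_C)$ (using that the tensor product of Tate resolutions provides a Tate resolution of $k\otimes k=k$ over $kG$). Applying Remark \ref{m3Kunneth} together with Theorem \ref{tate3} (which gives $m_3^{\hat H^*(C,k)}=0$ for cyclic $2$-groups $C$ of order $\geq 4$), every summand of the Künneth cocycle representing $\hat\mu_G$ is either a Hochschild coboundary or vanishes identically, so $\hat\mu_G=0$. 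For the elementary abelian case $G=(\mathbb{Z}_2)^n$ with $n\geq 3$, I would compute $\HH^{3,-1}(\hat H^*(G,k))$ directly from the trivial extension description $\hat H^*(G,k)=k[x_1,\dots,x_n]\ltimes \Sigma^{-n}D(k[x_1,\dots,x_n])$ and show that every $(3,-1)$-Hochschild cocycle is a coboundary, using the symmetry between the $n$ generators and the fact that the negative part sits in degrees $\leq -n$.

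The main obstacle is precisely the last subcase $G=(\mathbb{Z}_2)^n$, $n\geq 3$. The naive induction $(\mathbb{Z}_2)^n=(\mathbb{Z}_2)^{n-1}\times \mathbb{Z}_2$ breaks down because for $n=3$ the first factor is Klein four, where $\hat\mu$ is \emph{non-trivial}; Künneth (Remark \ref{m3Kunneth}) then produces a non-zero cocycle, and one must show this cocycle becomes a coboundary only after passing to the enveloping algebra of the larger ring $\hat H^*((\mathbb{Z}_2)^3,k)$. Executing this cancellation---i.e.\ exhibiting the explicit bounding cochain, or performing the Hochschild calculation that proves the $(3,-1)$-group vanishes in the relevant bidegree---is where the real work lies, and where I would expect to have to invoke the full polynomial-ring structure on $n\geq 3$ generators rather than any formal argument.
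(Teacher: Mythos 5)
Your strategy fails at a structural level on the ``other abelian $2$-groups'' side, because Remark~\ref{m3Kunneth} cannot be applied to Tate cohomology of a product of groups. The claimed quasi-isomorphism $\END(tk_G)\simeq\END(tk_H)\otimes_k\END(tk_C)$ for $G=H\times C$ is false: the cohomology of the right-hand side is $\hat H^*(H,k)\otimes_k\hat H^*(C,k)$ by the Künneth theorem for chain complexes, whereas the left-hand side has cohomology $\hat H^*(G,k)$, and these are \emph{not} isomorphic as graded rings. For instance with $H=C=\bbZ_2$, the ring $\hat H^*(\bbZ_2,k)\otimes_k\hat H^*(\bbZ_2,k)\cong k[X^{\pm 1}]\otimes_k k[Y^{\pm 1}]$ is infinite-dimensional in every degree, while $\hat H^*(\bbZ_2\times\bbZ_2,k)$ is finite-dimensional in every degree (its nonnegative part is the polynomial ring $k[x,y]$). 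The underlying issue is that the tensor product of two complete resolutions is acyclic but is \emph{not} a complete resolution of $k$ over $k(H\times C)$; only projective and injective resolutions respect the tensor decomposition, so only $\END(ik_{G_1})\otimes\END(ik_{G_2})\simeq\END(ik_G)$ and hence only ordinary group cohomology has a Künneth formula for products of groups. This is exactly the point the paper flags right before stating the theorem: the class $\mu_G$ of \emph{group} cohomology is killed by Proposition~\ref{ext3} and Remark~\ref{m3Kunneth} for all finite abelian $2$-groups, but ``the situation for Tate cohomology is entirely different,'' and the triviality statement must be cited from Langer. Your reduction of a general abelian $2$-group to a factor with $|C|\geq 4$ therefore collapses, and the same confusion persists in your final paragraph where you write that Künneth ``produces a non-zero cocycle'' for $(\bbZ_2)^n$ from the Klein-four factor.

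The cyclic case (via Theorem~\ref{tate3}) is fine. The non-triviality of $\hat\mu_{\bbZ_2\times\bbZ_2}$ via a Massey-product computation is the right idea, though your specific claim that $\langle x,y,x\rangle$ for $|x|=|y|=1$ ``hits a non-zero class in the negative part'' is off: that bracket has degree $1+1+1-1=2\geq 0$. One needs to test $m_3$ on elements including a negative-degree class (e.g.\ using the one-dimensional $\hat H^{-1}\cong k$) or argue in a way that does not require a negatively graded target. More importantly, note that the paper offers no proof of this theorem at all; it is a pure citation of \cite[Exm.~7.7]{BKS} and \cite[Thm.~3.1]{L}, so a full proof is genuinely beyond the scope of what the paper supplies.
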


\subsubsection{Reduction to Sylow subgroups}\label{secsylow}
Let $k$ be a field of characteristic $p > 0$ and $G$ be a finite group such that $p$ divides the order of $G$. Let $P$ a $p$-Sylow subgroup of $G$. Benson, Krause and Schwede \cite{BKS} have shown that the canonical class $\hat \mu_G$ is already determined by $\hat \mu_P$. In order to compare $\mu_G$ and $\hat \mu_G$ in more cases, we now briefly   explain why the same holds true for $\mu_G$ and $\mu_P$.


Let $P$ be a subgroup of $G$. If $M, N$ are $kG$-modules, we define the \emph{transfer} or \emph{trace map} 
$$\Tr \colon \Hom_{kP}(M,N) \to \Hom_{kG}(M,N)$$
as follows: 
$$\Tr_{P,G}(\Phi)(m) = \sum_{i \in I} g_i \Phi(g_i^{-1}m),$$
where $\{g_i \;|\; i \in I\}$ is any choice of left coset representatives of $P$ in $G$. See \cite[Ch.\ 3.6]{Bbook1} for details.
The transfer $\Tr$ induces a well-defined map
$$\Tr\colon \hat{H}^n(P,k) \to \hat{H}^n(G,k)$$
for $n \in \bbZ$, independent on the choice of the resolution \cite[Lemma 3.6.16]{Bbook1}. 

By considering a $kG$-Tate resolution of the trivial module as $kP$-Tate resolution of $k$ regarded as $kP$-module, we obtain for $n \in \bbZ$ a restriction map
$$\res \colon  \hat{H}^n(G,k) \to \hat{H}^n(P,k).$$

\begin{lem}\cite[Ch.\ 3.6]{Bbook1}\label{res}
The composition $\Tr  \circ \res\colon \hat{H}^n(G,k) \to \hat{H}^n(G,k)$ is given by multiplication with $[G:P]$.
\end{lem}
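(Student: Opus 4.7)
The plan is to prove Lemma~\ref{res} by a direct chain-level computation, exploiting the fact that the transfer and restriction have very transparent descriptions once one represents Tate cohomology classes as $kG$-linear maps between Tate resolutions. The main obstacle is essentially bookkeeping: we must be careful that our chain-level formula for $\Tr$ agrees with the cohomological one, and that $\res$ really does amount to ``forget $kG$-linearity, remember only $kP$-linearity''. Once this is set up, the actual calculation collapses to a one-line observation.

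First I would fix a complete resolution $tk$ of the trivial $kG$-module. Since $kG$ is free as a left $kP$-module on the coset representatives $\{g_i\}_{i\in I}$, each term of $tk$ is also $kP$-projective, so $tk$ can simultaneously be used as a $kP$-Tate resolution of $k$. Consequently, using the identification $\hat H^n(G,k)\cong\uHom_{kG}(k,\Omega^{-n}k)$ and its $kP$-analogue from Chapter \ref{sec group and Tate}, a class $\zeta\in\hat H^n(G,k)$ is represented by a $kG$-linear cochain map $\varphi\colon tk\to \Sigma^n tk$, and $\res(\zeta)$ is represented by the very same map $\varphi$, now only regarded as $kP$-linear.

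Next I would recall that the transfer at the chain level sends a $kP$-linear map $\psi\colon tk\to \Sigma^n tk$ to
\[
  \Tr(\psi)(x) \;=\; \sum_{i\in I} g_i\,\psi(g_i^{-1} x),
\]
which is $kG$-linear and, up to nullhomotopies, independent of the choice of coset representatives. This is precisely the description of $\Tr\colon \hat H^n(P,k)\to \hat H^n(G,k)$ on cocycles.

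Combining these two steps, the composition is immediate: for $\zeta=[\varphi]\in\hat H^n(G,k)$ with $\varphi$ a $kG$-linear representative, we compute
\[
  (\Tr\circ\res)(\varphi)(x)\;=\;\sum_{i\in I} g_i\,\varphi(g_i^{-1}x)\;=\;\sum_{i\in I} g_i g_i^{-1}\,\varphi(x)\;=\;[G:P]\cdot\varphi(x),
\]
where in the middle equality we have used that $\varphi$ is already $kG$-linear, so in particular $kP$-linearity was never the whole of the available structure. Passing to cohomology gives $\Tr\circ\res = [G:P]\cdot\id_{\hat H^n(G,k)}$, as required. The only remaining point is to check that the resulting class is independent of the choice of coset representatives, which follows since two choices differ by elements of $P$, and the $kG$-linearity of $\varphi$ absorbs such differences into the same sum.
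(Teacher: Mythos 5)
The paper does not prove Lemma~\ref{res}, instead citing it from Benson's book; your argument is the standard one that the cited reference gives, and it is correct. The key step — that a $kG$-linear chain-map representative $\varphi$ allows one to pull $g_i^{-1}$ through $\varphi$, so the transfer of its restriction collapses to $\sum_{i}\varphi(x)=[G:P]\varphi(x)$ — is exactly right, and your use of $tk$ as a simultaneous $kP$-Tate resolution (valid since $kG$ is free over $kP$) is the expected justification for the chain-level description of $\res$.
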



Note that the restriction map induces morphisms of graded algebras $\res\colon H^*(G,k) \to H^*(P,k)$ and
$\res\colon \hat{H}^*(G,k) \to \hat{H}^*(P,k)$. Moreover, if $P$ is a $p$-Sylow subgroup of $G$, then these algebra morphisms are split monomorphisms by Lemma \ref{res}.

\begin{thm} \label{TateSylowreduction}\cite[Thm.\ 8.3]{BKS} 
Let $k$ be a field of characteristic $p>0$, $G$ a finite
group and $P$ a $p$-Sylow subgroup of $G$. Then the canonical class
$\hat{\mu}_G\in\HH^{3,-1}(\hat H^*(G,k))$ is determined by the canonical class
$\hat{\mu}_P\in\HH^{3,-1}(\hat H^*(P,k))$ of the Sylow subgroup, and is given by the formula
\[ \hat{\mu}_G \ = 
\ \frac{\Tr \circ \hat{\mu}_P \circ \res^{\otimes 3}}{[G:P]} \ . \] 
\end{thm}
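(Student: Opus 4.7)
The plan is to leverage that the restriction map $\res\colon \hat H^*(G,k)\to\hat H^*(P,k)$ is a split monomorphism of graded algebras, with section $\tfrac{1}{[G:P]}\Tr$. Indeed, Lemma~\ref{res} gives $\Tr\circ\res=[G:P]\cdot\id$; since $p$ does not divide $[G:P]$, the factor $[G:P]$ is invertible in $k$. Hence $\res$ is a monomorphism, and we are in the situation of Proposition~\ref{H*mono}, provided we can realise $\res$ as the cohomology of a morphism of dg algebras. For this, fix a Tate resolution $tk$ of the trivial $kG$-module $k$; restricting the action to $P$, the complex $tk$ becomes a Tate resolution of $k$ as $kP$-module, and the natural inclusion
\[
\alpha\colon \END_{kG}(tk)\hookrightarrow \END_{kP}(tk)
\]
is a morphism of dg algebras whose cohomology is the restriction $\res\colon \hat H^*(G,k)\to \hat H^*(P,k)$.

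Next, I invoke Proposition~\ref{H*mono}: given any choice of secondary multiplication $m_3^{\hat H^*(G,k)}$ (with its auxiliary data $f_1,f_2$), we may extend it to a secondary multiplication $m_3^{\hat H^*(P,k)}$ on the target satisfying the strict equality
\[
\res\circ m_3^{\hat H^*(G,k)} \;=\; m_3^{\hat H^*(P,k)}\circ \res^{\otimes 3}
\]
in $\Hom_k^{-1}(\hat H^*(G,k)^{\otimes 3},\hat H^*(P,k))$. Composing both sides on the left with $\tfrac{1}{[G:P]}\Tr$ and using $\Tr\circ\res=[G:P]\cdot\id$ on the left hand side yields
\[
m_3^{\hat H^*(G,k)} \;=\; \tfrac{1}{[G:P]}\,\Tr\circ m_3^{\hat H^*(P,k)}\circ \res^{\otimes 3}.
\]
This already gives the formula on the cochain level; it only remains to verify that this identification descends to Hochschild classes, i.e.\ that the right hand side is a well-defined $(3,-1)$-Hochschild cocycle on $\hat H^*(G,k)$ with values in $\hat H^*(G,k)$.

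The key ingredient here is the \emph{projection formula} (Frobenius reciprocity): for $x\in\hat H^*(G,k)$ and $y\in\hat H^*(P,k)$, one has $\Tr(\res(x)\cdot y)=x\cdot \Tr(y)$ and similarly on the right. This says precisely that $\Tr\colon \hat H^*(P,k)\to \hat H^*(G,k)$ is a homomorphism of $(\hat H^*(G,k),\hat H^*(G,k))$-bimodules, where $\hat H^*(P,k)$ carries the bimodule structure pulled back via $\res$. Consequently, $\tfrac{1}{[G:P]}\Tr\circ(-)\circ\res^{\otimes 3}$ is a chain map on Hochschild complexes
\[
C^{*,*}(\hat H^*(G,k),\hat H^*(P,k))\ \longrightarrow\ C^{*,*}(\hat H^*(G,k),\hat H^*(G,k)),
\]
so it sends cocycles to cocycles and coboundaries to coboundaries. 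Passing to cohomology and applying Proposition~\ref{m3Hochschild} finally identifies the class of $m_3^{\hat H^*(G,k)}$ with $\hat\mu_G = \tfrac{1}{[G:P]}\,\Tr\circ\hat\mu_P\circ\res^{\otimes 3}$.

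The main obstacle, in my view, is the bookkeeping at the chain level: one must check that the model of restriction chosen for Proposition~\ref{H*mono} (i.e.\ the compatibility of $f_1,f_2$ under $\alpha$) is compatible with the projection formula, so that the trace is genuinely a morphism of bimodule Hochschild complexes rather than merely a $k$-linear map. Once this compatibility is verified, the rest of the argument is formal: split monomorphism of algebras, together with Proposition~\ref{H*mono} and the projection formula, forces the displayed equality.
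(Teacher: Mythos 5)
Your proof is correct and follows essentially the route used in the cited reference \cite{BKS} and indicated by the paper's commentary right after the statement: realise $\res$ as the cohomology of the dg algebra inclusion $\END_{kG}(tk)\hookrightarrow\END_{kP}(tk)$, transport the canonical class along it, and use the projection formula together with $\Tr\circ\res=[G:P]\cdot\id$ to split off the trace. Two remarks.

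First, the chain-level refinement from Proposition~\ref{H*mono} is not actually needed, and invoking it is what creates the ``main obstacle'' you worry about at the end. Proposition~\ref{aboutm3}(1) already gives the class-level identity $\res\circ\hat\mu_G = \hat\mu_P\circ\res^{\otimes 3}$ in $\HH^{3,-1}(\hat H^*(G,k),\hat H^*(P,k))$, where $\hat H^*(P,k)$ is a $\hat H^*(G,k)$-bimodule via $\res$. The projection formula says precisely that $\tfrac{1}{[G:P]}\Tr\colon\hat H^*(P,k)\to\hat H^*(G,k)$ is a morphism of $\hat H^*(G,k)$-bimodules, and $\tfrac{1}{[G:P]}\Tr\circ\res=\id$, so pushing the displayed identity forward along $\tfrac{1}{[G:P]}\Tr$ immediately yields the theorem's formula on classes. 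In particular the bimodule property of $\Tr$ is an intrinsic fact about the maps $\Tr$ and $\res$ and interacts with none of the auxiliary choices $f_1,f_2$ on either side, so there is no compatibility to verify. Second, a small slip: the source of the chain map in your display should be $C^{*,*}(\hat H^*(P,k),\hat H^*(P,k))$, not $C^{*,*}(\hat H^*(G,k),\hat H^*(P,k))$; precomposition with $\res^{\otimes n}$ is exactly what moves the algebra argument from $\hat H^*(P,k)$ to $\hat H^*(G,k)$.
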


The key point of the proof is that the restriction map in Tate cohomology is induced by a morphism of dg algebras. 
Since similarly, $\res\colon H^*(G,k) \to H^*(P,k)$ is induced by the inclusion dg algebras
$$\END_{kG}(ik) \to \END_{kP}(ik)$$
given by viewing the $kG$-injective resolution $ik$ as injective resolution over $kP$, we obtain 

\begin{lem}\label{GroupSylowreduction}
Let $k$ be a field of characteristic $p>0$, $G$ a finite
group and $P$ a $p$-Sylow subgroup of $G$. Then for the canonical classes
${\mu}_G\in\HH^{3,-1}( H^*(G,k))$ and \linebreak ${\mu}_P\in\HH^{3,-1}( H^*(P,k))$, it holds
\begin{align*}
\quad &\quad &\quad &\quad &\quad &\quad & &\hfill&\hfill&\hfill&\hfill&&&\hfill&\hfill&\hfill \hfill \hfill \hfill\hfill\hfill\ \hfill\hfill\hfill&&&&
{\mu}_G  = \frac{\Tr \circ {\mu}_P \circ \res^{\otimes 3}}{[G:P]}\; .
&\hfill&\hfill &\hfill&\hfill&\hfill&&&&&&\quad&\hfill&\hfill&\hfill&\hfill \hfill \hfill \hfill\hfill\hfill\quad &\quad &\quad &\qed
\end{align*}
\end{lem}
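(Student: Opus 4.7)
The plan is to mirror the argument sketched for Theorem~\ref{TateSylowreduction} in the excerpt, replacing the Tate setup by the group-cohomology setup. The essential geometric input is that since $kG$ is free as a $kP$-module, a $kG$-projective module is also $kP$-projective, and since both $kG$ and $kP$ are Frobenius algebras (so injective $=$ projective), any $kG$-injective resolution $ik$ of the trivial module is simultaneously a $kP$-injective resolution of $k$ viewed as trivial $kP$-module. Consequently the inclusion
$$\iota\colon \END_{kG}(ik)\hookrightarrow \END_{kP}(ik)$$
is a genuine morphism of dg algebras, and by construction $H^{*}\iota$ is the restriction map $\res\colon H^*(G,k)\to H^*(P,k)$.

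Next I would apply Proposition~\ref{aboutm3}(1) to the dg algebra morphism $\iota$. Regarding $H^*(P,k)$ as an $(H^*(G,k),H^*(G,k))$-bimodule via $\res$, this yields the equality
$$\res\circ\mu_G \;=\; \mu_P \circ \res^{\otimes 3}$$
in the Hochschild group $\HH^{3,-1}(H^*(G,k),H^*(P,k))$.

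Then I would post-compose both sides with the transfer $\Tr\colon H^*(P,k)\to H^*(G,k)$. By Frobenius reciprocity at the cochain level one has $\Tr(\res(\alpha)\cdot x\cdot\res(\beta))=\alpha\cdot \Tr(x)\cdot\beta$, so $\Tr$ is a morphism of $(H^*(G,k),H^*(G,k))$-bimodules. Therefore post-composition with $\Tr$ induces a well-defined map
$$\Tr_{*}\colon \HH^{*,*}(H^*(G,k),H^*(P,k))\;\longrightarrow\; \HH^{*,*}(H^*(G,k)),$$
and applying $\Tr_{*}$ to the identity above gives
$$(\Tr\circ\res)\circ\mu_G \;=\; \Tr\circ\mu_P\circ\res^{\otimes 3}$$
in $\HH^{3,-1}(H^*(G,k))$. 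By Lemma~\ref{res}, $\Tr\circ\res$ is multiplication by $[G:P]$ on $H^*(G,k)$, hence also on the Hochschild group by $H^*(G,k)$-bilinearity. Because $P$ is a $p$-Sylow subgroup, $[G:P]$ is coprime to $p=\Char k$ and thus invertible in $k$, so dividing by $[G:P]$ yields the stated formula.

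The only points requiring care are verifying that the transfer genuinely is an $H^*(G,k)$-bimodule map (so that $\Tr_{*}$ is defined on Hochschild cohomology with coefficients), and that $ik$ really is a $kP$-injective resolution; both are standard, but they are the only non-formal ingredients in the argument, the rest being a direct translation of the proof of Theorem~\ref{TateSylowreduction}.
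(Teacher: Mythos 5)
Your proof is correct and follows the same route as the paper, which just points out that the inclusion $\END_{kG}(ik)\hookrightarrow\END_{kP}(ik)$ realises $\res$ as the cohomology of a dg algebra morphism and then invokes the argument of Benson, Krause and Schwede for the Tate case; you have essentially filled in the details of that transfer argument (apply Proposition~\ref{aboutm3}(1), push forward along $\Tr$ using that it is a bimodule map by Frobenius reciprocity, and divide by the unit $[G:P]$), all of which is sound.
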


Benson, Krause and Schwede concluded in particular that if $G$ is a group whose $p$-Sylow group is cyclic with order different from $3$, then $\hat \mu_G$ is trivial \cite[Cor.\ 8.4]{BKS}. We infer from Lemma \ref{GroupSylowreduction} and Proposition \ref{ext3} that the same holds true for $\mu_G$.
\begin{cor}
Let $k$ be a field of characteristic $p>0$ and $G$ be a group whose $p$-Sylow subgroup is cyclic of order $p^n, n \geq 1$. Suppose that $n \geq 2$ if $p=3$. Then both the Hochschild classes $\mu_G$ and $\hat{\mu}_G$ are trivial.
\end{cor}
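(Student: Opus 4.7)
The plan is to apply the Sylow reduction formulas (Lemma~\ref{GroupSylowreduction} and Theorem~\ref{TateSylowreduction}) to deduce the vanishing of $\mu_G$ and $\hat\mu_G$ from the vanishing of $\mu_P$ and $\hat\mu_P$, where $P$ is the specified $p$-Sylow subgroup. The hypothesis is engineered exactly so that $P$ is a cyclic $p$-group of order $p^n$ with $(p,n)\neq(3,1)$, which is precisely the range in which Proposition~\ref{ext3} and Theorem~\ref{tate3} guarantee that both $\mu_P \in \HH^{3,-1}(H^*(P,k))$ and $\hat\mu_P \in \HH^{3,-1}(\hat H^*(P,k))$ vanish.

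First I would verify that the Sylow reduction formulas make sense in our setting: the factor $\frac{1}{[G:P]}$ is defined because $[G:P]$ is coprime to $p$ (as $P$ is a $p$-Sylow subgroup) and therefore invertible in the field $k$ of characteristic~$p$. Then I would observe that the restriction map $\res\colon H^*(G,k)\to H^*(P,k)$ (respectively $\res\colon \hat H^*(G,k)\to\hat H^*(P,k)$) is an algebra homomorphism, and the transfer $\Tr$ is a map of graded modules commuting with the Hochschild differential, so that if $\mu_P$ is represented by a Hochschild coboundary $\delta\eta$ in $\Hom_k^{-1}(H^*(P,k)^{\otimes 3},H^*(P,k))$, then
\[
\Tr\circ\mu_P\circ\res^{\otimes 3} \;=\; \Tr\circ(\delta\eta)\circ\res^{\otimes 3} \;=\; \delta\bigl(\Tr\circ\eta\circ\res^{\otimes 3}\bigr),
\]
and hence represents the zero class in $\HH^{3,-1}(H^*(G,k))$. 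The same argument applies with $\hat H^*$ in place of $H^*$.

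Next I would invoke Proposition~\ref{ext3} (for group cohomology) and Theorem~\ref{tate3} (for Tate cohomology): since $P$ is cyclic of order $p^n$ and $(p,n)\neq(3,1)$ by the hypothesis ``$n\geq 2$ if $p=3$'', both propositions yield $\mu_P = 0$ and $\hat\mu_P = 0$ in the respective Hochschild groups. Combining this with the Sylow reduction formulas
\[
\mu_G \;=\; \frac{\Tr\circ\mu_P\circ\res^{\otimes 3}}{[G:P]}, \qquad \hat\mu_G \;=\; \frac{\Tr\circ\hat\mu_P\circ\res^{\otimes 3}}{[G:P]},
\]
and the preceding observation that $\Tr\circ(-)\circ\res^{\otimes 3}$ sends Hochschild coboundaries to Hochschild coboundaries, we conclude that $\mu_G = 0$ in $\HH^{3,-1}(H^*(G,k))$ and $\hat\mu_G = 0$ in $\HH^{3,-1}(\hat H^*(G,k))$.

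There is no real obstacle in this argument; it is a direct application of the two reduction formulas and the already-computed examples for cyclic $p$-groups. The only subtlety worth flagging explicitly in the write-up is the remark that the hypothesis $n\geq 2$ for $p=3$ is exactly the ``sharp'' condition needed, since $\mathbb Z/3$ is the unique cyclic $p$-group for which the canonical classes are known to be non-trivial.
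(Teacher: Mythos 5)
Your proof is correct and follows essentially the same route as the paper: combine the Sylow reduction formulas (Lemma~\ref{GroupSylowreduction} for $\mu_G$, Theorem~\ref{TateSylowreduction} for $\hat\mu_G$) with the computations for cyclic $p$-groups in Proposition~\ref{ext3} and Theorem~\ref{tate3}. One small slip: in the display showing that the operation sends coboundaries to coboundaries, the final expression should read $\delta\bigl(\Tr\circ\eta\circ\res^{\otimes 2}\bigr)$ rather than $\res^{\otimes 3}$, since $\eta$ is a degree-2 Hochschild cochain; the identity also uses the projection formula (Frobenius reciprocity) $\Tr(\res(\lambda)\cdot x)=\lambda\cdot\Tr(x)$, which is worth stating explicitly even though it is already implicit in the well-definedness of the reduction formulas.
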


In order to investigate the case $P = \bbZ/(3)$, one needs to distinguish whether $G$ is $3$-nilpotent or not:
\begin{defn}
Let $G$ be a finite group and let $P$ be a $p$-Sylow subgroup. $G$ is called \emph{$p$-nilpotent} if there exists a normal subgroup $U \trianglelefteq G$ such that the composite
$$P \xto{\mathrm{incl}} G \xto{\mathrm{proj}} G/U$$
is an isomorphism.
\end{defn}

\begin{prop}\cite[Prop.\ 8.5]{BKS}
Let $k$ be a field of characteristic $p>0$ and $G$ be a finite group with $p$-Sylow subgroup $P$. The following conditions are equivalent:
\begin{itemize}
\item[(1)] $\res\colon \hat{H}^*(G,k) \to \hat{H}^*(P,k)$ is an isomorphism.
\item[(2)] $\res\colon H^*(G,k) \to H^*(P,k)$  is an isomorphism.
\item[(3)] $\res\colon H^1(G,k) \to H^1(P,k)$ is an isomorphism.
\item[(4)] The group $G$ is $p$-nilpotent.
\end{itemize}
\end{prop}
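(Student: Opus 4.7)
The plan is to verify the chain of implications $(1)\Rightarrow(2)\Rightarrow(3)\Rightarrow(4)\Rightarrow(1)$, since the two ``hard'' ones pull in opposite directions: $(3)\Rightarrow(4)$ is a classical group-theoretic statement (Tate's $p$-nilpotency theorem), and $(4)\Rightarrow(1)$ is a cohomological computation exploiting that the ``prime-to-$p$ part'' contributes nothing.

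For $(1)\Rightarrow(2)$, I would use the description from Section \ref{sec group and Tate}: the canonical map $H^*(G,k)\to\hat H^*(G,k)$ identifies $H^*(G,k)$ with the non-negatively graded part of $\hat H^*(G,k)$ (equality in positive degrees, and the component in degree $0$ coincides modulo the ideal $\I(k,k)$, which vanishes once projective summands are ignored). The restriction map respects this inclusion, so the restriction of an isomorphism is again an isomorphism. The implication $(2)\Rightarrow(3)$ is trivial.

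The implication $(3)\Rightarrow(4)$ is where the essential content lies, and I expect this to be the main obstacle. It is a purely group-theoretic fact (Tate): since $H^1(G,\mathbb{F}_p)=\Hom(G,\mathbb{F}_p)$ computes the largest elementary abelian $p$-quotient, an isomorphism $\res\colon H^1(G,k)\xto{\cong}H^1(P,k)$ forces $G$ and $P$ to have the same maximal elementary abelian $p$-factor group. One then combines this with the transfer (Lemma~\ref{res}, which gives $\Tr\circ\res=[G:P]$ and hence splits $\res$ off $\Tr$) and the focal subgroup theorem to conclude that the normal closure of $[G,G]\cap P$ in $G$ has index equal to $|P|$, producing a normal complement $U\trianglelefteq G$ of $p'$-order. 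This is classical but external to the paper, so the cleanest presentation is to invoke Tate's theorem directly.

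Finally, $(4)\Rightarrow(1)$ reduces to a straightforward cohomological calculation. Writing $G=U\rtimes P$ with $|U|$ invertible in $k$, the Lyndon--Hochschild--Serre spectral sequence
\[ E_2^{s,t}=H^s(P,H^t(U,k))\Longrightarrow H^{s+t}(G,k) \]
collapses, because $H^t(U,k)=0$ for $t>0$ (Maschke, since $p\nmid|U|$) and $H^0(U,k)=k$ as a trivial $P$-module. Thus inflation along $G\twoheadrightarrow G/U\cong P$ yields an isomorphism $H^*(P,k)\xto{\cong}H^*(G,k)$, and composition with $\res\colon H^*(G,k)\to H^*(P,k)$ is the identity (the inflation is a section of restriction along a splitting $P\hookrightarrow G$), forcing $\res$ to be an isomorphism. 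This proves $(4)\Rightarrow(2)$. To upgrade to $(4)\Rightarrow(1)$, I would observe that the same averaging argument applies to any Tate resolution: if $tk$ is a $kP$-Tate resolution, then $kG\otimes_{kP}tk$ (induced up to $G$) is a $kG$-Tate resolution, and the isomorphism $\res\colon\uHom_{kG}(k,\Omega^{-n}k)\to\uHom_{kP}(k,\Omega^{-n}k)$ in each degree $n\in\mathbb Z$ follows from $H^0(U,-)$ being exact on $kU$-modules. Alternatively, one invokes Tate duality (Proposition~\ref{Tateduality}): since restriction is natural, an isomorphism in non-negative degrees combined with duality $\hat H^{-n}(G,k)\cong D\hat H^{n-1}(G,Dk)$ and the analogous statement for $P$ extends the isomorphism to negative degrees as well. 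Either route completes the cycle.
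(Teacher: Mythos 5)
The paper itself gives no proof of this proposition; it simply cites \cite[Prop.\ 8.5]{BKS}. So what follows is an assessment of your reconstruction rather than a comparison with an argument in the text. Your overall architecture is the right one: $(1)\Rightarrow(2)\Rightarrow(3)$ is routine (and your argument for $(1)\Rightarrow(2)$ should really only invoke that $\hat H^n=H^n$ for $n>0$ and that both $H^0$ are $k$ with $\res$ the identity; there is no need to fuss about degree $0$ of Tate cohomology, though in fact $\hat H^0(G,k)\cong k$ too once $p\mid|G|$). The substance is in $(3)\Rightarrow(4)$, which is indeed Tate's $p$-nilpotency criterion, and in $(4)\Rightarrow(1)$, where your Lyndon--Hochschild--Serre collapse argument via inflation along $G\twoheadrightarrow G/U\cong P$ correctly handles $(4)\Rightarrow(2)$: inflation is an isomorphism, $\res\circ\inf=\id$ because the composite $P\hookrightarrow G\twoheadrightarrow G/U$ is the identity, hence $\res=\inf^{-1}$.

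Where you need to be careful is the upgrade from $(4)\Rightarrow(2)$ to $(4)\Rightarrow(1)$. The Tate-resolution route as written does not quite parse: $kG\otimes_{kP}tk$ is a complete resolution of $k[G/P]$, not of $k$, and the ``$H^0(U,-)$ exactness'' needs to be turned into an actual comparison of restriction maps. The duality route is closer but has a genuine sign-post error: under Tate duality the adjoint of $\res$ with respect to the cup-product pairing is the transfer $\Tr$ (projection formula $\Tr(\res(x)\cup y)=x\cup\Tr(y)$), \emph{not} $\res$ itself. So ``$\res$ iso in non-negative degrees plus naturality of duality'' does not directly give $\res$ iso in negative degrees. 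The repair is short: since $\Tr\circ\res=[G:P]$ is invertible (Lemma~\ref{res}) and $\res_n$ is an isomorphism for $n\geq 0$, so is $\Tr_n$; Tate duality then identifies $\Tr_n$ with the $k$-linear dual of $\res_{-n-1}$, whence $\res_m$ is an isomorphism for all $m\leq-1$ as well. With that adjustment the proof closes the cycle.
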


Since both the restriction maps $\hat{H}^*(G,k) \to \hat{H}^*(P,k)$ and $H^*(G,k) \to H^*(P,k)$ are induced by a morphism of dg algebras, we obtain as a consequence of Proposition~\ref{aboutm3}
\begin{cor}
Let $k$ be a field of characteristic $3$ and $G$ a finite, $3$-nilpotent group whose $3$-Sylow group is cyclic of order $3$. Then both the Hochschild classes $\mu_G$ and $\hat{\mu}_G$ are non-trivial.
\end{cor}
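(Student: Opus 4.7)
The plan is to reduce the non-triviality of $\mu_G$ and $\hat{\mu}_G$ to the corresponding non-triviality statements for the Sylow subgroup $P \cong \mathbb{Z}/(3)$, which have already been established in the excerpt, via the quasi-isomorphic nature of the restriction morphisms.

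First, I would invoke the proposition immediately preceding the corollary: since $G$ is $3$-nilpotent with $3$-Sylow subgroup $P$, both restriction maps
$$\res \colon H^*(G,k) \to H^*(P,k) \quad \text{and} \quad \res \colon \hat{H}^*(G,k) \to \hat{H}^*(P,k)$$
are isomorphisms of graded algebras. Next, as noted in the discussion in Section~\ref{secsylow} (just before Lemma~\ref{GroupSylowreduction} and analogously for the Tate case of Theorem~\ref{TateSylowreduction}), these restriction maps are, respectively, induced by the dg algebra morphisms $\END_{kG}(ik) \to \END_{kP}(ik)$ and $\END_{kG}(tk) \to \END_{kP}(tk)$ obtained by viewing a $kG$-injective (respectively Tate) resolution of $k$ as a $kP$-injective (respectively Tate) resolution. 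Since their effect in cohomology is an isomorphism, these dg algebra morphisms are in fact quasi-isomorphisms.

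I would then apply Proposition~\ref{aboutm3}(2) to each of these quasi-isomorphisms. This yields induced isomorphisms of Hochschild cohomology groups
$$\HH^{3,-1}(H^*(G,k)) \xrightarrow{\cong} \HH^{3,-1}(H^*(P,k)),\quad \HH^{3,-1}(\hat{H}^*(G,k)) \xrightarrow{\cong} \HH^{3,-1}(\hat{H}^*(P,k))$$
under which $\mu_G \mapsto \mu_P$ and $\hat{\mu}_G \mapsto \hat{\mu}_P$, respectively. Finally, since $P \cong \mathbb{Z}/(3)$ and $\mathrm{char}(k)=3$, the case $p=3, n=1$ of Theorem~\ref{tate3} gives that $\hat{\mu}_P = \hat{\mu}_{\mathbb{Z}/(3)}$ is non-trivial, and the same case of Proposition~\ref{ext3} gives that $\mu_P = \mu_{\mathbb{Z}/(3)}$ is non-trivial. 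Transporting these non-trivialities back along the isomorphisms above yields that both $\mu_G$ and $\hat{\mu}_G$ are non-trivial.

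There is no serious obstacle in this argument, as all the heavy lifting has already been done earlier in the excerpt: the delicate computation of $m_3^{H^*(\mathbb{Z}/(3),k)}$ and $m_3^{\hat{H}^*(\mathbb{Z}/(3),k)}$ was carried out in Theorem~\ref{tate3} and Proposition~\ref{ext3}, and the naturality of the canonical class under dg algebra quasi-isomorphisms is Proposition~\ref{aboutm3}(2). The one point worth double-checking is that the restriction morphism really does arise as the cohomology of a single dg algebra map (not merely a zigzag), so that Proposition~\ref{aboutm3}(2) applies directly; this is explicitly asserted in Section~\ref{secsylow}.
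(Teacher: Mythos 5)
Your proposal is correct and follows essentially the same route the paper intends: the restriction maps are isomorphisms because $G$ is $3$-nilpotent (the preceding proposition), they are induced by dg algebra morphisms which are therefore quasi-isomorphisms, and Proposition~\ref{aboutm3}(2) transports the non-triviality of $\mu_{\mathbb{Z}/(3)}$ and $\hat\mu_{\mathbb{Z}/(3)}$ (Proposition~\ref{ext3} and Theorem~\ref{tate3}) back to $\mu_G$ and $\hat\mu_G$. The paper states the corollary as an immediate consequence of Proposition~\ref{aboutm3} without spelling out the details; your proposal fills them in exactly as intended.
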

For $\hat \mu_G$, the result above is stated in \cite[Sect.\ 8]{BKS}.

\begin{thm}\cite[Thm.\ 8.6]{BKS}
Let $k$ be a field of characteristic $3$ and $G$ a finite group whose $3$-Sylow group is cyclic of order $3$. Assume that $G$ is not $3$-nilpotent. Then 
$$\hat{H}^*(G,k) = k[V,W,W^{-1}]/(V^2),$$
where $V$ is of degree $3$ and $W$ of degree $4$. The canonical class 
$\hat \mu_G$ is represented by the $(3,-1)$-cocycle $m$ given by
$$m(VW^i,VW^j,VW^l)=W^{i+j+l+2}, \quad i,j,l \in \bbZ,$$
and vanishes on all other tensor products of monomials in $V$ and $W$. The Hochschild class $\hat{\mu}_G$ is non-trivial.
\end{thm}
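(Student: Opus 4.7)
The plan is to execute three steps: identify $\hat H^*(G,k)$ via the stable-element theorem, compute a representing cocycle for $\hat\mu_G$ by Sylow reduction (Theorem \ref{TateSylowreduction}), and verify non-triviality by exploiting the sparsity of the grading.

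First, I would describe $\hat H^*(G,k)$ as the invariants of $\hat H^*(P,k)$ under the Weyl group $\Pi:=N_G(P)/C_G(P)$. Since $[G:P]$ is coprime to $p=3$ and hence invertible in $k$, Lemma \ref{res} shows that $\res\colon \hat H^*(G,k)\to \hat H^*(P,k)$ is split injective, and the stable-element theorem identifies its image with $\hat H^*(P,k)^{\Pi}$. Now $\Pi$ sits inside $\mathrm{Aut}(P)=\mathrm{Aut}(\bbZ/3)=\bbZ/2$; since $G$ is not $3$-nilpotent, Proposition~8.5 of the excerpt forces $\Pi=\bbZ/2$. Writing $\hat H^*(P,k)=k[X,Y^{\pm 1}]/(X^2)$, the non-trivial $\sigma\in\Pi$ acts on $H^1(P,k)=\Hom(\bbZ/3,k^+)$ by inversion, so $\sigma(X)=-X$; by naturality of the Bockstein applied to $Y=\beta(X)$, also $\sigma(Y)=-Y$. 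The $\sigma$-invariant monomials are therefore exactly $Y^{2n}$ and $XY^{2n+1}$ with $n\in\bbZ$, so setting $V=XY$ and $W=Y^2$ yields $\hat H^*(G,k)\cong k[V,W^{\pm 1}]/(V^2)$ with $|V|=3$, $|W|=4$, and $V^2=X^2Y^2=0$.

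Second, I would apply Theorem \ref{TateSylowreduction} to write down a representing cocycle. A representative of $\hat\mu_G$ is $m^G:=\tfrac{1}{[G:P]}\,\Tr\circ m^P\circ \res^{\otimes 3}$, where $m^P$ is the $\bbZ/3$-cocycle from Theorem \ref{tate3}. Using $\res(VW^i)=XY\cdot Y^{2i}=XY^{2i+1}$, one computes
$$m^P\bigl(XY^{2i+1},XY^{2j+1},XY^{2l+1}\bigr)=Y^{(2i+1)+(2j+1)+(2l+1)+1}=Y^{2(i+j+l+2)}=\res(W^{i+j+l+2}).$$
Lemma \ref{res} gives $\Tr(\res(W^{i+j+l+2}))=[G:P]\,W^{i+j+l+2}$, whence $m^G(VW^i,VW^j,VW^l)=W^{i+j+l+2}$ as claimed. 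For the vanishing on other monomial tuples, any factor $W^n$ is sent by $\res$ to the even-degree monomial $Y^{2n}$, which is not of the form $XY^{\bullet}$; since $m^P$ vanishes on all tuples containing such a factor by Theorem \ref{tate3}, so does $m^G$.

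Third, I would verify non-triviality by an elementary degree count. Setting $i=j=l=0$ gives $m^G(V,V,V)=W^2\neq 0$. Suppose for contradiction that $m^G=\delta\varphi$ for some $(2,-1)$-Hochschild cochain $\varphi$. Using the Hochschild differential formula from Section \ref{sec hochschild} together with $V^2=0$, one gets
$$(\delta\varphi)(V,V,V)=(-1)^{(-1)\cdot 3}V\varphi(V,V)-\varphi(V^2,V)+\varphi(V,V^2)-\varphi(V,V)V=-V\varphi(V,V)-\varphi(V,V)V.$$
But $\varphi(V,V)\in \hat H^*(G,k)$ has internal degree $|V|+|V|-1=5$, and the nonzero graded pieces of $k[V,W^{\pm 1}]/(V^2)$ lie only in degrees $\{4n,\,4n+3:n\in\bbZ\}$, a set which excludes $5$. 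Hence $\varphi(V,V)=0$, so $(\delta\varphi)(V,V,V)=0\neq W^2$, contradicting $m^G=\delta\varphi$. Therefore $\hat\mu_G$ is non-trivial. The main obstacle will be the first step, namely checking that the classical stable-elements description $\hat H^*(G,k)\cong \hat H^*(P,k)^{\Pi}$ applies verbatim in \emph{Tate} cohomology (not just group cohomology) and pinning down the action of $\Pi$ on the generators $X$ and $Y$; once the ring structure is secured, Theorem \ref{TateSylowreduction} and the unusually sparse grading take care of the rest in a few lines.
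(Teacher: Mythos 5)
Your proposal is correct, and it's worth noting that the paper itself cites this result from \cite[Thm.\ 8.6]{BKS} without giving a proof, so there is no in-paper argument to compare against line by line.

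Your strategy breaks into the three pieces you outline, and each checks out. The ring identification via stable elements is standard once one notes that for $P=\bbZ/3$ the Weyl group $\Pi = N_G(P)/C_G(P)$ sits inside $\mathrm{Aut}(\bbZ/3)=\bbZ/2$ and must be nontrivial by the $p$-nilpotence characterisation (Proposition~8.5); the sign action $\sigma(X)=-X$, $\sigma(Y)=\beta(\sigma X)=-Y$ then forces $V=XY$, $W=Y^2$ as invariant generators with $V^2=X^2Y^2=0$. The Sylow reduction step via Theorem~\ref{TateSylowreduction}, using $\res(VW^i)=XY^{2i+1}$, the $\bbZ/3$ cocycle of Theorem~\ref{tate3}, and the projection formula $\Tr\circ\res=[G:P]\cdot\mathrm{id}$, gives the stated formula $m^G(VW^i,VW^j,VW^l)=W^{i+j+l+2}$, and the vanishing on tuples containing a $W$-power follows because $\res(W^n)=Y^{2n}$ is even-degree while $m^P$ only sees products of odd-degree monomials. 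Where your proof departs from what one would expect in [BKS] is the non-triviality argument: [BKS] (as the paper notes just after the statement) proceeds by producing an explicit non-realisable module $\hat H^*(G,k)/(V)$, whereas you exploit the sparsity of the grading directly. Any $(2,-1)$-cochain $\varphi$ evaluated at $(V,V)$ lands in internal degree $3+3-1=5$, but $k[V,W^{\pm 1}]/(V^2)$ has nonzero components only in degrees $\equiv 0,3\pmod 4$, so $\varphi(V,V)=0$ and hence $(\delta\varphi)(V,V,V)=-V\varphi(V,V)-\varphi(V,V)V=0\neq W^2=m^G(V,V,V)$. This is a cleaner, purely combinatorial route to non-triviality; it buys you an argument that does not require any realisability machinery, Massey products, or Postnikov systems. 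Your flagged concern about transporting the stable-element theorem to Tate cohomology is legitimate in general but benign here, since $P$ is cyclic of prime order and the transfer/restriction maps in Tate cohomology satisfy the same double-coset formula, so the Weyl-invariant description goes through.
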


In \cite{BKS}, it is further shown that $\hat{H}^*(G,k)/(V)$ is not realisable. We conclude from Proposition \ref{rank1real} that $V$ viewed as module over $H^*(G,k)$ is not realisable  either. In particular, the canonical class $\mu_G$ must be non-trivial. 
Using Lemma \ref{GroupSylowreduction}, we can compute a representing cocycle for the canonical class, using the same methods as in \cite[Thm.\ 8.6]{BKS}.
\begin{prop}\label{group3Sylow}
Let $k$ be a field of characteristic $3$ and $G$ a finite group whose $3$-Sylow group is cyclic of order $3$. Assume that $G$ is not $3$-nilpotent. Then 
$$H^*(G,k) = k[V,W]/(V^2)$$
where $V$ is of degree $3$ and $W$ of degree $4$.
A representing $(3,-1)$-cocycle $m$  for the canonical class $\mu_G$ is given by
$$m(VW^i,VW^j,VW^l)=W^{i+j+l+2}, \quad i,j,l \geq 0,$$
and vanishes on all other tensor products of monomials in $V$ and $W$. Its Hochschild class $\hat{\mu}_G$ is non-trivial.\qed
\end{prop}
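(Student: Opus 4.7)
The plan is to follow the strategy used by Benson, Krause and Schwede for the Tate case \cite[Thm.\ 8.6]{BKS}, now transported to $H^*(G,k)$ via the Sylow reduction of Lemma~\ref{GroupSylowreduction}. First I would describe the ring: since $G\neq 1$, we have $H^n(G,k)=\hat H^n(G,k)$ for $n>0$ and $H^0(G,k)=k$, so $H^*(G,k)$ is the non-negatively graded subring of $\hat H^*(G,k)=k[V,W,W^{-1}]/(V^2)$, which is $k[V,W]/(V^2)$.

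Next, take $P=\bbZ/3$ to be the $3$-Sylow. Because $[G:P]$ is coprime to the characteristic $p=3$, it is invertible in $k$, so Lemma~\ref{GroupSylowreduction} applies. Here $H^*(P,k)=k[X,Y]/(X^2)$ with $|X|=1,|Y|=2$ as in Proposition~\ref{ext3}. By Lemma~\ref{res} the restriction $\res\colon H^*(G,k)\to H^*(P,k)$ is a split monomorphism, and since $H^3(P,k)$ and $H^4(P,k)$ are one-dimensional (spanned by $XY$ and $Y^2$), we must have $\res(V)=aXY$ and $\res(W)=bY^2$ for some $a,b\in k^\times$. Plugging into the known cocycle $m_P$ from Proposition~\ref{ext3}, I compute
\[
m_P\!\left(\res(VW^i),\res(VW^j),\res(VW^l)\right) \;=\; a^3 b^{i+j+l}\, Y^{2(i+j+l)+4},
\]
while $m_P$ vanishes on any triple in which at least one entry is $\res(W^n)=b^n Y^{2n}$ (no factor of $X$). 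Using Lemma~\ref{res} to invert $\res$ on even powers of $W$, namely $\Tr(Y^{2n})=[G:P]\,b^{-n}W^n$, the formula of Lemma~\ref{GroupSylowreduction} gives
\[
\mu_G(VW^i,VW^j,VW^l) \;=\; a^{3}b^{-2}\,W^{\,i+j+l+2},
\]
and $\mu_G=0$ on all other triples of monomials. A rescaling of $V$ and $W$ (preserving the presentation $k[V,W]/(V^2)$) absorbs the non-zero constant $a^3b^{-2}$, yielding the representing cocycle in the statement.

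For non-triviality of $\mu_G$, I would invoke the hypothesis that the $3$-Sylow is cyclic of order~$3$, so $r_3(G)=1$, and apply Proposition~\ref{rank1real}(2): a graded $\hat H^*(G,k)$-module is realisable if and only if its restriction to $H^*(G,k)$ is realisable. Benson, Krause and Schwede exhibit $\hat H^*(G,k)/(V)$ as a non-realisable $\hat H^*(G,k)$-module \cite{BKS}; its restriction is therefore non-realisable as an $H^*(G,k)$-module. Were $\mu_G$ trivial, Theorem~\ref{BKSglobal} would force every graded $H^*(G,k)$-module to be realisable, contradicting this. Hence $\mu_G\ne 0$ in $\HH^{3,-1}(H^*(G,k))$.

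The main obstacle is bookkeeping: one has to pin down that $a,b$ are non-zero (which uses the split injectivity of restriction and the one-dimensionality of $H^3(P,k)$ and $H^4(P,k)$), verify the vanishing of the cocycle on all tensor products involving a pure power of $W$, and normalise the generators cleanly so that the cocycle takes exactly the stated form. Everything else is mechanical once the Sylow formula of Lemma~\ref{GroupSylowreduction} is in hand.
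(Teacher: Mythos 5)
Your proposal is correct and follows the same route the paper alludes to: use the Sylow reduction formula of Lemma~\ref{GroupSylowreduction} together with the computation of $m_3^{H^*(P,k)}$ from Proposition~\ref{ext3} to get the cocycle, and transport the non-realisable module $\hat H^*(G,k)/(V)$ across Proposition~\ref{rank1real}(2) to conclude $\mu_G\neq 0$. The paper gives only a two-sentence sketch immediately before the proposition, so your write-up fills in the actual bookkeeping (the normalisation $\res(V)=aXY$, $\res(W)=bY^2$ with $a,b\in k^\times$ from split injectivity and one-dimensionality of $H^3(P,k)$, $H^4(P,k)$; the use of $\Tr\circ\res=[G:P]\cdot\mathrm{id}$; the vanishing on triples involving a pure power of $W$), all of which is sound. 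One small remark on the rescaling: to absorb $a^3b^{-2}$ you must rescale both generators (e.g.\ $V\mapsto cV$, $W\mapsto cW$ with $c=a^{-3}b^2$ works), since in characteristic $3$ the cube map is Frobenius and rescaling $V$ alone could fail over a non-perfect field; you assert the rescaling is possible without spelling this out, but it does hold.
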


\subsection{Lifting $H^*(G,k) \to \hat{H}^*(G,k)$ to a morphism of dg algebras}\label{secmorext}
Let $G$ be a finite group and $k$ a field.
In the examples we have considered in the last section, the class $\mu_G$ always arises as restriction of $\hat \mu_G$ to non-negative degrees. Now we give a explanation for this fact: We show that the canonical inclusion
$$\iota\colon H^*(G,k) \to \hat{H}^*(G,k)$$
is induced by a zigzag of dg algebra morphisms. The construction is analogous to the one in Section \ref{seclift}. However, for the convenience of the reader we sketch it briefly.
Let $\eta\colon \id \to RQ$ be the unit and $\varepsilon\colon QR
\to \id$ the counit of the adjunction
$$\xymatrix{\ar @{--}\bfK(\Inj kG) \ar@<-1ex>[r]_Q & \bfK_{ac}(\Inj kG) \ar@<-1ex>[l]_R,}$$
where $Q = - \otimes tk$ and $R$ is the inclusion. This is a smashing localisation by Proposition~\ref{recollhopf}.

The map $$\eta_{ik} \colon  ik \to RQ(ik)$$ is up to natural isomorphism just the canonical
inclusion $ik \to tk$, see Proposition \ref{recollhopf}. Therefore the map
$$\bfK(\Inj kG)(ik,ik)^* \to \bfK(\Inj kG)(RQ(ik),RQ(ik))^*,\quad f \mapsto RQ(f)$$
is up to isomorphism the canonical inclusion 
$$\iota \colon H^*(G,k) \to \hat{H}^*(G,k).$$

We have $\bfK(\Inj kG)(ik,RQ(ik)) \cong H^0(\HOM(ik,RQ(ik))$. For any representing cocycle $\bar{\eta}_{ik} \in
Z^0(\HOM(ik,RQ(ik))$ of $\eta_{ik}$ we obtain as in Lemma \ref{eta*qis} a \qis
$$\bar{\eta}_{ik}^{\,*} \colon \END(RQ(ik)) \to \HOM(ik,RQ(ik)), \quad f \mapsto f \circ \bar{\eta}_{ik}.$$ 
We choose for $\bar{\eta}_{ik}$ the inclusion of complexes $ik \to tk$ which is a degree-wise split monomorphism of complexes. Thus $\bar{\eta}_{ik}^{\,*}$ is surjective.

\begin{thm}\label{exttatemorphism}
There exists a dg algebra $\END(ik)'$ quasi-isomorphic to $\END(ik)$ and a 
zigzag of dg algebra morphisms
$$\xymatrix{\END(ik) & \ar[l]_-{\rho}^-{\sim} \END(ik)' \ar[r]^-{\varphi} & \END(tk)}$$
inducing the canonical inclusion $\iota \colon H^*(G,k) \to \hat{H}^*(G,k)$ in cohomology. That is, we have diagrams
$$\xymatrix{\ar @{--} \END(ik)' \ar[d]^{\sim}_{\rho} \ar[rd]^-{\varphi} && H^*(G,k) \ar[d]^{\cong}_{H^*\rho} \ar[rd]^-{H^*\varphi}& \\
\END(ik) & \END(tk)   & H^*(G,k) \ar[r]^-{\iota}&  \hat H^*(G,k)}$$
where right hand diagram is commutative and identifies with the cohomology of the left hand diagram.
\end{thm}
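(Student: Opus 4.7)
The strategy is to mirror the pullback construction of Theorem~\ref{A'} in this concrete setting, with $ik$ playing the role of the free generator $A$, the localisation $L=RQ$ playing the role of the abstract smashing localisation, and $\END(tk)\cong\END(L(ik))$ playing the role of $A_L$. All the analytic input is already assembled in the preamble: the unit $\eta_{ik}$ is represented by the canonical inclusion $\bar\eta_{ik}\colon ik\hookrightarrow tk$; this inclusion is a degree-wise split monomorphism of graded $kG$-modules (the non-negative part of $tk$ agrees with $ik$, while the negative part is a complex of projectives, so a degree-wise retraction is simply the obvious projection onto $ik$); and the resulting precomposition map
\[
\bar\eta_{ik}^{\,*}\colon \END(tk)\longrightarrow \HOM(ik,tk),\quad f\mapsto f\circ\bar\eta_{ik},
\]
is a surjective quasi-isomorphism by the analogue of Lemma~\ref{eta*qis} applied to the present smashing localisation.

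With this in place, I would form the pullback
\[
\xymatrix{
\END(ik)'\ar[r]^-{p_2}\ar[d]_-{p_1} & \END(tk)\ar[d]^-{\bar\eta_{ik}^{\,*}}\\
\END(ik)\ar[r]^-{\bar\eta_{ik*}} & \HOM(ik,tk)
}
\]
of cochain complexes, where $\bar\eta_{ik*}$ denotes postcomposition with $\bar\eta_{ik}$, and invoke Lemma~\ref{pullback}. The surjectivity of $\bar\eta_{ik}^{\,*}$ endows $\END(ik)'$ with a dg algebra structure for which $p_1$ and $p_2$ are morphisms of dg algebras, and identifies the induced cohomology square as a pullback; since $H^{*}(\bar\eta_{ik}^{\,*})$ is already an isomorphism, so is $H^{*}(p_1)$. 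Hence $\rho:=p_1$ is a quasi-isomorphism, and together with $\varphi:=p_2$ this yields the required zigzag of dg algebra maps.

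It remains to identify the map induced in cohomology. The commutativity of the pullback square gives
\[
H^{*}\varphi\circ (H^{*}\rho)^{-1}\;=\;H^{*}(\bar\eta_{ik}^{\,*})^{-1}\circ H^{*}(\bar\eta_{ik*}),
\]
and the standard adjunction identity $\varepsilon_{Q(ik)}\circ Q(\eta_{ik})=\id_{Q(ik)}$, applied verbatim as at the end of the proof of Theorem~\ref{A'}, identifies the right-hand side with the algebra map $f\mapsto RQ(f)$ on $\bfK(\Inj kG)(ik,ik)^{*}$. Under the identifications $H^{*}\END(ik)\cong H^{*}(G,k)$ and $H^{*}\END(tk)\cong\hat H^{*}(G,k)$, this is, by the explicit choice of representing cocycle $\bar\eta_{ik}$, precisely the canonical inclusion $\iota$. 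In outline the whole argument is Theorem~\ref{A'} transported verbatim, so no genuine obstacle arises; the only piece of work specific to the present situation is the degree-wise splitting of $ik\hookrightarrow tk$, which is immediate from the definition of a Tate resolution, and the mild observation that the formal arguments of Theorem~\ref{A'} apply inside the ambient triangulated category $\bfK(\Inj kG)$ just as well as inside the derived category of a dg algebra.
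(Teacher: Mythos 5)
Your proposal is correct and follows essentially the same route as the paper: both form the same pullback of complexes against the surjective quasi-isomorphism $\bar\eta_{ik}^{\,*}$ (surjective because $ik \hookrightarrow tk$ is a degree-wise split monomorphism), invoke Lemma~\ref{pullback} to see that $\END(ik)'$ is a dg algebra with $p_1$ a quasi-isomorphism, and then identify $H^*(\bar\eta_{ik}^{\,*})^{-1}\circ H^*(\bar\eta_{ik*})$ with the inclusion $\iota$ via the adjunction identity, exactly as at the end of Theorem~\ref{A'}. The paper itself presents this as a direct transport of the Section~\ref{seclift} argument, which is what you have done.
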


\begin{proof}
We form the pullback diagram
$$\xymatrix{
\END(ik)' \ar[r]^-{p_2} \ar[d]_{p_1} &  \END(RQ(ik)) \ar[d]|>{\object@{>>}}_{\sim}^{\bar{\eta}_{ik}^{\,*}} \\
\END(ik) \ar[r]^-{\bar{\eta}_{ik \,*}} & \HOM(ik,RQ(ik)) & }$$
Since $\bar{\eta}_{ik}^{\,*}$ is a surjective quasi-isomorphism, it follows from Lemma \ref{pullback} that $\END(ik)'$ is a dg algebra quasi-isomorphic to $\END(ik)$. 

In cohomology, we obtain a commutative diagram
$$
\xymatrix@C+7pt{
H^*\END(ik)' \ar[r]^-{H^*(p_2)} \ar[d]_{H^*(p_1)}^{\cong} & H^* \END(RQ(ik)) \ar[d]_{\cong}^{H^* (\bar{\eta}_{ik}^{\,*})}\\
H^* \END(ik) \ar[r]^-{H^*(\bar{\eta}_{ik\,*})}
& H^*(\HOM(ik,RQ(ik)) & }
$$
where the composition $$H^*(\bar{\eta}_{ik}^{\,*})^{-1} H^*(\bar{\eta}_{ik\,*})\colon \bfK(\Inj kG)(ik,ik)^* \to \bfK(\Inj kG)(RQ(ik),RQ(ik))^*$$ is given by applying the functor $RQ$ to a map $f \in  \bfK(\Inj kG)(ik,ik)^*$.  But this is up to isomorphism just the inclusion $\iota\colon H^*(G,k) \to \hat{H}^*(G,k)$.
\end{proof}
Observe that the map $\varphi\colon \END(ik)' \to \END(tk)$ is a monomorphism which moreover induces a monomorphism in cohomology.

Now we show that the relation between $\mu_G$ and $\hat \mu_G$ which we have observed in the examples is true in general.
\begin{prop}\label{classexttate}
In the Hochschild group $\HH^{3,-1}(H^*(G,k),\hat H^*(G,k))$, it holds
$$\iota \circ \mu_G = \hat\mu_G \circ \iota^{\otimes 3},$$
where $\iota\colon H^*(G,k) \to \hat{H}^*(G,k)$ is the canonical inclusion.
\end{prop}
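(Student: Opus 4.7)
The plan is to combine the zigzag of dg algebra morphisms
$$\END(ik) \xleftarrow{\rho} \END(ik)' \xrightarrow{\varphi} \END(tk),$$
supplied by Theorem~\ref{exttatemorphism}, with the functoriality of the canonical class $\mu_A$ recorded in Proposition~\ref{aboutm3}. Recall that $\rho$ is a quasi-isomorphism and that $H^*\varphi = \iota \circ H^*\rho$. Write $A' = \END(ik)'$ for brevity, so $\mu_{A'} \in \HH^{3,-1}(H^*A')$, while $\mu_G = \mu_{\END(ik)} \in \HH^{3,-1}(H^*(G,k))$ and $\hat\mu_G = \mu_{\END(tk)} \in \HH^{3,-1}(\hat H^*(G,k))$.

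First I would apply Proposition~\ref{aboutm3}(1) to $\varphi$ to obtain the identity
$$H^*\varphi \circ \mu_{A'} \;=\; \hat\mu_G \circ (H^*\varphi)^{\otimes 3}$$
in $\HH^{3,-1}(H^*A',\,\hat H^*(G,k))$, where $\hat H^*(G,k)$ is made into an $H^*A'$-bimodule via $H^*\varphi$. Next, since $\rho$ is a quasi-isomorphism, Proposition~\ref{aboutm3}(2) tells me that under the isomorphism of Hochschild cohomology groups induced by the algebra isomorphism $H^*\rho$, the class $\mu_G$ corresponds to $\mu_{A'}$. Concretely, for any representing cocycle $m_G$ of $\mu_G$ the cocycle $(H^*\rho)^{-1}\circ m_G \circ (H^*\rho)^{\otimes 3}$ represents $\mu_{A'}$. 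Substituting this into the previous identity and using $H^*\varphi = \iota \circ H^*\rho$, I obtain, as Hochschild classes,
$$\iota \circ \mu_G \circ (H^*\rho)^{\otimes 3} \;=\; \hat\mu_G \circ \iota^{\otimes 3} \circ (H^*\rho)^{\otimes 3}$$
in $\HH^{3,-1}(H^*A',\,\hat H^*(G,k))$.

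To finish, I would observe that precomposition with the algebra isomorphism $H^*\rho$ induces an isomorphism between the Hochschild complex computing $\HH^{3,-1}(H^*(G,k),\hat H^*(G,k))$ (with $\hat H^*(G,k)$ a bimodule via $\iota$) and the one computing $\HH^{3,-1}(H^*A',\hat H^*(G,k))$ (with $\hat H^*(G,k)$ a bimodule via $H^*\varphi = \iota\circ H^*\rho$). Cancelling the common factor $(H^*\rho)^{\otimes 3}$ under this isomorphism yields
$$\iota \circ \mu_G \;=\; \hat\mu_G \circ \iota^{\otimes 3}$$
in $\HH^{3,-1}(H^*(G,k),\hat H^*(G,k))$, which is the claim.

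The only real obstacle is the bookkeeping of the two $H^*A'$-bimodule structures on $\hat H^*(G,k)$ (via $H^*\varphi$ versus via $\iota \circ H^*\rho$, which in fact coincide) and verifying that the final cancellation respects the change-of-bimodule isomorphism on Hochschild complexes; this amounts to the naturality of the Hochschild construction under algebra isomorphisms, and no computation beyond Proposition~\ref{aboutm3} is needed.
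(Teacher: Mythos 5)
Your proposal is correct and follows essentially the same route as the paper's proof: apply Proposition~\ref{aboutm3} to each arm of the zigzag from Theorem~\ref{exttatemorphism} and combine the two resulting identities in $\HH^{3,-1}(H^*\END(ik)',-)$ using $H^*\varphi = \iota\circ H^*\rho$. The paper is terser about the final cancellation of $(H^*\rho)^{\otimes 3}$ (and invokes part (1) of Proposition~\ref{aboutm3} for $\rho$ rather than part (2), which is an equivalent reformulation when $\rho$ is a quasi-isomorphism), but the argument is the same.
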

\begin{proof}
We use the zigzag
$$\xymatrix{\END(ik) & \ar[l]_-{\rho}^-{\sim} \END(ik)' \ar[r]^-{\varphi} & \END(tk)}$$
of Theorem \ref{exttatemorphism}. By Proposition \ref{aboutm3}, we have

\begin{equation}\label{rho*}
H^*\rho \circ \mu_{\END(ik)'}=\mu_{\END(ik)}\circ (H^*\rho)^{\otimes 3} \textrm{\quad in \quad} \HH^{3,-1}(H^*\END(ik)',H^*\END(ik))
\end{equation}
and 
\begin{equation}\label{psi*}
H^*\varphi \circ \mu_{\END(ik)'}=\mu_{\END(tk)}\circ (H^*\varphi)^{\otimes 3}\textrm{\quad in \quad} \HH^{3,-1}(H^*\END(ik)',H^*\END(tk)).
\end{equation}
Hence in the Hochschild group $\HH^{3,-1}(H^*(G,k),\hat H^*(G,k))$, it holds
\begin{equation*}
\iota \circ \mu_{\END(ik)} = \mu_{\END(tk)} \circ \iota^{\otimes 3}.
\qedhere
\end{equation*}
\end{proof}

\begin{rem}
Even more is true: Since both $\rho$ and $\varphi$ induce monomorphisms in cohomology, there exist choices in defining the secondary multiplications  of $H^*\END(ik)$ and $H^*\END(tk)$ to obtain the equations \eqref{rho*} and \eqref{psi*} 
even on the level of $k$-linear maps, see Proposition \ref{H*mono}. With these choices, we have
$$\iota \circ m_3^{H^*(G,k)} = m_3^{\hat{H}^*(G,k)} \circ \iota^{\otimes 3} \textrm{\quad in \quad} \Hom_k^{-1}(H^*(G,k)^{\otimes 3}, \hat H^*(G,k)).$$
The secondary multiplication $m_3^{H^*(\bbZ_3,k)}$ we computed in Proposition \ref{ext3} and $m_3^{\hat H^*(\bbZ_3,k)}$ of Theorem \ref{tate3} satisfy this equation. However,  we cannot just restrict a \emph{fixed} $m_3^{\hat H^*(G,k)}$ to non-negative degrees to obtain $m_3^{H^*(G,k)}$.
\end{rem}

\subsection{Relating the global obstructions of $H^*(G,k)$ and $\hat{H}^*(G,k)$} \label{sec relation}
In all but one example 
we have considered in the first section, the Hochschild classes $\mu_G$ and $\hat \mu_G$ were either both trivial or both non-trivial. Now we are ready to give a general explanation for this fact. The main result of this chapter is
\begin{thm}\label{murelation}
Let $G$ be finite group, $k$ a field of characteristic $p>0$ and assume that $p$ divides the order of $G$.
If the Hochschild class $\hat{\mu}_G \in \HH^{3,-1}(\hat H^*(G,k))$ is trivial, then so is the Hochschild class $\mu_G \in \HH^{3,-1}(H^*(G,k))$. 
If the $p$-rank of $G$ equals one, then $\hat{\mu}_G$ is trivial if and only if $\mu_G$ is trivial. 
\end{thm}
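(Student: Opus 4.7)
The plan is to prove the two implications in turn: first $\hat{\mu}_G = 0 \Rightarrow \mu_G = 0$ (valid for every finite $G$), then the converse under the assumption $r_p(G) = 1$.

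For the first implication, Proposition~\ref{classexttate} provides the key identity $\iota \circ \mu_G = \hat{\mu}_G \circ \iota^{\otimes 3}$ in $\HH^{3,-1}(H^*(G,k), \hat{H}^*(G,k))$, so the vanishing of $\hat{\mu}_G$ gives $\iota_*(\mu_G) = 0$. What remains is injectivity of
\[
\iota_*\colon \HH^{3,-1}(H^*(G,k)) \lto \HH^{3,-1}(H^*(G,k), \hat{H}^*(G,k)).
\]
I would deduce this from the long exact sequence attached to the short exact sequence of $H^*(G,k)$-bimodules $0 \to H^*(G,k) \xto{\iota} \hat{H}^*(G,k) \to Q \to 0$ with $Q = \hat{H}^*(G,k)/H^*(G,k)$ concentrated in strictly negative internal degrees: it suffices to show that $\HH^{2,-1}(H^*(G,k), Q)$ vanishes, forcing the connecting map into $\HH^{3,-1}(H^*(G,k))$ to be zero.

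The vanishing $\HH^{2,-1}(H^*(G,k), Q) = 0$ is the heart of the argument, and it comes from a small cochain count in internal degree $-1$. Any $n$-cochain $h\colon H^*(G,k)^{\otimes n} \to Q$ of internal degree $-1$ satisfies $h(x_1, \dots, x_n) \in Q^{|x_1| + \cdots + |x_n| - 1}$, and must vanish unless every $x_i$ lies in $H^0(G,k) = k$; hence $C^{n,-1}(H^*(G,k), Q) \cong \hat{H}^{-1}(G,k)$ for every $n$, classified by the value at $(1, \dots, 1)$. Tate duality (Proposition~\ref{Tateduality}) together with $\hat{H}^0(G,k) \cong k$ yields $\hat{H}^{-1}(G,k) \cong k$. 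Using the Hochschild differential formula from Chapter~\ref{sec hochschild}, I would verify that $d^1\colon C^{1,-1}(H^*(G,k), Q) \to C^{2,-1}(H^*(G,k), Q)$ sends the generator $\varphi$ with $\varphi(1) = v$ to the cochain with value $v$ at $(1,1)$, so $d^1$ is an isomorphism; thus every $2$-cochain is a coboundary and $\HH^{2,-1}(H^*(G,k), Q) = 0$ as required.

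For the converse under $r_p(G) = 1$, Theorem~\ref{periodic-rank} together with Lemma~\ref{periodic-loc} identifies $\hat{H}^*(G,k)$ with the localisation $S^{-1} H^*(G,k)$ for $S$ generated by the periodicity generator. The inclusion $\iota$ is therefore a flat epimorphism (Example~\ref{locmorflat}), so applying Theorem~\ref{GammatutsAB} to the zigzag of dg algebra morphisms from Theorem~\ref{exttatemorphism}, and translating across the quasi-isomorphism by Proposition~\ref{aboutm3}, yields a ring homomorphism $\Gamma\colon \HH^{*,*}(H^*(G,k)) \to \HH^{*,*}(\hat{H}^*(G,k))$ with $\Gamma(\mu_G) = \hat{\mu}_G$, so $\mu_G = 0$ forces $\hat{\mu}_G = 0$. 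Combined with the first implication, this proves the biconditional. The main step to execute carefully is the bimodule bookkeeping in the key vanishing: the crucial point is that for positive-degree $a \in H^*(G,k)$, the product $a \cdot v \in \hat{H}^*(G,k)$ with $v \in \hat{H}^{-1}$ lies in non-negative degrees and hence dies in $Q$, which is exactly what kills every term in the Hochschild differential of $\varphi$ except at $(1,1)$ and makes $d^1$ an isomorphism rather than merely injective.
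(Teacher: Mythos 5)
Your proof is correct. The second implication (for $p$-rank one) coincides with the paper's argument; for the first implication you take a genuinely different route. The paper works directly with a cochain-level coboundary: given $\iota\circ m=\delta g$ with $g$ valued in $\hat H^*(G,k)$, it observes that $g$ lands in $H^*(G,k)$ automatically once $g(1,1)=0$ (because $g$ has internal degree $-1$ and only $g(1,1)$ can fall into negative degree), and if $g(1,1)\neq 0$ it subtracts $\kappa\,\delta u$ for a suitable $1$-cochain $u$ with $u(1)\neq 0$ to arrange $g(1,1)=0$, using Tate duality to supply the scalar $\kappa$. Your reformulation via the long exact sequence of Hochschild groups attached to $0\to H^*(G,k)\to\hat H^*(G,k)\to Q\to 0$ packages exactly this correction step: the vanishing of $\HH^{2,-1}(H^*(G,k),Q)$ is precisely what guarantees that the correction can be made, and it isolates the structural reason for the injectivity of $\iota_*$. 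Both arguments hinge on the same two facts — that a Hochschild cochain of internal degree $-1$ valued in $Q$ is determined by its value at the unit tuple, and that $\hat H^{-1}(G,k)\cong k$ by Tate duality — but your version makes the logic more transparent and modular, at the cost of invoking the long exact sequence machinery that the paper avoids.
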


\begin{proof}
Let $m\colon H^*(G,k)^{\otimes 3} \to H^*(G,k)$ be any $(3,-1)$-cocycle representing $\mu_G$.
In the Hochschild group $\HH^{3,-1}(H^*(G,k),\hat H^*(G,k))$, we have the equation
$$\iota \circ \mu_G = \hat\mu_G \circ \iota^{\otimes 3}$$
by Proposition \ref{classexttate}. Hence if the Hochschild class  $\hat{\mu}_G$ is trivial, then the $k$-linear map $\iota \circ m$ is a $(3,-1)$-Hochschild coboundary, say 
$$\iota \circ m = \delta g,$$
where $g \in \Hom_k^{-1}(H^*(G,k)^{\otimes 2}, \hat{H}^*(G,k))$.

If $g(1,1)=0$, then we actually have that  $g \in \Hom_k^{-1}(H^*(G,k)^{\otimes 2}, H^*(G,k))$  and 
$$m = \delta g.$$ 
Thus $\mu_G$ is trivial in this case.

If $g(1,1)\neq 0$, then we choose any map $u \in \Hom_k^{-1}(H^*(G,k),\hat{H}^*(G,k))$ satisfying $u(1)\neq 0$. Since $\hat{H}^{-1}(G,k)$ is one-dimensional by Tate duality (see Proposition \ref{Tateduality}), there exists an element $\kappa \in k$ such that $g(1,1)=\kappa \cdot u(1).$
Setting
$$g'=g-\kappa \cdot \delta u,$$
we obtain
$$\iota \circ m = \delta g',$$
and since $g'(1,1)=0$, we conclude that $\mu_G$ is trivial.

If the $p$-rank of $G$ equals one, then the inclusion $\iota\colon H^*(G,k) \to \hat{H}^*(G,k)$ is a flat epimorphism of rings by Lemma \ref{periodic-loc}. 
Moreover, it is induced by a zigzag of dg algebra morphisms by Theorem \ref{exttatemorphism}.
Thus we can apply Theorem  \ref{GammatutsAB} and conclude that the map
$\Gamma \colon \HH^{*,*}(H^*(G,k))  \to  \HH^{*,*}(\hat{H}^*(G,k))$ satisfies $\Gamma(\mu_G)=\hat{\mu}_G$. In particular, $\mu_G$ being trivial implies that $\hat{\mu}_G$ is trivial.
\end{proof}
Note that the second statement of the theorem is not true if $r_p(G)\ge 2$: If $G$ is the Klein four group, then $\mu_G$ is trivial but $\hat \mu_G$ is not, see Section \ref{secexamples}.

\begin{Rem}
In order to prove that $\mu_G$ is non-trivial in Proposition \ref{ext3}, Corollary \ref{groupquaternion} and Proposition \ref{group3Sylow}, we have shown the existence of a non-realisable module over $H^*(G,k)$. But in general, one cannot expect to have a non-realisable module whenever the global obstruction is non-trivial: 
 Benson, Krause and Schwede provide an example of a dg algebra $A$ such that the canonical class $\mu_A\in \HH^{3,-1}(H^*A)$ is non-trivial, but such that  
 \emph{all} $H^*A$-modules are realisable  \cite[Prop.\ 5.16]{BKS}.

However, in all known examples of non-trivial global obstructions for group or Tate cohomology, the existence of a non-realisable module could always be shown. Benson, Krause and Schwede use Massey products to show the existence of a non-realisable module over $\hat H^*(\bbZ_3,k)$ and $\hat H^*(\bbZ_2\times \bbZ_2,k)$ \cite[Exm.\ 6.7, Exm.\ 7.7]{BKS}.  Langer has used Matrix Massey products~\cite[Lemma 2.23]{L}  to construct a non-realisable module over $\hat H^*(Q_8,k)$. 

It is an open question whether for group and Tate cohomology, one can expect to have a non-realisable module whenever the global obstruction is non-trivial.
\end{Rem}


\newpage

\end{document}